\documentclass[11pt]{amsart}
\pdfoutput=1
\usepackage{amsmath, amssymb, amsthm, amsbsy, bbm, amsfonts, enumitem, color, verbatim, array, floatflt, yfonts}
\usepackage[margin=1.2in]{geometry}
\usepackage[all]{xy}
\usepackage[english]{babel}

\usepackage{graphicx, color, overpic}
\usepackage[usenames,dvipsnames,svgnames,table]{xcolor}
\usepackage[pdfpagelabels]{hyperref}

\usepackage{thmtools}
\usepackage{thm-restate} 

\setlength{\headheight}{15pt}
\setlength{\parindent}{3ex}
\setcounter{tocdepth}{2} % set depth of table of contents; 1=section, 2=subsection

\hyphenation{non-emptiness}
 
\theoremstyle{plain}
\newtheorem{thm}{Theorem}[section]
\newtheorem*{thm*}{Theorem}
 %  theorem environment for Introduction
 % "letter-numbered" theorems
\newtheorem{prop}[thm]{Proposition}
\newtheorem*{prop*}{Proposition}
\newtheorem{lemma}[thm]{Lemma}
\newtheorem*{lemma*}{Lemma}
\newtheorem{corollary}[thm]{Corollary}

\theoremstyle{definition}
\newtheorem{definition}[thm]{Definition}
\newtheorem{example}[thm]{Example}

\theoremstyle{remark}
\newtheorem{remark}[thm]{Remark}

\newcommand{\cA}{\mathcal{A}}

\newcommand{\R}{\mathbb{R}}
\newcommand{\Z}{\mathbb{Z}}

\newcommand{\Q}{\mathbb{Q}}
\newcommand{\F}{\mathbb{F}}

\newcommand{\cO}{\mathcal O}

\newcommand{\cT}{\mathcal T}

\newcommand{\proj}{\operatorname{proj}}

%{\mathrm{R}}%Root system
\newcommand{\id}{\mathbbm{1}} % identity element in a group
\newcommand{\Id}{\operatorname{I}} % identity linear operator
 % apartment system
 % set of chambers/alcoves 
 %building at infinity notation
 %parallel class of Weyl chambers or simplices
\newcommand{\sW}{W} %spherical Weyl group (consistent w/ GHKR, less subscripts)
\newcommand{\aW}{\overline{W}} %affine Weyl group
\newcommand{\eW}{\widetilde W} %extended affine Weyl group
 %generating set for spherical Weyl group
 %Hecke algebra for extended affine Weyl group
\newcommand{\Cf}{\mathcal{C}} %fundamentaler/dominant Weyl chamber
\newcommand{\fa}{\mathbf{a}} % fundamental alcove (more consistent)
 %opp of fundamentaler Weyl chamber
\newcommand{\Cfs}{\widetilde{\mathcal{C}}} %shrunken fundamental Weyl chamber
\newcommand{\Cw}{\mathcal{C}_w} % any Weyl chamber
\newcommand{\Cu}{\mathcal{C}_u} % any Weyl chamber
 % any shrunken Weyl chamber
 %positive Cone
 % opposite of the positive Cone (negative cone)
 %set of affine hyperplanes

 %induced orientation at infinity

 %min. galleries in $\Delta$
 %Cayley graph
 %weight/endpoint of a gallery
\newcommand{\rsa}{\rightsquigarrow} %arrow used for galleries
 % LS Gallery w.r.t. the negative standard orientation / negative root operators 
 % LS Gallery w.r.t. the positive standard orientation / positive root operators

%number of load-bearing walls through p_0

 % fundamental weight
 % fundamental co-weight
 % type-function

%vertex of an alcove of the same type as the index. To be used as: \vert_0(c)
 % root coordinates of an element of the root lattice
 %partition of root coordinates
 %set of partitions of root coordinates
 %inversion set

 % combinatorial convex hull of alcoves in an apartment
\newcommand{\Conv}{\operatorname{Conv}} % metric convex hull of vertices in an apartment
 % closure of the union of the set of alcoves defined by \conv

\newcommand{\Range}{\operatorname{Im}}
\newcommand{\Ker}{\operatorname{Ker}}

\newcommand{\bas}{\operatorname{bas}}

\newcommand{\s}{\mathbf{s}}
\newcommand{\x}{\mathbf{x}}
\newcommand{\y}{\mathbf{y}}
\newcommand{\z}{\mathbf{z}}
\newcommand{\w}{\mathbf{w}}
\newcommand{\bb}{\mathbf{b}}

\numberwithin{equation}{subsection}

\definecolor{amethyst}{rgb}{0.6, 0.4, 0.8}
\definecolor{kellygreen}{rgb}{0.3, 0.73, 0.09}
\definecolor{americanrose}{rgb}{1.0, 0.01, 0.24}

\begin{document}

\hypersetup{pdfauthor={Milicevic, Schwer, Thomas},pdftitle={Affine Deligne--Lusztig varieties and folded galleries governed by chimneys}}

\title[ADLVs and folded galleries governed by chimneys]{Affine Deligne--Lusztig varieties and folded galleries governed by chimneys}

\author{Elizabeth Mili\'{c}evi\'{c}}
\address{Elizabeth Mili\'{c}evi\'{c}, Department of Mathematics \& Statistics, Haverford College, 370 Lancaster Avenue, Haverford, PA, USA}
\email{emilicevic@haverford.edu}

\author{Petra Schwer}
\address{Petra Schwer, Department of Mathematics, Universitätsplatz 2, Otto-von-Guericke University of Magdeburg, Germany}
\email{petra.schwer@ovgu.de}

\author{Anne Thomas}
\address{Anne Thomas, School of Mathematics \& Statistics, Carslaw Building F07,  University of Sydney NSW 2006, Australia}
\email{anne.thomas@sydney.edu.au}

\thanks{EM was supported by NSF Grant DMS 1600982 and Simons Collaboration Grant 318716. PS was supported by the DFG Project SCHW 1550/4-1. This research was also supported in part by ARC Grant DP180102437.}

\begin{abstract}
We characterize the nonemptiness and dimension problems for an affine Deligne--Lusztig variety $X_x(b)$ in the affine flag variety in terms of galleries that are positively folded with respect to a chimney. If the parabolic subgroup associated to the Newton point of $b$ has rank 1, we then prove nonemptiness for a certain class of Iwahori--Weyl group elements $x$ by explicitly constructing such galleries. 
\end{abstract}

\maketitle

%!TEX root = main.tex

\section{Introduction}\label{sec:Intro}

Deligne--Lusztig varieties have played a central role in the geometric representation theory of reductive groups over finite fields since they were introduced in \cite{DL}. Their affine analogs arose several decades later, and these affine Deligne--Lusztig varieties were formally developed for applications to $p$-divisible groups and isocrystals over perfect fields, special fibers of both Rapoport--Zink spaces and moduli of local shtukas, and bad reduction of Shimura varieties; see \cite{RapSatake, RapShimura}.  Foundational work on affine Deligne--Lusztig varieties has required first tackling an array of coarse algebro-geometric problems; e.g.\ establishing nonemptiness patterns, dimension formulas, connected components, and closure relations.

\begin{figure}[ht]
\centering
\begin{overpic}[width=0.7\textwidth]{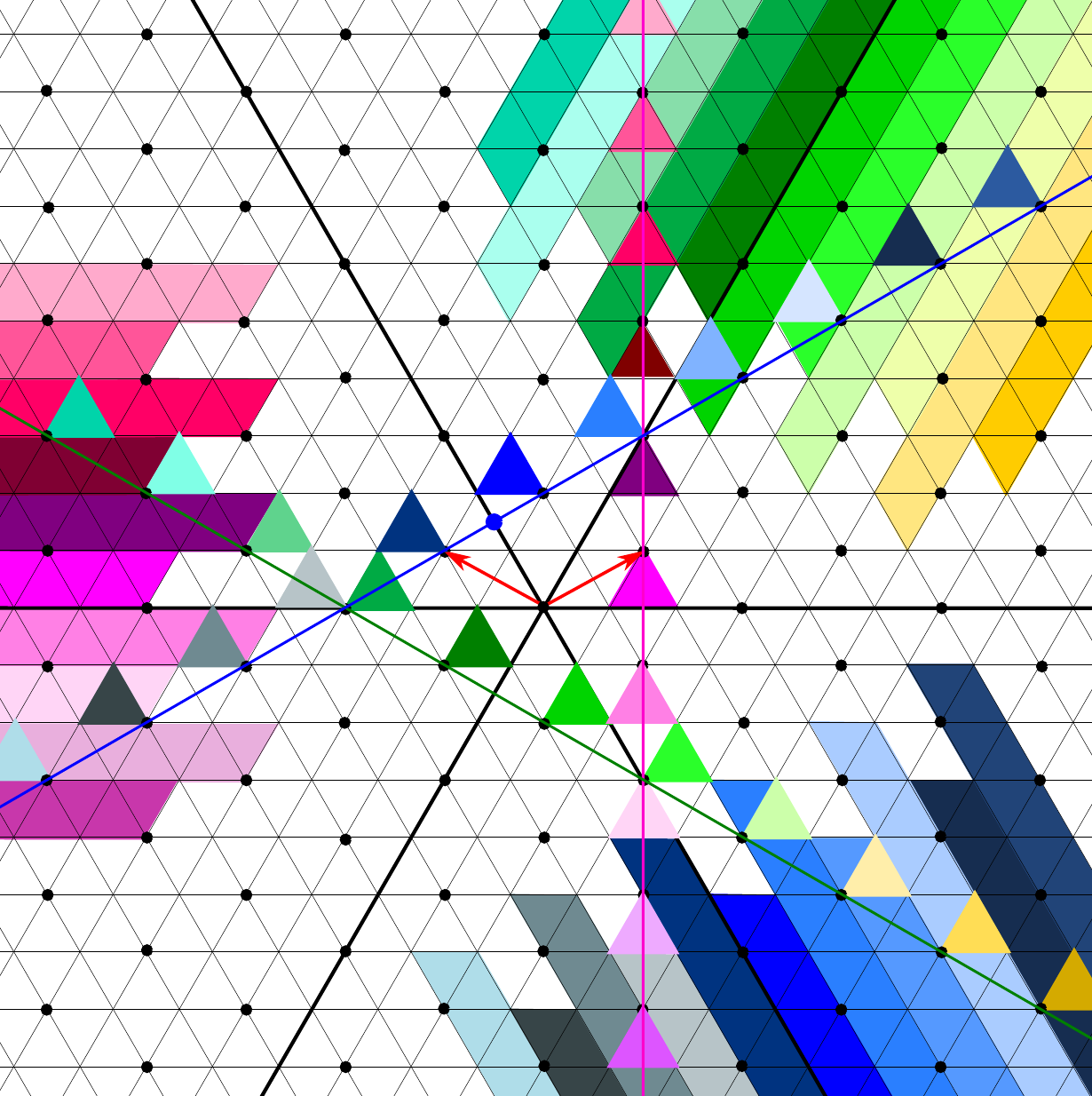}
\put(41,45.25){\footnotesize{\color{red}$\alpha_2^\vee$}}
\put(53,45.25){\footnotesize{\color{red}$\alpha_1^\vee$}}
\put(48.75,52){\footnotesize{\color{black}$\check{\rho}$}}
\put(42.5,58){\footnotesize{\color{blue}$b_\nu$}}
\put(42.5,52){\footnotesize{\color{blue}$\nu$}}
\end{overpic}
\caption{The conjugacy class of  the blue alcove $b_\nu = t^{\check{\rho}} s_1$, together with sectors representing the corresponding chimneys depicted in the same color.}
\label{fig:chimneyintro}
\end{figure}

Building on the pivotal conjectures in \cite{GHKRadlvs}, this paper applies the authors' recent joint work with Naqvi on chimney retractions \cite{MNST} to formulate the nonemptiness and dimension problems for affine Deligne--Lusztig varieties in the affine flag variety in terms of positively folded galleries in the associated Bruhat--Tits building. In particular, our analysis extends freely to the $p$-adic and mixed characteristic cases.  The main theorem then proves nonemptiness for certain affine Deligne--Lusztig varieties by explicitly constructing families of galleries which are positively folded with respect to the chimneys  illustrated in Figure \ref{fig:chimneyintro}.

\subsection{History of the problem}

In this section, we provide a brief overview of the nonemptiness problem for affine Deligne--Lusztig varieties. 
For more details, we refer the reader to the introduction of the authors' previous joint work \cite{MST1}, the arXiv version of He's 2018 ICM address \cite{HeICM}, and the references in these two works.

Classical Deligne--Lusztig varieties are indexed by an element of the finite Weyl group, and Lang's theorem guarantees that they are always nonempty.  In the setting of the affine flag variety, however, given a pair of indexing parameters, it remains an open problem to fully characterize the conditions under which the corresponding affine Deligne--Lusztig variety is nonempty.  In the affine Grassmannian, this question was solved relatively early and is phrased in terms of Mazur's inequality, which relates the coweight indexing the affine Schubert cell to the Newton point of the fixed group element serving as the second parameter; see \cite{KRFcrystals, Luc, Gashi}.  Although Mazur's inequality is a necessary condition for nonemptiness in the affine flag variety, it is far from sufficient.

Early progress in the affine flag variety focused on the basic case, in which the Newton point is as small as possible. G\"{o}rtz, Haines, Kottwitz, and Reuman first established an emptiness pattern \cite{GHKRadlvs}, and the nonemptiness problem was eventually settled by G\"ortz, He, and Nie \cite{GHN}. Beyond the basic case, major progress on the nonemptiness problem is largely shaped by the suite of formative conjectures in \cite{GHKR,GHKRadlvs}, based in large part on evidence from computer experiments.  These nonemptiness conjectures have still only been fully verified in type $\tilde{A}_2$, by the first author \cite{Be1} and again by Yang using different methods \cite{Yang}.  The first general nonemptiness results outside of the basic case were for affine Deligne--Lusztig varieties indexed by translation elements, in a lecture series by He \cite{HeCDM} and independently by the authors in \cite{MST1}, each under slightly different hypotheses and using very different methods.  The results on maximal Newton points by  Viehmann \cite{VieMaxNP} and the first author \cite{MilQBG}, as well as Viehmann's description of minimal Newton points \cite{VieMinNP}, also imply nonemptiness for certain affine Deligne--Lusztig varieties.

More recently, the first author's joint work with Viehmann on cordial elements establishes a condition under which the Newton stratum is saturated \cite{MilVie}, identifying families of affine Deligne--Lusztig varieties in the affine flag variety for which the nonemptiness pattern behaves as nicely as it does in the affine Grassmannian.  Certain cordial elements from \cite{MilVie} were then used by He as a new base case for extending the classical Deligne--Lusztig reduction methods to obtain the most general nonemptiness result to date \cite{HeCordial}.  The current work provides a distinct new proof when the Newton point is associated to a rank 1 parabolic subgroup, by adapting the relevant orbit intersections occurring in \cite{GHKRadlvs} to the context of retractions in the Bruhat--Tits building based at chimneys from the authors' joint work with Naqvi \cite{MNST}.

\subsection{Main results and key ideas}

In this section, we formally state the two main theorems in this paper, in addition to discussing key ideas from their proofs.

In \cite{MST1}, the authors use the folded galleries of Parkinson, Ram, and Schwer \cite{PRS} and the root operators of Gaussent and Littelmann \cite{GaussentLittelmann} to study the intersections of Iwahori and unipotent orbits occurring in the first work of G\"ortz, Haines, Kottwitz, and Reuman on affine Deligne--Lusztig varieties in the affine flag variety \cite{GHKR}. These particular orbits capture coarse algebro-geometric information about affine Deligne--Lusztig varieties indexed by translation elements, whose Newton points are typically regular dominant coweights.
 
For affine Deligne--Lusztig varieties indexed by Newton points which lie on a wall of the dominant Weyl chamber, G\"ortz, Haines, Kottwitz, and Reuman characterize both the nonemptiness and dimension problems in terms of intersections of Iwahori and $I_P$-orbits in the affine flag variety \cite{GHKRadlvs}; see Theorem~\ref{thm:GHKRThm} in the present paper.  These orbit intersections are interpreted as positively folded alcove walks with respect to orientations induced by the $P$-chimneys in the authors' joint work with Naqvi \cite{MNST}.  We recall the corresponding result of \cite{MNST} here as Theorem~\ref{thm:DoubleCosetIntersection}. Combining these two statements, we prove the following theorem in Section~\ref{sec:alcoveWalks}, using analogous arguments to those in \cite[Sec.~5]{MST1}. Most of the relevant terminology is formally developed in Section \ref{sec:Preliminaries}.

\begin{restatable}{thm}{ADLVChimneyRestate} 
\label{thm:ADLVChimneys} 
Let $[b] \in B(G)_P$ where $P$ is a standard spherical parabolic subgroup of $G(F)$.  Denote the Newton point of $[b]$ by $\nu$ and the standard representative by $b_\nu$. For any $x \in \eW$:
	\begin{enumerate}
		\item $X_x(b) \neq \emptyset$ if and only if for some $y \in \eW$, there exists a gallery $\gamma:\fa \rsa \bb_\nu^y$ of type $\vec{x}$ which is positively folded with respect to the $(P,y)$-chimney.
		\item If $X_x(b) \neq \emptyset$, then
		\[ \dim X_x(b) = \left( \sup_{y \in \eW} \dim(\gamma) \right) - \langle 2\rho, \nu \rangle, \]
		where we take the supremum over all galleries $\gamma$ as in (1).
	\end{enumerate}
\end{restatable}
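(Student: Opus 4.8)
The plan is to obtain Theorem~\ref{thm:ADLVChimneys} by composing the two results recalled above: Theorem~\ref{thm:GHKRThm} of G\"ortz, Haines, Kottwitz, and Reuman, which rephrases the nonemptiness and dimension of $X_x(b)$ in terms of intersections of an Iwahori orbit with an $I_P$-orbit in the affine flag variety, and Theorem~\ref{thm:DoubleCosetIntersection} from \cite{MNST}, which identifies such orbit intersections with sets of galleries in the Bruhat--Tits building that are positively folded with respect to a $P$-chimney. The overall architecture follows \cite[Sec.~5]{MST1}, where the analogous passage was made for translation elements via the orbit intersections of \cite{GHKR}; the new feature is only that the $I_P$-orbits pinning the Newton point to a wall of the dominant chamber are governed by chimney retractions rather than by retractions toward an ordinary sector, and this is precisely what Theorem~\ref{thm:DoubleCosetIntersection} supplies.

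First I would unwind Theorem~\ref{thm:GHKRThm} for a fixed $[b]\in B(G)_P$ with Newton point $\nu$ and standard representative $b_\nu$. That result expresses $X_x(b)\neq\emptyset$ as the nonemptiness, for some $y\in\eW$, of the intersection of the Iwahori double coset of a lift $\dot x$ with the $I_P$-double coset of a lift of the $y$-conjugate of $b_\nu$, and it gives a companion formula computing $\dim X_x(b)$ as the supremum over $y$ of the dimension of this orbit intersection (measured as a locally closed subset of a suitable finite-dimensional quotient of the affine flag variety), shifted by $-\langle 2\rho,\nu\rangle$. A short preliminary lemma would record that both the nonemptiness condition and the dimension quantity are independent of the choices of representatives, so that the right-hand sides depend only on $x$ and $[b]$.

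Next I would feed each $y$-indexed orbit intersection into Theorem~\ref{thm:DoubleCosetIntersection}. That theorem translates nonemptiness of the orbit intersection into the existence of a gallery $\gamma\colon\fa\rsa\bb_\nu^{y}$ of type $\vec x$ that is positively folded with respect to the $(P,y)$-chimney, and it matches the dimension of the orbit intersection with the supremum of the combinatorial dimensions $\dim(\gamma)$ over all such galleries. Substituting these two equalities into the output of Theorem~\ref{thm:GHKRThm} immediately gives statements (1) and (2): the existential quantifier over $y\in\eW$ survives unchanged, the type $\vec x$ is exactly the combinatorial shadow of the Iwahori factor $I\dot x I$, and the correction term $\langle 2\rho,\nu\rangle$ is carried through verbatim. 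Since any gallery of type $\vec x$ has combinatorial dimension at most $\ell(x)$, the supremum over $y$ (and over $\gamma$) is bounded above by an integer and hence is attained, so the formula in (2) is well posed.

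The step I expect to require the most care is the interface between the two theorems, specifically the dimension matching. One must verify that the notion of ``dimension of the orbit intersection'' for which Theorem~\ref{thm:GHKRThm} states its formula is exactly the one whose combinatorial value Theorem~\ref{thm:DoubleCosetIntersection} computes as $\sup_\gamma \dim(\gamma)$: this entails tracking the normalization by the dimension of the ambient Iwahori orbit and checking that the gallery dimension of \cite{MNST}---built from the number of positive folds together with the relevant crossings---carries the same normalization that is implicit in \cite{GHKRadlvs}, exactly as in the dimension computations of \cite[Sec.~5]{MST1}. A second, more routine point is to pin down the dictionary between the conjugation parameter $y$ as it appears on the group side (a $y$-twist of the standard representative $b_\nu$) and as it appears on the building side (the label of the $(P,y)$-chimney and the endpoint $\bb_\nu^{y}$ of the gallery). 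Once these identifications are in place, the proof reduces to a formal substitution.
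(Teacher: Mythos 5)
Your proposal is correct and takes essentially the same route as the paper: compose Theorem~\ref{thm:GHKRThm} with Theorem~\ref{thm:DoubleCosetIntersection} for nonemptiness, and carry the dimension formula across by matching the orbit-intersection dimension with the combinatorial gallery dimension. The one small point worth tightening is that Theorem~\ref{thm:DoubleCosetIntersection} is only a set-level bijection and does not by itself compute dimensions; the paper isolates the dimension match as a separate statement (Lemma~\ref{lem:GalleryDim}), whose proof uses the stratification of the orbit intersection into pieces $\mathbb{A}^{p(\gamma)} \times (\mathbb{A}\setminus\{0\})^{f(\gamma)}$ from \cite[Prop.~5.9]{MNST}, exactly the normalization check you anticipated via \cite[Prop.~5.6]{MST1}.
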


\noindent  Here $B(G)_P$ denotes the set of $\sigma$-conjugacy classes with associated parabolic $P$ as defined in Section \ref{sec:NewtonMap}, together with the Newton and Kottwitz points. Standard representatives for $\sigma$-conjugacy classes are discussed in Section~\ref{sec:StandardReps-General}, and the definition of the affine Delgine--Lusztig variety $X_x(b)$ is found in Section \ref{sec:ADLVs}. All relevant terminology on positively folded galleries with respect to chimneys occurs in Sections \ref{sec:galleriesChimneys} and \ref{sec:dimGallery}.

Having established Theorem \ref{thm:ADLVChimneys}, one typically needs to explicitly construct certain folded galleries in order to prove nonemptiness statements. The folded galleries constructed in \cite{MST1} correspond to the special case originally considered in \cite{PRS}, in which $P=B$ so that the orientation induced by the $P$-chimney is periodic.  Moreover, the root operators of \cite{GaussentLittelmann} as applied to the alcove walks in \cite{MST1} are guaranteed to preserve positivity of folds only in the special case $P=B$, for which an entire Weyl chamber is a representative sector for the corresponding chimney. Applying Theorem \ref{thm:ADLVChimneys} when $P\neq B$ thus requires entirely different constructions of positively folded galleries than any that have appeared in the literature to date. 

In this work, we focus on the case in which the Newton points have associated parabolic subgroups with a Levi factor of rank 1.  We then treat the family of alcoves in the shrunken dominant Weyl chamber having finite part equal to the longest element of the finite Weyl group. %For the rest of the introduction, we thus restrict to this case.
Our main application of Theorem \ref{thm:ADLVChimneys} is the following nonemptiness statement. We refer the reader to Section~\ref{sec:Notation} for our notation on Weyl groups, root systems, and alcoves.

\begin{thm}\label{thm:w0ShrunkenDominant} Let $b \in G(F)$, let $\nu_b$ be the Newton point of the $\sigma$-conjugacy class $[b]$, and let $P=P_{\nu_b}$ be the associated spherical standard parabolic subgroup of $G(F)$.  Let $x_0 = t^\lambda w_0 \in \eW$ be such that the Kottwitz point $\kappa_G(x_0) = \kappa_G(b)$.  Assume that:
\begin{enumerate}
\item the alcove $\x_0$ is in the shrunken dominant Weyl chamber; 
\item $\nu_b$ is contained in the polytope $\Conv( \sW (\lambda - 2\check{\rho}) )$; and
\item one of the following holds:
\begin{enumerate}
\item $P$ has rank $0$; that is, $P = B$;
\item $P$ has rank $1$; that is, $P = B \sqcup Bs_i B$ for some spherical simple reflection~$s_i$; 
or
\item $P$ has full rank; that is, $P = G$.
\end{enumerate}
\end{enumerate}
Then $X_{x_0}(b) \neq \emptyset$.  
\end{thm}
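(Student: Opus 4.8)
The plan is to apply Theorem~\ref{thm:ADLVChimneys} directly: it suffices to exhibit, for each allowed rank of $P$, an element $y \in \eW$ together with an explicit gallery $\gamma \colon \fa \rsa \bb_{\nu_b}^y$ of type $\vec{x}_0$ that is positively folded with respect to the $(P,y)$-chimney. Since $x_0 = t^\lambda w_0$, the type $\vec{x}_0$ is determined by a fixed reduced word for $x_0$; the natural starting point is to take the non-folded (``straight'') gallery in the fundamental apartment that runs from the base alcove $\fa$ and ends at the alcove $\x_0$, and then to fold it in a controlled way so that its endpoint lands on a conjugate of the standard alcove $\bb_{\nu_b}$ while all folds are positive with respect to the relevant chimney orientation. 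Hypothesis (2), that $\nu_b \in \Conv(\sW(\lambda - 2\check\rho))$, is exactly the convexity/reachability condition one expects to need so that such a folded gallery can terminate at $\bb_{\nu_b}^y$: it controls how far the endpoint must be ``pulled back'' from $\x_0$, and the $2\check\rho$ shift reflects the dimension normalization $\langle 2\rho,\nu\rangle$ in the formula and the codimension-counting of positive folds.

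I would organize the construction by the three cases of hypothesis (3). When $P = G$ (full rank), $\nu_b$ is central (up to the Kottwitz constraint) and $b_\nu$ is essentially a length-zero element; here the chimney is all of the apartment, every gallery is vacuously positively folded with respect to it, and one only needs a gallery of type $\vec{x}_0$ ending at the right alcove — this should reduce to a short direct argument, essentially the observation that $x_0$ and $b_\nu$ lie in the same $\sigma$-conjugacy class under the stated Kottwitz hypothesis. When $P = B$ (rank $0$), the $(B,y)$-chimney orientation is the periodic orientation associated to the Weyl chamber $y C$, and positively folded galleries with respect to it are exactly the classical positively folded alcove walks of \cite{PRS}; in this case the desired gallery can be built by the same chamber-based folding and root-operator techniques used in \cite[Sec.~5]{MST1}, choosing $y$ so that $\x_0$ being in the shrunken dominant chamber places the folds on the correct (positive) side, with hypothesis (2) guaranteeing there is enough room between $\lambda$ and $\nu_b$ to perform the needed folds. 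The genuinely new case is $P$ of rank $1$, $P = B \sqcup Bs_iB$: here a representative sector for the $P$-chimney is a half-space-like region (a ``thickened'' ray in the direction fixed by $s_i$), and the positivity condition on folds is dictated by this $P$-orientation rather than a full-chamber orientation.

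For the rank~$1$ case I would proceed concretely. Fix $y \in \eW$ so that the $(P,y)$-chimney points in a direction compatible with $w_0$ (the finite part of $x_0$); since $x_0$ sits in the shrunken dominant Weyl chamber with finite part $w_0$, the alcove $\x_0$ is ``deep'' inside the antidominant-facing region, which is what makes room for the folds. Then I would write down a reduced word for $x_0 = t^\lambda w_0$ adapted to the decomposition of the direction of $\nu_b$ into its $s_i$-fixed part and its orthogonal part, and fold the straight gallery at the hyperplanes crossing the relevant root $\alpha_i$, pushing the endpoint from $\x_0$ back onto $\bb_{\nu_b}^y$. The key verifications are: (a) the folded object is still a gallery of type $\vec{x}_0$ — immediate from how folding preserves type; (b) every fold is positive with respect to the $(P,y)$-orientation — this is where one must check that each folding hyperplane is crossed ``toward'' the $P$-chimney, using that the $P$-orientation only ``sees'' the finite direction transverse to the $s_i$-axis; and (c) the endpoint is exactly $\bb_{\nu_b}^y$ — a bookkeeping computation in the coweight lattice, where hypothesis (2) ensures $\lambda - 2\check\rho$ and hence $\nu_b$ is within reach of the folded path, and the Kottwitz hypothesis pins down the correct connected component.

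The main obstacle I expect is step (b) in the rank~$1$ case: controlling the \emph{signs} of all folds simultaneously with respect to the $P$-chimney orientation, which unlike the periodic ($P=B$) orientation is neither invariant under the affine Weyl group nor a full-chamber orientation, so the usual root-operator machinery of \cite{GaussentLittelmann,MST1} does not automatically preserve positivity (as the authors explicitly flag in the paragraph before the theorem). I anticipate that the resolution is to build the gallery ``by hand'' rather than via root operators: choose the reduced word and the folding positions so that all folds occur along hyperplanes orthogonal to a single root $\alpha_i$ (the root defining $P$), where the $P$-orientation is constant and positivity is easy to read off, and to route the rest of the gallery through regions where the $P$-orientation agrees with the standard (periodic) one so that existing arguments apply verbatim. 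Making precise that the shrunken-dominant hypothesis (1) together with the polytope hypothesis (2) leave exactly enough combinatorial room to do this — i.e., that one never needs a ``negative'' fold — is the crux, and I would expect the bulk of the section to be devoted to an explicit case analysis of the possible directions of $\nu_b$ relative to $\alpha_i$ and $w_0$.
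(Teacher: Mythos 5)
Your framing of the problem is correct: all three cases are handled by exhibiting, for a suitable $y \in \eW$, a gallery of type $\vec{x}_0$ ending at $\bb_{\nu_b}^y$ that is positively folded for the $(P,y)$-chimney, and then invoking Theorem~\ref{thm:ADLVChimneys}. You also correctly flag the central difficulty in the rank~$1$ case (positivity with respect to an aperiodic orientation). However, there are two genuine misconceptions that would derail the proof.

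First, the case $P = G$ is not vacuous. A $G$-chimney is not the whole apartment: when $P=G$ the defining sub-root system is all of $\Phi$, so a $P$-sector must lie between adjacent $\alpha$-hyperplanes for \emph{every} positive root $\alpha$, and there are no remaining directions in which to escape to infinity; a $G$-sector is a single alcove and the induced orientation is the alcove-based one, which is nontrivial. Galleries are therefore not automatically positively folded, and the basic case is a substantive theorem (the paper cites Theorem~A of~\cite{MST1}, which in turn relies on the emptiness/nonemptiness results of~\cite{GHKRadlvs} and~\cite{GHN}). Your claim that the $P = G$ case reduces to ``$x_0$ and $b_\nu$ lie in the same $\sigma$-conjugacy class'' is not a proof of nonemptiness: matching Kottwitz points is necessary but far from sufficient, and this is exactly what makes the basic case interesting.

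Second, and more fundamentally, in the rank~$1$ case the folding strategy is backwards. The $(P_i,y)$-chimney orientation is periodic (agreeing with the opposite standard orientation) on every $\beta$-hyperplane with $\beta \in \Phi^+ \setminus \{ \alpha_{i_1} \}$ (where $\alpha_{i_1} = -w_0\alpha_i$), and it is \emph{only} on $\alpha_{i_1}$-hyperplanes that the orientation is aperiodic and the sign of a fold depends on the hyperplane's index relative to the chimney; this is the content of Lemma~\ref{lem:posFolded}. So the ``easy'' hyperplanes are precisely the ones you propose to avoid, and the ``hard'' ones are the ones you propose to fold on. The paper instead chooses the reduced word so that the PRS-folding sequence has the \emph{minimum possible} number of folds in $\alpha_{i_1}$-hyperplanes (all at index $1$), and then the conjugating element $y$ is chosen so that the $(P_i,y)$-sector lies far enough in the $\alpha_{i_1}$-direction to make those few critical folds positive. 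Restricting all folds to $\alpha_i$-hyperplanes would also fail for a purely combinatorial reason: folds confined to one parallel family of hyperplanes cannot move the endpoint of a gallery from $\x_0$ all the way to an alcove on one of the transverse subspaces $u\cT_\nu$ of the conjugacy class. Finally, the root-operator machinery of~\cite{GaussentLittelmann} is \emph{not} abandoned here — Section~\ref{sec:remainingTargets} applies it to reach lower Newton points, with a delicate case split (and an extra hand-fold in Section~\ref{sec:lowest}) exactly to preserve positivity on the one aperiodic family. You would also need the transverse-subspace analysis of Section~\ref{sec:conjugacy} to identify which alcoves are genuine conjugates $\bb_\nu^y$; this is not mere bookkeeping, since in higher parabolic rank the conjugacy class fails to fill out the transverse subspaces (Example~\ref{ex:s1s3}).
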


We remark that several of the nonemptiness results in Theorem~\ref{thm:w0ShrunkenDominant} already appeared in the authors' earlier work \cite{MST1}; e.g.\ both the case of regular translations (3a) and an alternate proof of the basic case (3c).  He has also recently obtained even more general nonemptiness results \cite{HeCordial}, although the methods in the present paper differ quite  substantially.  Our intention in Theorem~\ref{thm:w0ShrunkenDominant} is to highlight those uniform nonemptiness results for which proofs using exclusively folded gallery methods exist. Of course, applying any automorphism of the Dynkin diagram yields uniform gallery constructions for further nonemptiness results via \cite[Thm.\ 10.3]{MST1}, though we do not include these additional cases in the above statement.

We continue by discussing some of the key elements in the constructions required to prove Theorem \ref{thm:w0ShrunkenDominant} via Theorem \ref{thm:ADLVChimneys}.
First, we require an explicit understanding of the standard representatives for $\sigma$-conjugacy classes defined in \cite{GHKRadlvs}, in order to fully describe their conjugacy classes in the extended affine Weyl group.  As is clear from the statement of Theorem \ref{thm:ADLVChimneys}, these $\eW$-conjugacy classes coincide with the target end alcoves for our folded galleries. We characterize these conjugacy classes geometrically in Section~\ref{sec:conjugacy} in terms of certain affine subspaces in an apartment of the Bruhat--Tits building. For example, in Figure \ref{fig:chimneyintro}, the three lines form the so-called transverse spaces, along which the colored alcoves corresponding to the elements of the $\eW$-conjugacy class lie. 

More delicately, we actually need a solution to the inverse problem; i.e.\ given any target $z$ in the $\eW$-conjugacy class of a fixed standard representative $b_\nu$, we provide an explicit element $y$ which conjugates $b_\nu$ to $z$. Applying Theorem \ref{thm:ADLVChimneys} requires this level of precision, since the conjugating element $y$ controls the location of the chimney; observe how the color-coded chimneys vary with the corresponding target alcove in Figure \ref{fig:chimneyintro}. Although many of our results in Sections \ref{sec:NewtonPointsReps} through \ref{sec:conjugacy} concern general Newton points, the question of whether the $\eW$-conjugacy class of a fixed standard representative coincides with the alcoves along the transverse subspaces (as depicted in Figure \ref{fig:chimneyintro}) is very sensitive to the rank of the corresponding parabolic subgroup; see Example \ref{ex:s1s3}, which illustrates this claim for $G=SL_4$.

The final two sections of the paper consist of delicate gallery constructions. We begin in Section~\ref{sec:minGallery} by carefully choosing a minimal gallery $\gamma: \fa\rightsquigarrow \x_0$, starting with a specific reduced expression for $w_0$ which is tailored to the parabolic subgroup $P$. We then apply a particular PRS-folding sequence to $\gamma$ in Section~\ref{sec:folding}, exercising sufficient control to prove in Section~\ref{sec:conjugationFinal} both that the end alcove of the resulting folded gallery $\gamma_{\check{\rho}}$ is an element of the $\eW$-conjugacy class of the standard representative $b_\nu$, and that $\gamma_{\check{\rho}}$ is positively folded with respect to the chimney dictated by the conjugating element. The main result of Section \ref{sec:firstTarget} is then Theorem~\ref{thm:firstTarget}, which proves nonemptiness for the largest possible expected rank 1 Newton point below the maximum.

In Section~\ref{sec:remainingTargets}, we modify the gallery $\gamma_{\check{\rho}}$ constructed in Section \ref{sec:firstTarget} to prove Theorem \ref{thm:remainingTargets}, which is a nonemptiness statement for all smaller Newton points associated to the same rank 1 parabolic subgroup. As in \cite[Sec.~6.3]{MST1}, the key idea here is to apply the root operators of \cite{GaussentLittelmann}, suitably modified to the context of alcove walks. However, due to the aperiodicity of the chimney orientations, extra care is needed to preserve the positivity of the folds in $\gamma_{\check{\rho}}$ when applying these root operators. We are thus forced to split the remaining constructions in two parts.  We discuss larger Newton points in Sections~\ref{sec:rootOps} and \ref{sec:lower}, and then further modify those galleries obtained via root operators, in order to separately tackle smaller Newton points  in Section~\ref{sec:lowest}. 

\subsection{Discussion of the general case}

In this section, we make some additional remarks concerning obstructions to  straightforward generalizations of Theorem \ref{thm:w0ShrunkenDominant}.

As discussed in \cite[Sec.\ 7]{MST1}, the ability to treat other spherical directions apart from the $w_0$-position in the translation case hinged upon an algebraic resolution to the nonemptiness question in the basic case.  While a more general algebraic solution is recently available in \cite{HeCordial}, it is our intention to present here only those results for which we are able to provide a logically independent proof, using the distinct method of positively folded galleries. Resolutions to Questions 1 and 2 posed in \cite{MST1} would likely be the first steps toward constructions that are less reliant upon the properties of vertex-to-vertex galleries developed in \cite{GaussentLittelmann}, which are best suited for galleries whose type labels an alcove in the $w_0$-position in the dominant Weyl chamber.

Another approach would be to use the recent saturation results of the first author in collaboration with Viehmann.  More specifically, part (a) of \cite[Thm.\ 1.2]{MilVie} guarantees the existence of those positively folded galleries required to apply Theorem \ref{thm:ADLVChimneys} whose type labels an element in the antidominant Weyl chamber.  To prove nonemptiness for all elements satisfying Reuman's criterion in the shrunken dominant Weyl chamber, as predicted by \cite[Conj.\ 9.5.1]{GHKRadlvs}, then requires an adaptation of \cite[Prop.\ 9.4]{MST1} concerning the geometric effects of conjugating a folded gallery by a simple reflection. A naive first try produced some type-dependent subtleties in case the gallery either begins or ends with a fold, and so we have elected not to further pursue this method, given the timing of this work.

As Example \ref{ex:s1s3} illustrates, the primary issue in treating higher rank parabolic subgroups is the fact that the $\eW$-conjugacy classes do not necessarily fill out the transverse subspaces associated to a given Newton point. Roughly speaking, these problems arise when there is a smaller possible parabolic subgroup associated to the Newton point than the choice made in \cite[Sec.\ 7]{GHKRadlvs}.  Early experiments with a modified choice of representative for the $\sigma$-conjugacy class suggest significant barriers to adapting the proof of \cite[Thm.\ 11.3.1]{GHKRadlvs} in a manner which also preserves the nature of the dimension theory developed in \cite[Sec.\ 10]{GHKRadlvs}. Moreover, the gallery constructions needed to prove nonemptiness for higher rank parabolic subgroups are even more delicate than the constructions we provide here, as the number of families of hyperplanes with aperiodic orientations increases linearly with rank.

Using the ideas developed in Section \ref{sec:alcoveWalks}, one also easily obtains a lower bound on the dimensions of the affine Deligne--Lusztig variety corresponding to each gallery we construct.  However, comparisons to the known upper bound of the virtual dimension from \cite{HeAnnals} reveal that equality only holds in type $\tilde{A}_2$, and the dimension formula in this case appeared earlier in \cite{Yang}.  In general, since the number of folds is fixed at $\ell(w_0)-1$ in the parabolic rank 1 case, one needs to produce a gallery which results in more positive crossings once folded.  

Finally, we comment on the requirement that the Newton point be contained in the convex hull of the $\sW$-orbit of the coweight $\lambda$, only after shifting by $-2\check{\rho}$.  Since Mazur's inequality is no longer sufficient for nonemptiness in the affine flag variety, the maximum Newton point generally differs from $\lambda$, and the required correction factors are greatest in the dominant Weyl chamber; see \cite{MilQBG}. Moreover, the first author's thesis \cite{Be1} provides concrete examples in which the only nonempty Newton stratum between $\lambda$ and $\lambda-2\check{\rho}$ is in fact the generic one.  Therefore, condition (2) in Theorem \ref{thm:w0ShrunkenDominant} is the cleanest hypothesis under which one could reasonably expect to prove a uniform nonemptiness result; compare \cite[Cor.\ 6.4]{HeCordial}

\subsection{Organization of the paper}

This section provides a brief overview of the contents of the body of the paper. We refer the reader to the opening remarks in each section for further pointers about the specific contents within each subsection.

In Section~\ref{sec:Preliminaries}, we recollect all terminology and external results which are used later in the paper: notation concerning Weyl groups and their root systems, basics on Newton points and their standard representatives, a review of affine Deligne--Lusztig varieties in the affine flag variety, as well as the main definitions concerning folded galleries and chimney orientations. The goal of Section \ref{sec:NewtonPointsReps} is to provide concrete type-free formulas for the standard representatives of the relevant Newton points, for which we rephrase the Newton point calculation geometrically in terms of certain averaging projection operators.  In Section~\ref{sec:alcoveWalks}, we establish the direct link between folded alcove walks and the general problem of determining nonemptiness and dimensions of affine Deligne--Lusztig varieties in the affine flag variety.   
In Section \ref{sec:conjugacy}, we obtain a complete characterization of the $\eW$-conjugacy class of a standard representative for any Newton point with associated parabolic of rank 1.  Explicit galleries designed to reach the maximum Newton point under consideration, as well as transformations of these galleries via root operators to reach the lower Newton points, are constructed in Sections~\ref{sec:firstTarget} and \ref{sec:remainingTargets}.

\subsection*{Acknowledgements}

We thank Ulrich G\"ortz, Gregory Maloney, and Arun Ram for helpful conversations. Crucial early discussions occurred during the visit by EM and AT to the University of Melbourne in January 2016, hosted by Arun Ram. The authors are grateful to the Mathematical Sciences Research Institute (MSRI), which hosted AT for a long-term stay, as well as short-term visits by each of EM and PS, in Fall 2016. PS wishes to thank the University of Sydney Mathematical Research Institute (SMRI) for hosting her in August 2019.  We also gratefully acknowledge the support of the Max-Planck-Institut f\"ur Mathematik (MPIM), which has hosted two long-term sabbatical visits by EM in 2016 and 2020, in addition to a collaborative visit by PS in March 2020. Finally, we thank Ulrich G\"ortz for useful comments on an earlier version of this manuscript, and the referee for a very careful reading and thoughtful suggestions.

%!TEX root = main.tex

\section{Preliminaries}\label{sec:Preliminaries}

This section reviews all preliminary definitions and results which are required throughout the paper.  Section \ref{sec:Notation} establishes our notation for Weyl groups, root systems, and alcoves, using the conventions from \cite{Bourbaki4-6}.  In Section \ref{sec:NewtonMap}, we review the Newton and Kottwitz maps, which determine the two invariants that parameterize $\sigma$-conjugacy classes. Following \cite{GHKRadlvs}, we discuss the standard representative for a given $\sigma$-conjugacy class in Section \ref{sec:StandardReps-General}. We provide a brief review of affine Deligne--Lusztig varieties in Section \ref{sec:ADLVs}. We conclude the preliminary material in Section \ref{sec:galleriesChimneys} by recalling the necessary background on combinatorial galleries and chimneys from \cite{MST1, MNST}.

\subsection{Notation}\label{sec:Notation}

Let $G$ be a split connected reductive group over $\F_q$, with $q$ a prime power, and let $T$ be a split maximal torus of $G$. Let $F=\overline{\mathbb{F}}_q((t))$ be the discretely valued field with ring of integers $\cO = \overline{\mathbb{F}}_q[[t]]$.  The choice of $T$ corresponds to fixing an apartment $\cA$ in the Bruhat--Tits building for $G(F)$.  We identify the apartment $\cA$ with $\mathfrak{a} := X_*(T)\otimes_{\Z}\R \cong \R^n$ where $n$ is the rank of $G$, and we let $\mathfrak{a}_{\Q} := X_*(T)\otimes_{\Z}\Q$. Fix a Borel subgroup $B$ and let $P=MN$ be a standard parabolic subgroup, with $M$ the unique Levi subgroup containing $T$ and unipotent radical $N$.  In the special case $P=B$, we write $B=TU.$ The finite or spherical Weyl group of $T$ in $G$ equals $\sW = N_G(T)/T$.  The extended affine Weyl group is then $\eW = N_GT(F)/T(\cO) \cong X_*(T) \rtimes \sW.$ Given any $\lambda \in X_*(T)$, we write $t^\lambda$ to denote the image of $t$ under the homomorphism $\lambda: \mathbb{G}_m \rightarrow T$, and $t^\lambda$ acts on $\mathfrak{a}$ by translation by $\lambda$.  Any element $x \in \eW$ can  be uniquely written as $x = t^\lambda w$ for some $\lambda \in X_*(T)$ and spherical direction $w \in \sW$.

Denote by $\Phi$ the set of roots of $T$ in $G$, which we assume is irreducible throughout the paper; equivalently, the associated Dynkin diagram is assumed to be connected. Denote by $\Phi^+$ those roots which are positive with respect to the opposite Borel subgroup $B^-$.  Let $\Delta = \{\alpha_i\}_{i=1}^n$ be a basis of simple roots in $\Phi^+$.  The set of indices on the elements of $\Delta$ shall be denoted by $[n] = \{1,2, \dots, n\}$. Denote by $\rho$ the half-sum of the positive roots in $\Phi^+$, and let $\tilde{\alpha}$ denote the highest root.  The coroot associated to $\alpha \in \Phi$ with respect to the evaluation pairing $\langle \cdot, \cdot \rangle: X^*(T) \times X_*(T) \rightarrow \Z$ is then $\alpha^\vee = 2\alpha/ (\alpha, \alpha )$, where $(\cdot , \cdot )$ denotes the standard Euclidean dot product. Denote by $\Delta^\vee$ the corresponding simple coroots. The fundamental weights $\{\omega_i\}$ and the fundamental coweights $\{\varpi_i\}$ are dual bases to $\Delta^\vee$ and $\Delta$, respectively, with respect to $\langle \cdot, \cdot \rangle$.   The dominant Weyl chamber is defined as $\Cf = \{v \in \mathfrak{a} \mid \langle \alpha, v \rangle \geq 0 \ \text{for all}\ \alpha \in \Phi^+\}$, and $\mathfrak{a}_{\Q}^+$ denotes the subset of $\mathfrak{a}_{\mathbb{Q}}$ which is contained in $\Cf$. The shrunken dominant Weyl chamber $\Cfs$ is defined by the same inequalities, after replacing the $0$ by $1$. 

Let $R^\vee = \bigoplus \Z\alpha_i^\vee \subset X_*(T)$ denote the coroot lattice, and let $Q^\vee = \bigoplus \Z \varpi_i \subset X_*(T)$ denote the coweight lattice.  Denote by $\check{\rho}$ the element of $Q^\vee$ defined as the half-sum of the positive coroots, or equivalently the sum of the fundamental coweights; note that $\check{\rho}$ may not be a scalar multiple of $\rho$. An element of $Q^\vee$ which lies in $\Cf$ is called a dominant coweight, and we remark that $\check{\rho}$ is dominant. The dominance ordering on $R^\vee$ defines $\lambda \geq \mu$ if and only if $\lambda-\mu$ is a non-negative $\Z$-linear combination of positive coroots. The partial ordering $\geq$ is then extended to $Q^\vee$ and $\mathfrak{a}$ by replacing $\Z$ with $\Q$ and $\R$, respectively. Denote by $\Lambda_G = X_*(T)/R^\vee$.

The finite Weyl group $\sW$ is a Coxeter group, generated by the set $S = \{ s_i\}_{i=1}^n$ of (simple) reflections across the hyperplanes orthogonal to the respective simple roots in $\Delta$.  Denote by $\ell(w)$ the length of an element $w \in \sW$ with respect to $S$, and denote by $w_0$ the longest element in $\sW$. We may also view $\sW$ as a finite reflection group acting on $\mathfrak{a} \cong \R^n$. Given any $\mu \in \mathfrak{a}$, we denote by $\mu^+$ the unique element of $\Cf$ which lies in the $\sW$-orbit of $\mu$.

Given any root $\alpha \in \Phi$ and any integer $k \in \Z$, define $H_{\alpha, k} = \{v \in \mathfrak{a} \mid \langle \alpha, v \rangle = k\}.$ In particular, note that $\check{\rho} \in H_{\alpha_i,1}$ for all $\alpha_i \in \Delta$. The complement in $\mathfrak{a}$ of the collection of all the affine hyperplanes $H_{\alpha,k}$ are called alcoves. The base alcove is defined to be the unique alcove in $\Cf$ whose closure contains the origin, denoted by $\fa_0 =\{v \in \Cf \mid \langle \alpha, v \rangle \leq 1 \ \text{for all}\ \alpha \in \Phi^+\}$.   The reflection across the affine hyperplane $H_{\alpha, k}$ defined by $s_{\alpha, k}(v) = v - (\langle \alpha, v \rangle - k)\alpha^\vee$ is an affine transformation of $\mathfrak{a}$, and the collection of all such reflections generates the (non-extended) affine Weyl group $\aW$. When $k=0$, we typically write $H_{\alpha, 0} = H_{\alpha}$ and $s_{\alpha, 0} = s_\alpha$. We remark that choosing $\Phi^+$ to be positive with respect to $B^-$ implies that the positive roots lie on the same side of the hyperplane $H_{\tilde{\alpha}}$ as $\Cf$.  Even when $k \neq 0$, we refer to $H_{\alpha,k}$ as an $\alpha$-hyperplane.

The action of the affine Weyl group $\aW$ on $\mathfrak{a}$ preserves the collection of hyperplanes $H_{\alpha, k}$, and so the element $x \in \aW$ naturally corresponds to the alcove $x\fa_0$, which we typically denote in bold by $\x$. The extended affine Weyl group $\eW$ acts simply transitively on the set of extended alcoves, which may be viewed as $|\Lambda_G|$ distinct copies of $\mathfrak{a}$, where each copy is endowed with the same configuration of alcoves on which the affine Weyl group $\aW$ acts simply transitively. The affine Weyl group can also be viewed as $\aW \cong R^\vee \rtimes \sW$, and so every element $x \in \aW$ can be uniquely expressed as $x = t^\lambda w$ for some $\lambda \in R^\vee$ and $w \in \sW$. Similarly, the extended affine Weyl group is isomorphic to $\eW \cong Q^\vee \rtimes \sW$.

The affine Weyl group $\aW$ is a Coxeter group with generating set $\tilde{S} = S \cup \{ s_{\tilde{\alpha},1}\},$ and the additional affine generator is  denoted by $s_0 :=s_{\tilde{\alpha},1}$.  We again denote the corresponding length function by $\ell: \aW \rightarrow \Z$. Although the extended affine Weyl group $\eW$ is no longer a Coxeter group, the length function $\ell$ extends to $\eW$ by defining those elements whose extended alcove projects onto the base alcove $\fa_0$ in $\mathfrak{a}$ to have length zero. Denote by $\Omega_G$ the subgroup of elements $\omega \in \eW$ such that $\ell(\omega) = 0$, and note that $|\Omega_G| = |\Lambda_G|$. An element $\omega \in \Omega_G$ acts on the base alcove $\fa_0$ by an automorphism, which induces a permutation action on the set $\tilde I = \{0,1,\dots, n\}$ indexing the elements of $\tilde S$. We then also have $\eW \cong \aW \rtimes \Omega_G$, so that for any $x \in \eW$, we may also write $x = y \omega$ for unique $y \in \aW$ and $\omega \in \Omega_G$.

Following \cite{Ram}, we view the elements in $\Omega_G$ as deck transformations, which move between the sheets of extended alcoves in the product $\Omega_G \times \mathfrak{a}$. For any $\omega \in \Omega_G$, the extended alcove $\omega \times \fa_0$ then projects onto the extended base alcove $0 \times \fa_0 \in \Omega_G \times \mathfrak{a}$. Given $\omega \in \Omega_G$, we denote the corresponding extended base alcove by $\fa_\omega = \omega \times \fa_0$. By abuse of notation, we often denote the extended base alcoves by $\fa_0 = 0\times \fa_0$ and $\fa = \fa_\omega$ for nonzero $\omega \in \Omega_G$. Similarly, the extended alcove in $\Omega_G \times \mathfrak{a}$ corresponding to the element $x \in \eW$ is typically denoted in bold by $\x=x\fa_0$. We occasionally refer to an extended alcove simply as an alcove, since the precise meaning should always be clear from context. Further, we present some figures which overlay extended alcoves from different sheets together in a single copy of $\mathfrak{a}$.

Since $\eW \cong Q^\vee \rtimes W \cong \aW \rtimes \Omega_G$ and $\aW \cong R^\vee \rtimes W$, then an element $\omega \in \Omega_G$ also determines a nonzero coset in $Q^\vee /R^\vee \cong \Omega_G$. As such, we shall typically index distinct sheets of extended alcoves by the elements of $Q^\vee/R^\vee$.   In particular, the image in $Q^\vee/R^\vee$ of the translation part of $x \in \eW$  uniquely identifies the sheet containing the extended alcove $\x$. By abuse of notation, we also frequently write $\lambda \in \omega + R^\vee$ for $\omega \in \Omega_G$ to denote the image of a given $\lambda \in Q^\vee$ in the quotient $Q^\vee/R^\vee$.

Given a Levi decomposition for a standard (spherical) parabolic subgroup $P=MN$ of $G$, the set of simple roots for $G$ decomposes as $\Delta = \Delta_M \sqcup \Delta_N,$ where $\Delta_M$ is the set of simple roots for the reductive group $M$, and $\Delta_N$ is the set of simple roots which occur in the Lie algebra of $N$. Note that although $\Phi$ is assumed to be irreducible, the set $\Phi_M$ of roots of $T$ in the Levi subgroup $M$ may not be.  Define $\mathfrak{a}_P = \{ v \in \mathfrak{a} \mid \langle \alpha_i, v \rangle = 0,\ \text{for all}\ \alpha_i \in \Delta_M\};$ equivalently, $\mathfrak{a}_P$ is the intersection of all hyperplanes $H_{\alpha_i}$ corresponding to those simple roots in $\Delta_M$.  The choice of standard parabolic then determines an open chamber in $\mathfrak{a}_P$ defined as
\begin{equation}\label{eq:aP+}
     \mathfrak{a}_P^+ = \{ v \in \mathfrak{a}_P \mid \langle \alpha, v \rangle>0\ \text{for all}\ \alpha \in \Delta_N \}.
\end{equation}
The dominant Weyl chamber is a disjoint union of these open chambers $\Cf = \coprod \mathfrak{a}_P^+,$ where $P$ ranges over all standard (spherical) parabolic subgroups of $G$.  Using the identification $\mathfrak{a}_P \cong \Lambda_M \otimes_{\Z}\R$, denote by $\Lambda_M^+$ the subset of $\Lambda_M$ whose image under this isomorphism lies in $\mathfrak{a}_P^+$.

The Iwahori subgroup $I$ of $G(F)$ is the stabilizer of the base alcove $\fa_0$. Equivalently, $I$ is the preimage of the opposite Borel under the projection $G(\cO) \rightarrow G(\overline{\mathbb{F}}_q)$ defined by $t \mapsto 0$. Note that the base alcove $\fa_0$ coincides with the basepoint of the affine flag variety $G(F)/I$. Given any standard (spherical) parabolic subgroup $P=MN$ of $G$, we define an associated subgroup of $G(F)$ by $I_P = (I\cap M(F))N(F)$. 

The Frobenius automorphism $a \mapsto a^q$ on $\overline{\mathbb{F}}_q$ can be extended to $\sigma: F \rightarrow F$ by defining $\sigma$ to raise each coefficient to the $q^{\text{th}}$ power. Two elements $g,h \in G(F)$ are said to be $\sigma$-conjugate if $g = xh\sigma(x)^{-1}$ for some $x \in G(F)$.  The collection of all $\sigma$-conjugates of an element $g \in G(F)$ will typically be denoted by $[g]$.  Given any subgroup $H$ of $G(F)$ and any $g,h \in G(F)$, we shall denote the usual conjugation action by $H^g = gHg^{-1}$ and $h^g = ghg^{-1}$. Given any $x,y \in \eW$, we also denote by $\x^y$ the extended alcove corresponding to the element $x^y$.

\subsection{The Newton and Kottwitz maps}\label{sec:NewtonMap}

The set $B(G)$ of $\sigma$-conjugacy classes in $G(F)$ is characterized by a pair of invariants: the Newton point and the Kottwitz point. In this section we briefly review the key properties of these two $\sigma$-conjugacy invariants, originally defined in \cite{KotIsoI, KotIsoII}.

Denote by $\kappa_G: G(F) \rightarrow \Lambda_G$ the natural surjection defined in \cite{KotIsoII}, subsequently called the \emph{Kottwitz homomorphism}. By \cite[Lem.~7.2.1]{GHKRadlvs}, the map $\kappa_G$ induces a bijection $\Lambda_G \longleftrightarrow \Omega_G$.  Given any $b \in G(F)$, we may thus view the \emph{Kottwitz point} $\kappa_G(b)$ as specifying an extended base alcove $\fa_\omega$ for some $\omega \in \Omega_G$. Equivalently, $\kappa_G(b)$ determines a coset in $Q^\vee /R^\vee \cong \Omega_G$.

By \cite[Cor.~7.2.2]{GHKRadlvs}, the restriction of the map $G(F) \rightarrow B(G)$ to $N_GT(F)$ factors through $\eW$, and the induced map $\eW \twoheadrightarrow B(G)$ is surjective.  We thus define the Newton map only on the extended affine Weyl group, where it has an especially elementary formula; see, for example, \cite[Sec.~4.2]{GoertzSurvey}. 
For $x = t^\lambda w \in \eW$ where $\lambda \in Q^\vee$ and the order of $w \in \sW$ equals $m$, the \emph{Newton point} for $x$ (viewed as an element of $G$) is given by 
\begin{equation}\label{eq:avgFormula}
    \nu_G(x) = \left( \frac{1}{m} \sum\limits_{i=1}^m w^i(\lambda)\right)^+.
\end{equation}
In particular, a Newton point $\nu$ is an element of $\mathfrak{a}_{\Q}^+$. As with an element of the integral coweight lattice, we say that $\nu \in \mathfrak{a}_{\Q}^+$ 
is \emph{regular} if and only if the stabilizer of $\nu$ in $\sW$ is trivial.

Choosing any representative $\dot{x} \in [b]$ such that $\dot{x} \in \eW$, and any representative $x \in G(F)$ of $\dot{x} \in N_GT(F)/T(\cO)$, the map
\begin{align}\label{eq:NewtonKottMap}
    B(G) & \rightarrow \mathfrak{a}^+_{\Q} \times \Lambda_G \nonumber \\
    [b] & \mapsto (\nu_G(x), \kappa_G(b))
\end{align}
is injective \cite[Sec.~4.13]{KotIsoII}. By abuse of notation, we often write $\nu_G(b)$ to denote the image of $[b]\in B(G)$ under the Newton map, without passing to a representative in $\eW$ or referencing its $\sigma$-conjugacy class.  When the context is clear and no further reference to the ambient group is required, we may also abbreviate $\nu_b :=\nu_G(b)$.

\begin{definition}\label{defn:integralNP}
Given any $[b] \in B(G)$, denote its Newton point by $\nu_b \in \mathfrak{a}_{\Q}^+$ and its Kottwitz point by $\omega_b \in \Lambda_G$. By definition, $\nu_b \in \bigoplus \Q\varpi_i$, where the coefficients on the fundamental coweights are all non-negative. If $\nu_b \in \bigoplus \Z \varpi_i$ and $\nu_b$ has the same image as $\omega_b$ in $Q^\vee/R^\vee$, then we say that $b$ has an \emph{integral} Newton point; equivalently, $\nu_b$ is an \emph{integral} Newton point.  Otherwise, we say that $b$ has a \emph{non-integral} Newton point; equivalently, $\nu_b$ is \emph{non-integral}.
\end{definition}

Although our definition of integrality formally requires the additional input of the Kottwitz point, note that if $Q^\vee = R^\vee$, then $\nu_b$ is integral if and only if $\nu_b$ is an element of the integral weight lattice.  Example \ref{ex:C2integral} below illustrates how the same Newton point (representing two different $\sigma$-conjugacy classes) can be both integral and non-integral, if more generally $Q^\vee \neq R^\vee$.  The primary motivation for splitting the cases as we have in Definition \ref{defn:integralNP} arises later in Proposition \ref{prop:stdReps}, in which our formulas for standard representatives fall precisely into these two distinct cases.

\begin{figure}[ht]
\centering
\begin{overpic}[width=0.6\textwidth]{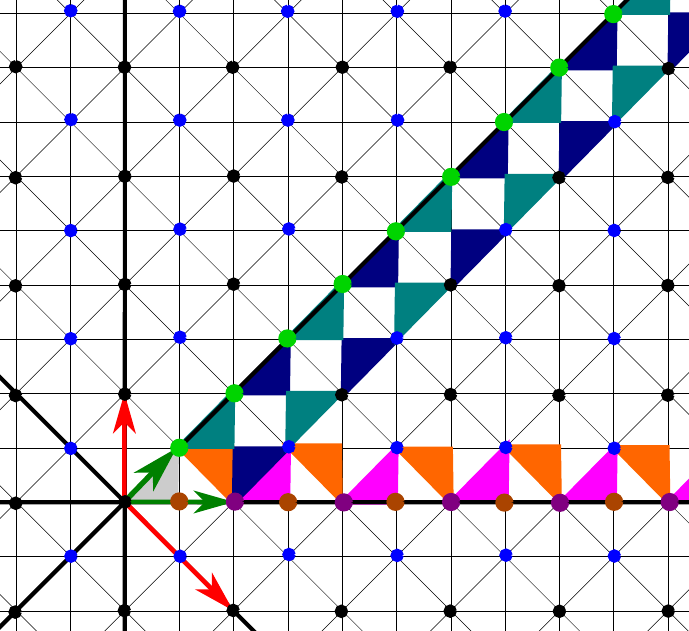}
\put(23,20.5){\footnotesize{\color{black}$\fa_0$}}
\put(13,28){\footnotesize{\color{red}$\alpha_2^\vee$}}
\put(25,4){\footnotesize{\color{red}$\alpha_1^\vee$}}
\put(23,29){\footnotesize{\color{DarkGreen}$\varpi_2$}}
\put(32,15){\footnotesize{\color{DarkGreen}$\varpi_1$}}
\end{overpic}
\caption{Newton points and their standard representatives in type $\tilde{C}_2$.}
\label{fig:C2StdReps}
\end{figure}

\begin{example}\label{ex:C2integral}
In Figure~\ref{fig:C2StdReps}, the coroot lattice $R^\vee = \Z^2$ for type $\tilde{C}_2$ is shown in black, and the lattice $\varpi_2 + R^\vee$ is shown in blue.
Consider $b = t^{3\varpi_2} \in \eW$ whose (extended) alcove $\bb$ is shaded teal.   The element $b$ has Newton point $\nu_b = \left( \frac{3}{2}, \frac{3}{2} \right) = 3\varpi_2$ by \eqref{eq:avgFormula}.
The translation part $3\varpi_2$ of $b$ is an element of the blue lattice, and so the element $b$ has Kottwitz point $\omega_b = \varpi_2$.  Since $2\varpi_2 \in R^\vee$, the Newton point $\nu_b$ maps to $\varpi_2 \in Q^\vee/R^\vee$, and so $\nu_b=3\varpi_2$ is integral by Definition \ref{defn:integralNP}. 

On the other hand, consider $\mu = (2,1) = 2\alpha_1^\vee + 3\alpha_2^\vee$ and define $b' = t^{\mu}s_1 \in \eW$, which corresponds to a (non-extended) alcove $\bb'$ shaded dark blue, Again by \eqref{eq:avgFormula}, we have $\nu_{b'} = 3\varpi_2$. In particular, $b$ and $b'$ have the same Newton points.  However, since $\mu \in R^\vee$, then $\omega_{b'} = 0$, which is not equal to the image of $\nu_{b'}$ in $Q^\vee/R^\vee$.  Therefore, $\nu_{b'}=3\varpi_2$ is non-integral in this case. Figure \ref{fig:C2StdReps} is discussed in more detail later in Example \ref{ex:C2stdreps}.
\end{example}

We continue by reviewing the geometric method for associating a standard parabolic subgroup to a Newton point; see \cite[Sec.~7]{GHKRadlvs} for equivalent algebraic interpretations. Recall that the dominant Weyl chamber decomposes as $\Cf = \coprod \mathfrak{a}^+_P$, where $P$ ranges over the standard parabolic subgroups. 

\begin{definition}\label{defn:assocParabolic} Given any $[b] \in B(G)$, denote the associated Newton point by $\nu = \nu_G(b)$.  
The \emph{parabolic subgroup associated to $\nu$} is defined to be the unique standard parabolic subgroup $P_\nu$ such that $\nu \in \mathfrak{a}^+_P$. Equivalently, $P_{\nu} = MN$ is defined by the criterion 
    \begin{equation}\label{eq:PnuRoots}
        \alpha_i \in \Delta_M  \iff  \nu \in H_{\alpha_i}.
    \end{equation}
\end{definition}

We now mention several important special cases which arise elsewhere in the literature; compare \cite[Ex.~11.3.6]{GHKRadlvs}, for example.

\begin{example}\label{ex:Pnu}
Let $[b] \in B(G)$, and denote by $\nu = \nu_G(b)$ the associated Newton point.  
 \begin{enumerate}
     \item $P_{\nu} = B$ if and only if $\nu$ is regular. 
 \item $P_\nu = G$ if and only if $\nu$ in the $\omega$-sheet projects onto the origin in the 0-sheet.
 \end{enumerate}
\end{example}

\subsection{Standard representatives for Newton points}\label{sec:StandardReps-General}

Denote by $B(G)_P$ the subset of $B(G)$ consisting of all elements $[b]$ such that $P_{\nu_b}$ equals $P$, and note that $B(G) = \coprod B(G)_P$, where $P$ runs over the set of all standard parabolic subgroups. In this section, we review the fact that the elements of $B(G)_P$ can be represented by certain convenient elements in the corresponding Levi subgroup $M(F)$. 

An element $b \in G(F)$ is called \emph{basic} if its image under the Newton map $\nu_b$ factors through the center $Z(G)$ of $G$.  Fix a standard parabolic subgroup $P$ with Levi decomposition $P = MN$.  For any basic element $[b_m] \in B(M)$, the image under the Newton map with respect to $M$ satisfies $\nu_M(b_m) \in \mathfrak{a}_P$, although it may not additionally satisfy $\nu_M(b_m) \in \mathfrak{a}_P^+$.  Denote by $B(M)_{\bas}^+$ the subset of basic elements $[b_m] \in B(M)$ such that $\nu_M(b_m) \in \mathfrak{a}_P^+$.  Then by \cite[Sec.~5]{KotIsoII}, there is a bijection $B(M)_{\bas}^+ \longleftrightarrow B(G)_P.$ 

By \cite[Lemma 7.2.1]{GHKRadlvs}, the elements in $\Omega_M \subset M(F)$ are $M$-basic, and furthermore, the canonical map $\Omega_M \rightarrow B(M)_{\bas}$ is a bijection. Denote by $\Omega_M^+$ the subset of $\Omega_M$ which is in bijection with $B(M)_{\bas}^+$ under this map, so that   
\begin{equation}\label{eq:StdRepMaps}
    \Omega_M^+ \longleftrightarrow B(M)_{\bas}^+ \longleftrightarrow B(G)_P.
\end{equation}
The following is a slight reformulation of \cite[Def.~7.2.3]{GHKRadlvs}.

\begin{definition}\label{defn:StdRep} Given any $[b] \in B(G)_P \subset B(G)$ associated to the Newton point $\nu = \nu_G(b)$, the element $b_{\nu} \in \Omega_M^+$ obtained from the bijections in \eqref{eq:StdRepMaps} is called the \emph{standard representative of $[b]$}.
\end{definition}

The next lemma simply rephrases the criteria characterizing the standard representative from the bijections in \eqref{eq:StdRepMaps} as a more convenient checklist, although we include a proof for the sake of completeness.

\begin{lemma}\label{lem:stdRepList}
Let $[b] \in B(G)_P$, and denote by $(\nu_b, \omega_b)$ the associated Newton and Kottwitz points. The element $b_\nu \in \eW$ is the standard representative of $[b]$ if and only if the following three conditions hold:
\begin{enumerate}
    \item $\kappa_G(b_\nu)=\omega_b$,
    \item $b_\nu \in \Omega_M$, and
    \item $\nu_M(b_\nu)=\nu_b$.
\end{enumerate}
In particular, these conditions imply that $b_\nu \in [b]$.
\end{lemma}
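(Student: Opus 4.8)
The plan is to prove Lemma~\ref{lem:stdRepList} by showing that conditions (1)--(3) exactly characterize the image of $[b]$ under the composite bijection in \eqref{eq:StdRepMaps}, and then to deduce $b_\nu \in [b]$ from the fact that the rightmost bijection in \eqref{eq:StdRepMaps} is compatible with the inclusion $M(F)\hookrightarrow G(F)$. First I would recall that the standard representative $b_\nu$ is, by Definition~\ref{defn:StdRep}, the unique element of $\Omega_M^+$ mapping to $[b]$ under $\Omega_M^+ \longleftrightarrow B(M)_{\bas}^+ \longleftrightarrow B(G)_P$. Since $\Omega_M^+ \subset \Omega_M$, any such element automatically satisfies condition (2). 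Conversely, an element $x\in\eW$ lying in $\Omega_M$ is $M$-basic by \cite[Lem.~7.2.1]{GHKRadlvs}, hence defines a class $[x]_M \in B(M)_{\bas}$; so (2) is precisely the condition that $x$ lies in the domain $\Omega_M$ of the left bijection.

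Next I would pin down which element of $\Omega_M$ it is, using (1) and (3). The element $[x]_M \in B(M)_{\bas}$ is determined by the pair of invariants $(\nu_M(x), \kappa_M(x))$ via the injection in \eqref{eq:NewtonKottMap} applied to $M$; but on $\Omega_M$, which is in bijection with $\Lambda_M$ via $\kappa_M$ (again \cite[Lem.~7.2.1]{GHKRadlvs}), the class is already determined by $\kappa_M(x)$ alone. Condition (3) says $\nu_M(x)=\nu_b$, which forces $[x]_M$ to be the unique basic class with Newton point $\nu_b$; since $\nu_b \in \mathfrak{a}_P^+$ by hypothesis that $[b]\in B(G)_P$, this class lies in $B(M)_{\bas}^+$ and hence $x\in\Omega_M^+$. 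Then $x$ and $b_\nu$ are two elements of $\Omega_M^+$ mapping to the same class in $B(M)_{\bas}^+$, namely the one corresponding to $[b]$; I should check this correspondence is pinned down by (1). Here I would use that the bijection $B(M)_{\bas}^+\longleftrightarrow B(G)_P$ from \cite[Sec.~5]{KotIsoII} sends $[x]_M$ to the $G$-class obtained by $\sigma$-conjugating in $G(F)$, and that this $G$-class has Kottwitz point $\kappa_G(x)$ (the Kottwitz homomorphism is functorial and $\sigma$-conjugation invariant). So condition (1), $\kappa_G(x)=\omega_b=\kappa_G(b)$, together with the agreement of Newton points (which holds since both have the same $M$-Newton point $\nu_b$, and the $G$-Newton point of a basic $M$-element is computed by dominating its $M$-Newton point, giving $\nu_b$ again as $\nu_b\in\mathfrak a_P^+$), forces the $G$-class of $x$ to be $[b]$ by injectivity of \eqref{eq:NewtonKottMap}. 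Hence $x=b_\nu$, establishing both implications.

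For the final sentence, "these conditions imply that $b_\nu \in [b]$": since $b_\nu\in\Omega_M\subset M(F)\subset G(F)$ and the bijection $B(M)_{\bas}^+\longleftrightarrow B(G)_P$ is induced by the inclusion $M(F)\hookrightarrow G(F)$, the image of $[b_\nu]_M$ in $B(G)$ is precisely the $\sigma$-conjugacy class of $b_\nu$ regarded as an element of $G(F)$; and by construction this image is $[b]$, so $b_\nu\in[b]$.

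The main obstacle I anticipate is bookkeeping the precise relationship between the $M$- and $G$-invariants: one must carefully justify that the Newton point of a basic element $[b_m]\in B(M)$ with $\nu_M(b_m)\in\mathfrak a_P^+$ equals $\nu_M(b_m)$ when computed in $G$ (i.e.\ no further dominance correction occurs because $\mathfrak a_P^+\subset\Cf$), and that $\kappa_G$ restricted to $M(F)$ composed with the natural map $\Lambda_M\to\Lambda_G$ is compatible with everything. These are all standard facts from \cite{KotIsoII} and \cite[Sec.~7]{GHKRadlvs}, but assembling them cleanly—rather than any hard computation—is where the care is needed; the proof itself is essentially a diagram-chase through \eqref{eq:StdRepMaps} and \eqref{eq:NewtonKottMap}.
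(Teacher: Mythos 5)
Your proposal is correct and takes essentially the same route as the paper's proof: both verify conditions (1)--(3) against the chain of bijections in \eqref{eq:StdRepMaps}, appeal to the injectivity of the Newton--Kottwitz map \eqref{eq:NewtonKottMap}, and hinge on the observation that $\nu_M(b_\nu)$ is already $G$-dominant (since it lies in $\mathfrak{a}_P^+\subset\Cf$), so no further dominance correction is needed when passing from $\nu_M$ to $\nu_G$. You spend more ink on the $\Omega_M \leftrightarrow \Lambda_M$ identification and on functoriality of $\kappa$, where the paper simply computes both directions in a few lines, but this is verbosity rather than a different argument. One small self-correcting slip worth noting: you first assert that condition (3) forces $[x]_M$ to be ``the unique basic class with Newton point $\nu_b$,'' which is false in general (several classes in $B(M)_{\bas}$ can share a Newton point, differing by $\kappa_M$), but you immediately flag and repair this by bringing in condition (1) to pin down the $G$-class via injectivity of \eqref{eq:NewtonKottMap}. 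You could tighten the exposition by dropping the intermediate uniqueness claim entirely and going straight to: (2) puts $x\in\Omega_M$, (3) puts $\nu_M(x)\in\mathfrak{a}_P^+$ so $x\in\Omega_M^+$ with $\nu_G(x)=\nu_M(x)=\nu_b$, and then (1) plus injectivity of \eqref{eq:NewtonKottMap} gives $[x]_G=[b]$ and hence $x=b_\nu$. The forward direction for (1) and (3) is only sketched in your final paragraph, but the ingredients ($b_\nu\in[b]$ plus $G$-dominance of $\nu_M(b_\nu)$) are all present.
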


\begin{proof}
If $b_\nu$ is the standard representative of $[b]$, then in particular $b_\nu \in [b]$.  By \eqref{eq:NewtonKottMap}, we thus have $\nu_G(b_\nu) = \nu_b$ and $\kappa_G(b_\nu) = \omega_b$.  In particular, condition (1) holds.
Condition (2) is necessary by \eqref{eq:StdRepMaps}. In addition, $\nu_M(b_\nu) \in \mathfrak{a}_P^+$ is already $G$-dominant, and so $\nu_b = \nu_G(b_\nu) = \nu_M(b_\nu)$, which means that (3) holds as well.

Conversely, assume that the three above conditions hold.  Condition (2) guarantees that $b_\nu \in \Omega_M$ is an $M$-basic element. Condition (3) then implies that $\nu_M(b_\nu)=\nu_b$ is already $G$-dominant, and so $\nu_M(b_\nu) = \nu_G(b_\nu) = \nu_b$.  Together with condition (1), we thus have by \eqref{eq:NewtonKottMap} that $b_\nu \in [b]$.  Therefore, $[b_\nu] \in B(M)_{\bas}^+$ represents $[b]\in B(G)_P$, as required for $b_\nu$ to be the standard representative.
\end{proof}

We now record the standard representatives for $\sigma$-conjugacy classes having one of the two associated parabolic subgroups studied in Example \ref{ex:Pnu}; compare \cite[Ex.~11.3.6]{GHKRadlvs}.

\begin{example}\label{ex:stdReps} Let $[b] \in B(G)_P$.  Denote by $\nu = \nu_G(b)$ the Newton point, and let $b_\nu$ denote the standard representative for $[b]$. 
\begin{enumerate}
\item If $P = B$, then $M=T\cong (\mathbb{G}_m)^n$ and $b_\nu$ is the translation $t^\nu=(t^{\varpi_1})^{\nu_1}\cdots (t^{\varpi_n})^{\nu_n} \in \Omega_M$, where $\nu = \sum \nu_i \varpi_i \mapsto (\nu_1, \dots, \nu_n)$ under the isomorphism $X_*(T) \cong \Z^n$.
\item If $P = G$, then $b_\nu = \id$ if $\nu=0$; otherwise $\nu=\nu_G(\omega)$ where $\omega = t^\eta v \in \Omega_G$ is such that $\eta \in Q^\vee$ projects onto a vertex of the base alcove $\fa_0$, and $b_\nu = \omega$.
\end{enumerate}
\end{example}

More generally, given $[b] \in B(G)_P$ with Newton point $\nu$, we shall typically express the standard representative as $b_\nu = t^\eta v \in \eW$, where $\eta \in Q^\vee$ and $v \in \sW_M$.   As discussed in \cite[Sec.~7]{GHKRadlvs}, the coweight $\eta$ is both $M$-dominant and $M$-minuscule, meaning that $\langle \alpha_i, \eta \rangle \in \{0,1\}$ for all $\alpha_i \in \Delta_M$.  The colored alcoves in Figure \ref{fig:C2StdReps} depict the standard representatives in type $\tilde{C}_2$, which are discussed in more detail in Example \ref{ex:C2stdreps} after we have explicit formulas.

\subsection{Affine Deligne--Lusztig varieties}\label{sec:ADLVs}

This section provides a brief review of the definition of affine Deligne--Lusztig varieties in the affine flag variety, together with the key results from \cite{GHKRadlvs} upon which our work builds.

Given any $x \in \eW$ and $b \in G(F)$, the \emph{affine Deligne--Lusztig variety} associated to this pair of elements is defined as 
\[ X_x(b) = \{ g \in G(F)/I \mid g^{-1}b\sigma(g) \in IxI \}. \]
In contrast to their classical counterparts, affine Deligne--Lusztig varieties may be empty, and it remains an open problem to fully characterize the precise conditions which determine nonemptiness.  

In \cite{GHKRadlvs}, both the nonemptiness problem and dimension calculations are rephrased in terms of intersections of $I$ and conjugates of $I_P$-orbits. Below we provide a slight reformulation of the main results from \cite[Sec.~11]{GHKRadlvs}, which we shall reinterpret later in Section \ref{sec:alcoveWalks}. We include a short proof for the sake of completeness, since the statements do not appear there in precisely this form.

\begin{thm}[\cite{GHKRadlvs}]\label{thm:GHKRThm}
Let $[b] \in B(G)_P$, and denote the corresponding Newton point by $\nu$ and the standard representative by $b_\nu$. For any $x \in \eW$, we have
\begin{enumerate}
\item $X_x(b) \neq \emptyset$ if and only if there exists $y \in \eW$ such that $Ix\fa_0 \cap I_P^y b_\nu^y \fa_0 \neq \emptyset$.
\item If $X_x(b) \neq \emptyset$, then
\begin{equation*}
\dim X_x(b) = \sup\limits_{y \in \eW} \left(\dim\left( Ix\fa_0 \cap I_P^y b_\nu^y \fa_0 \right)\right)-\langle 2\rho, \nu\rangle.
\end{equation*}
\end{enumerate}
\end{thm}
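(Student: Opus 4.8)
The plan is to deduce Theorem~\ref{thm:GHKRThm} from the main results of \cite[Sec.~11]{GHKRadlvs}, after first reducing to the standard representative and then reconciling the indexing. The reduction is the following. By Lemma~\ref{lem:stdRepList} the standard representative satisfies $b_\nu\in[b]$, so $b_\nu=g^{-1}b\sigma(g)$ for some $g\in G(F)$. I would check that left translation $hI\mapsto g^{-1}hI$ defines an isomorphism $X_x(b)\to X_x(b_\nu)$: writing $h'=g^{-1}h$ one has $h'^{-1}b_\nu\sigma(h')=h^{-1}\bigl(gb_\nu\sigma(g)^{-1}\bigr)\sigma(h)=h^{-1}b\sigma(h)$, so $h'^{-1}b_\nu\sigma(h')\in IxI$ exactly when $h^{-1}b\sigma(h)\in IxI$. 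This is an isomorphism of ind-schemes, hence preserves dimension, and it reduces both assertions to the case where $b$ is the element $b_\nu\in\eW$.

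Next I would invoke the GHKR analysis. Since $b_\nu\in\Omega_M\subset M(F)$ is basic in $M$, the results of \cite[Sec.~11]{GHKRadlvs} --- via the action of the $\sigma$-centralizer $J_{b_\nu}$ together with the stratification of $G(F)/I$ by $I_P$-orbits --- present $X_x(b_\nu)\subset G(F)/I$ as a disjoint union, indexed by $y\in\eW$, of pieces, the piece attached to $y$ being identified, after a uniform dimension shift by the constant $\langle 2\rho,\nu\rangle$, with the orbit intersection $Ix\fa_0\cap I_P^y b_\nu^y\fa_0$ inside $G(F)/I$. Granting this presentation, part~(1) is immediate, since $X_x(b_\nu)$ is nonempty precisely when one of its pieces is, i.e.\ when $Ix\fa_0\cap I_P^y b_\nu^y\fa_0\neq\emptyset$ for some $y\in\eW$; and part~(2) follows by taking the supremum over $y$ of the dimensions of the pieces and subtracting $\langle 2\rho,\nu\rangle$. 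The shift is the expected one: since $\nu\in\mathfrak{a}_P^+$ we have $\langle\alpha,\nu\rangle=0$ for every root $\alpha$ of $M$, so $\langle 2\rho,\nu\rangle=\langle 2\rho_N,\nu\rangle$, where $2\rho_N$ denotes the sum of the roots of $T$ on $\mathrm{Lie}(N)$; this is precisely the normalization occurring in \cite[Sec.~11]{GHKRadlvs}.

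The only genuine work --- and the sole obstacle, though it is purely bookkeeping --- is matching the indexing set and the dimension normalization in \cite{GHKRadlvs} with the clean statement above. Distinct elements $y\in\eW$ may give rise to the same piece, reflecting the $W_M$-ambiguity in the $I_P$-orbit stratification as well as the stabilizer of $b_\nu^y$ in the relevant group, so one must verify that enlarging the index set to all of $\eW$ changes neither the nonemptiness criterion in~(1) nor the value of the supremum in~(2). Once the precise statement of \cite[Sec.~11]{GHKRadlvs} is extracted, this check is routine and introduces no new geometric input; this is exactly why we only sketch it here for completeness.
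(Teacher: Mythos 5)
Your proposal is correct and follows essentially the same route as the paper: reduce to $b_\nu\in[b]$ via the isomorphism $X_x(b)\cong X_x(b_\nu)$, then quote the stratification of $X_x(b_\nu)$ by $I_P$-orbits and \cite[Eq.~(11.3.1), Thm.~11.3.1]{GHKRadlvs}, simplifying the dimension shift $\langle\rho,\nu_G(b_\nu)+\nu_M(b_\nu)\rangle$ to $\langle 2\rho,\nu\rangle$ via Lemma~\ref{lem:stdRepList}(3). The ``bookkeeping'' you flag at the end is actually a non-issue: since both assertions are existential/supremum statements over $\eW$ rather than a bijectively indexed disjoint union, redundancy in the index set is harmless, and the paper dispatches the matter by the simple substitution $z=y^{-1}$.
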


\begin{proof}
 Since $[b] \in B(G)_P$, we have by \cite[Eq.~(11.3.1)]{GHKRadlvs} that 
\begin{equation*}
    \dim X_x(b_\nu) = \sup\limits_{z \in \eW} \left(\dim \left( X_x(b_\nu)\cap I_P z\fa_0 \right) \right).
\end{equation*}
Next, \cite[Thm.~11.3.1]{GHKRadlvs} says that the dimensions on the right-hand side equal
\begin{equation*}
\dim(X_x(b_\nu) \cap I_Pz\fa_0) = \dim\left( Ix\fa_0 \cap I_P^{z^{-1}} b_\nu^{z^{-1}} \fa_0 \right)-\langle \rho, \nu_G(b_\nu)+\nu_M(b_\nu)\rangle.
\end{equation*}
By property (3) of Lemma \ref{lem:stdRepList}, since $b_\nu$ is the standard representative for $[b]$, then in fact $\nu_M(b_\nu) = \nu_G(b_\nu)=\nu$. Since $b_\nu \in [b]$, then $X_x(b) \cong X_x(b_\nu)$, in which case $\dim X_x(b) = \dim X_x(b_\nu)$.  Finally, note that the value $\langle 2\rho, \nu \rangle$ is independent of $z \in \eW$. The result now follows by replacing $z=y^{-1}$.
\end{proof}

\subsection{Galleries and chimneys}\label{sec:galleriesChimneys}

We direct the reader to~\cite{MST1} and to our more recent work~\cite{MNST} with Naqvi for all background concerning combinatorial galleries, labeled folded alcove walks, and chimneys. In this section, we briefly recall only those concepts which play a key role in this paper.

Every choice of a sub-root system, and hence choice of a standard spherical parabolic subgroup $P$, determines a $P$-\emph{chimney} in the $0$-sheet as defined in \cite[Def.~3.1]{MNST}. While formally a collection of half-apartments in the $0$-sheet, chimneys are represented by \emph{$P$-sectors}, which are regions of $\cA$ lying between pairs of adjacent hyperplanes for all positive roots in the defining sub-root system associated to $P$, and which go off to infinity in the other root directions. An important special case occurs when the sub-root system is the empty set and $P=B$, in which case chimneys are maximal simplices in the spherical building at infinity, represented by parallel sectors in $\cA$; equivalently, by Weyl chambers. See \cite[Rmk.~3.2]{MNST} for more background on the geometric interpretation.

\begin{figure}[ht]
\centering
\begin{overpic}[width=0.5\textwidth]{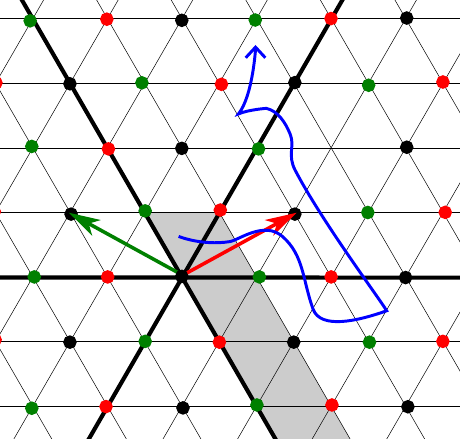}
\put(59,65){\color{blue}{$\gamma$}}
%horizontal height -2
\put(14,8){\footnotesize{\color{black}$+$}}
\put(14,5){\footnotesize{\color{black}$-$}}
\put(31,8){\footnotesize{\color{black}$+$}}
\put(31,5){\footnotesize{\color{black}$-$}}
\put(46,8){\footnotesize{\color{black}$+$}}
\put(46,5){\footnotesize{\color{black}$-$}}
\put(62,8){\footnotesize{\color{black}$+$}}
\put(62,5){\footnotesize{\color{black}$-$}}
\put(79,8){\footnotesize{\color{black}$+$}}
\put(79,5){\footnotesize{\color{black}$-$}}
\put(95,8){\footnotesize{\color{black}$+$}}
\put(95,5){\footnotesize{\color{black}$-$}}
%horizontal height 0
\put(14,36){\footnotesize{\color{black}$+$}}
\put(14,32){\footnotesize{\color{black}$-$}}
\put(30,36){\footnotesize{\color{black}$+$}}
\put(30,32){\footnotesize{\color{black}$-$}}
\put(46,36){\footnotesize{\color{black}$+$}}
\put(46,32){\footnotesize{\color{black}$-$}}
\put(62,36){\footnotesize{\color{black}$+$}}
\put(62,32){\footnotesize{\color{black}$-$}}
\put(80,36){\footnotesize{\color{black}$+$}}
\put(80,32){\footnotesize{\color{black}$-$}}
\put(95,36){\footnotesize{\color{black}$+$}}
\put(95,32){\footnotesize{\color{black}$-$}}
%horizontal height 2
\put(14,64){\footnotesize{\color{black}$+$}}
\put(14,61){\footnotesize{\color{black}$-$}}
\put(30,64){\footnotesize{\color{black}$+$}}
\put(30,61){\footnotesize{\color{black}$-$}}
\put(46,64){\footnotesize{\color{black}$+$}}
\put(46,61){\footnotesize{\color{black}$-$}}
\put(64,64){\footnotesize{\color{black}$+$}}
\put(64,61){\footnotesize{\color{black}$-$}}
\put(79,64){\footnotesize{\color{black}$+$}}
\put(79,61){\footnotesize{\color{black}$-$}}
\put(95,64){\footnotesize{\color{black}$+$}}
\put(95,61){\footnotesize{\color{black}$-$}}
%horizontal height 4
\put(14,92){\footnotesize{\color{black}$+$}}
\put(14,89){\footnotesize{\color{black}$-$}}
\put(30,92){\footnotesize{\color{black}$+$}}
\put(30,89){\footnotesize{\color{black}$-$}}
\put(46,92){\footnotesize{\color{black}$+$}}
\put(46,89){\footnotesize{\color{black}$-$}}
\put(64,92){\footnotesize{\color{black}$+$}}
\put(64,89){\footnotesize{\color{black}$-$}}
\put(79,92){\footnotesize{\color{black}$+$}}
\put(79,89){\footnotesize{\color{black}$-$}}
\put(95,92){\footnotesize{\color{black}$+$}}
\put(95,89){\footnotesize{\color{black}$-$}}
%horizontal height -1
\put(5,22){\footnotesize{\color{black}$+$}}
\put(5,19){\footnotesize{\color{black}$-$}}
\put(22,22){\footnotesize{\color{black}$+$}}
\put(22,19){\footnotesize{\color{black}$-$}}
\put(38,22){\footnotesize{\color{black}$+$}}
\put(38,19){\footnotesize{\color{black}$-$}}
\put(55,22){\footnotesize{\color{black}$+$}}
\put(55,19){\footnotesize{\color{black}$-$}}
\put(71,22){\footnotesize{\color{black}$+$}}
\put(71,19){\footnotesize{\color{black}$-$}}
\put(87,22){\footnotesize{\color{black}$+$}}
\put(87,19){\footnotesize{\color{black}$-$}}
%horizontal height 1
\put(5,50){\footnotesize{\color{black}$+$}}
\put(5,47){\footnotesize{\color{black}$-$}}
\put(22,50){\footnotesize{\color{black}$+$}}
\put(22,47){\footnotesize{\color{black}$-$}}
\put(38,50){\footnotesize{\color{black}$+$}}
\put(38,47){\footnotesize{\color{black}$-$}}
\put(54,50){\footnotesize{\color{black}$+$}}
\put(54,47){\footnotesize{\color{black}$-$}}
\put(71,50){\footnotesize{\color{black}$+$}}
\put(71,47){\footnotesize{\color{black}$-$}}
\put(87,50){\footnotesize{\color{black}$+$}}
\put(87,47){\footnotesize{\color{black}$-$}}
%horizontal height 3
\put(5,78){\footnotesize{\color{black}$+$}}
\put(5,75){\footnotesize{\color{black}$-$}}
\put(22,78){\footnotesize{\color{black}$+$}}
\put(22,75){\footnotesize{\color{black}$-$}}
\put(38,78){\footnotesize{\color{black}$+$}}
\put(38,75){\footnotesize{\color{black}$-$}}
\put(55,78){\footnotesize{\color{black}$+$}}
\put(55,75){\footnotesize{\color{black}$-$}}
\put(71,78){\footnotesize{\color{black}$+$}}
\put(71,75){\footnotesize{\color{black}$-$}}
\put(87,78){\footnotesize{\color{black}$+$}}
\put(87,75){\footnotesize{\color{black}$-$}}
% alpha_1 band 1
\put(-0.5,13){\footnotesize{\color{red}$+$}}
\put(4,13){\footnotesize{\color{red}$-$}}
\put(16,13){\footnotesize{\color{red}$+$}}
\put(20,13){\footnotesize{\color{red}$-$}}
\put(32,13){\footnotesize{\color{red}$+$}}
\put(36,13){\footnotesize{\color{red}$-$}}
\put(48,13){\footnotesize{\color{red}$+$}}
\put(53,13){\footnotesize{\color{red}$-$}}
\put(64,13){\footnotesize{\color{red}$-$}}
\put(69,13){\footnotesize{\color{red}$+$}}
\put(80,13){\footnotesize{\color{red}$-$}}
\put(85,13){\footnotesize{\color{red}$+$}}
% alpha_1 band 3
\put(-0.5,41){\footnotesize{\color{red}$+$}}
\put(4,41){\footnotesize{\color{red}$-$}}
\put(16,41){\footnotesize{\color{red}$+$}}
\put(20,41){\footnotesize{\color{red}$-$}}
\put(32,41){\footnotesize{\color{red}$+$}}
\put(36,41){\footnotesize{\color{red}$-$}}
\put(48,41){\footnotesize{\color{red}$-$}}
\put(53,41){\footnotesize{\color{red}$+$}}
\put(64,41){\footnotesize{\color{red}$-$}}
\put(69,41){\footnotesize{\color{red}$+$}}
\put(80,41){\footnotesize{\color{red}$-$}}
\put(85,41){\footnotesize{\color{red}$+$}}
% alpha_1 band 5
\put(-0.5,69){\footnotesize{\color{red}$+$}}
\put(4,69){\footnotesize{\color{red}$-$}}
\put(16,69){\footnotesize{\color{red}$+$}}
\put(20,69){\footnotesize{\color{red}$-$}}
\put(32,69){\footnotesize{\color{red}$-$}}
\put(36,69){\footnotesize{\color{red}$+$}}
\put(48,69){\footnotesize{\color{red}$-$}}
\put(53,69){\footnotesize{\color{red}$+$}}
\put(64,69){\footnotesize{\color{red}$-$}}
\put(69,69){\footnotesize{\color{red}$+$}}
\put(80,69){\footnotesize{\color{red}$-$}}
\put(85,69){\footnotesize{\color{red}$+$}}
% alpha_1 band 2
\put(8,27){\footnotesize{\color{red}$+$}}
\put(12,27){\footnotesize{\color{red}$-$}}
\put(24,27){\footnotesize{\color{red}$+$}}
\put(28,27){\footnotesize{\color{red}$-$}}
\put(40,27){\footnotesize{\color{red}$+$}}
\put(44,27){\footnotesize{\color{red}$-$}}
\put(56,27){\footnotesize{\color{red}$-$}}
\put(61,27){\footnotesize{\color{red}$+$}}
\put(72,27){\footnotesize{\color{red}$-$}}
\put(76,27){\footnotesize{\color{red}$+$}}
\put(88,27){\footnotesize{\color{red}$-$}}
\put(93,27){\footnotesize{\color{red}$+$}}
% alpha_1 band 4
\put(8,55){\footnotesize{\color{red}$+$}}
\put(12,55){\footnotesize{\color{red}$-$}}
\put(24,55){\footnotesize{\color{red}$+$}}
\put(28,55){\footnotesize{\color{red}$-$}}
\put(40,55){\footnotesize{\color{red}$-$}}
\put(44,55){\footnotesize{\color{red}$+$}}
\put(56,55){\footnotesize{\color{red}$-$}}
\put(61,55){\footnotesize{\color{red}$+$}}
\put(72,55){\footnotesize{\color{red}$-$}}
\put(76,55){\footnotesize{\color{red}$+$}}
\put(88,55){\footnotesize{\color{red}$-$}}
\put(93,55){\footnotesize{\color{red}$+$}}
% alpha_1 band 6
\put(8,83){\footnotesize{\color{red}$+$}}
\put(12,83){\footnotesize{\color{red}$-$}}
\put(24,83){\footnotesize{\color{red}$-$}}
\put(28,83){\footnotesize{\color{red}$+$}}
\put(40,83){\footnotesize{\color{red}$-$}}
\put(44,83){\footnotesize{\color{red}$+$}}
\put(56,83){\footnotesize{\color{red}$-$}}
\put(61,83){\footnotesize{\color{red}$+$}}
\put(72,83){\footnotesize{\color{red}$-$}}
\put(76,83){\footnotesize{\color{red}$+$}}
\put(88,83){\footnotesize{\color{red}$-$}}
\put(93,83){\footnotesize{\color{red}$+$}}
% alpha_2 band 1
\put(8,13){\footnotesize{\color{DarkGreen}$+$}}
\put(11.5,13){\footnotesize{\color{DarkGreen}$-$}}
\put(24,13){\footnotesize{\color{DarkGreen}$+$}}
\put(28,13){\footnotesize{\color{DarkGreen}$-$}}
\put(40,13){\footnotesize{\color{DarkGreen}$+$}}
\put(44,13){\footnotesize{\color{DarkGreen}$-$}}
\put(56,13){\footnotesize{\color{DarkGreen}$+$}}
\put(60,13){\footnotesize{\color{DarkGreen}$-$}}
\put(73,13){\footnotesize{\color{DarkGreen}$+$}}
\put(76,13){\footnotesize{\color{DarkGreen}$-$}}
\put(89,13){\footnotesize{\color{DarkGreen}$+$}}
\put(93,13){\footnotesize{\color{DarkGreen}$-$}}
% alpha_2 band 3
\put(8,41){\footnotesize{\color{DarkGreen}$+$}}
\put(11.5,41){\footnotesize{\color{DarkGreen}$-$}}
\put(24,41){\footnotesize{\color{DarkGreen}$+$}}
\put(28,41){\footnotesize{\color{DarkGreen}$-$}}
\put(40,41){\footnotesize{\color{DarkGreen}$+$}}
\put(44,41){\footnotesize{\color{DarkGreen}$-$}}
\put(56,41){\footnotesize{\color{DarkGreen}$+$}}
\put(60,41){\footnotesize{\color{DarkGreen}$-$}}
\put(73,41){\footnotesize{\color{DarkGreen}$+$}}
\put(76,41){\footnotesize{\color{DarkGreen}$-$}}
\put(89,41){\footnotesize{\color{DarkGreen}$+$}}
\put(93,41){\footnotesize{\color{DarkGreen}$-$}}
% alpha_2 band 5
\put(8,69){\footnotesize{\color{DarkGreen}$+$}}
\put(11.5,69){\footnotesize{\color{DarkGreen}$-$}}
\put(24,69){\footnotesize{\color{DarkGreen}$+$}}
\put(28,69){\footnotesize{\color{DarkGreen}$-$}}
\put(40,69){\footnotesize{\color{DarkGreen}$+$}}
\put(44,69){\footnotesize{\color{DarkGreen}$-$}}
\put(56,69){\footnotesize{\color{DarkGreen}$+$}}
\put(60,69){\footnotesize{\color{DarkGreen}$-$}}
\put(73,69){\footnotesize{\color{DarkGreen}$+$}}
\put(76,69){\footnotesize{\color{DarkGreen}$-$}}
\put(89,69){\footnotesize{\color{DarkGreen}$+$}}
\put(93,69){\footnotesize{\color{DarkGreen}$-$}}
% alpha_2 band 2
\put(-0.5,27){\footnotesize{\color{DarkGreen}$+$}}
\put(4,27){\footnotesize{\color{DarkGreen}$-$}}
\put(16,27){\footnotesize{\color{DarkGreen}$+$}}
\put(20,27){\footnotesize{\color{DarkGreen}$-$}}
\put(32,27){\footnotesize{\color{DarkGreen}$+$}}
\put(36,27){\footnotesize{\color{DarkGreen}$-$}}
\put(48,27){\footnotesize{\color{DarkGreen}$+$}}
\put(52,27){\footnotesize{\color{DarkGreen}$-$}}
\put(64,27){\footnotesize{\color{DarkGreen}$+$}}
\put(68,27){\footnotesize{\color{DarkGreen}$-$}}
\put(80,27){\footnotesize{\color{DarkGreen}$+$}}
\put(84,27){\footnotesize{\color{DarkGreen}$-$}}
% alpha_2 band 4
\put(-0.5,55){\footnotesize{\color{DarkGreen}$+$}}
\put(4,55){\footnotesize{\color{DarkGreen}$-$}}
\put(16,55){\footnotesize{\color{DarkGreen}$+$}}
\put(20,55){\footnotesize{\color{DarkGreen}$-$}}
\put(32,55){\footnotesize{\color{DarkGreen}$+$}}
\put(36,55){\footnotesize{\color{DarkGreen}$-$}}
\put(48,55){\footnotesize{\color{DarkGreen}$+$}}
\put(52,55){\footnotesize{\color{DarkGreen}$-$}}
\put(64,55){\footnotesize{\color{DarkGreen}$+$}}
\put(68,55){\footnotesize{\color{DarkGreen}$-$}}
\put(80,55){\footnotesize{\color{DarkGreen}$+$}}
\put(84,55){\footnotesize{\color{DarkGreen}$-$}}
% alpha_2 band 6
\put(-0.5,83){\footnotesize{\color{DarkGreen}$+$}}
\put(4,83){\footnotesize{\color{DarkGreen}$-$}}
\put(16,83){\footnotesize{\color{DarkGreen}$+$}}
\put(20,83){\footnotesize{\color{DarkGreen}$-$}}
\put(32,83){\footnotesize{\color{DarkGreen}$+$}}
\put(36,83){\footnotesize{\color{DarkGreen}$-$}}
\put(48,83){\footnotesize{\color{DarkGreen}$+$}}
\put(52,83){\footnotesize{\color{DarkGreen}$-$}}
\put(64,83){\footnotesize{\color{DarkGreen}$+$}}
\put(68,83){\footnotesize{\color{DarkGreen}$-$}}
\put(80,83){\footnotesize{\color{DarkGreen}$+$}}
\put(84,83){\footnotesize{\color{DarkGreen}$-$}}
\end{overpic}
\caption{For $i = 1$ and $2$, the coroot $\alpha_i^\vee$ and the orientation induced by the $P_1$-chimney on $\alpha_i$-hyperplanes are shown in red and green, respectively.  The orientation induced on $(\alpha_1^\vee + \alpha_2^\vee)$-hyperplanes is shown in black.}
\label{fig:chimneyOrientations}
\end{figure}

Recall that the $\omega$-sheets overlay each other such that the fundamental alcoves $\fa_\omega$ and the Weyl chambers coincide; see Figure \ref{fig:chimneyOrientations} for an illustration for $G=PGL_3$, where an extended alcove is pictured as a triangle together with a choice of a colored vertex. In particular, base alcoves $\fa_\omega$ and $\fa_{\omega'}$ for $\omega\neq\omega'$ both correspond to the same grey triangle, but are based at lattice elements of two different colors. We say that an extended alcove (or panel) \emph{sits above} or \emph{below} another extended alcove (or panel) if they project to the same alcove (or panel) in the 0-sheet. In Figure~\ref{fig:chimneyOrientations}, such alcoves (or panels) then correspond to the same simplex with a different base vertex. As for alcoves and panels, we may speak of walls, half-apartments, chimneys, and sectors as \emph{sitting above} or \emph{below} one another.

The affine Weyl group $\aW$ admits a natural action on the set of chimneys and on the set of sectors, which is induced by the action of $\aW$ on the set of half-apartments; see \cite[Def.~3.7]{MNST} for a formal definition. We extend this action to a left-action of $\eW$ as follows.  The image of a $P$-chimney under $\omega\in\Omega_G$ is the collection of half-apartments in the $\omega$-sheet which projects to the given $P$-chimney in the $0$-sheet; i.e.\ the $\omega$-image of the $P$-chimney is the chimney in the $\omega$-sheet sitting above it. For any given $y=z\omega \in\eW \cong \aW \rtimes \Omega_G$, the \emph{$(P,y)$-chimney} is defined to be the $y$-image of the $P$-chimney under this action; equivalently, the $z$-image inside the $\omega$-sheet of the chimney sitting above the $P$-chimney.

\begin{figure}[ht]
\centering
\begin{overpic}[width=0.5\textwidth]{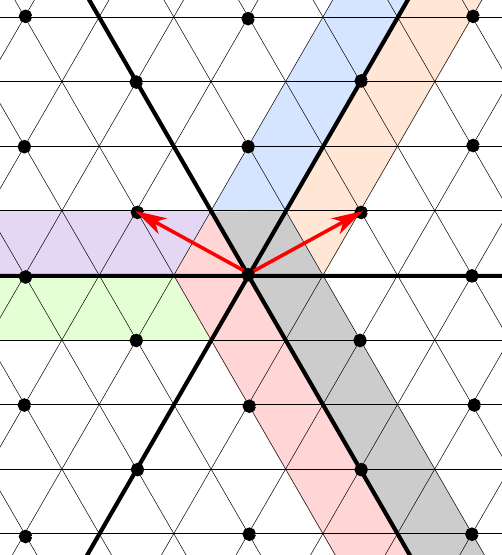}
\put(30,53){\color{red}$\alpha_2^\vee$}
\put(55,53){\color{red}$\alpha_1^\vee$}
\put(43,56){\footnotesize{\color{black}$\fa_0$}}
\put(36,53){\footnotesize{\color{black}$\s_1$}}
\put(50,53){\footnotesize{\color{black}$\s_2$}}
\put(35,46){\footnotesize{\color{black}$\s_1\s_2$}}
\put(49,46){\footnotesize{\color{black}$\s_2 \s_1$}}
\put(43,41){\footnotesize{\color{black}$\w_0$}}
\put(70,33){\footnotesize{\color{black}$(P_1,\id)$}}
\put(35,20){\footnotesize{\color{red}$(P_1,s_1)$}}
\put(11,64){\footnotesize{\color{Purple}$(P_1,s_2)$}}
\put(33,80){\footnotesize{\color{DarkBlue}$(P_1,s_1s_2)$}}
\put(11,35){\footnotesize{\color{DarkGreen}$(P_1,s_2s_1)$}}
\put(70,67){\footnotesize{\color{DarkOrange}$(P_1,w_0)$}}
\end{overpic}
\caption{A sector representing the $(P_1,u)$-chimney, for each $u \in \sW$.}
\label{fig:chimneys}
\end{figure}

We illustrate the action of $\sW$ on a chimney in Figure \ref{fig:chimneys}. For each $u \in \sW$ we have shaded a sector representing the $(P_1,u)$-chimney. More generally, for any $y = t^\mu u \in \eW$ with $\mu \in Q^\vee$ and $u \in \sW$,  a sector representing the $(P_1,y)$-chimney can be obtained by translating by $\mu$ any sector representing the $(P_1,u)$-chimney.  Thus, every $(P_1,y)$-chimney is represented by a sector which is a translate to the $\omega$-sheet of one of the shaded regions in Figure~\ref{fig:chimneys} for some $\omega \in \Omega_G$.

Every chimney in a given $\omega$-sheet induces an orientation on that sheet as defined in \cite[Def.~3.23]{MNST}.  Formally, an orientation is a map that assigns either $+$ or $-$ to every pair of an extended alcove and a panel contained in it. 
Fix a chimney in the $\omega$-sheet, and denote by $\phi$ the orientation it induces. We extend $\phi$ to all other sheets of $\Omega_G \times \mathfrak{a}$ by assigning to a pair of an alcove in the $\omega'$-sheet and a panel contained in it the same value as for the pair sitting above or below it in the $\omega$-sheet. Figure~\ref{fig:chimneyOrientations} shows the orientation of pairs of extended alcoves and panels for all overlaying sheets together in one picture.
The \emph{opposite standard orientation} is the orientation on $\Omega_G \times \mathfrak{a}$ corresponding to $P=B$. For the rest of this section, we fix an orientation $\phi$ on all sheets of $\Omega_G \times \mathfrak{a}$, which is induced by a particular $(P,y)$-chimney.

A \emph{(combinatorial) gallery $\gamma$} is a sequence of (extended) alcoves $c_i$ and panels $p_i$, contained in $c_i$ and $c_{i-1}$. We say that $\gamma$ \emph{is contained in the $\omega$-sheet} if one (and hence all) of its alcoves is in the $\omega$-sheet.  The \emph{type} of $\gamma$ is the sequence in $\tilde S$ given by the types $s_{j_i}\in \tilde S$ of the panels $p_i$ in $\gamma$. More specifically, a minimal gallery $\gamma:\fa\rsa \x$ contained in the $\omega$-sheet has first alcove $\fa = \fa_\omega$ and end alcove $\x=x\fa_0$ for some $x = z\omega \in \eW \cong \aW \rtimes \Omega_G$. Writing the reduced expression $z=s_{i_1}s_{i_2} \cdots s_{i_k}\in \aW$ corresponding to the types in the 0-sheet of the panels sitting below those of $\gamma$, we have $x=s_{i_1}s_{i_2} \cdots s_{i_k}\omega\in \eW$. The type of $\gamma$ is then denoted by $\vec x = (s_{j_1}, s_{j_2}, \dots, s_{j_k})$, where $j_l = \omega(i_l)$ for all $1 \leq l \leq k$. There is thus a natural action of  $\eW \cong \aW \rtimes \Omega_G$ on the set of all combinatorial galleries. The subgroup $\aW$ acts in a type-preserving way on each individual sheet, while $\Omega_G$ permutes the sheets and acts on the type by the induced permutation action on the set of indices $\tilde I$.

A gallery is \emph{folded at $p_i$} if $c_i=c_{i-1}$, and we say that this fold is \emph{positive (with respect to the $(P,y)$-chimney)} if $c_i=c_{i-1}$ and the orientation $\phi$ assigns $+$  to the pair $(c_i, p_i)$. More generally, a gallery $\gamma$ is \emph{positively folded (with respect to the $(P,y)$-chimney)} if all of its folds are positive with respect to $\phi$. If $c_i \neq c_{i-1}$, the gallery has a \emph{crossing at $p_i$}.  A crossing is \emph{positive (with respect to the $(P,y)$-chimney)} if the orientation $\phi$ assigns $+$ to the pair $(c_i, p_i)$. For further details, see Section 2.2 in \cite{MNST}. 
Positively folded galleries may be constructed using PRS-folding sequences, which the authors introduced in \cite[Def.~3.16]{MST1}. A \emph{PRS-folding sequence} is a sequence of galleries where the next entry in the sequence is obtained from the previous gallery by reflecting a shorter and shorter tail at a particular panel.

We further illustrate these concepts for the group $G=PGL_3$. 
In Figure~\ref{fig:chimneyOrientations}, the shaded region is a sector representing the $P_1$-chimney, and we show using $+$ and $-$ signs the orientation induced by this chimney.  For each hyperplane, the $+$ signs are on the side of the hyperplane facing away from the ``point at infinity" corresponding to the chimney.  We also include in this figure an example of a gallery $\gamma$ which is positively folded with respect to the $P_1$-chimney.  Roughly speaking, this means that the blue gallery $\gamma$ is always ``folded away'' from the chimney.  Viewed as a gallery contained in the 0-sheet, the type of $\gamma$ is $(s_2,s_0,s_1,s_0,s_2,s_0,s_1,s_2,s_0,s_1,s_2,s_0,s_1,s_0,s_2,s_0)$. If we view $\gamma$ as a gallery contained in the $\varpi_1$-sheet based at the red vertex of $\fa_{\varpi_1}$, then the corresponding rotation of $\fa_0$ induces a 3-cycle on the set of indices $\tilde I = \{0,1,2\}$, and the type of $\gamma$ is instead $(s_1,s_2,s_0,s_2,s_1,s_2,s_0,s_1,s_2,s_0,s_1,s_2,s_0,s_2,s_1,s_2)$.

%!TEX root = main.tex

\section{Averaging operators and Newton points with rank 1 parabolics}\label{sec:NewtonPointsReps}

Apart from the special cases reviewed in Example \ref{ex:stdReps}, explicit formulas for  standard representatives of $\sigma$-conjugacy classes have only been recorded for the group $G=GL_m$ in matrix form; see \cite[Ex.~7.2.5]{GHKRadlvs}. Our first proposition provides explicit type-free formulas for standard representatives of $\sigma$-conjugacy classes having associated parabolic subgroups with a Levi subgroup of rank 1. In this case, $\Delta_M = \{\alpha_i\}$ for some $i \in [n]$, and we denote the corresponding \emph{parabolic subgroup of rank 1} by $P_i=MN$. The dimension of the corresponding open chamber $\mathfrak{a}_{P_i}^+$ then equals $n-1$.  

The primary purpose of this section is to prove Proposition \ref{prop:stdReps}, which presents formulas for those standard representatives $b_\nu$ such that $P_\nu = P_i$. In Section \ref{sec:averaging}, we provide a characterization of Newton points by means of an averaging projection,  which then allows us to compute explicit formulas for Newton points associated to rank 1 parabolics in Section~\ref{sec:integralNPRank1}. An explicit description of their standard representatives is then given in Section~\ref{sec:stdReps}. 

\subsection{Averaging projection operators}\label{sec:averaging}

The proof of Proposition \ref{prop:stdReps} requires several lemmas which aim to characterize the Newton points associated to rank 1 parabolic subgroups.  For this purpose, we first reformulate the method for calculating the Newton point of an element of the extended affine Weyl group as applying a certain projection operator.

\begin{definition}\label{defn:averaging}
Let $w \in \sW$ be an element of order $m$.  The \emph{averaging projection for $w$} is the linear operator on $\mathfrak{a}$ defined by
\[A_w := \frac{1}{m}\left(I+w+w^2 +\cdots + w^{m-1}\right), \]
where $I$ denotes the identity operator.
\end{definition}

For any $x = t^\lambda w \in \eW$, note by \eqref{eq:avgFormula} that  $\nu_G(x) = (A_w \lambda)^+$, which motivates the study of this family of operators.  We begin with a lemma relating the Newton point of a $\sigma$-conjugacy class to the image of an averaging operator determined by its standard representative.

\begin{lemma}\label{lem:stdRepAvDom} Let $[b] \in B(G)_P$, and denote the corresponding Newton point by $\nu = \nu_G(b)$. If $b_\nu = t^\eta v$ denotes the standard representative for $[b]$, then $\nu = A_v \eta$.  
\end{lemma}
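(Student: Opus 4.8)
The plan is to combine the three defining properties of the standard representative collected in Lemma~\ref{lem:stdRepList} with the averaging-operator reformulation of the Newton map. Recall that for any $x = t^\lambda w \in \eW$ with $w$ of order $m$, equation~\eqref{eq:avgFormula} says $\nu_G(x) = (A_w\lambda)^+$, where the superscript $+$ denotes the unique dominant representative in the $\sW$-orbit. So applied to $b_\nu = t^\eta v$ viewed as an element of $M(F)$, we get $\nu_M(b_\nu) = (A_v \eta)^{+_M}$, where $+_M$ denotes taking the $\sW_M$-dominant representative. The whole point is to show that this dominant representative is already equal to $A_v\eta$ on the nose, and that it coincides with $\nu = \nu_G(b)$.

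First I would observe that $v \in \sW_M$, so $A_v$ is the averaging operator for an element of the Levi Weyl group, and hence $A_v\eta$ is fixed by $v$; more importantly, since $\eta$ is $M$-dominant (as recalled at the end of Section~\ref{sec:StandardReps-General}, $\eta$ is $M$-dominant and $M$-minuscule) and averaging over the cyclic group generated by $v$ preserves the $M$-dominant cone (the $M$-dominant Weyl chamber $\mathfrak{a}^+_{P}$-closure is convex and $\sW_M$-... well, it is a fundamental domain, but one checks directly that an average of $\sW_M$-conjugates of an $M$-dominant vector need not be $M$-dominant in general — however here we only average over $\langle v\rangle$, and one should argue that $A_v\eta$ lies in the closure of $\mathfrak{a}_P^+$). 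Actually the cleaner route: by Lemma~\ref{lem:stdRepList}(3), $\nu_M(b_\nu) = \nu_b = \nu$, and by the definition of $B(M)_{\bas}^+$ together with \eqref{eq:StdRepMaps}, $\nu_M(b_\nu) \in \mathfrak{a}_P^+$ is already $G$-dominant. So $\nu = \nu_M(b_\nu)$ is $M$-dominant, which forces $(A_v\eta)^{+_M} = A_v\eta$, i.e.\ $A_v\eta$ was already $\sW_M$-dominant. Hence $\nu_M(b_\nu) = (A_v\eta)^{+_M} = A_v\eta$, and combining with Lemma~\ref{lem:stdRepList}(3) gives $\nu = \nu_M(b_\nu) = A_v\eta$, as desired.

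The only genuinely delicate point is justifying that the Newton map computed \emph{inside} $M$ via formula~\eqref{eq:avgFormula} produces $(A_v\eta)^{+_M}$ rather than something involving the full $\sW$: one must be careful that $b_\nu = t^\eta v$ is being regarded as an element of the extended affine Weyl group of $M$, where $v$ has some order $m$ as an element of $\sW_M$ (the same as its order in $\sW$), and that the $M$-dominant representative of $A_v\eta$ is taken in the Levi, not in $G$. But since we have already shown $A_v\eta$ lies in $\mathfrak{a}_P^+ \subset \Cf$, it is simultaneously $M$-dominant and $G$-dominant, so the distinction evaporates. I would write the argument in the order: (i) apply~\eqref{eq:avgFormula} in $M$ to get $\nu_M(b_\nu) = (A_v\eta)^{+_M}$; (ii) invoke Lemma~\ref{lem:stdRepList}(3) and the membership $\nu_M(b_\nu) \in \mathfrak{a}_P^+$ to conclude $A_v\eta$ is already $M$-dominant, so $\nu_M(b_\nu) = A_v\eta$; (iii) conclude $\nu = \nu_M(b_\nu) = A_v\eta$. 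This is short; the expected obstacle is purely bookkeeping about which group's dominance chamber is in play, and the resolution is that the standard representative was precisely engineered so that $A_v\eta$ lands in $\mathfrak{a}_P^+$.
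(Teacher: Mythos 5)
Your proposal follows the same route as the paper's one-line proof, which cites Lemma~\ref{lem:stdRepList}(3) for $\nu = \nu_M(b_\nu)$ and then simply asserts $\nu_M(b_\nu) = A_v\eta$. You are right that this last equality deserves a word: the averaging formula applied inside $M$ a priori returns $(A_v\eta)^{+_M}$, and one must justify dropping the $M$-dominant representative, which you do by invoking the fact $\nu_M(b_\nu) \in \mathfrak{a}_P^+$ built into the definition of the standard representative. The one imprecision is in the phrase \emph{``$\nu_M(b_\nu)$ is $M$-dominant, which forces $(A_v\eta)^{+_M} = A_v\eta$.''} That implication does not hold as stated: $(A_v\eta)^{+_M}$ is $M$-dominant by definition, so asserting its $M$-dominance tells you nothing about $A_v\eta$ itself. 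The property you actually need, and which your membership $\nu_M(b_\nu) \in \mathfrak{a}_P^+$ supplies, is $M$-\emph{centrality}: a point of $\mathfrak{a}_P$ is fixed by all of $\sW_M$. Writing $(A_v\eta)^{+_M} = u(A_v\eta)$ for some $u \in \sW_M$ and applying $u^{-1}$, which fixes $(A_v\eta)^{+_M}$, gives $A_v\eta = (A_v\eta)^{+_M}$. With that one-line fix your argument is a complete and welcome elaboration of the step the paper leaves implicit.
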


\begin{proof} 
Given a Levi decomposition $P = MN$, we have $b_\nu = t^\eta v$ with $v \in W_M$. Therefore, $\nu_M(b_\nu) = (A_v\eta)^+_M$, where $(A_v \eta)^+_M$ denotes the unique element in the $W_M$-orbit of $A_v \eta$ which is $M$-dominant.  However, recall from \eqref{eq:StdRepMaps} that $b_\nu \in \Omega_M^+$ also satisfies $\nu_M(b_\nu) \in \mathfrak{a}_P^+$ and is thus $G$-dominant.  Therefore, $\nu_M(b_\nu) = (A_v\eta)^+_M = A_v \eta$.  Finally, since $v \in W_M$, note that $\nu_G(b_\nu) = \nu_M(b_\nu)$, and the result follows.
\end{proof}

Next, we gather several easy properties of averaging projections, including justifying the terminology.

\begin{lemma}\label{lem:AwProperties}
The averaging projection $A_w$ for any $w \in \sW$ satisfies the following:
\begin{enumerate}
    \item $A_w w = A_w$.
    \item $u A_w u^{-1} = A_{uwu^{-1}}$ for any $u \in \sW$.
    \item $A_w$ is a projection.
    \item If $w$ is a reflection, then $A_w$ is the orthogonal projection onto the hyperplane fixed by~$w$.
    \item $\Range(I-w) \subseteq \Ker(A_w)$.
    \item $\Range(A_w) \subseteq \Ker(I-w)$.
\end{enumerate}
\end{lemma}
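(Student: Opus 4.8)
The plan is to prove each of the six properties in order, since several later items reuse earlier ones. For (1), I would compute $A_w w = \frac{1}{m}(w + w^2 + \cdots + w^m) = \frac{1}{m}(w + \cdots + w^{m-1} + I) = A_w$, using $w^m = I$. For (2), conjugation is linear and multiplicative, so $u A_w u^{-1} = \frac{1}{m}\sum_{i=0}^{m-1} u w^i u^{-1} = \frac{1}{m}\sum_{i=0}^{m-1}(uwu^{-1})^i = A_{uwu^{-1}}$, noting that $uwu^{-1}$ also has order $m$. For (3), I would show $A_w^2 = A_w$: expanding, $A_w^2 = \frac{1}{m}\sum_{i=0}^{m-1} A_w w^i = \frac{1}{m}\sum_{i=0}^{m-1} A_w$ by iterating (1), which gives $A_w$.

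For (4), if $w$ is a reflection then $m = 2$ and $A_w = \frac{1}{2}(I + w)$. Decompose $\mathfrak{a} = H \oplus H^\perp$ where $H$ is the fixed hyperplane; $w$ acts as $+1$ on $H$ and $-1$ on the line $H^\perp$. Then $A_w$ acts as $1$ on $H$ and as $0$ on $H^\perp$, which is exactly orthogonal projection onto $H$. (One should note $w$ is an orthogonal transformation with respect to the inner product $(\cdot,\cdot)$, so the eigenspace decomposition is orthogonal.) For (5), if $v = (I-w)x \in \Range(I-w)$, then $A_w v = A_w x - A_w w x = A_w x - A_w x = 0$ by (1), so $v \in \Ker(A_w)$. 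For (6), if $v = A_w x \in \Range(A_w)$, then $(I-w)v = A_w x - w A_w x$; I would observe that $w A_w = A_w$ by the same telescoping argument as (1) (or apply (1) to $w^{-1}$ and use $A_{w^{-1}} = A_w$, which follows since $\{w^i\} = \{w^{-i}\}$), hence $(I-w)v = A_w x - A_w x = 0$, so $v \in \Ker(I-w)$.

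Since these are all elementary verifications, there is no real obstacle; the only minor subtlety worth flagging is the orthogonality claim in (4), which relies on $\sW$ acting by isometries of the Euclidean form $(\cdot,\cdot)$ used to define the coroots — this is standard for finite reflection groups but should be mentioned explicitly. I would present the argument compactly, handling (1)--(3), (5), (6) by the telescoping-sum manipulations above, and spending one extra sentence on (4) for the eigenspace decomposition. Note that (5) and (6) together with (3) in fact show that $A_w$ is precisely the projection onto $\Ker(I-w)$ along $\Range(I-w)$, and one could remark that these two subspaces are complementary and orthogonal, though that is not needed for the stated claims.
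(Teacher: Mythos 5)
Your proof is correct and follows essentially the same elementary route as the paper: telescoping sums for (1)--(3), and the identity $A_w(I-w) = (I-w)A_w = 0$ underlying (5) and (6). The only cosmetic divergence is in (4), where you decompose $\mathfrak{a}$ directly into the $\pm 1$-eigenspaces of the reflection $w$, whereas the paper first uses (2) to reduce to a simple reflection $s_i$ and then writes $A_{s_i}v = v - \tfrac{1}{2}(v - s_iv)$ explicitly; both arguments correctly invoke the fact that $\sW$ acts by isometries, and yours is marginally more self-contained since it avoids passing through the classification of reflections as conjugates of simple ones.
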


\begin{proof}
Properties (1) and (2) are clear from the definition of $A_w$, and property (3) follows from (1). 
For (4), by definition $w = us_i u^{-1}$ for some $s_i, u \in \sW$.  Hence by (2), we have $A_w = u A_{s_i} u^{-1}$.  The group $\sW$ acts by isometries on $\mathfrak{a}$ and conjugates of orthogonal projections by isometries are orthogonal projections, so it suffices to show that $A_{s_i}$ is the orthogonal projection onto $H_{\alpha_i}$.  Now for any point $v$ of $\mathfrak{a}$, we have 
\[
A_{s_i}v = \frac{1}{2}(v + s_iv) = v - \frac{1}{2}(v - s_i v)
\]
which is the orthogonal projection onto $H_{\alpha_i}$.
For properties (5) and (6), note that $A_w(I-w)= (I-w)A_w = 0$.
\end{proof}

The orthogonal projections of the form $A_{s_i}$ will play an especially important role, so we also occasionally use the more intuitive notation $A_{s_i} = \proj_i$ to denote the orthogonal projection of the standard apartment $\mathfrak{a}$ onto the (simple) hyperplane $H_{\alpha_i}$.

Next, we prove using the properties in Lemma~\ref{lem:AwProperties}  above that the two containments in (5) and (6) are in fact equalities.

\begin{corollary}\label{cor:AwKerRange}
Let $w \in \sW$.  Then
\begin{align*}
\Ker(A_w) &= \Range(I-w), \ \text{and} \\
\Range(A_w) &= \Ker(I-w).
\end{align*}
\end{corollary}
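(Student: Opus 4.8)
The plan is to upgrade the two inclusions from Lemma~\ref{lem:AwProperties}(5)--(6) to equalities by a dimension count, using the fact that $A_w$ is a projection (Lemma~\ref{lem:AwProperties}(3)). Since $A_w^2 = A_w$, the operator space $\mathfrak{a}$ decomposes as the direct sum $\Range(A_w) \oplus \Ker(A_w)$, so $\dim\Range(A_w) + \dim\Ker(A_w) = n$. On the other hand, $(I-w)$ satisfies $\dim\Range(I-w) + \dim\Ker(I-w) = n$ by rank--nullity. Combining these with the inclusions $\Range(I-w)\subseteq\Ker(A_w)$ and $\Range(A_w)\subseteq\Ker(I-w)$ already established, it suffices to prove a single reverse inclusion, say $\Ker(I-w)\subseteq\Range(A_w)$, and the other will follow automatically from the dimension bookkeeping.

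For the inclusion $\Ker(I-w)\subseteq\Range(A_w)$: if $wv = v$, then $w^i v = v$ for all $i$, hence $A_w v = \tfrac{1}{m}(v + wv + \cdots + w^{m-1}v) = \tfrac{1}{m}(mv) = v$, so $v = A_w v \in \Range(A_w)$. This gives $\Ker(I-w)\subseteq\Range(A_w)$, and combined with the reverse inclusion from Lemma~\ref{lem:AwProperties}(6) we get $\Range(A_w) = \Ker(I-w)$. Then the dimension identities force $\dim\Ker(A_w) = n - \dim\Range(A_w) = n - \dim\Ker(I-w) = \dim\Range(I-w)$, and since $\Range(I-w)\subseteq\Ker(A_w)$ by Lemma~\ref{lem:AwProperties}(5), equality of dimensions upgrades this to $\Ker(A_w) = \Range(I-w)$.

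I do not anticipate any real obstacle here; the statement is essentially the observation that $A_w$ is the projection onto the $w$-fixed subspace along the image of $I-w$, which is the standard eigenspace decomposition for a finite-order operator. The only mild point of care is to make sure the two halves are not argued circularly: the cleanest route is to prove the fixed-point inclusion $\Ker(I-w)\subseteq\Range(A_w)$ directly from the definition (as above), since this requires nothing beyond $w^m = I$, and then deduce everything else from the rank counts. Alternatively, one can directly compute $A_w v = v$ for $v$ fixed and observe that every element decomposes as $v = A_w v + (I - A_w)v$ with $(I - A_w)v \in \Ker(A_w)$ and, using (1), that $(I-w)(I-A_w) = I - w$, which also yields $\Range(I-w) = \Range(I - A_w) = \Ker(A_w)$; either packaging is routine.

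\begin{proof}
By Lemma~\ref{lem:AwProperties}(3), $A_w$ is a projection, so $\mathfrak{a} = \Range(A_w)\oplus\Ker(A_w)$ and hence
\[
\dim\Range(A_w) + \dim\Ker(A_w) = n.
\]
By the rank--nullity theorem applied to $I-w$,
\[
\dim\Range(I-w) + \dim\Ker(I-w) = n.
\]
Now suppose $v\in\Ker(I-w)$, so $wv = v$ and therefore $w^iv = v$ for every $i \geq 0$. Then
\[
A_w v = \frac{1}{m}\left(v + wv + \cdots + w^{m-1}v\right) = \frac{1}{m}\,(mv) = v,
\]
so $v = A_w v\in\Range(A_w)$. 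This proves $\Ker(I-w)\subseteq\Range(A_w)$, and together with the reverse inclusion from Lemma~\ref{lem:AwProperties}(6) we conclude $\Range(A_w) = \Ker(I-w)$. Comparing the two dimension identities above then gives $\dim\Ker(A_w) = \dim\Range(I-w)$. Since $\Range(I-w)\subseteq\Ker(A_w)$ by Lemma~\ref{lem:AwProperties}(5) and these subspaces have the same dimension, they are equal: $\Ker(A_w) = \Range(I-w)$.
\end{proof}
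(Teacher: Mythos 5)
Your proof is correct and follows essentially the same route as the paper: both establish $\Range(A_w) = \Ker(I-w)$ first by a direct computation showing that $A_w$ fixes $w$-fixed vectors (the paper phrases this via $\Ker(I-w) \subseteq \Ker(I-A_w) = \Range(A_w)$, which is the same calculation), and then both use a rank--nullity dimension count together with Lemma~\ref{lem:AwProperties}(5) to upgrade the remaining inclusion to an equality.
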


\begin{proof}
First, compute directly using Lemma \ref{lem:AwProperties}(6) that $\Ker(I-w) \subseteq \Ker(I-A_w)$. Recalling from Lemma \ref{lem:AwProperties}(3) that $A_w$ is a projection operator, we also know that $\Ker(I-A_w) = \Range(A_w)$. Combining these observations with Lemma \ref{lem:AwProperties}(6) again, we have
\[\Range(A_w) \subseteq \Ker(I-w) \subseteq \Ker(I-A_w) = \Range(A_w). \]
In particular, $\Range(A_w) = \Ker(I-w)$, which proves the second desired equality.  Combining this equality with the Rank-Nullity Theorem applied twice to the pair of operators $A_w$ and $I-w$, we deduce that $\Ker(A_w)$ and $\Range(I-w)$ have the same dimension.  Applying Lemma \ref{lem:AwProperties}(5) then yields the first equality.
\end{proof}

\subsection{Formulas for Newton points with rank 1 associated parabolics}\label{sec:integralNPRank1}

In this section, we aim to explicitly describe those Newton points which have an associated parabolic subgroup of rank 1.  Using the results on the averaging operators from the previous section, we prove the following lemma, which provides several concrete formulas for such Newton points.

\begin{lemma}\label{lem:NPimageAsi}
Let $[b]\in B(G)_P$, and denote the corresponding Newton point by $\nu = \nu_G(b)$ and Kottwitz point by $\omega = \kappa_G(b)$.  If $P=P_i$ is a rank 1 parabolic subgroup, then 
\begin{equation}
\nu = A_{s_i}\lambda = \proj_i(\lambda)    
\end{equation}
for some $\lambda \in Q^\vee$ such that $\kappa_G(\lambda)=\omega$. More explicitly, for any such $\lambda \in Q^\vee$, 
\begin{equation}\label{eq:nuProj}
    \nu = \lambda - \frac{\langle \alpha_i, \lambda \rangle}{2}\alpha_i^\vee,
\quad \text{where} \quad
    \langle \alpha_i, \lambda \rangle = 
    \begin{cases}
    2k & \text{if $\nu$ is integral},\\
    2k+1 & \text{if $\nu$ is non-integral},
    \end{cases}
\end{equation}
for some $k \in \Z$. Equivalently, for any such $\lambda \in Q^\vee$, if we denote by $m_j = \langle \alpha_j, \lambda \rangle$, then
\begin{equation}\label{eq:nuCowt}
    \nu = \sum\limits_{j\neq i} \left(m_j - \frac{m_ic_{ji}}{2} \right)\varpi_j,
\end{equation}
where $C = (c_{ij})$ is the Cartan matrix.
\end{lemma}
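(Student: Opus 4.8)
The plan is to derive all three formulas from Lemma~\ref{lem:stdRepAvDom} together with the rank~1 description of $P_i$. First I would invoke Lemma~\ref{lem:stdRepAvDom} to write $\nu = A_v\eta$ for the standard representative $b_\nu = t^\eta v$, where by the discussion following Definition~\ref{defn:StdRep} we have $v \in \sW_M$. Since $P = P_i$ is rank~1, the Levi Weyl group $\sW_M$ is either trivial or $\{\id, s_i\}$; in the first case $\nu = \eta$ and in the second $\nu = A_{s_i}\eta$. Using Lemma~\ref{lem:AwProperties}(3) and the fact that $A_{s_i} = \proj_i$ fixes $H_{\alpha_i}$ pointwise, both cases are uniformly captured by $\nu = A_{s_i}\lambda = \proj_i(\lambda)$ for a suitable $\lambda \in Q^\vee$: namely, one may take $\lambda = \eta$, and then check that $\kappa_G(\lambda) = \kappa_G(b_\nu) = \omega$ by Lemma~\ref{lem:stdRepList}(1) since the translation part determines the Kottwitz point. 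The point that needs care here is that the statement asserts this for \emph{any} $\lambda \in Q^\vee$ with $\kappa_G(\lambda) = \omega$, not just $\lambda = \eta$: if $\lambda' = \eta + \mu$ with $\mu \in R^\vee$, then $\proj_i(\lambda') = \proj_i(\eta) + \proj_i(\mu)$, so I would need $\proj_i(\mu) = 0$, i.e. $\mu \in \Ker(A_{s_i}) = \Range(I - s_i)$ by Corollary~\ref{cor:AwKerRange}; this requires the (standard, but worth stating) fact that $R^\vee \subseteq \Range(I - s_i)$ when… actually this is false in general, so the intended reading must be that the formula holds for the \emph{specific} choice $\lambda = \eta$ and any other $\lambda$ differing from it appropriately — I would reconcile this by observing that changing $\lambda$ within a coset of $R^\vee$ changes $\langle\alpha_i,\lambda\rangle$ by an even integer (since $\langle\alpha_i,\alpha_j^\vee\rangle$ ranges over the Cartan integers, and in particular $\langle\alpha_i,\alpha_i^\vee\rangle=2$, while for $j\neq i$ the other coordinates $\varpi_j$ of $\proj_i(\lambda)$ absorb the change), so that the two cases in the parity dichotomy are well-defined independent of the choice.

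Next I would establish the explicit coordinate formula \eqref{eq:nuProj}. From $\nu = \proj_i(\lambda)$ and the reflection formula $s_i(\lambda) = \lambda - \langle\alpha_i,\lambda\rangle\alpha_i^\vee$, we get $\nu = A_{s_i}\lambda = \tfrac12(\lambda + s_i\lambda) = \lambda - \tfrac12\langle\alpha_i,\lambda\rangle\alpha_i^\vee$, which is exactly the displayed formula. For the parity claim on $\langle\alpha_i,\lambda\rangle$, I would argue via Definition~\ref{defn:integralNP}: $\nu$ is integral if and only if $\nu \in \bigoplus\Z\varpi_j$ and $\nu \equiv \omega_b$ in $Q^\vee/R^\vee$. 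Pairing $\nu = \lambda - \tfrac12\langle\alpha_i,\lambda\rangle\alpha_i^\vee$ against the fundamental weights: for $j \neq i$, $\langle\alpha_j \text{-coefficient}\rangle$ — more precisely, the $\varpi_j$-coordinate of $\nu$ is $\langle\alpha_j,\nu\rangle = m_j - \tfrac12 m_i \langle\alpha_j,\alpha_i^\vee\rangle = m_j - \tfrac12 m_i c_{ji}$, and the $\varpi_i$-coordinate is $\langle\alpha_i,\nu\rangle = m_i - \tfrac12 m_i \cdot 2 = 0$, consistent with $\nu \in H_{\alpha_i}$. Integrality of $\nu$ as a coweight then forces $m_j - \tfrac12 m_i c_{ji} \in \Z$ for all $j \neq i$, and combined with the congruence condition modulo $R^\vee$, this is equivalent to $m_i = \langle\alpha_i,\lambda\rangle$ being even (when some $c_{ji}$ is odd, the half-integrality is governed directly by the parity of $m_i$; when all relevant $c_{ji}$ are even one falls back on the $R^\vee$-congruence, using that $\langle\alpha_i,\alpha_i^\vee\rangle = 2$ contributes the decisive parity). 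This gives the dichotomy $m_i = 2k$ versus $m_i = 2k+1$.

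Finally, formula \eqref{eq:nuCowt} is obtained by simply expanding $\nu$ in the basis of fundamental coweights $\{\varpi_j\}$, whose coordinates are recovered by pairing with the simple roots $\alpha_j$: from the computation just done, $\langle\alpha_j,\nu\rangle = m_j - \tfrac12 m_i c_{ji}$ for $j \neq i$ and $\langle\alpha_i,\nu\rangle = 0$, hence $\nu = \sum_{j\neq i}\bigl(m_j - \tfrac12 m_i c_{ji}\bigr)\varpi_j$ as claimed. The main obstacle, as flagged above, is the "for any such $\lambda$" quantifier: one must verify that the right-hand sides of \eqref{eq:nuProj} and \eqref{eq:nuCowt} genuinely depend only on $[b]$ (equivalently on $(\nu,\omega)$) and not on the particular lift $\lambda \in Q^\vee$ of $\omega$ chosen — this amounts to checking that $\proj_i$ kills the ambiguity $R^\vee \cap \{\text{lifts of }\omega\} - \eta$, which I would handle by noting that any two lifts differ by an element of $R^\vee$, that $\proj_i(\alpha_i^\vee) = 0$, and that modifying $\lambda$ by $\alpha_j^\vee$ for $j\neq i$ shifts $\nu$ by $\proj_i(\alpha_j^\vee) = \alpha_j^\vee - \tfrac12 c_{ji}\alpha_i^\vee$, which is precisely the coweight-lattice adjustment making the parity bookkeeping in \eqref{eq:nuProj} consistent. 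All remaining manipulations are the routine identities $A_{s_i} = \tfrac12(I + s_i)$, $s_i\lambda = \lambda - \langle\alpha_i,\lambda\rangle\alpha_i^\vee$, and $\langle\alpha_j,\alpha_i^\vee\rangle = c_{ji}$.
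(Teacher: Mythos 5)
Your route to the first equation $\nu = A_{s_i}\lambda$ is genuinely different from the paper's and works cleanly: you start from the standard representative $b_\nu = t^\eta v$ and invoke Lemma~\ref{lem:stdRepAvDom} plus $\sW_M = \{\id, s_i\}$, absorbing the case $v = \id$ by noting $\eta = \nu \in H_{\alpha_i}$ is then fixed by $\proj_i$. The paper instead begins with an \emph{arbitrary} representative $t^\lambda w \in \eW$, arranges $A_w\lambda$ dominant, and uses the rank-1 hypothesis together with Corollary~\ref{cor:AwKerRange} to force $w = s_i$. Your approach is shorter; the paper's is needed because the lemma asserts the formula for \emph{every} $\lambda \in Q^\vee$ with $\proj_i(\lambda)=\nu$ and $\kappa_G(\lambda)=\omega$, not merely for $\lambda = \eta$, so one cannot restrict to the standard representative from the outset. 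You notice this tension yourself, and the key observation resolving it (also implicit in the paper, see \eqref{eq:lambdaForm}) is that the full solution set is $\{\nu + \tfrac{m}{2}\alpha_i^\vee : m\in\Z\} \cap Q^\vee$, so any two valid $\lambda$'s differ by a multiple of $\alpha_i^\vee$.

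However, your justification that the parity dichotomy is well defined is wrong as stated. You claim ``changing $\lambda$ within a coset of $R^\vee$ changes $\langle\alpha_i,\lambda\rangle$ by an even integer.'' This is false: replacing $\lambda$ by $\lambda + \alpha_j^\vee$ for $j \neq i$ changes $\langle\alpha_i,\lambda\rangle$ by $c_{ij}$, which is often $-1$ (e.g.\ any adjacent pair in type $A_n$). What saves the dichotomy is not $R^\vee$-invariance of the parity, but the much narrower fact that the only $R^\vee$-shifts \emph{preserving the constraint} $A_{s_i}\lambda = \nu$ are multiples of $\alpha_i^\vee$, and $\langle\alpha_i,\alpha_i^\vee\rangle = 2$. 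Relatedly, your parity argument (``when some $c_{ji}$ is odd\dots when all relevant $c_{ji}$ are even\dots'') is a case-split that is not carried to completion and does not obviously close; it would require checking $\tfrac12\alpha_i^\vee \notin Q^\vee$, which is not uniform across types. The paper sidesteps this entirely by using \emph{both} clauses of Definition~\ref{defn:integralNP}: integrality requires $\nu$ and $\omega$ (hence $\nu$ and $\lambda$) to agree in $Q^\vee/R^\vee$, so $\lambda - \nu = \tfrac{m}{2}\alpha_i^\vee$ must lie in $R^\vee$, and since $\alpha_i^\vee$ is a primitive lattice vector in $R^\vee$, this forces $m$ even; conversely $m$ even yields $\nu = \lambda - \tfrac{m}{2}\alpha_i^\vee \in Q^\vee$ with $\kappa_G(\nu)=\kappa_G(\lambda)=\omega$. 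You should replace your parity paragraph with this argument, keyed to the congruence in $R^\vee$ rather than to integrality of the $\varpi_j$-coordinates alone. The remaining computations (the reflection formula and the expansion in fundamental coweights giving \eqref{eq:nuCowt}) match the paper's and are correct.
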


\begin{proof}
By \eqref{eq:avgFormula}, the definition of the averaging operator $A_w$, and the fact that the elements of $\eW$ represent $B(G)$, we must have $\nu = \nu_G(t^\lambda w)= (A_w \lambda)^+$ for some $\lambda \in Q^\vee$ and $w \in \sW$. Further, in order that $t^\lambda w \in \eW$ represents $[b] \in B(G)_P$, we must also have $\kappa_G(\lambda) = \omega$.  Since $\nu = (A_w \lambda)^+$, then $\nu = u\left(A_w \lambda \right)$ for some $u \in \sW$. By part (4) of Lemma \ref{lem:AwProperties}, we thus see that $\nu = A_{uwu^{-1}}(u\lambda)$.  Since $u\lambda \in Q^\vee$ also satisfies $\kappa_G(u\lambda) = \kappa_G(\lambda) = \omega$, we thus assume without loss of generality that $\nu = A_w \lambda$, meaning that $A_w \lambda$ is already dominant.

Now suppose that $P$ is a rank 1 parabolic subgroup, so that $P = P_i$ for some $i \in [n]$.  By \eqref{eq:PnuRoots}, we have $\nu \in H_{\alpha_i}$, but $\nu \notin H_{\alpha_j}$ for any $j\neq i$.  Equivalently, 
\begin{align}\label{eq:rank1ineqs}
    \langle \alpha_i, \nu \rangle &= 0,\ \text{and} \nonumber \\
    \langle \alpha_j, \nu \rangle &>0\phantom{,} \ \text{for}\ j \neq i.
\end{align}
Since $\nu = A_w\lambda$, we have $\nu \in \Range(A_w)$ and so by Corollary \ref{cor:AwKerRange} we also have $\nu \in \Ker(I-w)$.  In particular,  $\nu$ is fixed by the action of $w$. Combining this observation with \eqref{eq:rank1ineqs} and the $\sW$-invariance of the evaluation pairing, we have
\begin{align*}
   \langle w\alpha_i, \nu \rangle &= 0,\ \text{and} \\
   \langle w\alpha_j, \nu \rangle &>0\phantom{,} \ \text{for}\ j \neq i. \nonumber
\end{align*}
Therefore, we must have $w\alpha_i = -\alpha_i$ and so in fact $w=s_i$.  In words, $\nu$ is simply the image of the orthogonal projection of $\lambda$ onto $H_{\alpha_i}$. Altogether, we have thus shown that there exists $\lambda \in Q^\vee$ such that  $A_{s_i}\lambda = \nu$ and $\kappa_G(\lambda) = \omega$.  Throughout the remainder of the proof, fix any such $\lambda \in Q^\vee$. 

Using that $s_i(\lambda) = \lambda - \langle \alpha_i, \lambda \rangle \alpha_i^\vee$, we compute directly that 
\begin{equation}\label{eq:nuProjv1}
    \nu = A_{s_i}\lambda= \frac{1}{2}\left(\lambda+s_i \lambda \right)= \lambda - \frac{\langle \alpha_i, \lambda \rangle}{2}\alpha_i^\vee,
\end{equation}
which proves the first equality in \eqref{eq:nuProj}.  Dualizing Equation (14) from \cite[Ch.~VI $\S$1.10]{Bourbaki4-6}, the simple coroots can be written in terms of the basis of fundamental coweights as  
\begin{equation}\label{eq:cortcowt}
\alpha_i^\vee = \sum c_{ji}\varpi_j,     
\end{equation}
where $C = (c_{ij})$ is the Cartan matrix. Write $\lambda = \sum m_j \varpi_j$ in terms of the basis of fundamental coweights, where $m_j = \langle \alpha_j, \lambda \rangle$ for all $j \in [n]$ by duality. Using the fact that $c_{ii}=2$, we thus see from \eqref{eq:nuProjv1} and \eqref{eq:cortcowt} that
\begin{equation*}
    \nu = \sum_{j=1}^n m_j \varpi_j - \frac{m_i}{2}\sum_{j=1}^n c_{ji}\varpi_j = \sum\limits_{j\neq i} \left(m_j - \frac{m_ic_{ji}}{2} \right)\varpi_j,
\end{equation*}
confirming \eqref{eq:nuCowt}.

Finally, we consider the parity of $\langle \alpha_i, \lambda \rangle$.  By \eqref{eq:nuProjv1}, all $\lambda \in Q^\vee$ such that $A_{s_i}\lambda = \nu$ are of the form \begin{equation}\label{eq:lambdaForm}
    \lambda = \nu+\frac{m}{2}\alpha_i^\vee
\end{equation} for some $m \in \Z$.   First suppose that $\nu$ is integral, so that $\nu \in Q^\vee$.  Since $\lambda \in Q^\vee$ as well, \eqref{eq:lambdaForm} implies that $m$ must be even in this case. Conversely, if $m$ is even, then $\lambda$ and $\nu$ differ by an element of $R^\vee$, and so $\kappa_G(\nu) = \kappa_G(\lambda) = \omega$. In addition, if $m$ is even, then \eqref{eq:lambdaForm} implies that $\nu \in Q^\vee$.  Therefore, if $m$ is even, then $\nu$ is integral in the sense of Definition \ref{defn:integralNP}. Altogether, we have thus proved that $m$ is even if and only if $\nu$ is integral, confirming the parity statement in \eqref{eq:nuProj} and concluding the proof.
\end{proof}

\subsection{Standard representatives for Newton points with rank 1 parabolics}\label{sec:stdReps}

We are now prepared to prove the following proposition concerning standard representatives for $\sigma$-conjugacy classes having Newton points with associated parabolic subgroups of rank 1.

\begin{prop}\label{prop:stdReps} Let $[b] \in B(G)_P$.  Denote by $\nu = \nu_G(b)$ the Newton point, and let $b_\nu$ denote the standard representative for $[b]$.  
\begin{enumerate}
\item If $P = P_i$ and $\nu$ is integral, then $b_\nu$ is the translation $t^\nu$.
\item If $P = P_i$ and $\nu$ is non-integral, then $b_\nu = t^\eta s_i$ where 
\[
\eta = \nu+ \frac{1}{2}\alpha_i^\vee.
\]
Equivalently, $\eta$ is the unique element of $H_{\alpha_i,1} \cap Q^\vee$ such that \begin{equation*}
\nu = A_{s_i}\eta=\proj_i (\eta).
\end{equation*}
\end{enumerate}
\end{prop}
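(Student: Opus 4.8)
The plan is to verify directly that each proposed element satisfies the three conditions from Lemma~\ref{lem:stdRepList}, namely $\kappa_G(b_\nu)=\omega_b$, $b_\nu\in\Omega_M$, and $\nu_M(b_\nu)=\nu_b$, which together characterize the standard representative. Throughout we use that $P=P_i$ means $M$ is the rank~1 Levi with $\Delta_M=\{\alpha_i\}$, so $\sW_M=\{\id,s_i\}$ and $\Omega_M$ consists of those $t^\eta v$ with $v\in\sW_M$ and $\eta$ both $M$-dominant and $M$-minuscule, i.e.\ $\langle\alpha_i,\eta\rangle\in\{0,1\}$; an element has length zero in $\eW_M$ precisely when its alcove projects onto the base alcove of the $M$-apartment, which for the rank~1 group $M$ translates exactly into the minuscule condition on $\eta$ (combined with $v$ absorbing the reflection when $\langle\alpha_i,\eta\rangle=1$).

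First, for part~(1), suppose $\nu$ is integral, so $\nu\in Q^\vee$ and, by Lemma~\ref{lem:NPimageAsi}, $\langle\alpha_i,\nu\rangle=0$ (take $k=0$ there, or note $\nu\in H_{\alpha_i}$ directly from \eqref{eq:PnuRoots}). Set $b_\nu=t^\nu$. Then $b_\nu\in\Omega_M$ since $v=\id\in\sW_M$ and $\eta=\nu$ is $M$-dominant and trivially $M$-minuscule as $\langle\alpha_i,\nu\rangle=0$. Condition~(3) holds because $\nu_M(t^\nu)=A_{\id}\nu=\nu$. For condition~(1), by Definition~\ref{defn:integralNP} integrality of $\nu$ means precisely that $\nu$ has the same image as $\omega_b$ in $Q^\vee/R^\vee$, i.e.\ $\kappa_G(t^\nu)=\omega_b$. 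Hence $b_\nu=t^\nu$ is the standard representative by Lemma~\ref{lem:stdRepList}.

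For part~(2), suppose $\nu$ is non-integral and put $\eta=\nu+\tfrac12\alpha_i^\vee$, $b_\nu=t^\eta s_i$. The main points to check are that $\eta\in Q^\vee$ and that $\langle\alpha_i,\eta\rangle=1$. For the first, apply Lemma~\ref{lem:NPimageAsi}: there exists $\lambda\in Q^\vee$ with $\kappa_G(\lambda)=\omega_b$, $A_{s_i}\lambda=\nu$, and $\langle\alpha_i,\lambda\rangle=2k+1$ odd; by \eqref{eq:lambdaForm}, $\lambda=\nu+\tfrac{m}{2}\alpha_i^\vee$ with $m=\langle\alpha_i,\lambda\rangle$ odd, so $\eta=\nu+\tfrac12\alpha_i^\vee=\lambda-\tfrac{m-1}{2}\alpha_i^\vee\in Q^\vee$ since $m-1$ is even. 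Moreover $\langle\alpha_i,\eta\rangle=\langle\alpha_i,\nu\rangle+\langle\alpha_i,\tfrac12\alpha_i^\vee\rangle=0+1=1$, using $\nu\in H_{\alpha_i}$ and $\langle\alpha_i,\alpha_i^\vee\rangle=2$. This gives $b_\nu\in\Omega_M$: $v=s_i\in\sW_M$ and $\eta$ is $M$-minuscule with $\langle\alpha_i,\eta\rangle=1$, so $t^\eta s_i$ has length zero in $\eW_M$. Condition~(3): $\nu_M(t^\eta s_i)=A_{s_i}\eta=\eta-\tfrac{\langle\alpha_i,\eta\rangle}{2}\alpha_i^\vee=\eta-\tfrac12\alpha_i^\vee=\nu$, which also verifies the claimed equivalent description $\nu=\proj_i(\eta)$; uniqueness of $\eta$ in $H_{\alpha_i,1}\cap Q^\vee$ with this property follows since any two such differ by a multiple of $\alpha_i^\vee$ (the kernel of $\proj_i$) yet both lie on $H_{\alpha_i,1}$, forcing them equal. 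Condition~(1): $\eta$ and $\lambda$ differ by an element of $R^\vee$, so $\kappa_G(b_\nu)=\kappa_G(t^\eta s_i)=\kappa_G(\eta)=\kappa_G(\lambda)=\omega_b$ (the reflection $s_i$ does not affect the Kottwitz point). By Lemma~\ref{lem:stdRepList}, $b_\nu=t^\eta s_i$ is the standard representative.

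The step I expect to be the main obstacle is the clean identification of $\Omega_M$ with the set of $t^\eta v$, $v\in\sW_M$, $\eta$ $M$-minuscule $M$-dominant — that is, correctly reconciling the length-zero condition in $\eW_M$ with the minuscule/dominant bookkeeping in the rank~1 Levi, and making sure the reflection $s_i$ is handled consistently (it is needed precisely when $\langle\alpha_i,\eta\rangle=1$ so that the alcove of $t^\eta s_i$ projects onto the $M$-base alcove). Once that correspondence is pinned down, the remaining verifications are the short computations above using Lemma~\ref{lem:NPimageAsi} and the properties of $A_{s_i}$.
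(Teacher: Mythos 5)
Your proof is correct and follows essentially the same route as the paper: both verify the three conditions of Lemma~\ref{lem:stdRepList}, using Lemma~\ref{lem:NPimageAsi} to produce $\lambda$ and then show $\eta=\nu+\tfrac12\alpha_i^\vee\in H_{\alpha_i,1}\cap Q^\vee$ with matching Kottwitz point. The only cosmetic difference is in checking condition (2): the paper writes $b_\nu$ out explicitly as a product of the generators $t^{\varpi_j}$ ($j\neq i$) and $\tau_i=t^{\varpi_i}s_i$ of $\Omega_M$, whereas you invoke the $M$-minuscule/length-zero characterization directly; the two are equivalent, and your version is fine provided (as you note) one is careful that $v\in\sW_M$ is forced to be $\id$ or $s_i$ according as $\langle\alpha_i,\eta\rangle$ is $0$ or $1$.
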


\begin{proof} 
Recall that $\nu = \nu_G(b)$ is the Newton point of $[b] \in B(G)_P$, and denote the Kottwitz point by $\omega = \kappa_G(b)$. Throughout the proof, we assume that $P=P_i$ is a rank 1 parabolic subgroup, in which case the Levi subgroup $M$ is isomorphic to $\mathbb{G}_m^{i-1}\times G_1 \times \mathbb{G}_m^{n-i}$, where $G_1$ is a rank 1 group of type $A_1$. Since $\Delta_M = \{\alpha_i\}$, then $\Omega_M$ is generated by the translations $t^{\varpi_j}$ for all $j \neq i$, together with the element $\tau_i :=t^{\varpi_i}s_i$.  Moreover, the action of $s_i$ is trivial on all $t^{\varpi_j}$ such that $j \neq i$.

We first consider the case where $\nu$ is integral. We need only verify that $t^\nu$ satisfies the three properties from Lemma \ref{lem:stdRepList}.  Recall by Definition \ref{defn:integralNP} that $\nu$ has the same image as $\omega$ in $Q^\vee/R^\vee$ since $\nu$ is integral, and so $\kappa_G(t^\nu) = \omega$, verifying property (1) for $b_\nu = t^\nu$.  Since $\nu = \sum_{j\neq i} \nu_j \varpi_j \in Q^\vee$ for some $\nu_j \in \Z$ by \eqref{eq:nuCowt}, we see that $t^\nu = (t^{\varpi_1})^{\nu_1}\cdots \id \cdots (t^{\varpi_n})^{\nu_n}$, where $\id$ occurs in the $i^{\text{th}}$ coordinate.  In particular, $t^\nu \in \Omega_M$, confirming property (2) of Lemma \ref{lem:stdRepList}.  Finally, using \eqref{eq:avgFormula}, since $t^\nu$ is a translation and $\nu \in \mathfrak{a}_P^+$, we have $\nu_M(t^\nu) = \nu_G(t^\nu) = \nu$, confirming property (3).  Therefore, $b_\nu=t^\nu$ is the standard representative for $[b]$ in the case where $\nu$ is integral.

Now consider the case where $\nu$ is non-integral. By Lemma \ref{lem:NPimageAsi}, we have $\nu = A_{s_i}\lambda$ for some $\lambda \in Q^\vee$ such that both $\kappa_G(\lambda) = \omega$ and $\langle \alpha_i, \lambda \rangle = 2k+1$ for some $k \in \Z$. Consider $\eta = \lambda-k\alpha^\vee_i$.  Then $A_{s_i}\eta = \nu$ as well, since $A_{s_i}\lambda = \nu$ and $\lambda$ and $\eta$ differ by a multiple of $\alpha_i^\vee$.  In addition, 
\begin{equation}\label{eq:eta1}
    \langle \alpha_i, \eta \rangle = \langle \alpha_i, \lambda - k\alpha_i^\vee \rangle = \langle \alpha_i, \lambda \rangle - k\langle \alpha_i, \alpha_i^\vee \rangle = (2k+1)-k(2) = 1,
\end{equation}
so that $\eta \in H_{\alpha_i,1} \cap Q^\vee$. In particular, $\eta = \nu+\frac{1}{2}\alpha_i^\vee.$  In addition, note that by definition, 
\begin{equation}\label{eq:kappaEta}
    \kappa_G(\eta) = \kappa_G\left(\lambda - k\alpha_i^\vee \right) = \kappa_G(\lambda).
\end{equation}

Now define $b_\nu = t^\eta s_i$.  By \eqref{eq:kappaEta} and our choice of $\lambda$, we have that $\kappa_G(b_\nu) = \kappa_G(\eta) = \kappa_G(\lambda) = \omega,$ verifying property (1) of Lemma \ref{lem:stdRepList}.  Recalling \eqref{eq:cortcowt} and applying  \eqref{eq:nuCowt} to $\eta$, we have
\begin{equation*}
    \eta = \nu+\frac{1}{2}\alpha_i^\vee =  \sum\limits_{j\neq i} \left(m_j - \frac{m_ic_{ji}}{2} \right)\varpi_j + \frac{1}{2} \left( \sum\limits_{j=1}^n c_{ji}\varpi_j\right) = \varpi_i+ \sum\limits_{j\neq i}m_j\varpi_j,
\end{equation*}
where $m_j = \langle \alpha_j, \eta \rangle$ and $m_i=1$ by \eqref{eq:eta1}. 
Therefore, $b_\nu = t^\eta s_i = (t^{\varpi_1})^{m_1}\cdots \tau_i \cdots (t^{\varpi_n})^{m_n} \in \Omega_M$ since $s_i$ acts trivially on $t^{\varpi_j}$ for all $j \neq i$, confirming property (2).  Finally, $\nu_M(b_\nu) = A_{s_i}\eta = \nu$, so that $b_\nu$ satisfies property (3). By Lemma \ref{lem:stdRepList}, we have that $b_\nu=t^\eta s_i$ is the standard representative for $[b]$ in the case where $\nu$ is non-integral.
\end{proof}

The next result concerns the location of $\eta$ where $b = t^\eta s_i$ is a standard representative.  We will use this result in Section~\ref{sec:remainingTargets}.

\begin{corollary}\label{cor:stdRepDominant}  Let $\nu$ be any non-integral Newton point with associated spherical standard parabolic subgroup $P_i$, and let $b_\nu = t^\eta s_i$ be its standard representative.   Then $\eta \in Q^\vee$ is in the dominant Weyl chamber $\Cf$, except if $G$ is of type $\tilde G_2$ and $\nu = \frac{1}{2}\alpha_1^\vee + \alpha_2^\vee$.
\end{corollary}

\begin{proof}  Since $\eta \in Q^\vee$, we may write $\eta = \sum n_k \varpi_k$ where $n_k = \langle \alpha_k, \eta\rangle \in \Z$ by duality.  We thus aim to prove that $n_k \geq 0$ for all $k$, outside of the unique case identified in the statement.  By Proposition~\ref{prop:stdReps}, we have $\eta \in H_{\alpha_i,1}$, and so indeed $n_i =1 \geq 0$.

We now consider any $j \neq i$.  Applying  \eqref{eq:nuCowt} to $\lambda = \eta$, we have
\begin{equation*}
   \nu =  \sum\limits_{j\neq i} \left(n_j - \frac{c_{ji}}{2} \right)\varpi_j.
\end{equation*}
Since $\langle \alpha_j, \nu \rangle  = n_j - \frac{c_{ji}}{2} >0$ by \eqref{eq:rank1ineqs}, then for $c_{ji} \in \{0,-1,-2\}$, we have $n_j  = \langle \alpha_j, \eta \rangle \geq 0$ because $n_j \in \Z$.  In all of these cases, $\eta$ is thus dominant as required.
 
The remaining possibility is $c_{ji} = -3$, in which case $G$ is type $G_2$ with $j = 2$ and $i = 1$.  In 
this case we have $\nu = (n_2 + \frac{3}{2})\varpi_2$, and so by \eqref{eq:rank1ineqs}, we have $n_2 + \frac{3}{2} >0$.  Since $n_2 \in \Z$, then this inequality implies that $n_2 \geq -1$.  For $n_2 \geq 0$, we have $\langle \alpha_2, \eta \rangle \geq 0$, and so $\eta$ is dominant. The only exception is $n_2 = -1$, in which case $\nu = \frac{1}{2}\varpi_2 = \frac{1}{2}\alpha_1^\vee + \alpha_2^\vee$ via \eqref{eq:cortcowt}, as required.
\end{proof}

We now complete our discussion of Figure \ref{fig:C2StdReps} from Section \ref{sec:NewtonMap} illustrating Proposition \ref{prop:stdReps}.

\begin{example}\label{ex:C2stdreps}
Figure \ref{fig:C2StdReps} illustrates the four families of standard representatives in type $\tilde{C}_2$ obtained from Proposition \ref{prop:stdReps}. The parabolic $P=P_1$ corresponds to the Newton points in the $\Z_{>0}$-span of $\varpi_2$ highlighted in green, all of which are contained in the hyperplane $H_{\alpha_1}$.  The dark blue alcoves correspond to standard representatives having Kottwitz point $0 \in Q^\vee/R^\vee$, whereas elements for the teal alcoves have Kottwitz point $\varpi_2 \in Q^\vee/R^\vee$. The Newton point $\nu$ is integral for precisely those alcoves whose translation part lies on the hyperplane $H_{\alpha_1}$; these translation alcoves are depicted as teal and dark blue in alternation.  The standard representatives for the non-integral Newton points having $P=P_1$ are similarly shaded in alternating dark blue and teal, though now along the affine hyperplane $H_{\alpha_1,1}$, and located in the $s_1$-spherical position. Note that each green Newton point along the hyperplane $H_{\alpha_1}$ is represented by two distinct $\sigma$-conjugacy classes, one for each element of $Q^\vee/R^\vee$. 

There are again two families of standard representatives corresponding to Newton points such that $P=P_2$.  The pink alcoves along $H_{\alpha_2}$ depict the translation alcoves representing the integral Newton points highlighted in dark pink.  The orange alcoves based on $H_{\alpha_2,1}$ are the standard representatives for the non-integral Newton points highlighted in dark orange.  For $P=P_2$, each Newton point along $H_{\alpha_2}$ represents a unique $\sigma$-conjugacy class, in contrast to the situation for $P=P_1$. This distinction will play an important role later in Lemma \ref{lem:BCintegral}.
\end{example}

%!TEX root = main.tex

\section{Affine Deligne--Lusztig varieties and labeled folded alcove walks}\label{sec:alcoveWalks}

This section provides a precise connection between galleries which are positively folded with respect to a chimney and affine Deligne--Lusztig varieties in the affine flag variety.  The main result is Theorem \ref{thm:ADLVChimneys}, restated and then proved in Section \ref{sec:dimGallery}, which takes the problems of determining nonemptiness and calculating dimensions from \cite{GHKRadlvs} summarized here as Theorem \ref{thm:GHKRThm}, and translates these questions into the language of positively folded galleries with respect to a chimney.  We then briefly illustrate in Section \ref{sec:integral} how to use the folded galleries constructed in \cite{MST1} to reprove nonemptiness for regular integral Newton points as a special case of Theorem \ref{thm:ADLVChimneys}.

\subsection{Dimension formulas and folded galleries}\label{sec:dimGallery}

The orbits which appear in Theorem \ref{thm:GHKRThm} are precisely those which were studied by the authors in recent joint work with Naqvi on retractions based at chimneys. We thus continue by reviewing one of the main theorems from \cite{MNST}, which permits the translation of Theorem \ref{thm:GHKRThm} into gallery combinatorics. 

We remark that the proofs in \cite{MNST} equally apply to all alcoves in the same fixed $\omega$-sheet of extended alcoves for any $\omega \in \Omega_G$.  Since a necessary condition for the intersection $Ix\fa_0 \cap I_P^y b^y\fa_0$ to be nonempty is that the alcoves $\x$ and $\bb^y$ are in the same sheet of extended alcoves, we may freely state Theorem \ref{thm:DoubleCosetIntersection} in this slightly greater level of generality.

\begin{thm}[\cite{MNST}, Thm.~1.2]\label{thm:DoubleCosetIntersection} Let $b, x, y \in \eW$. Let $P$ be a standard spherical parabolic subgroup of $G(F)$.  There is a bijection between the points of the intersection 
\[Ix\fa_0 \cap I_P^y b^y \fa_0\]
and the set of labeled folded alcove walks of type $\vec{x}$ from $\fa_0$ to $\bb^y$ which are positively folded with respect to the $(P,y)$-chimney.
\end{thm}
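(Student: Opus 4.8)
The statement is, up to the extension from the $0$-sheet to an arbitrary $\omega$-sheet of extended alcoves, exactly \cite[Thm.~1.2]{MNST}, so my plan has two parts: first recall why the theorem holds when $\fa = \fa_0$, in a way that isolates which features of the set-up are actually used; then check that all of those features are $\Omega_G$-equivariant, so that the argument transports verbatim to the $\omega$-sheet.

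For the first part I would run the retraction-and-alcove-walk machinery of \cite{PRS,GaussentLittelmann} as adapted to chimneys in \cite{MNST}. Fix a reduced word for $x$, producing the type $\vec{x}$. The Iwahori cell $Ix\fa_0 = IxI/I$ is parametrized in the standard Bott--Samelson fashion: a point $gI$ is encoded uniquely, letter by letter along $\vec{x}$, and it determines a minimal gallery $\fa_0 \rsa g\fa_0$ of type $\vec{x}$ in the building together with the combinatorial record of where it crosses which panels. Now apply the retraction $\rho$ of the building onto $\cA$ based at the $(P,y)$-chimney, as constructed in \cite{MNST}: the image of that minimal gallery is a combinatorial gallery $\gamma(gI)\colon \fa \rsa \rho(g\fa_0)$ of the same type $\vec{x}$, lying in $\cA$. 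The key input from \cite{MNST} is that the orientation $\phi$ induced by the $(P,y)$-chimney is precisely the one adapted to $\rho$, so that $\rho$ never folds a crossing to the negative side; hence $\gamma(gI)$ is automatically positively folded with respect to the $(P,y)$-chimney, and conversely every positively folded gallery of type $\vec{x}$ starting at $\fa$ is realized as some $\gamma(gI)$.

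It then remains to match endpoints, which is where $I_P$ rather than $I$ enters: \cite{MNST} describes the $I_P^y$-orbit of $\bb^y$ as the fibre $\rho^{-1}(\bb^y)$ over the alcove $\bb^y = b^y\fa_0$, with the $(I\cap M(F))$-part of $I_P$ accounting for the $M$-walls fixed by $\rho$ and the $N(F)$-part realizing the contraction toward the chimney. Granting this, $gI \in Ix\fa_0 \cap I_P^y b^y\fa_0$ if and only if $gI \in Ix\fa_0$ and $\gamma(gI)$ ends at $\bb^y$, and by the injectivity of the parametrization $gI \mapsto \gamma(gI)$ this is exactly the asserted bijection between the intersection and the positively folded galleries of type $\vec{x}$ from $\fa$ to $\bb^y$.

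For the second part, let $\fa = \fa_\omega$ with $\omega \in \Omega_G$. Since $\Omega_G$ normalizes $I$ and $I_P$, acts on $\Omega_G \times \cA$ by the deck transformations permuting the sheets, and carries the $P$-chimney -- hence its induced orientation $\phi$ and the retraction $\rho$ -- to the corresponding object in the $\omega$-sheet, every step above applies word for word after conjugating by $\omega$ and relabelling $\tilde{I}$ by the induced permutation. The only bookkeeping point is that the type of a gallery computed in the $\omega$-sheet is the $\omega$-image of the type computed in the $0$-sheet, which is built into the definition of the type recalled in Section~\ref{sec:galleriesChimneys} (cf.\ the $PGL_3$ example there). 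I expect no genuine obstacle: if one were proving \cite[Thm.~1.2]{MNST} from scratch, the hard step would be the identification of $I_P^y$-orbits with chimney-retraction fibres together with the compatibility of the chimney orientation with $\rho$; as it is, that content is already in \cite{MNST}, and all that is new here is verifying that these ingredients are insensitive to the choice of sheet.
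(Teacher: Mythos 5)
Your proposal matches the paper's handling of this statement: the paper also treats it as a direct citation of \cite[Thm.~1.2]{MNST} for the $0$-sheet and disposes of the extension to an arbitrary $\omega$-sheet with a one-sentence remark that the proofs of \cite{MNST} apply unchanged there, which is exactly your $\Omega_G$-equivariance observation. Your recap of the retraction/Bott--Samelson argument is a faithful summary of the cited machinery, and the only genuinely new content -- insensitivity to the choice of sheet -- is argued the same way in both.
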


We proceed to generalize the discussion of \cite[Sec.~4]{MST1} by defining the dimension of a gallery which is positively folded with respect to a chimney. For any gallery $\gamma$ which is positively folded with respect to the $(P,y)$-chimney, denote by $p_P^y(\gamma)$ the number of positive crossings, and by $f_P^y(\gamma)$ the number of folds in the gallery $\gamma$.

\begin{definition}\label{defn:GalleryDim}
Let $P$ be a standard spherical parabolic subgroup of $G(F)$, and let $y \in \eW$.  Given any gallery $\gamma$ which is positively folded with respect to the $(P,y)$-chimney, the \emph{dimension of $\gamma$ with respect to the $(P,y)$-chimney}, denoted $\dim_P^y(\gamma)$, is defined to be
\[ \dim_P^y(\gamma) = p_P^y(\gamma) + f_P^y(\gamma). \]
When the choice of chimney is clear, we abbreviate these statistics as $\dim(\gamma), p(\gamma),$ and $f(\gamma)$, respectively.
\end{definition}

Roughly speaking, the crossings in a gallery which move in a direction ``away'' from the chimney are those which contribute to the dimension, as the next example illustrates.

\begin{example}
Consider the blue gallery $\gamma$ in Figure~\ref{fig:chimneyOrientations}, which is positively folded with respect to the $P_1$-chimney shaded in grey.  Before the first fold, only the 2 crossings of hyperplanes parallel to $H_{\alpha_1}$ are positive.  By contrast, all 7 crossings after the first fold are positive crossings, each of which takes a step ``away'' from the chimney.  Together with the 2 folds, we thus see that $\dim(\gamma) = p(\gamma)+f(\gamma)= 9+2=11.$ Moreover, this dimension calculation is identical independent of the sheet of extended alcoves containing $\gamma$.
\end{example}

The choice of terminology is justified by the following lemma, which says that the dimension of a gallery which is positively folded with respect to the $(P,y)$-chimney coincides with the dimension of the intersection of the Iwahori and $I_P^y$-orbits occurring in Theorem \ref{thm:DoubleCosetIntersection}.

\begin{lemma}\label{lem:GalleryDim}
Let $b,x,y \in \eW$. Let $P$ be a standard spherical parabolic subgroup of $G(F)$.  If $Ix\fa_0 \cap I_P^y b^y \fa_0\neq \emptyset$, then
\begin{equation}\label{E:GalleryDim}
\dim \left( Ix\fa_0 \cap I_P^y b^y \fa_0 \right) = \max\{ \dim_P^y (\gamma) \},
\end{equation}
where $\gamma:\fa \rsa \bb^y$ is a gallery of type $\vec{x}$ which is positively folded with respect to the $(P,y)$-chimney.
\end{lemma}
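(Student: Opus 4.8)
The plan is to establish Lemma~\ref{lem:GalleryDim} by combining the bijection of Theorem~\ref{thm:DoubleCosetIntersection} with a direct dimension count on the parametrization of the orbit intersection, exactly paralleling \cite[Sec.~4]{MST1} but now relative to the $(P,y)$-chimney orientation $\phi$. Recall that Theorem~\ref{thm:DoubleCosetIntersection} gives a bijection between the points of $Ix\fa_0 \cap I_P^y b^y\fa_0$ and the set of galleries $\gamma:\fa\rsa\bb^y$ of type $\vec{x}$ which are positively folded with respect to the $(P,y)$-chimney. Since this intersection is a locally closed subvariety of the affine flag variety, its dimension is the maximum dimension of an irreducible component, so it suffices to stratify the intersection by galleries and compute the dimension of each stratum.

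First I would recall the combinatorial model underlying the bijection: a point of the intersection corresponds to a choice, at each panel $p_i$ of a reference minimal gallery of type $\vec{x}$, of an element of $\overline{\mathbb{F}}_q$ (a ``parameter'') recording how the actual gallery crosses or folds at that panel. The dimension of the stratum indexed by a fixed positively folded gallery $\gamma$ is then the number of free parameters, and the standard analysis (as in \cite{PRS} and \cite[Sec.~4]{MST1}, generalized in \cite{MNST}) shows that a panel contributes a free $\mathbb{A}^1$-worth of parameters precisely when either $\gamma$ has a fold at that panel (necessarily positive, by hypothesis), or $\gamma$ has a positive crossing at that panel; a negative crossing contributes nothing, and a positive fold's extra parameter is already counted. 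Hence the stratum indexed by $\gamma$ has dimension $p_P^y(\gamma) + f_P^y(\gamma) = \dim_P^y(\gamma)$.

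Given this, I would argue as follows. The intersection decomposes as a finite union of strata, one for each positively folded gallery $\gamma:\fa\rsa\bb^y$ of type $\vec{x}$ appearing in the bijection of Theorem~\ref{thm:DoubleCosetIntersection}; the stratum for $\gamma$ is isomorphic to an affine space of dimension $\dim_P^y(\gamma)$. Therefore
\[
\dim\left( Ix\fa_0 \cap I_P^y b^y\fa_0 \right) = \max\{ \dim_P^y(\gamma) \},
\]
the maximum taken over all such $\gamma$, which is precisely \eqref{E:GalleryDim}. The hypothesis $Ix\fa_0 \cap I_P^y b^y\fa_0 \neq \emptyset$ guarantees this set of galleries is nonempty, so the maximum is well defined.

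The main obstacle is justifying the per-panel dimension count for the aperiodic chimney orientation $\phi$, rather than simply citing the $P=B$ case from \cite{MST1, PRS}. The key point to verify carefully is that the contribution of a panel to the parameter space depends only on the local data $(c_i, p_i)$ and the value $\phi(c_i,p_i)\in\{+,-\}$, and that this matches the crossing/fold bookkeeping in Definition~\ref{defn:GalleryDim}; this in turn rests on the construction of the bijection in \cite{MNST}, where the orientation $\phi$ is exactly what controls which of the two ``halves'' of the local $\mathbb{A}^1$ records a fold versus a crossing. Since \cite{MNST} already sets up the retraction and the labeled-alcove-walk parametrization relative to $\phi$, this step amounts to reading off the fiber dimensions from that parametrization and observing that the orientation being aperiodic plays no role in the count — only the local assignment of signs matters. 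I would therefore present the proof as a short deduction from Theorem~\ref{thm:DoubleCosetIntersection} together with the explicit fiber-dimension description in \cite{MNST}, mirroring the structure of \cite[Sec.~4]{MST1}.
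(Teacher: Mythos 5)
Your proposal is essentially the paper's proof, just unpacked: the paper's argument is a two-line citation — the proof of \cite[Prop.~5.9]{MNST} furnishes the stratification of $Ix\fa_0 \cap I_P^y b^y\fa_0$ indexed by positively folded galleries, and then the argument of \cite[Prop.~5.6]{MST1} gives the dimension formula with $U^-$ replaced by $I_P$ — and you reconstruct the per-panel parameter count that lies behind both citations. One small inaccuracy worth flagging: you describe the stratum indexed by $\gamma$ as ``an affine space of dimension $\dim_P^y(\gamma)$,'' but what \cite{MNST} actually produces is a product $\mathbb{A}^{p_P^y(\gamma)} \times (\mathbb{A}\setminus\{0\})^{f_P^y(\gamma)}$: the positive-crossing panels contribute full affine lines, while each fold contributes a \emph{punctured} line (the parameter at a fold must be nonzero for the step to be a genuine fold rather than a crossing). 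This is not an affine space, though it has the same dimension, so your conclusion is unaffected; and your phrase ``a positive fold's extra parameter is already counted'' obscures rather than records this distinction. With that corrected the argument matches the paper's.
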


\begin{proof}
The proof of \cite[Prop.~5.9]{MNST} shows that the chimney orientations correspond to a stratification on the intersection $Ix\fa_0 \cap I_P^y b^y \fa_0$ by products of the form $\mathbb{A}^{p(\gamma)} \times (\mathbb{A}\backslash \{0\})^{f(\gamma)}$.  The same argument as in the proof of \cite[Prop.~5.6]{MST1} thus applies, replacing $U^-$ by $I_P$.
\end{proof}

We now repeat Theorem~\ref{thm:ADLVChimneys} from the introduction and give a proof. 

% The next command restates  Theorem~\ref{thm:ADLVChimneys} from the introduction. 
\ADLVChimneyRestate*

\begin{proof}  Combine part (1) of Theorem \ref{thm:GHKRThm} with Theorem~\ref{thm:DoubleCosetIntersection} for the nonemptiness statement. Combine part (2) of Theorem \ref{thm:GHKRThm} with Lemma \ref{lem:GalleryDim} to obtain the dimension formula.
\end{proof}

\subsection{Integral Newton points}\label{sec:integral}

In this section we briefly explain how to apply results from~\cite{MST1} to prove Theorem~\ref{thm:w0ShrunkenDominant} in the case that the Newton point $\nu_b$ is integral. We mention that the vector $\check{\rho}$ was mislabeled as $\rho$ throughout \cite{MST1}, and we correct this error in the statement of Theorem \ref{thm:translations} below; all statements in \cite{MST1} remain true after this relabeling. Since we also have a dimension formula from \cite{MST1}, we prove the following.

\begin{thm}[\cite{MST1}]\label{thm:translations}   Let $b = t^\mu \in \eW$, and let $\nu_b$ be the Newton point of the $\sigma$-conjugacy class $[b]$. Let $x_0=t^\lambda w_0 \in \eW$ be such that $\kappa_G(x_0) = \kappa_G(b)$. Assume that:
\begin{enumerate}
    \item $\x_0$ is contained in $\Cfs$; and 
    \item $\nu_b \in \Conv( \sW (\lambda - 2\check{\rho}) ).$
\end{enumerate}
Then 
$X_{x_0}(b) \neq \emptyset$, and $\dim X_{x_0}(b) = \langle \rho, \lambda - \nu_b \rangle$.
\end{thm}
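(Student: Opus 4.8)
The plan is to derive Theorem~\ref{thm:translations} as a direct consequence of Theorem~\ref{thm:ADLVChimneys} together with the folded gallery constructions of \cite{MST1}. Since $b = t^\mu$ is a translation, its associated parabolic $P = P_{\nu_b}$ need not be $B$, but because $b$ is already a translation element, the standard representative $b_{\nu_b}$ and its $\eW$-conjugacy class are governed by Example~\ref{ex:stdReps}(1) together with Proposition~\ref{prop:stdReps}(1): when $\nu_b$ is \emph{integral} (which is the case treated in this section, as announced), Proposition~\ref{prop:stdReps}(1) tells us $b_\nu = t^{\nu_b}$ is itself a translation, and Example~\ref{ex:stdReps}(1) covers the regular subcase $P=B$. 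So the $\eW$-conjugacy class of $b_\nu$ and the relevant chimney data is explicit and compatible with the periodic orientation used in \cite{MST1}.

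First I would invoke Theorem~\ref{thm:ADLVChimneys}(1): $X_{x_0}(b)\ne\emptyset$ iff for some $y\in\eW$ there is a gallery $\gamma:\fa\rsa\bb_\nu^y$ of type $\vec{x_0}$ which is positively folded with respect to the $(P,y)$-chimney. I would then recall precisely which galleries \cite[Sec.~5 and Sec.~6]{MST1} construct: under the hypotheses that $\x_0\subset\Cfs$ (condition (1)) and $\nu_b\in\Conv(\sW(\lambda-2\check\rho))$ (condition (2)), \cite{MST1} produces a gallery of type $\vec{x_0}$ (for $x_0 = t^\lambda w_0$) which is positively folded with respect to a Weyl-chamber chimney (the $P=B$ case, equivalently the opposite standard orientation) and whose end alcove lies in the $\eW$-conjugacy class of $t^{\nu_b}$; moreover that construction tracks the dimension $\dim(\gamma) = \langle 2\rho,\lambda\rangle - \langle 2\rho, \nu_b\rangle + \langle 2\rho,\nu_b\rangle$ in a way that, after the shift by $\langle 2\rho,\nu_b\rangle$ in Theorem~\ref{thm:ADLVChimneys}(2), yields $\dim X_{x_0}(b) = \langle\rho,\lambda-\nu_b\rangle$. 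The main point is to check that when $\nu_b$ is integral, the target end alcove required in Theorem~\ref{thm:ADLVChimneys} — namely $\bb_\nu^y$ for some $y$ — coincides (for a suitable $y$) with the target alcove appearing in the \cite{MST1} construction, and that the chimney there, being a Weyl chamber, is precisely a $(P,y)$-chimney since $P=B$ in the regular case, and reduces correctly in general because a translation standard representative places no constraint forcing $P\ne B$ to behave non-periodically on the relevant walls.

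I would then assemble the proof: choose $y$ so that $\bb_\nu^y$ is the \cite{MST1} target alcove; observe that the \cite{MST1} gallery $\gamma$ of type $\vec{x_0}$ is positively folded with respect to the corresponding chimney, which is (or refines to) the $(P,y)$-chimney; apply Theorem~\ref{thm:ADLVChimneys}(1) to conclude $X_{x_0}(b)\ne\emptyset$; and apply Theorem~\ref{thm:ADLVChimneys}(2), using the dimension count $\dim(\gamma)$ from \cite[Prop.~5.6 / Sec.~6]{MST1}, to get
\[
\dim X_{x_0}(b) \;=\; \dim(\gamma) - \langle 2\rho,\nu_b\rangle \;=\; \langle \rho, \lambda - \nu_b\rangle.
\]
The correction $\check\rho$-versus-$\rho$ relabeling noted in the text must be propagated consistently here, since condition (2) involves $\check\rho$ while the dimension formula involves $\rho$.

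The hard part will be bookkeeping rather than a new idea: one must verify that the gallery and the target alcove produced in \cite{MST1} genuinely match the data demanded by Theorem~\ref{thm:ADLVChimneys} after translating between the $U^-$-orbit language of \cite{MST1} and the $I_P^y$-orbit language of \cite{GHKRadlvs}/\cite{MNST}, and that positivity of folds with respect to the periodic (Weyl chamber) orientation used in \cite{MST1} is exactly positivity with respect to the $(P,y)$-chimney in the integral-Newton-point case — this is where the restriction to \emph{integral} $\nu_b$ is essential, since for integral $\nu_b$ the standard representative is a translation, so the chimney combinatorics collapses to the $P=B$ situation already handled in \cite{MST1}. Once that identification is in place, the nonemptiness and dimension statements are immediate corollaries.
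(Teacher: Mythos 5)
Your proposal and the paper's proof take genuinely different routes, and yours contains a gap. The paper simply conjugates $b$ so that $\mu$ is dominant (hence $\nu_b=\mu$, which is then integral), observes that the convex-hull condition becomes a negative-cone condition, and then directly invokes~\cite[Prop.~8.7]{MST1}, which was proved using the intersections of Iwahori and $U^-$-orbits from~\cite{GHKR} — a characterization valid for \emph{every} translation element, regular or not. Your proposal instead routes through Theorem~\ref{thm:ADLVChimneys}, whose underlying input is the $I_P$-orbit characterization from~\cite{GHKRadlvs}, and this forces $P=P_{\nu_b}$ to be the parabolic associated to the Newton point.

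The gap is your claim that ``for integral $\nu_b$ the standard representative is a translation, so the chimney combinatorics collapses to the $P=B$ situation.'' The parabolic $P$ in Theorem~\ref{thm:GHKRThm} and Theorem~\ref{thm:ADLVChimneys} is dictated by which walls $\nu_b$ lies on, not by the form of $b_\nu$. Having $b_\nu = t^{\nu_b}$ a translation does not make $P=B$: if $\nu_b$ is on a wall of $\Cf$ — which the hypothesis $\nu_b\in\Conv(\sW(\lambda-2\check\rho))$ permits, since this polytope meets every wall — then $P$ has positive rank, the $(P,y)$-chimney is not a Weyl chamber, and its orientation is aperiodic on the $\Delta_M$-hyperplanes. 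The galleries of~\cite{MST1} are positively folded only for the periodic $B$-orientation, so there is no reason they remain positively folded with respect to such a $(P,y)$-chimney. The paper's own Remark~4.7 (immediately following Theorem~\ref{thm:translations}) makes this explicit: the derivation from Theorem~\ref{thm:ADLVChimneys} is available precisely when $\nu_b$ additionally lies in $\Cfs$, hence is regular and $P_\nu=B$. (You also have an arithmetic slip in the intermediate dimension count, which as written simplifies to $\langle 2\rho,\lambda\rangle$ and would yield $\langle 2\rho,\lambda-\nu_b\rangle$ rather than $\langle\rho,\lambda-\nu_b\rangle$, but this is minor next to the parabolic issue.)
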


\begin{proof}  By $\sigma$-conjugating (equivalently conjugating) the given $b$ by an element of $\sW$, we may assume without loss of generality that $\mu$ is contained in the dominant Weyl chamber $\Cf$, so that $\nu_b = \mu$. The intersection of $\Cf$ with the polytope $\Conv( \sW (\lambda - 2\check{\rho}) )$ is equal to the intersection of $\Cf$ with the negative cone based at $\lambda - 2\check{\rho}$. Since the Kottwitz points of $x_0$ and $b$ agree, we may consider galleries in a single fixed sheet of extended alcoves.  Therefore, by \cite[Prop.~8.7]{MST1}, for all $\mu$ in this intersection, we have $X_{x_0}(b) \neq \emptyset$ and $\dim X_{x_0}(b) = \langle \rho, \lambda - \mu \rangle = \langle \rho, \lambda - \nu_b \rangle$.  
\end{proof}

\begin{remark}
If $\nu_b = \mu$ also lies in the shrunken dominant Weyl chamber $\Cfs$, then the result of Theorem \ref{thm:translations} can be obtained as an application of Theorem \ref{thm:ADLVChimneys}.  Recall from Example \ref{ex:Pnu} that if the Newton point $\nu$ for $[b] \in B(G)$ is integral and regular, then $P_\nu = B$.  By Example \ref{ex:stdReps}, the standard representative $b_\nu = t^\nu$ in this case, and the gallery $\gamma_0: \fa \rsa \bb_\nu^{w_0}$ from \cite[Prop.~6.7]{MST1}, constructed to prove the nonemptiness and dimension conclusions in \cite[Prop.~8.7]{MST1} cited in the proof above, has type $\vec{x}_0$ and is positively folded with respect to the $(B,w_0)$-chimney.  In particular, the dominant Weyl chamber is a representative sector for this chimney, and the gallery $\gamma_0$ is folded away from the chimney, as required.
\end{remark}

%!TEX root = main.tex

\section{Conjugacy classes of standard representatives}\label{sec:conjugacy}

In this section we consider $\eW$-conjugacy classes of standard representatives for $\sigma$-conjugacy classes, motivated by the conjugates appearing in Theorem~\ref{thm:ADLVChimneys}. Figure~\ref{fig:conjugates}, repeated from the introduction here with more detailed annotation, illustrates an example of the definitions and results of this section in type $\tilde{A}_2$. In Section~\ref{sec:fillOut}, we give a precise description of the $\eW$-conjugacy classes of standard representatives for non-integral Newton points with a rank 1 parabolic subgroup. This description is based on the notion of transverse subspaces introduced in Section~\ref{sec:transverse}.

\subsection{Transverse subspaces and conjugacy classes} \label{sec:transverse}

For each Newton point $\nu$, we now introduce a collection of associated affine subspaces of the standard apartment $\mathfrak{a}$.  We then relate these subspaces to the $\eW$-conjugacy class of the standard representative $b_\nu$.  

\begin{definition}[Transverse subspaces]\label{defn:transverse}  Let $[b] \in B(G)_P$, and denote by $\nu$ the corresponding Newton point and by $b_\nu = t^\eta v$ the standard representative.  The affine subspace of $\mathfrak{a}$  given by 
\[
\cT_\nu := \nu + \Ker A_v
\]
is called \emph{the subspace transverse to $\nu$}.  For any $u \in \sW$, the affine subspace $u\cT_\nu$ is called \emph{a transverse subspace of $\nu$}.  
\end{definition}

In the special cases we treat in this paper, the next two results provide a concrete description of these transverse subspaces. See Figure~\ref{fig:conjugates} for an example of transverse subspaces in parabolic rank $1$, illustrating Corollary \ref{cor:nonintegralTransverse} below.

\begin{corollary}\label{cor:integralTransverse}  If either $[b_\nu] \in B(G)_P$ with $P=B$, or $[b_\nu] \in B(G)_P$ with $P = P_i$ and $\nu$ integral, then the subspace transverse to $\nu$ is the single point $\cT_\nu = \{\nu\}$.
\end{corollary}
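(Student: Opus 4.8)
The plan is to unwind the definition of the transverse subspace $\cT_\nu = \nu + \Ker A_v$, using the explicit description of standard representatives provided in Proposition~\ref{prop:stdReps}. Since a transverse subspace is an affine translate of $\Ker A_v$ by $\nu$, the claim $\cT_\nu = \{\nu\}$ is equivalent to showing that $\Ker A_v = \{0\}$; that is, that $A_v$ is invertible. By Lemma~\ref{lem:AwProperties}(3), $A_v$ is a projection, so $A_v$ is invertible precisely when $A_v = I$, which by Corollary~\ref{cor:AwKerRange} happens exactly when $\Range(A_v) = \Ker(I-v) = \mathfrak{a}$, i.e.\ when $v$ acts as the identity on $\mathfrak{a}$, i.e.\ when $v = \id$ in $\sW$.

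So the entire statement reduces to checking that the standard representative $b_\nu = t^\eta v$ has $v = \id$ in the two cases listed. First I would handle the case $P = B$: by Example~\ref{ex:stdReps}(1), when $P = B$ the standard representative is the translation $b_\nu = t^\nu$, so $v = \id$ and $A_v = I$, giving $\cT_\nu = \nu + \{0\} = \{\nu\}$. Second, for the case $P = P_i$ with $\nu$ integral, Proposition~\ref{prop:stdReps}(1) says that $b_\nu$ is again the translation $t^\nu$, so once more $v = \id$, $\Ker A_v = \Ker I = \{0\}$, and therefore $\cT_\nu = \{\nu\}$. In both cases the conclusion follows immediately.

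There is essentially no obstacle here: the corollary is a direct bookkeeping consequence of the formulas for standard representatives already established, combined with the elementary properties of averaging projections from Lemma~\ref{lem:AwProperties} and Corollary~\ref{cor:AwKerRange}. The only point requiring a moment's care is recording why $\Ker A_{\id} = \{0\}$, which is immediate since $A_{\id} = I$ (the sum in Definition~\ref{defn:averaging} has a single term when $m = 1$). If one wishes to be even more economical, one can simply observe that in both cases $v = \id$ so $A_v = I$ and hence $\cT_\nu = \nu + \Ker I = \{\nu\}$, with no appeal to Corollary~\ref{cor:AwKerRange} needed at all; I would likely phrase the final proof in this shorter way and reserve the $\Range(A_v) = \Ker(I-v)$ discussion for the non-integral case treated in Corollary~\ref{cor:nonintegralTransverse}.
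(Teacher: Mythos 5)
Your proof is correct and, once stripped to the economical version you sketch at the end, it is essentially identical to the paper's argument: in both cases the standard representative is the translation $t^\nu$, so $v = \id$, hence $A_v = \Id$ and $\Ker A_v$ is trivial, giving $\cT_\nu = \nu + \{0\} = \{\nu\}$. The paper cites Example~\ref{ex:Pnu}(1) for the $P = B$ case where you cite Example~\ref{ex:stdReps}(1); the latter is the reference that actually records $b_\nu = t^\nu$, so your citation is arguably the more appropriate one.
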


\begin{proof}
By Example~\ref{ex:Pnu}(1) and Proposition~\ref{prop:stdReps}(1), respectively, in these two cases $b_\nu = t^\nu$.  Thus $A_v = A_\id = \Id$, and so $\Ker A_v$ is trivial.
\end{proof}

\begin{corollary}\label{cor:nonintegralTransverse}
Suppose $[b_\nu] \in B(G)_P$ with $P = P_i$ and $\nu$ non-integral, and write  $b_\nu = t^\eta s_i$ as in Proposition~\ref{prop:stdReps}(2).  Then 
\[
\cT_\nu = \nu + \{ r \alpha_i^\vee \mid r \in \R \}.
\]
\end{corollary}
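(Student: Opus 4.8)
The plan is to unwind Definition~\ref{defn:transverse} in this particular case and to identify $\Ker A_v$ explicitly. By Proposition~\ref{prop:stdReps}(2), since $P = P_i$ and $\nu$ is non-integral, the standard representative is $b_\nu = t^\eta s_i$, so the finite Weyl group element appearing in the definition of $\cT_\nu$ is $v = s_i$, and hence $\cT_\nu = \nu + \Ker A_{s_i}$. Thus the entire content of the corollary is the computation $\Ker A_{s_i} = \R\alpha_i^\vee$.

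First I would invoke Corollary~\ref{cor:AwKerRange} to rewrite $\Ker A_{s_i} = \Range(I - s_i)$. Then, using the formula $s_i(x) = x - \langle \alpha_i, x\rangle \alpha_i^\vee$ recorded in Section~\ref{sec:Notation}, one has $(I - s_i)(x) = \langle \alpha_i, x\rangle\, \alpha_i^\vee$ for every $x \in \mathfrak{a}$. Since the linear functional $\langle \alpha_i, \cdot\rangle \colon \mathfrak{a} \to \R$ is surjective (for instance $\langle \alpha_i, \alpha_i^\vee\rangle = 2$), it follows that $\Range(I - s_i) = \{ r\alpha_i^\vee \mid r \in \R\}$, and translating by $\nu$ gives the claim.

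An equivalent and perhaps cleaner route is to appeal directly to Lemma~\ref{lem:AwProperties}(4): $A_{s_i}$ is the orthogonal projection of $\mathfrak{a}$ onto the hyperplane $H_{\alpha_i}$, so its kernel is the Euclidean-orthogonal complement of $H_{\alpha_i}$, which is the one-dimensional subspace spanned by $\alpha_i$, equivalently by $\alpha_i^\vee = 2\alpha_i/(\alpha_i, \alpha_i)$. Either way $\Ker A_{s_i} = \R\alpha_i^\vee$, and $\cT_\nu = \nu + \{ r\alpha_i^\vee \mid r \in \R\}$ as asserted.

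I do not expect any genuine obstacle here: the statement is an immediate consequence of Proposition~\ref{prop:stdReps}(2) combined with the already-established description of $\Ker A_w$ in Corollary~\ref{cor:AwKerRange} (or with Lemma~\ref{lem:AwProperties}(4)). The only point requiring a moment's care is that it is $\alpha_i^\vee$, rather than $\alpha_i$, that spans the Euclidean-orthogonal complement of $H_{\alpha_i}$ — but since $\alpha_i^\vee$ is a positive scalar multiple of $\alpha_i$ this distinction is harmless.
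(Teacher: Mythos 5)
Your proof is correct, and your second route — invoking Lemma~\ref{lem:AwProperties}(4) to identify $A_{s_i}$ as the orthogonal projection of $\mathfrak{a}$ onto $H_{\alpha_i}$, whose kernel is therefore the line $\R\alpha_i^\vee$ — is precisely the paper's argument. Your first route, via Corollary~\ref{cor:AwKerRange} and the explicit computation $(I - s_i)(x) = \langle \alpha_i, x\rangle\,\alpha_i^\vee$, is an equally valid alternative; it trades the geometric picture for a one-line algebraic verification, but the two are essentially dual phrasings of the same fact and neither offers extra generality here.
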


\begin{proof}
The result follows from Definition \ref{defn:transverse} and the fact that $\Ker A_{s_i} = \{ r \alpha_i^\vee \mid r \in \R \}$, since by Lemma~\ref{lem:AwProperties}(4), the operator $A_{s_i}$ is the orthogonal projection onto $H_{\alpha_i}$.
\end{proof}

For general Newton points, we observe the following.

\begin{corollary}\label{cor:stdRepTransverse}
Let $[b] \in B(G)_P$, and denote by $\nu$ the Newton point and by $\omega$ the Kottwitz point.  Let $b_\nu = t^\eta v$ be the standard representative for $[b]$.  Then $\eta \in \cT_\nu \cap (\omega+R^\vee)$.
\end{corollary}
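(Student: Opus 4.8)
The plan is to check the two conditions $\eta \in \cT_\nu$ and $\eta \in \omega + R^\vee$ separately, both of which follow quickly from results already in hand.

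First I would establish $\eta \in \cT_\nu$. By Lemma~\ref{lem:stdRepAvDom}, the standard representative $b_\nu = t^\eta v$ satisfies $\nu = A_v \eta$, and by Lemma~\ref{lem:AwProperties}(3) the operator $A_v$ is a projection. Consequently $\nu = A_v\eta \in \Range(A_v)$, so applying $A_v$ once more yields $A_v\nu = A_v^2\eta = A_v\eta = \nu$. Therefore
\[
A_v(\eta - \nu) = A_v\eta - A_v\nu = \nu - \nu = 0,
\]
so $\eta - \nu \in \Ker A_v$, which by Definition~\ref{defn:transverse} says precisely that $\eta \in \nu + \Ker A_v = \cT_\nu$.

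Next I would verify $\eta \in \omega + R^\vee$. Since $b_\nu$ is the standard representative of $[b]$, property~(1) of Lemma~\ref{lem:stdRepList} gives $\kappa_G(b_\nu) = \omega$. As $b_\nu = t^\eta v$ with $v \in \sW$, the Kottwitz homomorphism sends $b_\nu$ to the image of $\eta$ in $\Lambda_G \cong Q^\vee/R^\vee$; this is the same computation of $\kappa_G$ on a translation-times-Weyl-element that is used repeatedly in the proof of Proposition~\ref{prop:stdReps}, and it reflects the fact that $\sW$ normalizes $R^\vee$ so $\kappa_G$ depends only on the translation part modulo $R^\vee$. Hence the image of $\eta$ in $Q^\vee/R^\vee$ equals $\omega$, i.e.\ $\eta \in \omega + R^\vee$ in the notation of Section~\ref{sec:Notation}. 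Combining the two paragraphs gives $\eta \in \cT_\nu \cap (\omega + R^\vee)$, as required.

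There is no genuine obstacle here: the statement is a direct corollary of Lemmas~\ref{lem:stdRepAvDom}, \ref{lem:AwProperties}, and~\ref{lem:stdRepList}. The only points that merit a sentence of justification are the idempotence argument giving $A_v\nu = \nu$ and the observation that $\kappa_G(t^\eta v)$ depends only on $\eta$ modulo the coroot lattice.
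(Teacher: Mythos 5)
Your proof is correct. The overall skeleton matches the paper's: $\eta \in \omega + R^\vee$ comes from $\kappa_G(b_\nu) = \omega$ (Lemma~\ref{lem:stdRepList}(1)), and $\eta \in \cT_\nu$ reduces to showing $A_v\nu = \nu$ given $A_v\eta = \nu$. Where you diverge is in the argument that $A_v$ fixes $\nu$. The paper argues geometrically: since $P_\nu = MN$, we have $\nu \in \bigcap_{\alpha_i\in\Delta_M} H_{\alpha_i}$, and $v\in W_M$ therefore fixes $\nu$ pointwise, so $A_v\nu = \nu$ from the definition of $A_v$. You instead argue purely algebraically: $\nu = A_v\eta$ places $\nu$ in $\Range(A_v)$, and idempotence (Lemma~\ref{lem:AwProperties}(3)) then gives $A_v\nu = A_v^2\eta = A_v\eta = \nu$. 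Your route is more economical and self-contained, needing nothing about the structure of $P_\nu$ or $W_M$; the paper's route has the slight advantage of recording the intermediate fact that $v$ (not just $A_v$) fixes $\nu$, which is a useful geometric observation in its own right. Either is a legitimate proof.
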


\begin{proof}
First recall by property (1) of the standard representative from Lemma \ref{lem:stdRepList} that $\kappa_G(b_\nu) = \kappa_G(\eta) = \omega$, and so $\eta \in \omega+R^\vee$.  By definition, $\eta \in \cT_\nu$ if and only if $\eta - \nu \in \Ker A_v$.  Thus we want to show that $A_v \eta = A_v \nu$. We have $A_v \eta = \nu$ by Lemma~\ref{lem:stdRepAvDom}, so it suffices to prove that $A_v$ fixes $\nu$.  Let $P = MN$ be the parabolic subgroup associated to $\nu$.  Then $\nu \in \cap_{\alpha_i \in \Delta_M} H_{\alpha_i}$.  Now $v \in W_M$, and it follows that $v$ fixes $\nu$. From the definition of $A_v$, it is now easy to see that $A_v$ fixes $\nu$, as desired.
\end{proof}

We now explain the connection between transverse subspaces and $\eW$-conjugacy classes of standard representatives.

\begin{lemma}\label{lem:containedTransverse} Let $[b] \in B(G)_P$, and denote by $\nu$ the Newton point and by $\omega$ the Kottwitz point.  Let $b_\nu = t^\eta v$ be the standard representative for $[b]$. Then for any $y = t^\mu u \in \eW$ with $\mu \in Q^\vee$ and $u \in \sW,$ the conjugate $b_\nu^y$ is of the form $t^\xi u v u^{-1},$ where $\xi \in (u\cT_\nu) \cap (\omega + R^\vee)$. 
\end{lemma}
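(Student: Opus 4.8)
The plan is to compute the conjugate $b_\nu^y$ explicitly in the semidirect product $\eW \cong Q^\vee \rtimes \sW$, and then verify the two membership conditions on its translation part separately. Writing $y^{-1} = t^{-u^{-1}\mu}u^{-1}$ and using the product rule $t^\lambda w \cdot t^{\lambda'}w' = t^{\lambda + w\lambda'}\,ww'$, a short computation gives
\[ b_\nu^y = (t^\mu u)(t^\eta v)(t^{-u^{-1}\mu}u^{-1}) = t^{\xi}\, uvu^{-1}, \qquad \text{where} \quad \xi = u\eta + (I - uvu^{-1})\mu. \]
This already exhibits the spherical part as $uvu^{-1}$, so it remains only to show that $\xi \in (u\cT_\nu) \cap (\omega + R^\vee)$.

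For the inclusion $\xi \in \omega + R^\vee$, I would invoke the elementary fact that $(I-w)\lambda \in R^\vee$ for every $w \in \sW$ and every $\lambda \in Q^\vee$, which reduces to the case of a simple reflection, where $\lambda - s_\alpha\lambda = \langle \alpha,\lambda\rangle\alpha^\vee \in R^\vee$. Applying this with $w = uvu^{-1}$ and $\lambda = \mu$ puts the second summand of $\xi$ in $R^\vee$; applying it with $w = u$ and $\lambda = \eta$ gives $u\eta \equiv \eta \pmod{R^\vee}$. Since $\eta \in \omega + R^\vee$ by Corollary~\ref{cor:stdRepTransverse} (equivalently by property (1) of Lemma~\ref{lem:stdRepList}), this yields $\xi \in \omega + R^\vee$.

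For the inclusion $\xi \in u\cT_\nu$, the key point is Lemma~\ref{lem:AwProperties}(2): the identity $uA_vu^{-1} = A_{uvu^{-1}}$ gives $u(\Ker A_v) = \Ker A_{uvu^{-1}}$, so that $u\cT_\nu = u\nu + \Ker A_{uvu^{-1}}$. It therefore suffices to check that $A_{uvu^{-1}}(\xi - u\nu) = 0$. I would split $\xi - u\nu = u(\eta - \nu) + (I - uvu^{-1})\mu$. The first term is annihilated because $\eta - \nu \in \Ker A_v$ by Corollary~\ref{cor:stdRepTransverse}, so conjugating gives $A_{uvu^{-1}}(u(\eta-\nu)) = uA_v(\eta - \nu) = 0$; the second term is annihilated by Lemma~\ref{lem:AwProperties}(5) applied to $w = uvu^{-1}$, since $\Range(I-w) \subseteq \Ker A_w$. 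Hence $\xi \in u\nu + \Ker A_{uvu^{-1}} = u\cT_\nu$, which completes the argument.

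There is no genuine obstacle here beyond careful bookkeeping in the semidirect product. The one point that requires attention is the identification $u(\Ker A_v) = \Ker A_{uvu^{-1}}$, which is exactly the geometric statement that $u$ carries the subspace transverse to $\nu$ onto a candidate for the subspace transverse to $u\nu$; once this is in place, all the required vanishing falls out of the averaging-operator properties in Lemma~\ref{lem:AwProperties} together with the characterization of the standard representative in Lemma~\ref{lem:stdRepList}, Lemma~\ref{lem:stdRepAvDom}, and Corollary~\ref{cor:stdRepTransverse}.
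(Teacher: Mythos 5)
Your proof is correct and follows essentially the same route as the paper: compute $b_\nu^y = t^\xi uvu^{-1}$ with $\xi = u\eta + (I - uvu^{-1})\mu$, then handle the two membership claims using Corollary~\ref{cor:stdRepTransverse} and Lemma~\ref{lem:AwProperties}(5). The only cosmetic difference is bookkeeping in the second half: the paper multiplies through by $u^{-1}$ and checks membership in $\cT_\nu$ directly, while you instead invoke Lemma~\ref{lem:AwProperties}(2) to rewrite $u\cT_\nu = u\nu + \Ker A_{uvu^{-1}}$ and verify vanishing under $A_{uvu^{-1}}$ term by term; these are equivalent. You are also slightly more explicit than the paper in the first half, spelling out that $(I-w)Q^\vee \subseteq R^\vee$ (reduced to $\lambda - s_\alpha\lambda = \langle\alpha,\lambda\rangle\alpha^\vee$), whereas the paper abbreviates this to ``$\sW$ preserves $R^\vee$''; your version is the more careful one.
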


\begin{proof}  First compute that $b_\nu^y = t^\xi u v u^{-1}$
where
\[
\xi = \mu + u \eta - u v u^{-1} \mu.
\]
Recall from Corollary \ref{cor:stdRepTransverse} that $\eta \in \omega + R^\vee$.  Since the action of $\sW$ on $\mathfrak{a}$ preserves $R^\vee$, then $\xi \in \omega + R^\vee$ as well, independent of $\mu \in Q^\vee$.

It remains to show that $\mu + u \eta - u v u^{-1} \mu$ lies in $u\cT_\nu$.  Multiplying on the left by $u^{-1}$ and replacing $u^{-1}\mu$ by $\mu'$, it is then enough to show that for all $\mu' \in R^\vee$, we have $\mu' + \eta - v \mu' \in \cT_\nu$.
Now by Corollary~\ref{cor:stdRepTransverse}, we have $\eta \in \cT_\nu$.  It follows that $\nu + \Ker A_{v} = \eta + \Ker A_{v}$.  It now suffices to show that $\mu' - v\mu' \in \Ker A_{v}$.  But $\mu' - v\mu' = (\Id - v)\mu' \in \Range(\Id -v) \subseteq \Ker A_{v}$ by Lemma~\ref{lem:AwProperties}(5), so we are done. 
\end{proof}

We rephrase the previous lemma in the following immediate corollary.

\begin{corollary}\label{cor:containedTransverse}  With all notation as in Lemma~\ref{lem:containedTransverse}, the $\eW$-conjugacy class of $b_\nu$ is contained in the set
\[
\bigcup_{u \in \sW} \left\{ t^\xi u v u^{-1} \mid \xi \in (u\cT_\nu) \cap (\omega+R^\vee) \right\}.
\]
\end{corollary}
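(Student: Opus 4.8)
The plan is to recognize that this statement is simply the per-$y$ assertion of Lemma~\ref{lem:containedTransverse} repackaged as a single set-theoretic containment, so almost all of the content has already been proved. By definition, the $\eW$-conjugacy class of $b_\nu$ is the set $\{ b_\nu^y \mid y \in \eW \}$, so it suffices to show that each element $b_\nu^y$ lies in the displayed union.

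First I would fix an arbitrary $y \in \eW$ and use the decomposition $\eW \cong Q^\vee \rtimes \sW$ recalled in Section~\ref{sec:Notation} to write $y = t^\mu u$ with $\mu \in Q^\vee$ and $u \in \sW$. Lemma~\ref{lem:containedTransverse} then applies verbatim and gives $b_\nu^y = t^\xi\, u v u^{-1}$ for some $\xi \in (u\cT_\nu) \cap (\omega + R^\vee)$. Hence $b_\nu^y$ belongs to the set $\{ t^\xi u v u^{-1} \mid \xi \in (u\cT_\nu) \cap (\omega+R^\vee) \}$, which is the $u$-indexed member of the union. Since $y \in \eW$ was arbitrary, every conjugate of $b_\nu$ lies in $\bigcup_{u \in \sW} \{ t^\xi u v u^{-1} \mid \xi \in (u\cT_\nu) \cap (\omega+R^\vee) \}$, which is exactly the claim.

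There is no genuine obstacle in this corollary: the only point requiring (already-established) care is the validity of the decomposition $y = t^\mu u$, and all of the substantive work — that the translation part of any conjugate stays in $\omega + R^\vee$ and lands in the relevant transverse subspace — is carried out in Lemma~\ref{lem:containedTransverse}, which in turn rests on Corollary~\ref{cor:stdRepTransverse} and Lemma~\ref{lem:AwProperties}(5). The harder direction, namely that this union is actually \emph{filled out} by the $\eW$-conjugacy class (at least for non-integral Newton points with a rank~$1$ parabolic), is exactly what is addressed later in Section~\ref{sec:fillOut} and is not needed here.
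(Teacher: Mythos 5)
Your proof is correct and follows exactly the argument the paper has in mind: the paper explicitly calls Corollary~\ref{cor:containedTransverse} an ``immediate'' rephrasing of Lemma~\ref{lem:containedTransverse}, and your write-up simply unwinds that — decompose $y = t^\mu u$, invoke the lemma, and collect the conclusions into the stated union. Nothing to add.
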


\begin{figure}[ht]
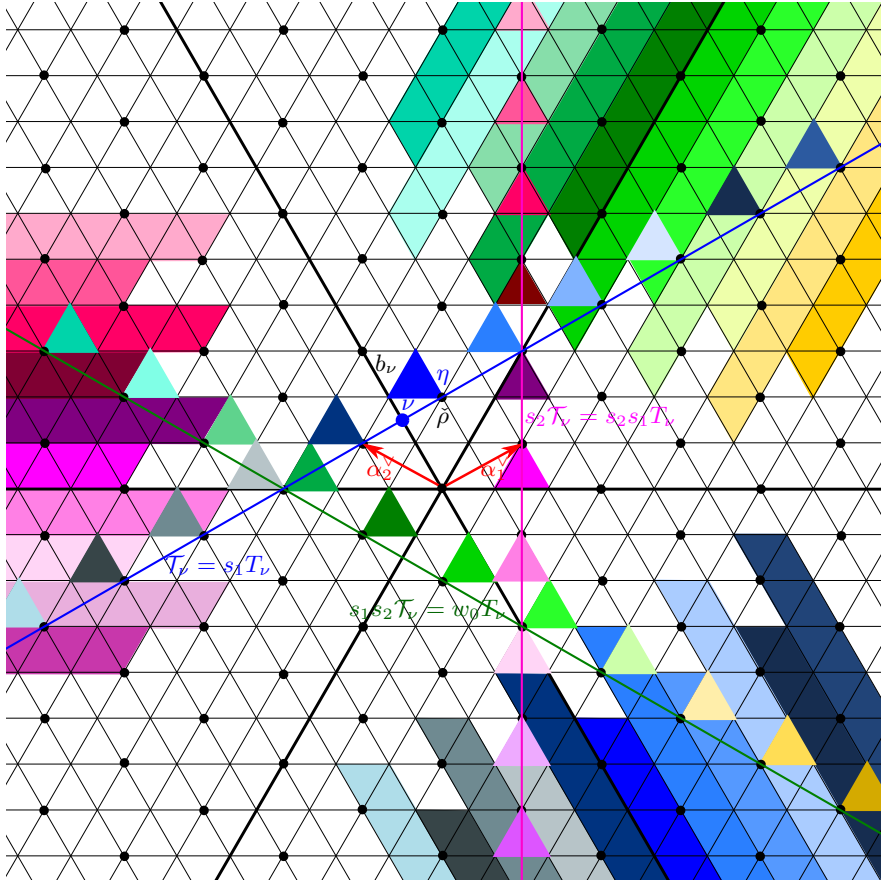

\centering
\begin{overpic}[width=0.75\textwidth]{conjugates}
\put(41,46){\footnotesize{\color{red}$\alpha_2^\vee$}}
\put(54,46){\footnotesize{\color{red}$\alpha_1^\vee$}}
\put(49,52){\footnotesize{\color{black}$\check{\rho}$}}
\put(42,58){\footnotesize{\color{black}$b_\nu$}}
\put(45,53.5){\footnotesize{\color{blue}$\nu$}}
\put(49,57){\footnotesize{\color{blue}$\eta$}}
\put(18,35){\footnotesize{\color{blue}$\cT_\nu = s_1 T_\nu$}}
\put(39,30){\footnotesize{\color{DarkGreen}$s_1 s_2\cT_\nu = w_0 T_\nu$}}
\put(59,52){\footnotesize{\color{Fuchsia}$s_2 \cT_\nu = s_2 s_1 T_\nu$}}
\end{overpic}
\caption{The $\eW$-conjugacy class of $b_\nu = t^{\check{\rho}} s_1$, with sectors in the same color representing the corresponding chimneys.}
\label{fig:conjugates}
\end{figure}

We note that in general the union in the statement of Corollary~\ref{cor:containedTransverse} will not be disjoint.  For example, in Figure~\ref{fig:conjugates}, if $\xi \in \cT_\nu \cap R^\vee$ and $u = s_1$ then $\xi \in u \cT_\nu = \cT_\nu$ and $t^\xi s_1 = t^\xi u s_1 u^{-1}$.  In this figure, the colored alcoves, which lie along the transverse subspaces,  are exactly those which are (labeled by) the elements of $\eW$ given in the statement of  Corollary~\ref{cor:containedTransverse}.

Motivated by Corollary~\ref{cor:containedTransverse}, we now make the following definition.

\begin{definition}\label{defn:fillsOut}  Let $[b] \in B(G)_P$, and denote by $\nu$ the Newton point and by $\omega$ the Kottwitz point.  Let $b_\nu = t^\eta v$ be the standard representative for $[b]$. We say that the $\eW$-conjugacy class of $b_\nu$ \emph{fills out the transverse subspaces} if it is equal to
\[
\bigcup_{u \in \sW} \left\{ t^\xi u v u^{-1} \mid \xi \in (u\cT_\nu) \cap (\omega + R^\vee) \right\}.
\]
\end{definition}

The next lemma gives a convenient condition equivalent to verifying the set equality in Definition \ref{defn:fillsOut}. Recall that by Corollary~\ref{cor:AwKerRange}, for any $v \in \sW$ we have $\Ker A_v = \Range(\Id - v)$. However, we will see later in Examples \ref{ex:C2} and \ref{ex:s1s3} that Condition~\eqref{eq:kerRangeZ} below is a stronger requirement, and by no means automatic from this equality.

\begin{lemma}\label{lem:fillsOut}   
Let $[b] \in B(G)_P$, and denote by $\nu$ the corresponding Newton point. Let $b_\nu = t^\eta v$ be the standard representative for $[b]$. The $\eW$-conjugacy class of $b_\nu$ fills out the transverse subspaces if and only if the following condition holds: 
\begin{equation}\label{eq:kerRangeZ}
\Ker A_{v} \cap R^\vee \subseteq (\Id - v)R^\vee.
\end{equation}
\end{lemma}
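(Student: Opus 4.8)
The statement to prove is Lemma~\ref{lem:fillsOut}, an iff between ``the $\eW$-conjugacy class of $b_\nu$ fills out the transverse subspaces'' and the lattice inclusion~\eqref{eq:kerRangeZ}. By Corollary~\ref{cor:containedTransverse} we already know one inclusion of the set equality in Definition~\ref{defn:fillsOut} always holds, so the content of the lemma is that \emph{equality} of those two sets is equivalent to~\eqref{eq:kerRangeZ}. The natural strategy is to make the computation from the proof of Lemma~\ref{lem:containedTransverse} completely explicit: there we saw that conjugating $b_\nu = t^\eta v$ by $y = t^\mu u$ yields $b_\nu^y = t^\xi uvu^{-1}$ with
\[
\xi = \mu + u\eta - uvu^{-1}\mu = u\bigl(\eta + (\Id - v)\mu'\bigr),
\]
where $\mu' = u^{-1}\mu$ ranges over all of $R^\vee$ as $\mu$ does (since $u\in\sW$ preserves $R^\vee$ and hence permutes $Q^\vee/R^\vee$-cosets; more precisely $u^{-1}\mu$ ranges over $Q^\vee$, but its $R^\vee$-part is what matters — one must be a little careful, see below). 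The plan is then: fix $u\in\sW$ and analyze, for the fixed spherical part $uvu^{-1}$, exactly which translation parts $t^\xi$ occur among the conjugates $b_\nu^y$.

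\textbf{Key steps.} First I would reduce to the case $u = \id$ by conjugating: since $b_\nu^{y}$ for $y = t^\mu u$ equals $(b_\nu^{t^{\mu'}})^{u}$ up to bookkeeping, and since $\cT_\nu$ and $u\cT_\nu$, and the cosets $\omega + R^\vee$, transform compatibly under the $\sW$-action (using that $\sW$ preserves $R^\vee$), it suffices to show that the set of translation parts $\xi$ with $t^\xi v$ conjugate to $b_\nu$ equals $\cT_\nu \cap (\omega + R^\vee)$. Second, the ``fills out'' condition for the $u = \id$ piece says precisely: for every $\xi \in \cT_\nu \cap (\omega+R^\vee)$ there is $\mu' \in Q^\vee$ (equivalently, since $\xi - \eta \in R^\vee$ by Corollary~\ref{cor:stdRepTransverse}, one may take $\mu' \in R^\vee$) with $\xi = \eta + (\Id - v)\mu'$. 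Using $\eta \in \cT_\nu$ and $\cT_\nu = \eta + \Ker A_v$, this is equivalent to: every element of $\Ker A_v \cap R^\vee$ lies in $(\Id - v)R^\vee$ — which is exactly~\eqref{eq:kerRangeZ}. For the converse direction, assuming~\eqref{eq:kerRangeZ}, I reverse this: given any $\xi$ in the target set, write $\xi - \eta \in \Ker A_v \cap R^\vee = (\Id - v)R^\vee$, produce the needed $\mu'$, and conjugate to realize $t^\xi v$. Third, one must handle the non-$\id$ pieces $uvu^{-1}$ and check that the union over $u\in\sW$ of the realized sets is precisely the union in Definition~\ref{defn:fillsOut}; this is formal once the $u=\id$ case is settled, because conjugating the whole analysis by $u$ turns $\Ker A_v \cap R^\vee \subseteq (\Id-v)R^\vee$ into $\Ker A_{uvu^{-1}} \cap R^\vee \subseteq (\Id - uvu^{-1})R^\vee$, which is the same condition applied to the conjugate reflection data, and which follows from~\eqref{eq:kerRangeZ} by applying $u$ (an isometry preserving $R^\vee$) throughout.

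\textbf{Main obstacle.} The delicate point is the interplay between $Q^\vee$ and $R^\vee$ in the translation parts, and making sure the Kottwitz-point condition $\xi \in \omega + R^\vee$ is neither too strong nor too weak. The conjugating elements $y$ range over $\eW$, so $\mu \in Q^\vee$; but $(\Id - v)\mu$ for $\mu\in Q^\vee$ need not lie in $R^\vee$ a priori, whereas $(\Id-v)R^\vee \subseteq R^\vee$ cleanly. The resolution is that $\xi - \eta = (\Id-v)\mu'$ must automatically lie in $R^\vee$ because $\kappa_G$ is $\sigma$-conjugation-invariant (so $\kappa_G(t^\xi uvu^{-1}) = \kappa_G(b_\nu) = \omega$ forces $\xi \in \omega + R^\vee$, hence $\xi - \eta \in R^\vee$), and then one checks that $(\Id - v)Q^\vee \cap R^\vee = (\Id-v)R^\vee$ — or rather, that it suffices to allow $\mu'\in R^\vee$. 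This last equality deserves a short argument: if $(\Id - v)\mu' \in R^\vee$ for $\mu' \in Q^\vee$, decompose $\mu'$ using that $v\in W_M$ acts trivially on the $\cT_\nu$-directions; on the complementary space $(\Id - v)$ is invertible over $\Q$, and one argues integrally using $v\in W_M$ and the fact that $\Delta_M = \{\alpha_i\}$ in the rank~$1$ case (or, in general, that restricted to $R^\vee$ the relevant torsion is controlled). I would isolate this as a small sublemma. Everything else is bookkeeping with the explicit formula for $\xi$ and the already-established facts that $\eta \in \cT_\nu \cap (\omega+R^\vee)$ (Corollary~\ref{cor:stdRepTransverse}) and $\cT_\nu = \eta + \Ker A_v = \nu + \Ker A_v$ (Definition~\ref{defn:transverse}, Corollary~\ref{cor:AwKerRange}).
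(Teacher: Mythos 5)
Your overall strategy matches the paper's: both make the conjugation computation from Lemma~\ref{lem:containedTransverse} explicit, reduce to the translation component for a fixed spherical piece $u$, and match the resulting parametrization against Condition~\eqref{eq:kerRangeZ}. The converse direction (Condition $\Rightarrow$ fills out), which is the one the paper actually uses in Corollary~\ref{cor:conjRank1}, goes through cleanly in your plan exactly as in the paper: Condition~\eqref{eq:kerRangeZ} hands you a witness $\mu' \in R^\vee$ directly, so there is no $Q^\vee$-vs-$R^\vee$ issue to resolve.

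The ``main obstacle'' paragraph, however, contains a genuine error. First, the worry that $(\Id - v)\mu$ ``need not lie in $R^\vee$'' for $\mu \in Q^\vee$ is misplaced: $(\Id - v)Q^\vee \subseteq R^\vee$ always, because $\sW$ acts trivially on $Q^\vee/R^\vee$ (for a simple reflection, $\mu - s_i\mu = \langle \alpha_i,\mu\rangle\alpha_i^\vee$, and induct over a word for $v$). As a consequence, the sublemma you propose to prove, $(\Id - v)Q^\vee \cap R^\vee = (\Id - v)R^\vee$, is \emph{false} in general: the left side equals $(\Id - v)Q^\vee$, which can strictly contain $(\Id - v)R^\vee$. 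The paper's own Example~\ref{ex:C2} is a counterexample even in parabolic rank $1$: for $v = s_2$ in type $C_2$, one has $(\Id - s_2)Q^\vee = \Z\alpha_2^\vee$ while $(\Id - s_2)R^\vee = 2\Z\alpha_2^\vee$. Moreover the proof sketch for that sublemma leans on $\Delta_M = \{\alpha_i\}$, but Lemma~\ref{lem:fillsOut} is stated for arbitrary $P$, so that structure is not available. The actual subtlety in the forward direction is whether the conjugating element $y = t^\mu u$ witnessing $b_\nu^y = t^\xi u v u^{-1}$ can be normalized so that $u^{-1}\mu$ lands in $R^\vee$ (equivalently $\mu \in R^\vee$, i.e.\ $y \in \aW$); the paper's proof simply asserts such a $\mu' \in R^\vee$ exists rather than deducing it from a lattice identity. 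Your ``or rather, it suffices to allow $\mu' \in R^\vee$'' fallback is pointing in the right direction, but it is not justified, and the lattice identity you propose as the primary route cannot be the justification since it fails.
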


\begin{proof} Throughout the proof, denote the Kottwitz point of $[b]$ by $\omega$. First suppose that the $\eW$-conjugacy class of $b_\nu$ fills out the transverse subspaces.  Then by definition, for all $u \in \sW$ and  $\xi \in (u\cT_\nu) \cap (\omega + R^\vee)$, there exists a $\mu \in Q^\vee$ such that for $y = t^\mu u\in \eW$, we have $b_\nu^y = t^\xi u v u^{-1}$.  As in the proof of Lemma~\ref{lem:containedTransverse}, we have
\[
\xi = \mu + u \eta - u v u^{-1} \mu.
\]
Multiply on the left by $u^{-1}$ and replace $u^{-1}\xi$ by $\xi'$ and $u^{-1}\mu$ by $\mu'$.  Then for all $\xi' \in \cT_\nu \cap (\omega+R^\vee)$, there exists $\mu' \in R^\vee$ such that 
\[
\xi' -\eta = \mu' - v \mu'.
\]
In other words, for all $\xi' \in \cT_\nu \cap (\omega+R^\vee)$, we have $\xi' - \eta \in (\Id - v)R^\vee$.

Now consider an arbitrary element $\sum_{j = 1}^n c_j \alpha_j^\vee \in \Ker A_{v} \cap R^\vee$, where $c_j \in \Z$. Define $\zeta:= \eta + \sum_{j = 1}^n c_j \alpha_j^\vee$.  Since $\eta \in \cT_\nu \cap (\omega+R^\vee)$ by Corollary~\ref{cor:stdRepTransverse}, it follows that $
\zeta \in \cT_\nu \cap (\omega+R^\vee)$.  
So by the previous paragraph, $\sum_{j = 1}^n c_j \alpha_j^\vee = \zeta - \eta \in (\Id - v)R^\vee.$
Therefore Condition~\eqref{eq:kerRangeZ} holds.

For the reverse containment, suppose Condition~\eqref{eq:kerRangeZ} holds, and let $u \in \sW$ and $\xi \in (u\cT_\nu) \cap (\omega+R^\vee)$.  We will construct an element $\mu \in Q^\vee$ such that for $y = t^\mu u \in \eW$, we have $b_\nu^y = t^\xi u v u^{-1}$.
Write $\xi' = u^{-1}\xi$, so that $\xi' \in \cT_\nu \cap (\omega+R^\vee)$.   Since $\eta \in \cT_\nu \cap (\omega+R^\vee)$ by Corollary \ref{cor:stdRepTransverse}, it follows that 
\[ \xi' - \eta \in \Ker A_{v} \cap R^\vee.\]  
Then by Condition~\eqref{eq:kerRangeZ}, there exists $\mu' \in R^\vee$ such that
\[
\xi' - \eta = (\Id - v)\mu'.
\]

Finally, let $\mu = u\mu'$, and recall that $\xi = u\xi'$.  Then 
\[
\xi = \mu + u\eta - u v u^{-1}\mu.
\]
Thus if $y = t^\mu u$, we obtain $b_\nu^y = t^\xi u v u^{-1}$, as in the proof of Lemma~\ref{lem:containedTransverse}.  So the $\eW$-conjugacy class of $b_\nu$ fills out the transverse subspaces, as required.
\end{proof}

 Condition~\eqref{eq:kerRangeZ} from the statement of Lemma~\ref{lem:fillsOut} is quite subtle, as indicated by the following examples.  These examples use the indexing from the plates in Bourbaki~\cite{Bourbaki4-6}.

\begin{example}\label{ex:C2}  Suppose that $G$ is of type $\tilde{C}_2$, and consider $v=s_2$.  By definition, we then have $A_{s_2}= \frac{1}{2}\left( I + s_2\right)$.  Compute that 
\[ A_{s_2}(\alpha_2^\vee) = \frac{1}{2}\left(I+s_2\right)\alpha_2^\vee = \frac{1}{2}\left(\alpha_2^\vee + s_2(\alpha_2^\vee)\right) = 0,\]
and so $\alpha_2^\vee \in \Ker(A_{s_2}) \cap R^\vee.$ (Alternatively, directly apply Lemma \ref{lem:AwProperties}(4) or Equation \eqref{eq:nuProj} from Lemma \ref{lem:NPimageAsi}.)

On the other hand, for any $c_1\alpha_1^\vee + c_2\alpha_2^\vee \in R^\vee$, we have
\begin{equation}\label{eq:s2ActC2} s_2(c_1\alpha_1^\vee + c_2\alpha_2^\vee) =  c_1s_2(\alpha_1^\vee) + c_2s_2(\alpha_2^\vee) =   c_1(\alpha_1^\vee+2\alpha_2^\vee) -c_2\alpha_2^\vee.
\end{equation}
Therefore, 
\[(I-s_2)(c_1\alpha_1^\vee + c_2\alpha_2^\vee) = (c_1\alpha_1^\vee + c_2\alpha_2^\vee)-(c_1(\alpha_1^\vee+2\alpha_2^\vee) -c_2\alpha_2^\vee)=2(c_2-c_1)\alpha_2^\vee. \]
Since $c_1,c_2 \in \Z$, the image $(I-s_2)R^\vee$ consists only of even multiples of $\alpha_2^\vee$, and in particular $\alpha_2^\vee \notin (I-s_2)R^\vee$. Since $\alpha_2^\vee  \in \Ker(A_{s_2}) \cap R^\vee$ but $\alpha_2^\vee \notin (I-s_2)R^\vee$,  Condition~\eqref{eq:kerRangeZ} does not hold for $v=s_2$ in type $\tilde{C}_2$. (Dually, Condition~\eqref{eq:kerRangeZ} does not hold for $v=s_1$ in type $\tilde{B}_2$.) 
\end{example}

As the next lemma suggests, the fact that Condition~\eqref{eq:kerRangeZ} fails for $v=s_2$ in type $\tilde{C}_2$ is not problematic for our purposes.  In type $\tilde{C}_2$, for all $[b] \in B(G)_P$ with $P=P_2$, Lemma \ref{lem:BCintegral} below says that the corresponding Newton point $\nu$ is integral, and so in fact $b_\nu = t^\nu$ and we are instead in the case of Corollary \ref{cor:integralTransverse}, rendering Lemma \ref{lem:fillsOut} for $v=s_2$ (and thus Example \ref{ex:C2}) irrelevant for our purposes. This feature is illustrated in Figure \ref{fig:C2StdReps}, where we see that all of the pink Newton points  $\nu \in H_{\alpha_2}$ are represented by the pink translation alcoves based at $\nu$. 
We remark that one can prove a version of Lemma \ref{lem:BCintegral} in greater generality, but we include here only those cases which arise directly in the proof of Proposition \ref{prop:conjRank1}.

\begin{lemma}\label{lem:BCintegral}
Suppose $[b] \in B(G)_P$ with $P=P_i$, and denote by $\nu$ the Newton point.  If the Kottwitz point $\kappa_G(b) = 0$, and either of the following conditions hold:
\begin{enumerate}
    \item $G$ is of type $\tilde{B}_2$, and $i=1$; or
    \item $G$ is of type $\tilde{C}_2$, and $i=2$; 
\end{enumerate}
then $\nu$ is integral.
\end{lemma}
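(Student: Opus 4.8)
The plan is to compute $\nu$ explicitly using the formula from Lemma~\ref{lem:NPimageAsi} and check directly that it lands in the coweight lattice $Q^\vee$ with the correct image in $Q^\vee/R^\vee$.  Since $\kappa_G(b) = 0$, Lemma~\ref{lem:NPimageAsi} gives a $\lambda \in R^\vee$ with $\kappa_G(\lambda) = 0$ and $\nu = A_{s_i}\lambda = \proj_i(\lambda)$, and by \eqref{eq:nuProj} we have $\langle \alpha_i, \lambda\rangle$ congruent mod $2$ to $0$ precisely when $\nu$ is integral.  So the entire statement reduces to the arithmetic claim: for $G$ of type $\tilde{B}_2$ with $i=1$ (resp.\ $\tilde{C}_2$ with $i=2$) and any $\lambda \in R^\vee$, the pairing $\langle \alpha_i, \lambda\rangle$ is even.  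Writing $\lambda = c_1 \alpha_1^\vee + c_2 \alpha_2^\vee$ with $c_1, c_2 \in \Z$, this is just $\langle \alpha_i, \lambda\rangle = c_1 \langle \alpha_i, \alpha_1^\vee\rangle + c_2\langle \alpha_i, \alpha_2^\vee\rangle = c_1 c_{1i} + c_2 c_{2i}$, where $C = (c_{ij})$ is the Cartan matrix.

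First I would record the relevant Cartan matrices from the Bourbaki plates.  In type $B_2$ with the Bourbaki indexing, $c_{11} = 2$, $c_{21} = -1$, so for $i = 1$ we get $\langle \alpha_1, \lambda\rangle = 2c_1 - c_2$ — \emph{not} automatically even.  This is where one must be careful: the claim as stated should instead be checked using the column of the Cartan matrix that makes the pairing even, and indeed in type $\tilde{B}_2$ the short simple root is $\alpha_2$ while in $\tilde C_2$ the short one is $\alpha_1$ (with the Bourbaki conventions actually the reverse — I would double-check against Plate II and Plate III of \cite{Bourbaki4-6}).  The correct computation is: in the case where $\alpha_i$ is the \emph{long} simple root, the entries $\langle \alpha_i, \alpha_j^\vee\rangle$ are $2$ and an even number (namely $-2$, since $\langle \alpha_{\text{long}}, \alpha_{\text{short}}^\vee\rangle = -2$ in $B_2/C_2$).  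Hence $\langle \alpha_i, \lambda\rangle = 2c_i - 2c_j$ is even for every $\lambda \in R^\vee$.  By \eqref{eq:nuProj} this means $\langle\alpha_i,\lambda\rangle = 2k$ for some $k \in \Z$, so $\nu$ is integral.  Concretely: in type $\tilde B_2$ with $i = 1$ the relevant row of the Cartan matrix is $(\langle\alpha_1,\alpha_1^\vee\rangle, \langle\alpha_1,\alpha_2^\vee\rangle) = (2,-2)$, and in type $\tilde C_2$ with $i = 2$ it is $(\langle\alpha_2,\alpha_1^\vee\rangle,\langle\alpha_2,\alpha_2^\vee\rangle) = (-2,2)$; in both cases both entries are even, so $\langle\alpha_i,\lambda\rangle$ is even for all $\lambda\in R^\vee$.

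Once the parity of $\langle \alpha_i, \lambda\rangle$ is established, the conclusion is immediate: by the parity dichotomy in \eqref{eq:nuProj}, $\nu$ is integral.  Alternatively, and perhaps cleaner to write, one can argue directly via Definition~\ref{defn:integralNP}: from \eqref{eq:nuProjv1} we have $\nu = \lambda - \tfrac{1}{2}\langle\alpha_i,\lambda\rangle\alpha_i^\vee$, which lies in $Q^\vee$ since $\tfrac12\langle\alpha_i,\lambda\rangle \in \Z$, and $\nu - \lambda \in R^\vee$ so $\nu$ has the same image as $\lambda$, hence as $\kappa_G(b) = 0$, in $Q^\vee/R^\vee$; therefore $\nu$ is integral.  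The main obstacle is simply bookkeeping with the Bourbaki root-system conventions — getting the long/short labeling and the signs in the rank $2$ Cartan matrices right — rather than anything structural; once the correct Cartan entries are in hand, the proof is a one-line parity check.
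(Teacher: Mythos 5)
Your proposal is correct and takes essentially the same approach as the paper. Both arguments boil down to the observation that when $\alpha_i$ is the long simple root in the rank-$2$ system, every entry $\langle \alpha_i, \alpha_j^\vee\rangle$ of the corresponding Cartan-matrix row is even, so that $A_{s_i}$ maps $R^\vee$ into $R^\vee$ (equivalently, $\langle \alpha_i, \lambda\rangle$ is even for all $\lambda \in R^\vee$, which is your parity criterion from Lemma~\ref{lem:NPimageAsi}); the paper phrases this as a direct computation of $(I + s_2)$ on a general element of $R^\vee$ in type $\tilde C_2$ and invokes duality for $\tilde B_2$, while you check both cases directly via the Cartan matrix. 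Your initial worry about whether $\langle\alpha_1,\lambda\rangle = 2c_1 - c_2$ in type $B_2$ stems from a transposed index in $c_1 c_{1i} + c_2 c_{2i}$ — the correct expression is $c_1 c_{i1} + c_2 c_{i2}$, using the paper's convention $\langle\alpha_i,\alpha_j^\vee\rangle = c_{ij}$ — but you resolve this by the end and reach the right conclusion.
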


\begin{proof}
We include the proof for $G$ of type $\tilde{C}_2$ and $i=2$, since the argument for type $\tilde{B}_2$ with $i=1$ follows by duality.  Since $\kappa_G(b)=0$, the Newton point $\nu$ is the image of an element of $R^\vee$.  It thus suffices to prove that $A_{s_2}(R^\vee) \subseteq R^\vee$.  To this end, consider any $c_1\alpha_1^\vee+c_2\alpha_2^\vee \in R^\vee$, and compute using \eqref{eq:s2ActC2} that 
\[ (I+s_2)(c_1\alpha_1^\vee+c_2\alpha_2^\vee) = (c_1\alpha_1^\vee+c_2\alpha_2^\vee)+(c_1(\alpha_1^\vee+2\alpha_2^\vee) -c_2\alpha_2^\vee))=2c_1(\alpha_1^\vee+\alpha_2^\vee).\]
Therefore, $\nu = A_{s_2}(c_1\alpha_1^\vee+c_2\alpha_2^\vee) = \frac{1}{2}(I+s_2)(c_1\alpha_1^\vee+c_2\alpha_2^\vee) = c_1(\alpha_1^\vee+\alpha_2^\vee) \in R^\vee$. Since $\nu \in R^\vee$ and $\kappa_G(b)=0$, then $\nu$ is integral.
\end{proof}

The lucky phenomenon demonstrated by Lemma \ref{lem:BCintegral}, in which Condition~\eqref{eq:kerRangeZ} fails precisely when no non-integral Newton points actually exist, does not always persist when the rank of the parabolic subgroup is greater than 1.  In fact, Example \ref{ex:s1s3} below illustrates the primary obstruction to providing a straightforward generalization of our methods to treat parabolic subgroups of higher rank.  More specifically, above parabolic rank 1, Example \ref{ex:s1s3} shows that the $\eW$-conjugacy class of a standard representative does not always fill out the transverse subspaces.

\begin{example}\label{ex:s1s3}  Suppose that $G = SL_4$, and consider the element $b = t^{(1,0,0,-1)}s_1s_3 \in \eW$. Compute using \eqref{eq:avgFormula} that $\nu := \nu_G(b) = \left(\frac{1}{2},\frac{1}{2},-\frac{1}{2},-\frac{1}{2}\right)$.  By \eqref{eq:PnuRoots}, we have $P_\nu = MN$ where the Levi subgroup $M$ is isomorphic the product $GL_2 \times GL_2$ inside $SL_4$.  To distinguish these two copies of $GL_2$, we write $M_1 \times M_3$, where $\sW_{M_i} = \langle s_i \rangle$ are each of type $A_1$.

We claim that $b$ is the standard representative of $[b]$, which we prove using Lemma \ref{lem:stdRepList}. First note that $\kappa_G(b) = 0$ since $(1,0,0,-1) \in R^\vee$ and $G = SL_4$, verifying property (1).  Next, note that $b = (t^{\varpi_1}s_1) \cdot (t^{\varpi_3}s_3)^{-1} \in \Omega_M$, verifying property (2).  Therefore, $\nu_M(b) = \nu_G(b) = \nu$, satisfying property (3).   By Lemma \ref{lem:stdRepList}, we now have that $b = t^{(1,0,0,-1)}s_1s_3$ is the standard representative of $[b]$; compare \cite[Ex.~7.2.5]{GHKRadlvs}.

For $v=s_1s_3$, we have shown thus far that $v$ is the spherical direction of the standard representative $b_\nu =t^{(1,0,0,-1)}s_1s_3$.  Since $\nu \in H_{\alpha_1} \cap H_{\alpha_3}$, note that the associated parabolic subgroup is not of rank~$1$.  We proceed to show that Condition~\eqref{eq:kerRangeZ} does not hold for $v = s_1 s_3$. With respect to the basis $\{ \alpha_1^\vee, \alpha_2^\vee, \alpha_3^\vee \}$, the matrix for $v$ is
\[
v = \begin{pmatrix} 
-1 & 1 & 0 \\
0 & 1 & 0  \\
0 & 1 & -1 
\end{pmatrix}.
\]
Since $v$ has order $2$, 
\[
A_v =  \frac{1}{2}\left(\Id + v\right) =  \frac{1}{2}\begin{pmatrix} 
0 & 1 & 0 \\
0 & 2 & 0  \\
0 & 1 & 0 
\end{pmatrix}
\quad \mbox{and} \quad 
\Id - v = \begin{pmatrix} 
2 & -1 & 0 \\
0 & 0 & 0 \\
0 & -1 & 2 
\end{pmatrix}.
\]  
Thus,
\[
\Ker A_v \cap R^\vee = \{ c_1 \alpha_1^\vee + c_3 \alpha_3^\vee \mid c_1, c_3 \in \Z \}.
\]  
In \eqref{eq:kerRangeZ}, we are thus considering whether for all $c_1, c_3 \in \Z$ the equation
\[
c_1 \alpha_1^\vee + c_3 \alpha_3^\vee = (\Id - v)\left( \sum_{i=1}^3 d_i \alpha_i^\vee \right)
\]
has a solution with $d_1, d_2, d_3 \in \Z$.  
This is equivalent to the system
\begin{eqnarray*}
2 d_1 - 2d_3 & = & c_1 - c_3 \\
d_2 - 2d_3 & = & -c_3,
\end{eqnarray*}
and this system has a solution $d_1, d_2, d_3 \in \Z$ if and only if the integer $c_1 - c_3$ is even.   Thus Condition~\eqref{eq:kerRangeZ} does not hold for $v = s_1 s_3$ in $\sW$ in type $\tilde{A}_3$, showing that $\eW$-conjugacy classes of standard representatives do not necessarily fill out the transverse subspaces if the rank of the parabolic subgroup is larger than 1.
\end{example}

\subsection{Conjugacy classes of standard representatives in parabolic rank 1}\label{sec:fillOut}

We now consider $\eW$-conjugacy classes of standard representatives $b_\nu$ where $\nu$ is non-integral and the associated parabolic subgroup has rank $1$.  The main results are   Corollary~\ref{cor:conjRank1}, which gives a precise description of these $\eW$-conjugacy classes in terms of the transverse subspaces introduced in Section~\ref{sec:transverse}, and Proposition~\ref{prop:inverseRank1}, which partly solves the ``inverse problem" for an element of the $\eW$-conjugacy class of $b_\nu$.

In contrast to Example~\ref{ex:s1s3}, when the parabolic subgroup is rank 1, we have the following.

\begin{prop}\label{prop:conjRank1}  Suppose $[b_\nu] \in B(G)_P$ with $P = P_i$ and $\nu$ non-integral.  Then 
\[
\Ker A_{s_i} \cap R^\vee \subseteq (\Id - s_i)R^\vee;
\]
that is, Condition~\eqref{eq:kerRangeZ} holds. 
\end{prop}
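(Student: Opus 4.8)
The plan is to evaluate both sides of the containment \eqref{eq:kerRangeZ} directly and reduce it to an elementary divisibility statement about the $i$-th row of the Cartan matrix, and then to use the non-integrality hypothesis on $\nu$ to eliminate the few exceptional configurations in which that statement could fail.

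First I would identify the left-hand side. By Lemma~\ref{lem:AwProperties}(4) the operator $A_{s_i}$ is the orthogonal projection of $\mathfrak{a}$ onto $H_{\alpha_i}$, so $\Ker A_{s_i}$ is the line $\R\alpha_i^\vee$ spanned by the coroot perpendicular to $H_{\alpha_i}$; since $\alpha_i^\vee$ is one of the free $\Z$-generators of $R^\vee=\bigoplus_j\Z\alpha_j^\vee$, every element of $R^\vee$ lying on this line is an integer multiple of $\alpha_i^\vee$, whence $\Ker A_{s_i}\cap R^\vee=\Z\alpha_i^\vee$. For the right-hand side, note that $(\Id-s_i)\lambda=\langle\alpha_i,\lambda\rangle\alpha_i^\vee$ for all $\lambda\in\mathfrak{a}$, so applying this to $\lambda=\sum_j a_j\alpha_j^\vee$ with $a_j\in\Z$ and using $\langle\alpha_i,\alpha_j^\vee\rangle=c_{ij}$ from \eqref{eq:cortcowt} gives $(\Id-s_i)R^\vee=\bigl(\gcd_j c_{ij}\bigr)\Z\alpha_i^\vee$. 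As $c_{ii}=2$, this gcd is $1$ or $2$, and it equals $2$ precisely when every off-diagonal entry $c_{ij}$ $(j\neq i)$ is even. Combining the two computations, \eqref{eq:kerRangeZ} holds if and only if some $c_{ij}$ with $j\neq i$ is odd, so it remains only to rule out the possibility that the off-diagonal part of the $i$-th row of the Cartan matrix consists entirely of even entries.

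Since $\Phi$ is irreducible, the Dynkin diagram is connected, so $\alpha_i$ has at least one neighbour, and as the off-diagonal Cartan integers all lie in $\{0,-1,-2,-3\}$, ``all even'' forces $c_{ij}=-2$ for every neighbour $j$ of $\alpha_i$. An irreducible root system contains at most one double bond, so this can occur only when $\alpha_i$ is a leaf of the diagram joined by a double bond with $\alpha_i$ the long root at that bond; inspecting the classification of irreducible root systems, the only such pairs $(\Phi,i)$ are $\Phi$ of type $B_2=C_2$ with $\alpha_i$ the long simple root and $\Phi$ of type $C_n$ with $i=n$. In each of these cases I would appeal to Lemma~\ref{lem:BCintegral} — and, for $C_n$ with $n\ge 3$, to the same computation carried out with the rank-$1$ Levi $\sW_M=\langle s_n\rangle$ in place of the rank-$2$ Levi — to conclude that every $[b]\in B(G)_{P_i}$ has integral Newton point, contradicting the standing hypothesis that $\nu$ is non-integral. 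Hence these cases never occur, $\gcd_j c_{ij}=1$, and \eqref{eq:kerRangeZ} holds.

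The two lattice computations and the reduction to a gcd condition are routine, so the substantive point — and the place where care is genuinely needed — is the last step: correctly enumerating the exceptional pairs $(\Phi,i)$ for which the Cartan-matrix obstruction appears, and verifying for each of them that no $\sigma$-conjugacy class with associated parabolic $P_i$ can carry a non-integral Newton point, so that the hypothesis of the proposition is vacuous there. This is exactly the role played by Lemma~\ref{lem:BCintegral} (and its mild extension), and it is what separates the present statement from the purely formal equality $\Ker A_v=\Range(\Id-v)$ over $\R$ recorded in Corollary~\ref{cor:AwKerRange}, which by Example~\ref{ex:C2} does not by itself give \eqref{eq:kerRangeZ}.
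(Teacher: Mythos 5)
Your approach is essentially the paper's — reduce the containment to integer linear algebra in the $i$-th row of the Cartan matrix, then eliminate the exceptional configurations using the non-integrality hypothesis — but your gcd reformulation is cleaner and, notably, your enumeration of exceptional rows is \emph{more complete} than the one in the published proof. The paper first computes $(\Id-s_i)\alpha_j^\vee=-C_{ij}\alpha_i^\vee$ and searches for a $j$ with $C_{ij}=-1$, asserting that the only rows with no $-1$ occur in types $B_2$ ($i=1$), $C_2$ ($i=2$), and $G_2$ ($i=2$); it then handles $G_2$ by an ad hoc integer combination and disposes of $B_2/C_2$ via Lemma~\ref{lem:BCintegral}. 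Your observation that $(\Id-s_i)R^\vee=(\gcd_j c_{ij})\Z\alpha_i^\vee$ makes the $G_2$ case evaporate (the entry $c_{21}=-3$ is odd, so the gcd is already $1$), and also flags that in type $C_n$ for every $n\ge 3$, the $n$-th row $(0,\ldots,0,-2,2)$ has gcd $2$ and in particular no $-1$ — a case the paper's list silently omits. Your proposed extension of the computation in Lemma~\ref{lem:BCintegral} to this case does go through: $A_{s_n}$ sends each $\alpha_j^\vee$ to an element of $R^\vee$, so $A_{s_n}(R^\vee)\subseteq R^\vee$.

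The gap — and it is shared with the paper's own proof, so this is a comparison rather than a criticism unique to your write-up — is in quoting Lemma~\ref{lem:BCintegral} to conclude that \emph{every} $[b]\in B(G)_{P_i}$ has integral Newton point in the exceptional types. That lemma is stated and proved only under the hypothesis $\kappa_G(b)=0$, which is exactly what is used (for $\kappa_G(b)=0$, $\nu\in A_{s_i}(R^\vee)\subseteq R^\vee$). But the proposition imposes no constraint on the Kottwitz point. If $\Lambda_G\ne 0$ — say $G$ is adjoint of type $C_n$ and $i=n$, or of type $B_2$ and $i=1$ — then choosing $\lambda\in Q^\vee$ with nontrivial Kottwitz class forces $\langle\alpha_i,\lambda\rangle$ odd, and $\nu=A_{s_i}\lambda$ is a non-integral Newton point with $P_\nu=P_i$, so the hypothesis of the proposition is \emph{not} vacuous; yet $\Ker A_{s_i}\cap R^\vee=\Z\alpha_i^\vee\not\subseteq 2\Z\alpha_i^\vee=(\Id-s_i)R^\vee$. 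So ``hence these cases never occur'' is not established for such $G$, and some additional argument (restricting the isogeny type, tracking $\kappa_G$, or, more likely, replacing $R^\vee$ by $Q^\vee$ on the right of \eqref{eq:kerRangeZ} — which would make the containment an identity, since $(\Id-s_i)\varpi_i=\alpha_i^\vee$ already generates the left side, and which is what the $\mu\in Q^\vee$ appearing in Definition~\ref{defn:fillsOut} actually supplies) is needed to close it.
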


\begin{proof} Since $\Ker A_{s_i} \cap R^\vee = \{ c_i \alpha_i^\vee \mid c_i \in \Z \}$, it suffices to show that there are integers $d_j \in \Z$ such that 
\begin{equation}\label{eq:rank1}
\alpha_i^\vee = (\Id - s_i) \left( \sum_{j = 1}^n d_j\alpha_j^\vee\right).
\end{equation}
First note that 
\[
(\Id - s_i)\alpha_i^\vee = \alpha_i^\vee - s_i(\alpha_i^\vee) = 2\alpha_i^\vee.
\]
Recall more generally that $s_i(\alpha_j^\vee) = \alpha_j^\vee - \langle \alpha_i, \alpha_j^\vee \rangle \alpha_i^\vee$ by definition.  Using the conventions in \cite{Bourbaki4-6}, the value of the pairing $\langle \alpha_i, \alpha_j^\vee \rangle = c_{ij}$ equals the $ij$-entry of the Cartan matrix. 
Now, if there exists an $\alpha_j \in \Delta$ such that $\langle \alpha_i, \alpha_j^\vee \rangle = -1$, then
\[
(\Id - s_i)\alpha_j^\vee= \alpha_j^\vee - s_i(\alpha_j^\vee) =\alpha_j^\vee - \left( \alpha_j^\vee - \langle \alpha_i, \alpha_j^\vee \rangle \alpha_i^\vee\right) =  \alpha_j^\vee - (\alpha_j^\vee + \alpha_i^\vee) = -\alpha_i^\vee,
\]
and so Equation~\eqref{eq:rank1} holds.  

Using the plates from Bourbaki~\cite{Bourbaki4-6}, the only cases in which $c_{ij}\neq -1$ for any $j \in [n]$ are $G$ of type $\tilde{B}_2$ with $i=1$; type $\tilde{C}_2$ with $i=2$; and type $\tilde{G}_2$ with $i=2$. In type $\tilde{B}_2$ with $i=1$, Lemma \ref{lem:BCintegral} says that $\nu$ is always integral, so there is nothing to prove, since $\nu$ is non-integral by hypothesis.  Similarly, in type $\tilde{C}_2$ with $i=2$, there is nothing to check.

Finally, suppose that $G$ of type $\tilde{G}_2$ and $i=2$.  We then have
\[(\Id - s_2)\alpha_1^\vee = \alpha_1^\vee - s_2 (\alpha_1^\vee) = \alpha_1^\vee - (\alpha_1^\vee - \langle \alpha_2, \alpha_1^\vee \rangle\alpha_2^\vee) = \alpha_1^\vee - (\alpha_1^\vee +3\alpha_2^\vee)=-3\alpha_2^\vee. \]
Recalling from above that $(I-s_2)\alpha_2^\vee = 2\alpha_2^\vee$, we have
 $\{ (\Id - s_2)\alpha_j^\vee \mid \alpha_j \in \Delta \} = \{ -3\alpha_2^\vee, 2 \alpha_2^\vee \}$ in this case.  Taking for example $d_1 = d_2 = -1$, we see that Equation~\eqref{eq:rank1} holds, completing the proof.
\end{proof}

We immediately obtain the following precise description of the $\eW$-conjugacy classes of the standard representatives in parabolic rank $1$. For example, in Figure~\ref{fig:conjugates}, the (labels of) the colored alcoves are exactly the elements of the $\eW$-conjugacy class of $b_\nu$.

\begin{corollary}\label{cor:conjRank1}  Suppose $[b_\nu] \in B(G)_P$ with $P = P_i$ and $\nu$ non-integral, and let $\omega$ denote the Kottwitz point.  Then the $\eW$-conjugacy class of $b_\nu$ is equal to
\[
\bigcup_{u \in \sW} \{ t^\xi u s_i u^{-1} \mid \xi \in (u\cT_\nu) \cap (\omega+R^\vee) \}.
\]
That is, the $\eW$-conjugacy class of $b_\nu$ fills out the transverse subspaces.
\end{corollary}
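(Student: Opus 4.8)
The plan is to obtain this corollary as an immediate assembly of the two preceding results, Proposition~\ref{prop:conjRank1} and Lemma~\ref{lem:fillsOut}, once the hypotheses are correctly lined up. First I would invoke Proposition~\ref{prop:stdReps}(2): since $[b_\nu] \in B(G)_P$ with $P = P_i$ and $\nu$ non-integral, the standard representative has the explicit form $b_\nu = t^\eta s_i$ with $\eta \in H_{\alpha_i,1}\cap Q^\vee$, so in particular its spherical direction is $v = s_i$. This is the only point where a moment's care is needed, since it is what makes the next two inputs applicable with the same $v$.

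Next, Proposition~\ref{prop:conjRank1} asserts precisely that $\Ker A_{s_i}\cap R^\vee \subseteq (\Id - s_i)R^\vee$, which is exactly Condition~\eqref{eq:kerRangeZ} for $v = s_i$. Feeding this, together with the standard representative $b_\nu = t^\eta s_i$, into Lemma~\ref{lem:fillsOut} yields that the $\eW$-conjugacy class of $b_\nu$ fills out the transverse subspaces. Unwinding Definition~\ref{defn:fillsOut} then gives the asserted equality
\[
\{ b_\nu^y \mid y \in \eW \} \;=\; \bigcup_{u \in \sW} \{ t^\xi u s_i u^{-1} \mid \xi \in (u\cT_\nu) \cap (\omega+R^\vee) \},
\]
where $\omega = \kappa_G(b)$ is the Kottwitz point. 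I would also remark that the inclusion ``$\subseteq$'' of this equality holds unconditionally by Corollary~\ref{cor:containedTransverse}, so the substantive content is the reverse inclusion ``$\supseteq$'', which is exactly the half of Lemma~\ref{lem:fillsOut} powered by Condition~\eqref{eq:kerRangeZ}.

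Since all of the genuine work has already been carried out upstream --- the Cartan-matrix case analysis and the exceptional types $\tilde B_2$, $\tilde C_2$, $\tilde G_2$ (with the integrality fallback provided by Lemma~\ref{lem:BCintegral}) in Proposition~\ref{prop:conjRank1}, and the bijective bookkeeping between the filling-out condition and \eqref{eq:kerRangeZ} in Lemma~\ref{lem:fillsOut} --- there is essentially no remaining obstacle at this stage; the corollary is a two-line deduction. Thus I do not anticipate a hard part here: the proof is purely a matter of citing Proposition~\ref{prop:stdReps}(2) to identify $v = s_i$, then Proposition~\ref{prop:conjRank1} and Lemma~\ref{lem:fillsOut} to conclude.
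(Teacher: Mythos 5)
Your proof is correct and follows exactly the paper's route: the paper's own proof reads simply ``Combine Lemma~\ref{lem:fillsOut} and Proposition~\ref{prop:conjRank1}.'' You have merely made the implicit step explicit — namely citing Proposition~\ref{prop:stdReps}(2) to identify the spherical direction $v = s_i$ so that Condition~\eqref{eq:kerRangeZ} and Lemma~\ref{lem:fillsOut} apply with the same $v$ — which is a reasonable addition but not a different argument.
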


\begin{proof}
Combine Lemma~\ref{lem:fillsOut} and Proposition~\ref{prop:conjRank1}.
\end{proof}

Given an element $z$ of the $\eW$-conjugacy class of $b_\nu$, a natural problem is to find all $y\in \eW$ such that $z = b_\nu^y$.  The next result provides one explicit solution $y \in \eW$ to this ``inverse problem" for every such $z$. We do not require a complete solution to the inverse problem for our current purposes, and so do not pursue this further.  We simply remark that, given one solution, finding all solutions would be equivalent to understanding the centralizer of $b_\nu$ in $\eW$. As far as we are aware, this centralizer is not known (even in the parabolic rank $1$ case).

\begin{prop}\label{prop:inverseRank1}
Suppose $[b_\nu] \in B(G)_P$ with $P = P_i$ and $\nu$ non-integral, and let $\omega$ denote the Kottwitz point.  For any $u \in \sW$ and $\zeta \in (u\cT_\nu) \cap (\omega+R^\vee)$, note by Corollary~\ref{cor:conjRank1} that $z = t^\zeta us_i u^{-1}$ is in the $\eW$-conjugacy class of $b_\nu$.  Then:
\begin{enumerate}
    \item there exists an integer $d_i \in \Z$ such that $u^{-1}\zeta - \eta = d_i \alpha_i^\vee$; and
    \item if $y = t^\mu u \in \eW$ with $\mu = d_i u \eta\in Q^\vee$ for $d_i \in \Z$ as in (1), then $z = b_\nu^y$.
\end{enumerate}
\end{prop}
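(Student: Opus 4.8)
The plan is to verify both parts directly by the kind of explicit computation already used in the proofs of Lemma~\ref{lem:containedTransverse} and Lemma~\ref{lem:fillsOut}. For part~(1), observe that $\zeta \in u\cT_\nu$ means precisely $u^{-1}\zeta \in \cT_\nu$, which by Definition~\ref{defn:transverse} says $u^{-1}\zeta - \nu \in \Ker A_{s_i}$. Since $\eta \in \cT_\nu$ by Corollary~\ref{cor:stdRepTransverse}, subtracting gives $u^{-1}\zeta - \eta \in \Ker A_{s_i}$. On the other hand, $\zeta \in \omega + R^\vee$ and $\eta \in \omega + R^\vee$ (again Corollary~\ref{cor:stdRepTransverse}), and since $\sW$ preserves $R^\vee$ we get $u^{-1}\zeta - \eta \in R^\vee$. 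Therefore $u^{-1}\zeta - \eta \in \Ker A_{s_i} \cap R^\vee$. By Corollary~\ref{cor:nonintegralTransverse} (or directly Lemma~\ref{lem:AwProperties}(4)), $\Ker A_{s_i} \cap R^\vee = \{ c_i \alpha_i^\vee \mid c_i \in \Z\} \cap R^\vee = \{ d_i \alpha_i^\vee \mid d_i \in \Z\}$, since $\alpha_i^\vee \in R^\vee$. This yields the desired integer $d_i$.

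For part~(2), I would simply compute $b_\nu^y$ for $y = t^\mu u$ with $\mu = d_i u\eta$. From the formula in the proof of Lemma~\ref{lem:containedTransverse}, conjugating $b_\nu = t^\eta s_i$ by $y = t^\mu u$ gives $b_\nu^y = t^\xi\, u s_i u^{-1}$ where
\[
\xi = \mu + u\eta - u s_i u^{-1}\mu.
\]
Substituting $\mu = d_i u\eta$ and using that $s_i(\eta) = \eta - \langle \alpha_i, \eta\rangle \alpha_i^\vee = \eta - \alpha_i^\vee$ (since $\eta \in H_{\alpha_i,1}$ by Proposition~\ref{prop:stdReps}(2), so $\langle\alpha_i,\eta\rangle = 1$), we get $u s_i u^{-1}\mu = d_i\, u s_i(\eta) = d_i(u\eta - u\alpha_i^\vee)$. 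Hence
\[
\xi = d_i u\eta + u\eta - d_i u\eta + d_i u\alpha_i^\vee = u\eta + d_i u\alpha_i^\vee = u(\eta + d_i\alpha_i^\vee).
\]
By part~(1), $\eta + d_i\alpha_i^\vee = u^{-1}\zeta$, so $\xi = u(u^{-1}\zeta) = \zeta$. Since also $u s_i u^{-1}$ is the spherical direction of both $z$ and $b_\nu^y$, we conclude $b_\nu^y = t^\zeta u s_i u^{-1} = z$.

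The argument is essentially routine once the pieces are assembled; the only mild subtlety is keeping track of the identification $\Ker A_{s_i} \cap R^\vee = \Z\alpha_i^\vee$ in part~(1) and correctly applying $s_i$ to $\eta$ using the fact that $\langle\alpha_i,\eta\rangle = 1$ from Proposition~\ref{prop:stdReps}(2) in part~(2). I do not anticipate a genuine obstacle here, as both steps are direct unwindings of definitions and formulas already established in Sections~\ref{sec:NewtonPointsReps} and~\ref{sec:conjugacy}; the main care needed is bookkeeping with the left $\sW$-action and ensuring the translation and spherical parts match on both sides.
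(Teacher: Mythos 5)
Your proof is correct and follows essentially the same route as the paper's: part (1) via the transverse-subspace and lattice conditions (with $\Ker A_{s_i} \cap R^\vee = \Z\alpha_i^\vee$), and part (2) via the explicit conjugation formula from Lemma~\ref{lem:containedTransverse} together with $s_i\eta = \eta - \alpha_i^\vee$ from $\eta \in H_{\alpha_i,1}$. The only cosmetic slip is writing $\{c_i\alpha_i^\vee \mid c_i \in \Z\}$ where you mean the real line $\R\alpha_i^\vee = \Ker A_{s_i}$ before intersecting with $R^\vee$, but this does not affect the argument.
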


\begin{proof}
We have $u^{-1}\zeta \in \cT_\nu \cap (\omega+R^\vee)$ by hypothesis, and $\eta \in \cT_\nu$ by Corollary~\ref{cor:stdRepTransverse}.  Since $\eta \in  \omega + R^\vee$ by Corollary \ref{cor:stdRepTransverse}, using Corollary~\ref{cor:nonintegralTransverse}  we see that $u^{-1}\zeta - \eta = d_i \alpha_i^\vee$ for some $d_i \in \Z$, and so (1) holds.

For (2), let $y = t^\mu u$, where $\mu = d_i u \eta$ and $d_i\in \Z$ is as in part (1).  Then the translation part of $b_\nu^y$ is equal to 
\[
\mu + u \eta - u s_i u^{-1}\mu = d_i u \eta + u \eta - d_i u s_i \eta = d_i u (\eta - s_i \eta) + u \eta.
\]
Since $\eta \in H_{\alpha_i,1}$ by Proposition~\ref{prop:stdReps}, then $s_i\eta = \eta -\alpha_i^\vee$, and so $\eta - s_i \eta = \alpha_i^\vee$.  Therefore the translation part of $b_\nu^y$ equals
\[
u( d_i \alpha_i^\vee + \eta) = u(u^{-1}\zeta) = \zeta
\]
by part (1). Since the spherical direction of $b_\nu^y$ is equal to $us_i u^{-1}$, we have that $b_\nu^y= z$, as required.
\end{proof}

In Figure~\ref{fig:conjugates}, for each alcove $\z$ corresponding to an element of the $\eW$-conjugacy class of $b_\nu = t^{\check{\rho}}s_1$, the shaded region of the same color is a sector representing the $(P_1,y)$-chimney, for $y$ as in the statement of Proposition~\ref{prop:inverseRank1}.  (We do not indicate the alcoves $\y$, as this would make the picture too cluttered.)

%!TEX root = main.tex

\section{The first target}\label{sec:firstTarget}

Since if $G$ has rank $1$ all Newton points are integral and Theorem \ref{thm:translations} applies, from now on we assume that $G$ has rank $n \geq 2$.  Let $x_0 = t^\lambda w_0 \in \eW$ be such that the alcove $\x_0$ is in the shrunken dominant Weyl chamber.
In Section~\ref{sec:minGallery}, we construct a minimal gallery $\gamma:\fa \rightsquigarrow \x_0$, and then  we carry out a PRS-folding sequence on $\gamma$ in Section~\ref{sec:folding}.  We then use these constructions in Section~\ref{sec:conjugationFinal} to prove the following result.  

\begin{thm}\label{thm:firstTarget} Let $x_0=t^\lambda w_0\in \eW$, and assume that $\x_0$ is in the shrunken dominant Weyl chamber $\Cfs$.  
Suppose that
\[
\proj_i(\lambda - 2\check{\rho}) = \nu,
\]
where $\nu$ is a non-integral Newton point with associated standard spherical parabolic subgroup $P_i$ of rank $1$. 
Then for any $b \in \eW$ such that $\nu_b = \nu$ and $\kappa_G(b)=\kappa_G(x_0)$, we have
$X_{x_0}(b) \neq \emptyset.$
\end{thm}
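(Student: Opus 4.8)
The plan is to apply Theorem~\ref{thm:ADLVChimneys}(1) with the standard representative $b_\nu = t^\eta s_i$ coming from Proposition~\ref{prop:stdReps}(2), so the whole task reduces to exhibiting a single element $y \in \eW$ together with a gallery $\gamma: \fa \rsa \bb_\nu^y$ of type $\vec{x}_0$ which is positively folded with respect to the $(P_i,y)$-chimney. Concretely I would proceed in three stages, mirroring the section structure announced in the paper: (1) build a well-chosen \emph{minimal} gallery $\gamma: \fa \rsa \x_0$; (2) apply a carefully engineered PRS-folding sequence to $\gamma$, producing a positively folded gallery $\gamma_{\check\rho}$ with exactly $\ell(w_0)-1$ folds, all occurring in controlled positions; and (3) identify the end alcove of $\gamma_{\check\rho}$ and produce the conjugating element $y$ witnessing that this end alcove lies in the $\eW$-conjugacy class of $b_\nu$ and that $\gamma_{\check\rho}$ folds \emph{away} from the $(P_i,y)$-chimney.

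For stage (1): since $\x_0$ lies in $\Cfs$ and has finite part $w_0$, write $x_0 = t^\lambda w_0$ and fix a reduced expression for $\lambda$-translation-then-$w_0$ that is adapted to $P_i$; the key point (this is what Section~\ref{sec:minGallery} is for) is to choose the reduced word for $w_0$ so that the simple reflection $s_i$ appears in a predictable place, and so that the gallery's first $\ell(w_0)$ steps realize the ``spherical part'' in a way compatible with the later folding. The target offset is dictated by the hypothesis $\proj_i(\lambda - 2\check\rho) = \nu$: since $\eta = \nu + \tfrac12\alpha_i^\vee$ and $\proj_i$ kills the $\alpha_i^\vee$-direction, we have $\proj_i(\lambda) = \nu + \proj_i(2\check\rho) = \eta - \tfrac12\alpha_i^\vee + \langle\text{stuff}\rangle$, so the $-2\check\rho$ shift is exactly the amount of ``retraction'' one can hope to gain by folding a minimal gallery whose naive endpoint is near $t^\lambda w_0$; this is the analogue of \cite[Prop.~8.7]{MST1} in the aperiodic setting.

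For stage (2): apply a PRS-folding sequence (in the sense of \cite[Def.~3.16]{MST1}) that reflects successively shorter tails of $\gamma$ at the panels lying on the $\check\rho$-shifted walls $H_{\alpha_j,1}$ for $j \neq i$. Because $\check\rho \in H_{\alpha_j,1}$ for every simple root, this is the natural ``center'' at which to fold; the folding must be set up so that every fold is positive with respect to the orientation induced by the relevant chimney, and so that the number of folds is precisely $\ell(w_0)-1$ (the paper emphasizes this count is forced). One then tracks, using Lemma~\ref{lem:containedTransverse} and Corollary~\ref{cor:nonintegralTransverse}, that the end alcove of the folded gallery is of the form $t^\xi\, u s_i u^{-1}$ with $\xi \in (u\cT_\nu)\cap(\omega+R^\vee)$; by Corollary~\ref{cor:conjRank1} this automatically lies in the $\eW$-conjugacy class of $b_\nu$, and Proposition~\ref{prop:inverseRank1} then hands us the explicit $y = t^\mu u$ with $\mu = d_i u\eta$. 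Finally one checks — and \textbf{this is the main obstacle} — that the folds of $\gamma_{\check\rho}$ are positive with respect to the $(P_i,y)$-chimney located by \emph{this particular} $y$, not merely with respect to some periodic orientation: because the chimney orientation is aperiodic (it agrees with the standard orientation only outside a bounded region near the transverse subspace), positivity of a fold depends on which side of finitely many $\alpha_i$-hyperplanes the fold sits, so the gallery construction in stage (1) must be reverse-engineered to keep all folds on the correct side. Verifying this compatibility — that the geometry of $\gamma_{\check\rho}$ and the geometry of the chimney dictated by the conjugating element $y$ line up — is the delicate heart of the argument and is what Sections~\ref{sec:folding} and~\ref{sec:conjugationFinal} must establish; once it is in place, Theorem~\ref{thm:ADLVChimneys}(1) immediately gives $X_{x_0}(b) \neq \emptyset$.
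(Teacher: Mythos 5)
Your proposal reconstructs the paper's proof essentially step for step: a minimal gallery $\gamma:\fa\rsa\x_0$ built from a reduced word for $w_0$ ending with the Coxeter element $s_{i_n}\cdots s_{i_1}$ (Section~\ref{sec:minGallery}), a PRS-folding sequence yielding $\gamma_{\check{\rho}}$ with $\ell(w_0)-1$ folds in panels of the $w_0$-position at $\check{\rho}$ (Section~\ref{sec:folding}), identification of the end alcove as $b_\nu^y$ via Proposition~\ref{prop:inverseRank1}, and verification of positivity for the aperiodic $(P_i,y)$-orientation (Section~\ref{sec:conjugationFinal}), followed by Theorem~\ref{thm:ADLVChimneys}(1). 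The one small inaccuracy worth noting is your assertion that the folds lie in $H_{\alpha_j,1}$ for $j\ne i$: in fact the folds lie in $H_{\alpha_{k_m},1}$ for the first $\ell(w_0)-1$ letters $s_{k_m}$ of the chosen word and may include $\alpha_i$-hyperplanes, and the relevant positivity control (Lemma~\ref{lem:posFolded}, Corollary~\ref{cor:firstTarget}) is that the only folds in an $\alpha_{i_1}$-hyperplane (where $\alpha_{i_1}=-w_0\alpha_i$) lie in $H_{\alpha_{i_1},1}$, combined with the inequality $\langle\alpha_{i_1},\mu\rangle = d_i \ge 1$ coming from $\langle\alpha_i,\lambda\rangle\ge 3$.
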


We will also use the constructions from this section later in Section~\ref{sec:remainingTargets}.

\subsection{Construction of minimal gallery}\label{sec:minGallery}

In this section, we construct a minimal gallery $\gamma: \fa \rightsquigarrow \x_0$ using a specific choice of reduced word for the longest element $w_0 \in \sW$. 
An example of the construction of $\gamma$ in type $\tilde{A}_2$ is given in Figure~\ref{fig:gamma}. Although our constructions apply equally well to any fixed sheet of extended alcoves, we illustrate all explicit gallery constructions in the 0-sheet.

\begin{figure}[ht]
\centering
\begin{overpic}[width=0.4\textwidth]{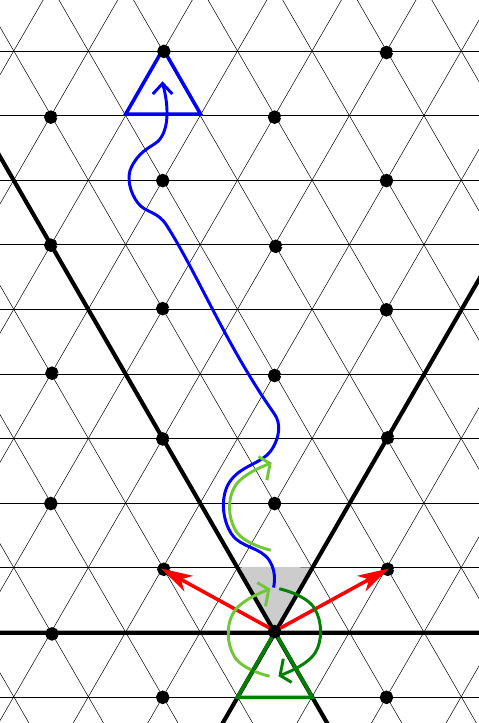}
\put(21.5,94.5){\color{blue}$\lambda$}
\put(19,76){\color{blue}$\lambda - \check{\rho}$}
\put(21,89){\color{blue}$\x_0$}
\put(32,53){\color{blue}$\gamma$}
\put(37,17){\color{black}$\fa$}
\put(37,27){\color{black}$\check{\rho}$}
\put(47,15){\color{red}$\alpha_1^\vee$}
\put(25,15){\color{red}$\alpha_2^\vee$}
\put(54,51){\color{black}$H_{\alpha_2}$}
\put(9,51){\color{black}$H_{\alpha_1}$}
\put(36,4.5){\color{DarkGreen}$\w_0$}
\put(44,8){\color{DarkGreen}$\tilde\tau$}
\put(30,8){\color{LimeGreen}$\tau$}
\put(33,31){\color{LimeGreen}$\tau'$}
\put(47,25){\color{DarkGreen}$H_1$}
\put(53,14){\color{DarkGreen}$H_2$}
\put(45,1){\color{DarkGreen}$H_3$}
\end{overpic}
\caption{An example of the construction of the minimal gallery $\gamma:\fa \rightsquigarrow \x_0$.}
\label{fig:gamma}
\end{figure}

Fix $i \in \{1,\dots,n\}$.  The reduced word for $w_0$ is chosen as follows.  Since conjugation by $w_0$ permutes the (spherical)  simple reflections, $w_0 s_i w_0$ is again a simple reflection, which we denote by $s_{i_1}$.  Thus $w_0 \alpha_i = -\alpha_{i_1}$.  We then denote by $s_{i_2},\dots,s_{i_n}$ the remaining spherical simple reflections $s_j$ for $j \in \{1,\dots,n\} \setminus \{ i_1\}$, written in any order.  Now choose a reduced word $\vec w_0 = s_{k_1} \cdots s_{k_\ell}$ for $w_0$, where $\ell = \ell(w_0)$, which ends with the Coxeter element $s_{i_n}s_{i_{n-1}} \cdots s_{i_1}$.  That is, $\vec w_0$ has last letter $s_{k_\ell} = s_{i_1}$, and $s_{k_{\ell-1}}=s_{i_2}$, \dots, $s_{k_{\ell - n + 1}} = s_{i_n}$ are the remaining spherical simple reflections.  In the example depicted in Figure~\ref{fig:gamma}, we have $i = 1$, and hence $i_1 = 2$, $i_2 = 1$, and $\vec w_0 = s_2 s_1 s_2$.

Recall that $x_0 =t^\lambda w_0$, and suppose throughout the rest of the paper that $\lambda \in \omega + R^\vee$, or equivalently that $\x_0$ is in the $\omega$-sheet of extended alcoves with base alcove $\fa = \fa_\omega$. We construct a minimal gallery $\gamma: \fa \rightsquigarrow \x_0$ as follows. Denote by $\w_0 = w_0\fa$. Let the gallery $\tilde\tau:\fa \rightsquigarrow \w_0$ be of type $(s_{\omega k_1},\dots,s_{\omega k_\ell})$; that is, $\tilde\tau$ has the same type as $\vec w_0$, appropriately relabeled according to the action of $\omega$ on $\fa_0$. Denote by $H_1, H_2, \dots, H_\ell$ the (ordered) list of hyperplanes crossed by $\tilde\tau$, which is independent of $\omega$. One can compute from the chosen word $\vec w_0$ for $w_0$ that $H_1=H_{\alpha_{k_1}}$ and $H_j= s_{k_1}s_{k_2}\cdots s_{k_{j-1}}H_{\alpha_{k_j}}$ for all $j=2, \dots, \ell$.  In Figure~\ref{fig:gamma}, the gallery $\tilde \tau$ is the dark green gallery from $\fa$ to $\w_0$, of $(s_2,s_1,s_2)$, and the hyperplanes $H_1$, $H_2$, and $H_3$ are also labeled in dark green.

Now let $\tau: \w_0\rightsquigarrow \fa$ be the minimal gallery that crosses the same list of hyperplanes as $\tilde\tau,$ in the same order. In other words, the alcoves and panels in $\tau$ are opposite those in $\tilde\tau$ at each step, and $\tau$ can be thought of as the image of $\tilde \tau$ under the opposition map in the star (or link) of the origin. The type of $\tau$ may or may not be the same as the type of $\tilde\tau$, depending on $G$.  For example, if $G$ is of type $\tilde{B}_2$ then $\tau$ will have the same type as $\tilde \tau$.  In Figure~\ref{fig:gamma}, in which $G$ is type $\tilde{A}_2$, the gallery $\tau$ is the light green gallery from $\w_0$ to $\fa$, and $\tau$ has type $(1,2,1)$, which is different from the type of $\tilde \tau$.

Next, let $\tau'$ be the gallery from the $w_0$-position at $\check{\rho}$ to the identity position at $\check{\rho}$ obtained by applying the translation $t^{\check{\rho}}$ to $\tau$.   Note that the translation $t^{\check{\rho}}$ is not necessarily type-preserving; however, for each simple root $\alpha_k$, it takes the hyperplane $H_{\alpha_k,0}$ through the origin to the hyperplane $H_{\alpha_k,1}$ through $\check{\rho}$.  In Figure~\ref{fig:gamma}, the gallery $\tau' = t^{\check{\rho}} \tau$ is also depicted in light green.

Now let $\gamma:\fa \rightsquigarrow \x_0$ be the gallery obtained by concatenating the following five galleries:
\begin{enumerate}
    \item a minimal gallery from $\fa$ to the $w_0$-position at $\check{\rho}$;
    \item the gallery $\tau'$;
    \item a minimal gallery from the identity position at $\check{\rho}$ to the $w_0$-position at $\lambda - \check{\rho}$;
    \item a minimal gallery from the $w_0$-position at $\lambda - \check{\rho}$ to the identity position at $\lambda-\check{\rho}$, whose final crossing is in a panel of the $\alpha_i$-hyperplane passing through $\lambda - \check{\rho}$;
    \item a minimal gallery from the identity position at $\lambda - \check{\rho}$ to the alcove $\x_0$, which is in the $w_0$-position at $\lambda$.
\end{enumerate}
In Figure~\ref{fig:gamma}, such a gallery $\gamma$ is shown in blue.  The resulting gallery $\gamma$ is minimal by similar arguments to those in \cite[Lemma 6.2]{MST1}, and we denote its type by $\vec{x}_0$.  For the constructions in this section we will only need some features of $\gamma$, and the remaining features will be used in Section~\ref{sec:remainingTargets}.

\subsection{Folding the gallery}\label{sec:folding}

In this section we carry out a PRS-folding sequence on the minimal gallery $\gamma:\fa \rightsquigarrow \x_0$ from Section~\ref{sec:minGallery} to obtain a gallery $\gamma_{\check{\rho}}$ of type $\vec{x}_0$, and we determine the first and final alcoves of $\gamma_{\check{\rho}}$.  To illustrate this folding sequence, we continue the example from Figure~\ref{fig:gamma} in Figure~\ref{fig:gamma_rho} below.  We also establish a sufficient condition on certain elements $y \in \eW$ for $\gamma_{\check{\rho}}$ to be positively folded with respect to the $(P_i,y)$-chimney.

\begin{figure}[ht]
\centering
\begin{overpic}[width=0.7\textwidth]{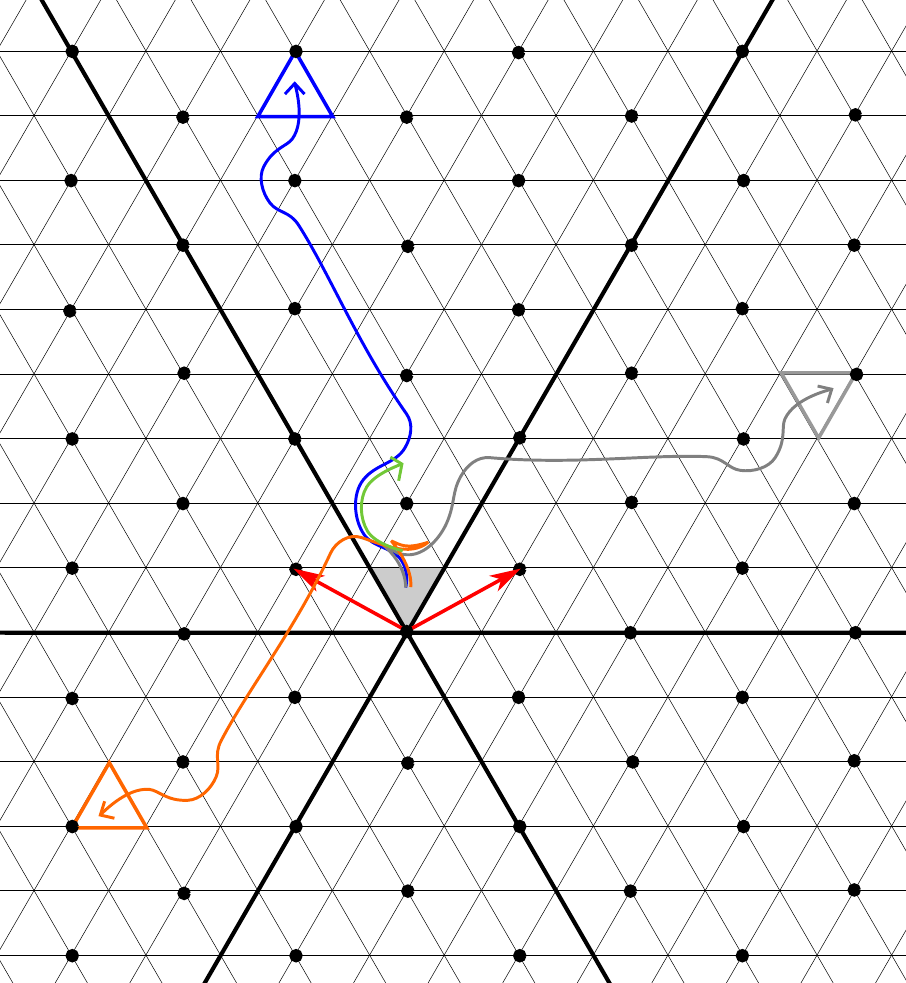}
\put(29,96){\color{blue}$\lambda$}
\put(24,70){\color{blue}$\lambda - 2\check{\rho}$}
\put(29,91.7){\color{blue}$\x_0$}
\put(37,65){\color{blue}$\gamma$}
\put(68,91){\color{black}$H_{\alpha_2}$}
\put(10,91){\color{black}$H_{\alpha_1}$}
\put(40.5,38.5){\color{black}$\fa$}
\put(37,37){\color{black}$\s_1$}
\put(44,37){\color{black}$\s_2$}
\put(35,33.5){\color{black}$\s_1\s_2$}
\put(43,33.5){\color{black}$\s_2 \s_1$}
\put(40,30){\color{black}$\w_0$}
\put(40.5,46){\color{black}$\check{\rho}$}
\put(49,37){\color{red}$\alpha_1^\vee$}
\put(31,37){\color{red}$\alpha_2^\vee$}
\put(37.5,48){\color{LimeGreen}$\tau'$}
\put(10.5,19){\color{OrangeRed}$\z$}
\put(4.5,14){\color{OrangeRed}$\zeta$}
\put(4.5,57){\color{OrangeRed}$w_0\zeta$}
\put(25,26){\color{OrangeRed}$\gamma_{\check{\rho}}$}
\end{overpic}
\caption{An example of the construction of the folded gallery $\gamma_{\check{\rho}}$.}
\label{fig:gamma_rho}
\end{figure}

To obtain $\gamma_{\check{\rho}}$ from $\gamma$, we fold the first $\ell(w_0) -1$ crossings of the subgallery $\tau' = t^{\check{\rho}} \tau$ of $\gamma$ using a PRS-folding sequence, so that there are $\ell(w_0)  - 1$ folds in total, all occurring in panels of the $w_0$-position at $\check{\rho}$.  (Note that since $G$ has rank $n \geq 2$, we have $\ell(w_0) > 1$.)   This folding sequence consists of exactly the first $\ell(w_0) - 1$ folds of the sequence described in more detail in \cite[Sec.~6.2]{MST1}. In Figure~\ref{fig:gamma_rho}, where $\ell(w_0) - 1 = 2$, recall that the subgallery $\tau'$ is shown in light green and $\gamma$ is blue. The gray gallery is that obtained from $\gamma$ by carrying out the first fold in this folding sequence, and $\gamma_{\check{\rho}}$ is the orange gallery obtained from the gray gallery by carrying out the second and final fold.

We note that $\gamma_{\check{\rho}}$, like $\gamma$, has first alcove $\fa$, since this folding sequence leaves fixed the initial minimal subgallery of $\gamma$ from $\fa$ to the $w_0$-position at $\check{\rho}$. We now determine the final alcove $\z$ of the folded gallery $\gamma_{\check{\rho}}$.

\begin{lemma}\label{lem:sphericalZ}  The spherical direction of $\z$ is $s_{i_1} = w_0 s_i w_0$.
\end{lemma}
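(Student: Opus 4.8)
The plan is to track the spherical (finite) direction of the end alcove through the PRS-folding sequence. Write $\ell := \ell(w_0)$, let $K_1,\dots,K_\ell$ be the hyperplanes crossed by $\tau' = t^{\check{\rho}}\tau$ in order, let $r_j$ be the reflection of $\mathfrak{a}$ across $K_j$, and let $\bar r_j \in \sW$ denote its linear (spherical) part, so $\bar r_j = s_{\alpha_j}$ for any root $\alpha_j$ defining $K_j$. First I would record the effect of each fold: by a short induction on $k$, after the $k$-th fold of the sequence the current gallery has end alcove $r_1 r_2\cdots r_k(\x_0)$, and the $(k+1)$-st fold is performed at the panel $r_1\cdots r_k(q_{k+1})$, where $q_{k+1}$ is the panel of the $(k+1)$-st crossing of $\tau'$ in $\gamma$. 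Since $\tau'$ is minimal, the alcove of $\tau'$ immediately before the $(k+1)$-st crossing is $r_k\cdots r_1(D)$ for $D := t^{\check{\rho}}w_0\fa_0$ the $w_0$-position at $\check{\rho}$, so $q_{k+1}$ is a panel of $r_k\cdots r_1(D)$ and hence $r_1\cdots r_k(q_{k+1})$ is a panel of $D$, which explains why all folds lie in panels of $D$. Taking $k=\ell-1$ gives $\z = r_1 r_2\cdots r_{\ell-1}(\x_0)$.

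Next I would pass to spherical directions. Since $\x_0 = t^\lambda w_0$ has spherical direction $w_0$, the spherical direction of $\z$ is $\bar r_1\bar r_2\cdots\bar r_{\ell-1}\,w_0$. To evaluate the prefix I would use that $\tau'$ is a \emph{minimal} gallery from $D = t^{\check{\rho}}w_0\fa_0$ to the identity position $t^{\check{\rho}}\fa_0$ at $\check{\rho}$: reading off its crossing reflections and using that $\eW$ acts simply transitively on alcoves gives $r_1 r_2\cdots r_\ell\, t^{\check{\rho}} = t^{\check{\rho}} w_0$ in $\eW$, and taking linear parts yields $\bar r_1\bar r_2\cdots\bar r_\ell = w_0$; hence $\bar r_1\cdots\bar r_{\ell-1} = w_0\,\bar r_\ell$.

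It then remains to identify $\bar r_\ell$. By construction $\tau$ crosses the same list of hyperplanes as $\tilde\tau$ in the same order, so $K_\ell = t^{\check{\rho}}(H_\ell)$ with $H_\ell$ the last hyperplane crossed by $\tilde\tau$. The chosen reduced word $\vec w_0 = s_{k_1}\cdots s_{k_\ell}$ has $s_{k_\ell} = s_{i_1}$, so $s_{k_1}\cdots s_{k_{\ell-1}} = w_0 s_{i_1}$, and the formula for $H_\ell$ together with $w_0\alpha_i = -\alpha_{i_1}$ gives
\[
H_\ell = (s_{k_1}\cdots s_{k_{\ell-1}})H_{\alpha_{i_1}} = w_0 s_{i_1}(H_{\alpha_{i_1}}) = w_0(H_{\alpha_{i_1}}) = H_{w_0\alpha_{i_1}} = H_{\alpha_i}.
\]
Thus $K_\ell = H_{\alpha_i,1}$ and $\bar r_\ell = s_{\alpha_i} = s_i$, so $\bar r_1\cdots\bar r_{\ell-1} = w_0 s_i$ and the spherical direction of $\z$ equals $w_0 s_i w_0 = s_{i_1}$, as claimed.

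I expect the main obstacle to be the bookkeeping in the first paragraph: one must check, against the precise PRS-folding sequence imported from \cite[Sec.~6.2]{MST1}, that the successive fold panels really are the reflected copies $r_1\cdots r_k(q_{k+1})$ of the crossings of $\tau'$ and that, since each fold reflects the tail across the hyperplane supporting its fold panel, the end alcove is transformed exactly by $r_1\cdots r_{\ell-1}$. This relies on the fact that $\tau'$ winds minimally around $\check{\rho}$, equivalently that $\tau$ is the opposition image of $\tilde\tau$ in the star of the origin. Once this is pinned down, the remainder is a routine computation with linear parts in $\sW$.
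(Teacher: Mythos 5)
Your proof is correct. Both you and the paper track the effect of the PRS-folding sequence on the end alcove, arriving at $\z$ as $\x_0$ transformed by a product of $\ell(w_0)-1$ reflections; you differ only in how that product is packaged and evaluated. The paper identifies each fold hyperplane explicitly as $H_{\alpha_{k_j}}$, so the spherical part becomes $s_{k_{\ell-1}}\cdots s_{k_1}w_0$, and then telescopes against the chosen reduced word $\vec w_0 = s_{k_1}\cdots s_{k_\ell}$. You instead show the cleaner bookkeeping identity that after $k$ folds the end alcove is $r_1\cdots r_k(\x_0)$, where the $r_j$ are the reflections across the \emph{crossing} hyperplanes $K_j$ of $\tau'$ rather than the fold hyperplanes; this holds for any PRS-folding sequence and requires no reference to the particular reduced word. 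You then evaluate the prefix using the standard fact that the crossing word of the minimal gallery $\tau'$ multiplies to $w_0$ on spherical parts, so $\bar r_1\cdots\bar r_{\ell-1} = w_0\bar r_\ell$, and a separate computation of the last hyperplane gives $\bar r_\ell = s_i$. Since the ordered product of fold reflections and the ordered product of crossing reflections coincide (as you note when checking that the fold panels are $r_1\cdots r_k(q_{k+1})$), the two formulations are mathematically equivalent; your version isolates a reusable lemma at the cost of one extra hyperplane computation, while the paper's does all the work inside the reduced word, which is convenient because essentially the same telescoping reappears in the proof of the translation-vertex lemma immediately afterward.
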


\noindent For example, in Figure~\ref{fig:gamma_rho}, we have $i_1 = 2$ and the spherical direction of $\z$ is in fact $s_2$.

\begin{proof} By construction, the final alcove $\x_0$ of $\gamma$ has spherical direction $w_0$.  The subgallery $\tau' = t^{\check{\rho}} \tau$ of $\gamma$ crosses the hyperplanes $t^{\check{\rho}} H_1, \dots, t^{\check{\rho}} H_\ell$, in this order, and all folds of $\gamma_{\check{\rho}}$ occur within $\tau'$.  Thus the spherical direction of $\z$ can be found by first translating $\x_0$ by $-\check{\rho}$, then folding the first $\ell(w_0) - 1$ crossings of $\tau$, then applying the translation $t^{\check{\rho}}$ to the final alcove of the resulting gallery.  Since translations do not alter spherical directions of alcoves, it suffices to consider the effect of the reflections which correspond to   folding the first $\ell(w_0) - 1$ crossings of $\tau$.  

The hyperplanes crossed by $\tau$ are $H_1,\dots,H_\ell$, with $H_j= s_{k_1}s_{k_2}\cdots s_{k_{j-1}}H_{\alpha_{k_j}}$.  The reflection corresponding to the first fold of $\tau$ is thus $s_{\alpha_{k_1}}$, and we have $s_{\alpha_{k_1}}H_2 = H_{\alpha_{k_2}}$, so the second fold of $\tau$ occurs in $H_{\alpha_{k_2}}$.  Continuing in this way we obtain that the hyperplanes in which the folds of $\tau$ occur are $H_{\alpha_{k_1}}$, $H_{\alpha_{k_2}}$, \dots, $H_{\alpha_{k_{\ell - 1}}}$, in this order.  Hence the spherical direction of $\z$ is $s_{k_{\ell -1}}\cdots s_{k_1}w_0$.

Using $\vec w_0 = s_{k_1} \cdots s_{k_\ell}$ we then obtain
\[s_{k_{\ell -1}}\cdots s_{k_1}w_0 = s_{k_{\ell -1}}\cdots s_{k_1} \cdot s_{k_1}\cdots s_{k_\ell}=s_{k_\ell}=s_{i_1}=w_0 s_i w_0, \] 
as claimed.
\end{proof}

We now determine the translation vertex of the final alcove of the gallery $\gamma_{\check{\rho}}$.  

\begin{lemma}\label{lem:translationZ}  The translation vertex of $\z$ is given by 
\begin{equation}\label{eq:zeta}
\zeta = \check{\rho} + w_0 s_i (\lambda - \check{\rho}) \;\in \omega + R^\vee.
\end{equation}
\end{lemma}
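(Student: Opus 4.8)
The plan is to track how the folding sequence of Section~\ref{sec:folding} moves the final alcove $\x_0$ of $\gamma$ to the final alcove $\z$ of $\gamma_{\check{\rho}}$. By construction, all $\ell - 1 = \ell(w_0) - 1$ folds occur among the crossings of the subgallery $\tau' = t^{\check{\rho}}\tau$, and these crossings all precede $\x_0$ in $\gamma$; hence $\x_0$ lies in the tail that is reflected by each of these folds, so $\z$ is the image of $\x_0$ under the composition of the corresponding fold reflections, taken in the order in which the PRS-folding sequence introduces the folds.

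The main work — and the step I would set up most carefully — is identifying those reflections. The proof of Lemma~\ref{lem:sphericalZ} already shows that if one folds the first $\ell - 1$ crossings of the untranslated gallery $\tau$, then the $m$-th fold occurs in the linear hyperplane $H_{\alpha_{k_m}}$. Since $\tau' = t^{\check{\rho}}\tau$ and the folding procedure commutes with the translation $t^{\check{\rho}}$, the $m$-th fold of $\gamma_{\check{\rho}}$ therefore occurs in $t^{\check{\rho}}H_{\alpha_{k_m}} = H_{\alpha_{k_m},1}$, using $\langle \alpha_{k_m},\check{\rho}\rangle = 1$ for the simple root $\alpha_{k_m}$; the corresponding reflection is $s_{\alpha_{k_m},1} = t^{\check{\rho}}s_{k_m}t^{-\check{\rho}}$. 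I would be careful here both about the legitimacy of this ``translate everything by $t^{\check{\rho}}$'' argument and about the composition order (leftmost reflection applied last). Telescoping the conjugating translations, the net transformation $g$ with $\z = g\,\x_0$ is then $g = t^{\check{\rho}}\bigl(s_{k_{\ell-1}}s_{k_{\ell-2}}\cdots s_{k_1}\bigr)t^{-\check{\rho}}$.

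The rest is routine algebra. Since $\vec{w_0} = s_{k_1}\cdots s_{k_\ell}$ is a reduced word for the involution $w_0$ whose last letter was chosen so that $s_{k_\ell} = s_{i_1} = w_0 s_i w_0$, one computes $s_{k_{\ell-1}}\cdots s_{k_1} = s_{k_\ell}w_0 = w_0 s_i$, so $g = t^{\check{\rho}}(w_0 s_i)t^{-\check{\rho}} = t^{\check{\rho} - w_0 s_i\check{\rho}}(w_0 s_i)$. Applying $g$ to $\x_0 = t^\lambda w_0$ then gives spherical direction $w_0 s_i w_0 = s_{i_1}$ — a welcome consistency check against Lemma~\ref{lem:sphericalZ} — and translation vertex $\zeta = (\check{\rho} - w_0 s_i\check{\rho}) + w_0 s_i\lambda = \check{\rho} + w_0 s_i(\lambda - \check{\rho})$, as desired. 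Finally, $\zeta \in \omega + R^\vee$ because $\gamma$ lies in the $\omega$-sheet and folding preserves the sheet containing a gallery, so $\z$, and hence its translation vertex, lies in the $\omega$-sheet.
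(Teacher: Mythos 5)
Your proof is correct and follows essentially the same route as the paper: both identify the fold reflections as $s_{\alpha_{k_m},1} = t^{\check{\rho}}s_{k_m}t^{-\check{\rho}}$ via the computation in Lemma~\ref{lem:sphericalZ}, compose them to get $t^{\check{\rho}}(w_0 s_i)t^{-\check{\rho}}$, and apply the word identity $s_{k_{\ell-1}}\cdots s_{k_1} = w_0 s_i$. Your variant of applying $g$ to the full element $x_0 \in \eW$ rather than just to the vertex $\lambda$ is a minor packaging difference that usefully recovers Lemma~\ref{lem:sphericalZ} as a consistency check.
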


\noindent For example, in Figure~\ref{fig:gamma_rho}, we have $i = 1$ so $w_0 s_i = s_1 s_2$, and both $\lambda \in R^\vee$ and $\zeta = \check{\rho} + s_1 s_2 (\lambda - \check{\rho}) \in R^\vee$.

\begin{proof} 
First, note that although $\check{\rho}$ may not have the same image as $\lambda$ in $Q^\vee/R^\vee$, the final vertex $\zeta$ of $\gamma_{\check{\rho}}$ will, since reflections in hyperplanes through $\check{\rho}$ preserve this image.

Since $\x_0$ has translation vertex $\lambda$, by similar arguments to those in the proof of Lemma~\ref{lem:sphericalZ}, the final vertex of $\gamma_{\check{\rho}}$ can be found by first translating $\lambda$ by $-\check{\rho}$, then carrying out the reflections in the successive hyperplanes $H_{\alpha_{k_1}},\dots,H_{\alpha_{k_{\ell - 1}}}$, and finally translating the result by $\check{\rho}$.  So it suffices to show that $
s_{k_{\ell - 1}}\cdots s_{k_1} = w_0 s_i$.  Using the reverse of our chosen word $\vec{w}_0$, we obtain
\[
s_{k_{\ell - 1}}\cdots s_{k_1} = s_{k_\ell} s_{k_\ell} s_{k_{\ell - 1}}\cdots s_{k_1} = s_{k_\ell} w_0 = s_{i_1} w_0 = w_0 s_i w_0 w_0 = w_0 s_i,
\]
which completes the proof.
\end{proof}

We point out that the gallery $\gamma_{\check{\rho}}$ is not positively folded with respect to the $P_i$-chimney.  For example, compare the orientations for the $P_1$-chimney from Figure \ref{fig:chimneyOrientations} to see that the two folds in $\gamma_{\check{\rho}}$ from Figure \ref{fig:gamma_rho} both occur on the negative side of the respective hyperplane.  The next lemma determines an infinite family of elements $y \in \eW$ such that $\gamma_{\check{\rho}}$ is positively folded with respect to the $(P_i,y)$-chimney.

\begin{lemma}\label{lem:posFolded}  
Let $y = t^\mu w_0 s_i \in \eW$, where $\mu \in Q^\vee$.  If $\langle \alpha_{i_1}, \mu \rangle \geq 1$, then the gallery  $\gamma_{\check{\rho}}$ is positively folded with respect to the $(P_i,y)$-chimney.
\end{lemma}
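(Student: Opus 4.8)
The plan is to analyze each of the $\ell(w_0)-1$ folds of $\gamma_{\check\rho}$ directly, and show that the orientation $\phi$ induced by the $(P_i,y)$-chimney assigns $+$ to each pair (folded alcove, folding panel). Recall from Section~\ref{sec:folding} that all folds of $\gamma_{\check\rho}$ occur in panels of the $w_0$-position at $\check\rho$, lying in the hyperplanes $t^{\check\rho}H_{\alpha_{k_1}}, \dots, t^{\check\rho}H_{\alpha_{k_{\ell-1}}}$, and that the folded alcoves all sit on the $w_0$-side of $\check\rho$. First I would recall from \cite[Def.~3.23]{MNST} the precise rule for the orientation induced by a $(P_i,y)$-chimney: for a hyperplane $H$ not parallel to any wall defining the $P_i$-sector, the $+$ side is the side ``facing away'' from the sector, while for hyperplanes parallel to $H_{\alpha_i}$ (the walls bounding the $P_i$-sector) the orientation is determined locally near the sector, and the $+$/$-$ assignment to a pair $(c,p)$ at such a hyperplane depends on which side of $p$ the alcove $c$ lies, in the periodic manner of the $A_1$-factor $M$. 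This is exactly the structure visible in Figure~\ref{fig:chimneyOrientations}, where the green/black labels realize this dichotomy for $PGL_3$.

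Next I would pin down the location of the chimney relative to the folds. The $(P_i,y)$-chimney for $y = t^\mu w_0 s_i$ is, by definition, the $y$-image of the $P_i$-chimney; since $w_0 s_i$ sends $\alpha_i \mapsto w_0 s_i \alpha_i = -w_0\alpha_i = \alpha_{i_1}$ and more generally conjugates $\Delta_M = \{\alpha_i\}$ to $\{\alpha_{i_1}\}$, the translated-sector representative of the $(P_i,y)$-chimney lies between two adjacent $\alpha_{i_1}$-hyperplanes and runs off to infinity in the directions dictated by $w_0 s_i(\Delta_N)$, based near the lattice point $\mu$. The hypothesis $\langle \alpha_{i_1}, \mu\rangle \geq 1$ is precisely what guarantees that this sector sits on the far side (away from the origin, in the $\alpha_{i_1}$-direction) of all the hyperplanes in which the folds of $\gamma_{\check\rho}$ occur; I would verify this by computing $\langle \alpha_{i_1},\cdot\rangle$ on the vertices $\check\rho$ and $\mu$ and noting $\langle\alpha_{i_1},\check\rho\rangle = 1$. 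Combined with the fact (established in Lemma~\ref{lem:sphericalZ}'s proof) that the folding hyperplanes are the $w_0$-conjugates $s_{k_1}\cdots s_{k_{j-1}}H_{\alpha_{k_j}}$ translated by $\check\rho$, one checks that each such hyperplane separates the folded alcove from the sector, with the folded alcove on the $+$ side — for the non-$\alpha_{i_1}$-parallel hyperplanes this is immediate from ``facing away from infinity'', and for any folding hyperplane that happens to be an $\alpha_{i_1}$-hyperplane one uses the local periodic rule together with $\langle\alpha_{i_1},\mu\rangle \geq 1$ to land on the correct side of the panel.

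The main obstacle I anticipate is the bookkeeping for folds occurring on hyperplanes \emph{parallel} to $H_{\alpha_i}$ (equivalently, on $\alpha_{i_1}$-hyperplanes after applying $y$), since there the chimney orientation is not simply ``away from the point at infinity'' but is governed by the periodic $A_1$-pattern of the Levi factor $M$, and one must track the exact integer level of the folding hyperplane and of the sector to determine the sign. Here the inequality $\langle \alpha_{i_1},\mu\rangle \geq 1$ must be used sharply: it should ensure the sector representing the $(P_i,y)$-chimney lies at or beyond level $1$ in the $\alpha_{i_1}$-direction while all relevant folds sit at levels $\leq 1$ on the appropriate side, so that the periodic sign works out to $+$. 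Once all folds are handled case-by-case (parallel vs. transverse to $H_{\alpha_{i_1}}$), the conclusion that $\gamma_{\check\rho}$ is positively folded with respect to the $(P_i,y)$-chimney follows by Definition of positively folded in Section~\ref{sec:galleriesChimneys}. A clean way to organize the whole argument is to reduce to $\mu$ with $\langle\alpha_{i_1},\mu\rangle = 1$ by a translation argument (translations by $R^\vee_M$-directions in the sector don't change positivity of folds lying outside the parallel family), and then argue the single remaining level directly, perhaps by transporting everything back through $y^{-1}$ so that one is checking positivity of the \emph{unfolded-then-refolded} picture against the standard $P_i$-chimney, where Figure~\ref{fig:chimneyOrientations} makes the signs transparent.
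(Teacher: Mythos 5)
Your overall plan is in the right spirit, but the step you call ``immediate'' is actually where the real content of the paper's proof lies, and your proposal leaves a genuine gap there.

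You correctly observe that the $(P_i,y)$-sector for $y = t^\mu w_0 s_i$ lies between consecutive $\alpha_{i_1}$-hyperplanes (the paper pins this down as lying between $H_{\alpha_{i_1},k}$ and $H_{\alpha_{i_1},k+1}$ with $k = \langle \alpha_{i_1}, \mu\rangle$), and you correctly isolate the $\alpha_{i_1}$-hyperplane folds as the ones that need the hypothesis $\langle \alpha_{i_1}, \mu\rangle \geq 1$. However, for the folds in $\beta$-hyperplanes with $\beta \in \Phi^+\setminus\{\alpha_{i_1}\}$, you assert that positivity is ``immediate from facing away from infinity.'' It is not: the $(P_i,y)$-chimney for $y = t^\mu w_0 s_i$ goes off to infinity in the direction governed by $w_0 s_i$ applied to the $P_i$-chimney, which on the face of it is neither the dominant nor the antidominant direction (since $w_0 s_i \neq w_0$). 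One must actually verify that, on each such $\beta$-hyperplane, the induced orientation coincides with one for which the folds of $\gamma_{\check\rho}$ are known to be positive. The paper's proof does exactly this by exhibiting the bijection $\Phi^+ \setminus \{\alpha_i\} \to \Phi^+ \setminus \{\alpha_{i_1}\}$ sending $\beta \mapsto -w_0 s_i \beta$, which shows that for every $\beta \in \Phi^+\setminus\{\alpha_{i_1}\}$, the $(P_i,y)$-chimney contains the half-apartments bounded by $\beta$-hyperplanes containing subsectors of the dominant Weyl chamber; in other words, the induced orientation on these hyperplanes agrees with the \emph{opposite standard} ($B$-chimney) orientation. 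Combined with the fact (built into the PRS-folding construction of $\gamma_{\check\rho}$) that $\gamma_{\check\rho}$ is positively folded with respect to the opposite standard orientation, only the $\alpha_{i_1}$-hyperplane folds remain to be checked, and those are all in $H_{\alpha_{i_1},1}$, whence the hypothesis $k = \langle\alpha_{i_1},\mu\rangle \geq 1$ suffices. Your proposal neither records the bijection nor invokes the opposite-standard positivity of $\gamma_{\check\rho}$, so the non-$\alpha_{i_1}$ case is unproved; if you want to pursue a per-fold check, you need one of these two ingredients to make the ``facing away from infinity'' heuristic rigorous. Your proposed translation reduction to $\langle\alpha_{i_1},\mu\rangle = 1$ is harmless but buys nothing, since the orientation on $H_{\alpha_{i_1},1}$ is already determined by which side of it the sector lies on, not by its exact level.
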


\begin{proof}  Since the spherical direction of $\y$ is $w_0 s_i$, and $w_0 s_i \alpha_i = w_0(-\alpha_i) = \alpha_{i_1}$, any $(P_i,y)$-sector lies between the hyperplanes $H_{\alpha_{i_1},k}$ and $H_{\alpha_{i_1},k+1}$ for some $k \in \Z$.  More precisely, as any $P_i$-sector lies between $H_{\alpha_{i},0}$ and $H_{\alpha_{i},1}$, any $(P_i, y)$-sector lies between $H_{\alpha_{i_1},k}$ and $H_{\alpha_{i_1},k+1}$ where $k = \langle \alpha_{i_1}, \mu \rangle$.

Let $\beta \in \Phi^+ \setminus \{ \alpha_i \}$.  Then by definition the $P_i$-chimney contains all half-apartments bounded by $\beta$-hyperplanes which contain subsectors of the antidominant Weyl chamber.  Write $\beta' = w_0 s_i \beta$.  Then $s_i \beta$ is positive, hence $\beta'$ is negative.  Now since $\alpha_{i_1} = w_0 s_i \alpha_i$, it follows that $-\beta' \in \Phi^+ \setminus \{ \alpha_{i_1} \}$.  Moreover the map $\Phi^+ \setminus \{ \alpha_i \} \to \Phi^+ \setminus \{ \alpha_{i_1} \}$ given by $\beta \mapsto -\beta'$ is a bijection.

Therefore for all $\beta \in \Phi^+ \setminus \{ \alpha_{i_1} \}$ and all $y = t^\mu w_0 s_i$, any $(P_i,y)$-chimney contains all half-apartments bounded by $\beta$-hyperplanes which contain subsectors of the dominant Weyl chamber.  Thus for all $\beta \in \Phi^+ \setminus \{ \alpha_{i_1} \}$ and all $y = t^\mu w_0 s_i$, the periodic orientation on $\beta$-hyperplanes with respect to the opposite standard orientation agrees with the orientation induced by the $(P_i,y)$-chimney.

Now by construction, the gallery $\gamma_{\check{\rho}}$ has all folds positive with respect to the opposite standard orientation, and its only folds in an $\alpha_{i_1}$-hyperplane are in $H_{\alpha_{i_1},1}$.   So $\gamma_{\check{\rho}}$ will be positively folded with respect to any $(P_i,y)$-chimney represented by a sector lying between $H_{\alpha_{i_1},k}$ and $H_{\alpha_{i_1},k+1}$ where $k \geq 1$.  This is exactly the condition $\langle \alpha_{i_1}, \mu \rangle \geq 1$ from the statement.
\end{proof}

\begin{remark}\label{rem:chimney}
Conversely, we point out that if $\langle \alpha_{i_1}, \mu \rangle <1,$ then in fact $\gamma_{\check{\rho}}$ is not positively folded with respect to the $(P_i,y)$-chimney.  In particular, the folds in $H_{\alpha_{i_1},1}$ occur on the negative side of this hyperplane if $\langle \alpha_{i_1}, \mu \rangle <1$.  As we shall see in Section \ref{sec:remainingTargets}, we will need to handle these two cases separately.
\end{remark}

We will need the next result in Section~\ref{sec:conjugationFinal} and in Section~\ref{sec:remainingTargets}, which says that both $\lambda-2\check{\rho}$ and $w_0\zeta$ project to the same point on $H_{\alpha_i}$. Note that since $\sW$ preserves $R^\vee$ and $2\check{\rho} \in R^\vee$ by definition, then $w_0\zeta$ and $\lambda-2\check{\rho}$ have the same image in $Q^\vee/R^\vee$, which equals that of $\lambda$. See Figure~\ref{fig:gamma_rho} for an example, and recall that $i = 1$.

\begin{lemma}\label{lem:top}
For any $\lambda \in Q^\vee$ and $i \in [n]$ as in Theorem \ref{thm:firstTarget}, and $\zeta$ defined by Lemma \ref{lem:translationZ},
\[\proj_i(w_0\zeta) = \proj_i(\lambda - 2\check{\rho}).\]
\end{lemma}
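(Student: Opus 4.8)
The plan is a direct computation: substitute the explicit formula $\zeta = \check{\rho} + w_0 s_i(\lambda - \check{\rho})$ from Lemma~\ref{lem:translationZ}, apply $w_0$, and simplify, then hit everything with $\proj_i$.

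First I would use $w_0^2 = \id$ to rewrite
\[
w_0\zeta = w_0\check{\rho} + s_i(\lambda - \check{\rho}).
\]
The key input here is the identity $w_0\check{\rho} = -\check{\rho}$, which holds because $-w_0$ permutes the simple roots, hence permutes the fundamental coweights $\varpi_j$, and therefore fixes their sum $\check{\rho}$. Next I would expand the reflection via $s_i(v) = v - \langle \alpha_i, v\rangle \alpha_i^\vee$, using the fact (recorded in the excerpt) that $\langle \alpha_i, \check{\rho}\rangle = 1$ since $\check{\rho} \in H_{\alpha_i,1}$. This gives
\[
s_i(\lambda - \check{\rho}) = (\lambda - \check{\rho}) - \bigl(\langle \alpha_i, \lambda\rangle - 1\bigr)\alpha_i^\vee,
\]
and hence
\[
w_0\zeta = (\lambda - 2\check{\rho}) - \bigl(\langle \alpha_i, \lambda\rangle - 1\bigr)\alpha_i^\vee.
\]

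Finally I would apply $\proj_i = A_{s_i}$. Since $A_{s_i}$ is linear and $A_{s_i}\alpha_i^\vee = \tfrac12(\alpha_i^\vee + s_i\alpha_i^\vee) = 0$ — equivalently $\alpha_i^\vee \in \Ker A_{s_i}$ by Lemma~\ref{lem:AwProperties}(4) — the correction term drops out and we obtain $\proj_i(w_0\zeta) = \proj_i(\lambda - 2\check{\rho})$, as claimed.

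I do not expect a genuine obstacle here: the only points requiring any care are the sign identity $w_0\check{\rho} = -\check{\rho}$ and keeping track of the constant $\langle\alpha_i,\check{\rho}\rangle = 1$, after which the statement is one line of linear algebra. Indeed, the cleanest phrasing is purely geometric: the computation above shows $w_0\zeta$ and $\lambda - 2\check{\rho}$ differ by a multiple of $\alpha_i^\vee$, which is precisely the direction orthogonal to $H_{\alpha_i}$, so their orthogonal projections onto $H_{\alpha_i}$ agree.
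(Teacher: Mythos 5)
Your proof is correct and takes essentially the same route as the paper's: both rest on $w_0\check{\rho}=-\check{\rho}$ together with the projection properties of $\proj_i=A_{s_i}$. The only cosmetic difference is that the paper invokes the absorption identity $\proj_i\circ s_i=\proj_i$ (Lemma~\ref{lem:AwProperties}(1)) directly, whereas you unwind it by expanding $s_i$ and noting $\alpha_i^\vee\in\Ker\proj_i$; these are the same fact.
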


\begin{proof}
By the definition of $\zeta$, properties of $\proj_i$, and the fact that $w_0 \check{\rho} = - \check{\rho}$,
\begin{eqnarray*}
\proj_i(w_0 \zeta) & = & \proj_i(w_0 \check{\rho} + s_i(\lambda - \check{\rho})) \\
& = & \proj_i(w_0 \check{\rho}) + \proj_i(s_i(\lambda - \check{\rho})) \\ 
& = & \proj_i(-\check{\rho}) + \proj_i(\lambda - \check{\rho}) \\
& = & \proj_i(\lambda -2\check{\rho}), 
\end{eqnarray*}
as claimed.
\end{proof}

\subsection{Reaching the first target}\label{sec:conjugationFinal}

We are now prepared to prove Theorem~\ref{thm:firstTarget}.  We thus assume that $\nu = \proj_i(\lambda - 2\check{\rho})$ is a non-integral Newton point with associated parabolic subgroup $P=P_i$ of rank 1.   We will show that the final alcove $\z$ of the folded gallery $\gamma_{\check{\rho}}$ from Section~\ref{sec:folding} is (labeled by) a certain $\eW$-conjugate of the standard representative $b_\nu$.  This allows us to conclude that $\gamma_{\check{\rho}}$ is positively folded with respect to a suitable chimney, such that we can apply Theorem~\ref{thm:ADLVChimneys}. Figure~\ref{fig:firstTarget}, which continues the example from Figures~\ref{fig:gamma} and~\ref{fig:gamma_rho}, illustrates the proof of Theorem \ref{thm:firstTarget}.

\begin{figure}[ht]
\centering
\begin{overpic}[width=0.6\textwidth]{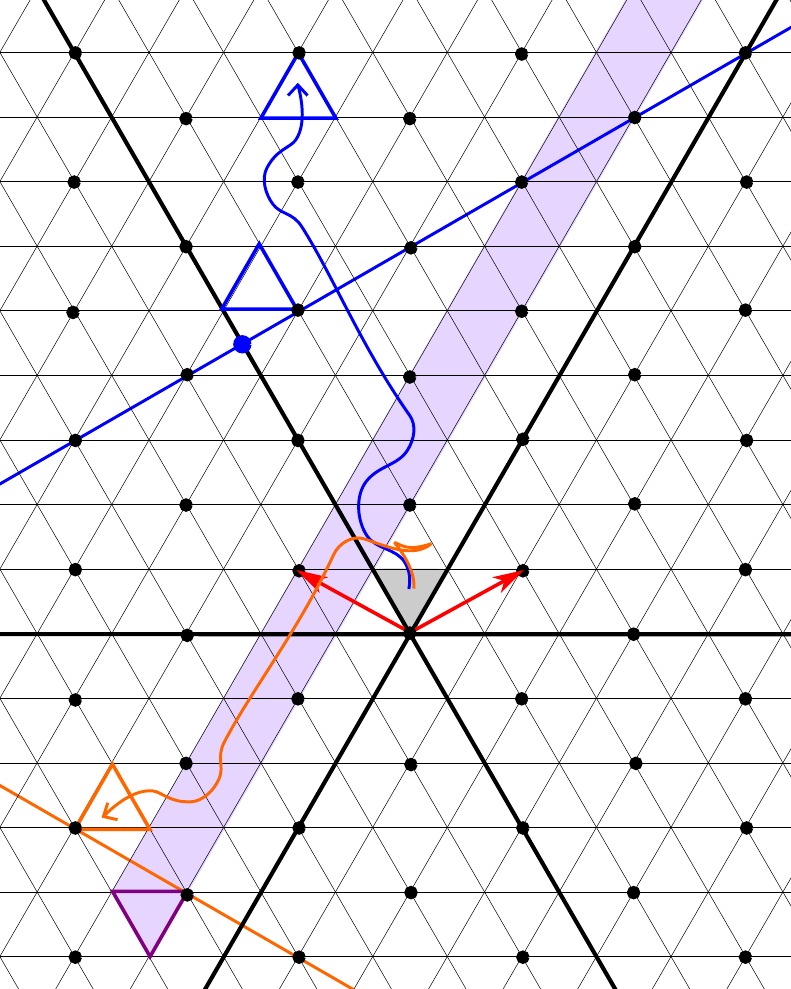}
\put(29,96){\color{blue}$\lambda$}
\put(25,70){\color{blue}$b_\nu$}
\put(29,91.7){\color{blue}$\x_0$}
\put(37,65){\color{blue}$\gamma$}
\put(29.5,66){\color{blue}$\eta$}
\put(23.5,62.6){\color{blue}$\nu$}
\put(69,93){\color{blue}$\cT_\nu$}
\put(40.5,38.5){\color{black}$\fa$}
\put(37,37){\color{black}$\s_1$}
\put(44,37){\color{black}$\s_2$}
\put(35,33.5){\color{black}$\s_1\s_2$}
\put(43,33.5){\color{black}$\s_2 \s_1$}
\put(40,30){\color{black}$\w_0$}
\put(40.5,46){\color{black}$\check{\rho}$}
\put(49,37){\color{red}$\alpha_1^\vee$}
\put(31,37){\color{red}$\alpha_2^\vee$}
\put(10.5,19){\color{OrangeRed}$\z$}
\put(4.5,14){\color{OrangeRed}$\zeta$}
\put(4.5,57){\color{OrangeRed}$w_0\zeta$}
\put(36,77){\color{OrangeRed}$s_i w_0\zeta$}
\put(21.5,33){\color{OrangeRed}$\gamma_{\check{\rho}}$}
\put(25.5,6){\color{OrangeRed}$w_0 s_i \cT_\nu$}
\put(18,11){\color{DarkMagenta}$\mu$}
\put(14.5,7){\color{DarkMagenta}$\y$}
\end{overpic}
\caption{An example illustrating the proof of Theorem~\ref{thm:firstTarget}.}
\label{fig:firstTarget}
\end{figure}

We start by using some results on conjugacy classes from Section~\ref{sec:conjugacy}.  Recall that the subspace transverse to the Newton point $\nu$ with standard representative $b_\nu = t^\eta v$ is defined as $\cT_\nu = \nu + \Ker A_v$.  In Figure~\ref{fig:firstTarget}, in which $i = 1$, the affine subspace $\cT_\nu$ is the blue line, and the orange line is $w_0 s_i \cT_\nu = s_1 s_2 \cT_\nu$.

\begin{lemma}\label{lem:firstTransverse} Suppose that $\nu = \proj_i(\lambda - 2\check{\rho})$ is a non-integral Newton point, with standard representative $b_\nu = t^\eta s_i$, and denote the corresponding Kottwitz point by $\omega$.  
Then \[\zeta \in (w_0s_i \cT_\nu) \cap (\omega+R^\vee).\]
\end{lemma}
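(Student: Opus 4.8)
The statement bundles two claims: that $\zeta$ lies in the coset $\omega + R^\vee$, and that $\zeta$ lies in the transverse subspace $w_0 s_i \cT_\nu$. The first is essentially free: Lemma~\ref{lem:translationZ} already records $\zeta = \check{\rho} + w_0 s_i(\lambda - \check{\rho}) \in \omega + R^\vee$, where here one uses that the symbol $\omega$ in the present statement is the common Kottwitz point $\omega = \kappa_G(b) = \kappa_G(x_0)$ supplied by the hypotheses of Theorem~\ref{thm:firstTarget}. So the plan is to spend all the effort on the second claim, $\zeta \in w_0 s_i \cT_\nu$.

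For that, I would first rewrite the containment as $(w_0 s_i)^{-1}\zeta = s_i w_0 \zeta \in \cT_\nu$. Since $P = P_i$ has rank $1$ and $\nu$ is non-integral, Corollary~\ref{cor:nonintegralTransverse} identifies $\cT_\nu = \nu + \{ r\alpha_i^\vee \mid r \in \R\}$, and Lemma~\ref{lem:AwProperties}(4) gives $\{r\alpha_i^\vee \mid r \in \R\} = \Ker A_{s_i} = \Ker \proj_i$. Since $P_\nu = P_i$ forces $\nu \in H_{\alpha_i}$ by \eqref{eq:PnuRoots}, we have $\proj_i(\nu) = \nu$. Combining these observations, $s_i w_0 \zeta \in \cT_\nu$ if and only if $\proj_i(s_i w_0 \zeta) = \nu$, so the problem reduces to verifying a single projection identity.

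The last step is then a one-line computation with the averaging operator: Lemma~\ref{lem:AwProperties}(1) yields $A_{s_i} s_i = A_{s_i}$, hence $\proj_i(s_i w_0 \zeta) = \proj_i(w_0 \zeta)$; Lemma~\ref{lem:top} gives $\proj_i(w_0 \zeta) = \proj_i(\lambda - 2\check{\rho})$; and the latter equals $\nu$ by the standing hypothesis $\nu = \proj_i(\lambda - 2\check{\rho})$. This closes the argument. I do not foresee a real obstacle — the proof is short — but the point demanding care is the bookkeeping: one must be precise that $\proj_i$ fixes $\nu$ (not $\eta$) when reducing to the projection identity, and that the $\omega + R^\vee$ clause is literally inherited from Lemma~\ref{lem:translationZ} via the matching of Kottwitz points.
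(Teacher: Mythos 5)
Your proposal is correct and follows essentially the same route as the paper's proof: both hinge on the identity $\proj_i(w_0\zeta) = \nu$ supplied by Lemma~\ref{lem:top}, and both absorb the extra $s_i$ in $w_0 s_i \cT_\nu$ by using that $\cT_\nu$ is $s_i$-invariant, which you phrase via $A_{s_i}s_i = A_{s_i}$ and the paper phrases as $s_i\cT_\nu = \cT_\nu$. The only cosmetic difference is that the paper verifies $w_0\zeta \in \cT_\nu$ by comparing against $\eta$ (via $\proj_i(w_0\zeta) = \proj_i(\eta)$ and Corollary~\ref{cor:stdRepTransverse}), whereas you compare directly against the base point $\nu$ of $\cT_\nu$; this makes your version marginally shorter but does not change the substance.
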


\begin{proof} 
Recall that $A_{s_i} = \proj_i$ by Lemma~\ref{lem:AwProperties}(4), in which case $s_i(\Ker A_{s_i}) = \Ker A_{s_i}$.  Since $\nu \in H_{\alpha_i}$ and hence is fixed by $s_i$, it follows by the definition of $\cT_\nu = \nu + \Ker A_{s_i}$ that $s_i \cT_\nu = \cT_\nu$.  Since $\zeta \in \omega + R^\vee$ by Lemma \ref{lem:translationZ}, it now suffices to show that $w_0 \zeta \in \cT_\nu$.  By Lemma~\ref{lem:top} and assumption, we have $\proj_i(w_0 \zeta) = \proj_i(\lambda - 2\check{\rho}) = \nu$. Combining this observation with  Proposition~\ref{prop:stdReps}(2), we get that $\proj_i(w_0 \zeta) = \nu = \proj_i(\eta)$.  Thus $w_0 \zeta - \eta \in \Ker A_{s_i}$.  From Corollary~\ref{cor:stdRepTransverse} we have $\eta \in \cT_\nu$, and hence $w_0 \zeta \in \cT_\nu$ as required.
\end{proof}

We now identify $z=t^\zeta s_{i_1}$ as a particular $\eW$-conjugate of the standard representative $b_\nu$.

\begin{corollary}\label{cor:firstConjugation}
Let $z = t^\zeta s_{i_1} \in \eW$.  Then:
\begin{enumerate}
    \item there exists an integer $d_i \in \Z$ such that  $s_i w_0 \zeta - \eta = d_i \alpha_i^\vee$; and
    \item if $y = t^\mu w_0 s_i \in \eW$ with $\mu = d_i w_0 s_i \eta \in Q^\vee$ for $d_i \in \Z$ as in (1), then $z=b_\nu^y$.
\end{enumerate}
\end{corollary}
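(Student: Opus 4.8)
The plan is to obtain this corollary as a direct instance of Proposition~\ref{prop:inverseRank1}, applied with the particular element $u = w_0 s_i \in \sW$. First I would record the conjugation identity $u s_i u^{-1} = (w_0 s_i) s_i (w_0 s_i)^{-1} = w_0 s_i s_i s_i w_0 = w_0 s_i w_0 = s_{i_1}$, using that $(w_0 s_i)^{-1} = s_i w_0$ together with the definition $s_{i_1} = w_0 s_i w_0$ from Section~\ref{sec:minGallery}. Consequently the final alcove $z = t^\zeta s_{i_1}$ of $\gamma_{\check{\rho}}$, as computed in Lemmas~\ref{lem:sphericalZ} and~\ref{lem:translationZ}, is exactly of the shape $t^\zeta u s_i u^{-1}$ to which Proposition~\ref{prop:inverseRank1} refers.

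Next I would check that the hypothesis of Proposition~\ref{prop:inverseRank1} holds, namely that $\zeta \in (u\cT_\nu) \cap (\omega + R^\vee)$. Since $u = w_0 s_i$, this is precisely the assertion $\zeta \in (w_0 s_i \cT_\nu) \cap (\omega + R^\vee)$ established in Lemma~\ref{lem:firstTransverse}, so no additional work is needed here. Invoking Proposition~\ref{prop:inverseRank1}(1) with this $u$ then produces an integer $d_i \in \Z$ with $u^{-1}\zeta - \eta = d_i \alpha_i^\vee$; expanding $u^{-1} = s_i w_0$ rewrites this as $s_i w_0 \zeta - \eta = d_i \alpha_i^\vee$, which is statement~(1). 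Likewise, Proposition~\ref{prop:inverseRank1}(2) says that for $y = t^\mu u$ with $\mu = d_i u \eta \in Q^\vee$ (here $\mu \in Q^\vee$ because $\eta \in Q^\vee$ and $Q^\vee$ is $\sW$-stable) one has $b_\nu^y = z$; since $u = w_0 s_i$, this reads $y = t^\mu w_0 s_i$ with $\mu = d_i w_0 s_i \eta$, which is statement~(2).

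The only subtle point — and the step I would treat most carefully — is the bookkeeping of inverses guaranteeing that $u = w_0 s_i$ (and not, say, $s_i w_0$) is the correct choice to make $u s_i u^{-1}$ match the spherical direction $s_{i_1}$ of $\z$, and that the conjugating element $y$ then comes out with spherical part $w_0 s_i$, as the statement demands. Both facts reduce immediately to the identity $s_{i_1} = w_0 s_i w_0$, so after this verification the corollary follows with no further computation, being just the combination of Lemma~\ref{lem:firstTransverse} with Proposition~\ref{prop:inverseRank1}.
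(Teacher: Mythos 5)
Your proposal is correct and follows exactly the paper's proof: set $u = w_0 s_i$, compute $u s_i u^{-1} = s_{i_1}$, invoke Lemma~\ref{lem:firstTransverse} to verify $\zeta \in (u\cT_\nu) \cap (\omega + R^\vee)$, and then apply Proposition~\ref{prop:inverseRank1}. The only difference is that you spell out the unwinding of $u^{-1} = s_i w_0$ in each conclusion, which the paper leaves implicit.
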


\noindent For example, in Figure~\ref{fig:firstTarget} we have $d_i = 1$, and hence $\mu = w_0 s_i \eta = s_1 s_2 \eta$ with $y = t^\mu s_1 s_2$.  The vertex $\mu \in R^\vee$ and the alcove $\y$ are labeled in purple.  (Note that, coincidentally in this example,  $\lambda - 2\check{\rho} = \eta$.)

\begin{proof}
Put $u = w_0 s_i$.  Then $s_{i_1} = w_0 s_i w_0 = w_0 s_i s_i s_i w_0 = u s_i u^{-1}$. We also have $\zeta \in (u \cT_\nu) \cap (\omega +R^\vee)$ by Lemma~\ref{lem:firstTransverse}.  The remaining claims are then immediate by Proposition~\ref{prop:inverseRank1}.
\end{proof}

Our final corollary in this section combines the previous results to conclude that the gallery $\gamma_{\check{\rho}}$ is positively folded with respect to the chimney suggested by Corollary \ref{cor:firstConjugation}.

\begin{corollary}\label{cor:firstTarget} Let $x_0 =t^\lambda w_0 \in \eW$, and assume that $\x_0$ is in the shrunken dominant Weyl chamber $\Cfs$.
Suppose that  
\[
\proj_i(\lambda - 2\check{\rho}) = \nu,
\]
where $\nu$ is a non-integral Newton point with associated standard spherical parabolic subgroup $P_i$ of rank $1$, and denote the corresponding standard representative by $b_\nu = t^\eta s_i$.   

Let $\mu = d_i w_0 s_i \eta$ and $y = t^\mu w_0 s_i \in \eW$ be as in the statement of Corollary~\ref{cor:firstConjugation}. Then the gallery $\gamma_{\check{\rho}}:\fa \rightsquigarrow \bb_\nu^y$ has type $\vec{x}_0$ and is positively folded with respect to the $(P_i,y)$-chimney.
\end{corollary}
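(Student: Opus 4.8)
The proof of Corollary~\ref{cor:firstTarget} is essentially a bookkeeping exercise that assembles the pieces established in Sections~\ref{sec:minGallery} through \ref{sec:conjugationFinal}. The plan is as follows. First, recall that the gallery $\gamma$ was built in Section~\ref{sec:minGallery} to be a minimal gallery $\fa \rightsquigarrow \x_0$ of type $\vec{x}_0$, and that the PRS-folding sequence applied in Section~\ref{sec:folding} preserves the type of a gallery. Hence $\gamma_{\check{\rho}}$ has type $\vec{x}_0$. By construction $\gamma_{\check{\rho}}$ has first alcove $\fa$, as noted after the folding sequence. So it remains to identify the final alcove of $\gamma_{\check{\rho}}$ with $\bb_\nu^y$ and to verify the positive-folding condition.

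For the identification of the final alcove: Lemma~\ref{lem:sphericalZ} shows the spherical direction of $\z$ is $s_{i_1} = w_0 s_i w_0$, and Lemma~\ref{lem:translationZ} shows its translation vertex is $\zeta = \check{\rho} + w_0 s_i(\lambda - \check{\rho})$. Thus $\z$ is (labeled by) the element $z = t^\zeta s_{i_1} \in \eW$. Corollary~\ref{cor:firstConjugation} then gives precisely the element $y = t^\mu w_0 s_i$ with $\mu = d_i w_0 s_i \eta$ (where $d_i$ is the integer from Corollary~\ref{cor:firstConjugation}(1)) for which $z = b_\nu^y$. Since $\y^y$-labeling of alcoves is compatible with the $\eW$-action (as recalled in Section~\ref{sec:Notation}), the final alcove $\z$ of $\gamma_{\check{\rho}}$ is the alcove $\bb_\nu^y$. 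Therefore $\gamma_{\check{\rho}} : \fa \rightsquigarrow \bb_\nu^y$.

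For the positive-folding claim: I would invoke Lemma~\ref{lem:posFolded}, which says that for $y = t^\mu w_0 s_i$ the gallery $\gamma_{\check{\rho}}$ is positively folded with respect to the $(P_i, y)$-chimney provided $\langle \alpha_{i_1}, \mu \rangle \geq 1$. So the one remaining computation is to check this inequality for $\mu = d_i w_0 s_i \eta$. Using $w_0 s_i \alpha_{i} = \alpha_{i_1}$ together with $\langle w_0 s_i \alpha_i, w_0 s_i \eta\rangle = \langle \alpha_i, \eta \rangle$ by $\sW$-invariance of the pairing, one gets $\langle \alpha_{i_1}, \mu \rangle = d_i \langle \alpha_i, \eta \rangle = d_i$, since $\langle \alpha_i, \eta \rangle = 1$ by Proposition~\ref{prop:stdReps}(2) (equation~\eqref{eq:eta1}). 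So the inequality reduces to $d_i \geq 1$. This is the one point requiring a genuine argument rather than pure assembly, and it is the main obstacle: one must show the integer $d_i$ determined implicitly by $s_i w_0 \zeta - \eta = d_i \alpha_i^\vee$ is at least $1$. I would argue this using the hypothesis that $\x_0$ lies in the shrunken dominant Weyl chamber $\Cfs$, which forces $\lambda - 2\check\rho$ to lie in (the closure of) $\Cf$, hence $\langle \alpha_i, \lambda - 2\check\rho\rangle \geq 0$; combining with $\nu = \proj_i(\lambda - 2\check\rho)$ non-integral (so $\langle\alpha_i,\lambda-2\check\rho\rangle$ is a positive odd integer by Lemma~\ref{lem:NPimageAsi}) and Lemma~\ref{lem:top} ($\proj_i(w_0\zeta) = \proj_i(\lambda - 2\check\rho) = \nu = \proj_i(\eta)$), one computes that $\langle \alpha_i, w_0\zeta\rangle = \langle\alpha_i, \eta\rangle + \langle\alpha_i, w_0\zeta - \eta\rangle$, i.e. $\langle \alpha_i, w_0 \zeta \rangle$ and $d_i$ are controlled by $\langle \alpha_i, \lambda - 2\check\rho\rangle \geq 1$, yielding $d_i \geq 1$. (One takes care here with $s_i w_0\zeta$ versus $w_0\zeta$: $\langle\alpha_i, s_i w_0\zeta\rangle = -\langle\alpha_i,w_0\zeta\rangle$, and the sign works out because $s_i w_0 \zeta - \eta$ is the relevant difference.) Once $d_i \geq 1$ is in hand, Lemma~\ref{lem:posFolded} applies directly and the proof is complete.
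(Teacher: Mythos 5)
Your proposal is correct and follows essentially the same route as the paper's proof: reduce the type and end-alcove claims to Lemmas~\ref{lem:sphericalZ}, \ref{lem:translationZ}, and Corollary~\ref{cor:firstConjugation}; reduce the positivity claim to the inequality $\langle\alpha_{i_1},\mu\rangle = d_i \geq 1$ via Lemma~\ref{lem:posFolded}; and extract $d_i \geq 1$ from the shrunken-dominant hypothesis together with the parity coming from non-integrality. The one spot where you hand-wave is the actual extraction of $d_i$: what the computation gives (via $s_iw_0\zeta - \eta = d_i\alpha_i^\vee$, $\langle\alpha_i,\eta\rangle = 1$, and \eqref{eq:zeta}) is $2d_i + 1 = \langle\alpha_i,\lambda\rangle$, equivalently $2d_i - 1 = \langle\alpha_i,\lambda - 2\check\rho\rangle$. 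Your argument that this last pairing is a nonnegative odd integer, hence $\geq 1$, is exactly right and yields $d_i\geq 1$; the paper phrases the same bound as $\langle\alpha_i,\lambda\rangle$ odd and $\geq 2$, hence $\geq 3$. So the two proofs differ only in whether they track $\langle\alpha_i,\lambda\rangle$ or $\langle\alpha_i,\lambda-2\check\rho\rangle$, which is immaterial; to turn your sketch into a complete proof you should record the identity $2d_i+1 = \langle\alpha_i,\lambda\rangle$ explicitly rather than gesturing at it.
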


\noindent For example, a $(P_i,y)$-sector is shaded in purple in Figure~\ref{fig:firstTarget} with the alcove $\y$ based at $\mu$ outlined, and the orange gallery $\gamma_{\check{\rho}}$ is positively folded with respect to the $(P_i,y)$-chimney.

\begin{proof}  
By construction, $\gamma_{\check{\rho}}: \fa \rsa \z$ has type $\vec{x}_0$. Corollary \ref{cor:firstConjugation} says that $\z=z\fa_0$ where $z=b^y_\nu$, and so $\bb_\nu^y = b_\nu^y\fa_0$ is indeed the final alcove of $\gamma_{\check{\rho}}$. In order for $\gamma_{\check{\rho}}$ to be positively folded with respect to the $(P_i,y)$-chimney, by Lemma~\ref{lem:posFolded} it suffices to show that $\langle \alpha_{i_1}, \mu \rangle \geq 1$.  
We have
\[
\langle \alpha_{i_1}, \mu \rangle = \langle w_0 s_i \alpha_i, d_i w_0 s_i \eta \rangle = d_i \langle \alpha_i, \eta \rangle = d_i,
\]
since $\eta \in H_{\alpha_i,1}$ by Proposition \ref{prop:stdReps}. We thus aim to show that $d_i \geq 1$.  

As $s_i w_0 \zeta - \eta = d_i \alpha_i^\vee$ by Corollary \ref{cor:firstConjugation}(1), we obtain that $s_i w_0 \zeta - \eta \in H_{\alpha_i,2d_i}$, and hence $s_i w_0 \zeta \in H_{\alpha_i,2d_i +1}$ again using that $\eta \in H_{\alpha_i,1}$.  Thus
\[
2d_i + 1 = \langle \alpha_i, s_i w_0 \zeta \rangle = \langle -\alpha_i, -\check{\rho} + s_i(\lambda - \check{\rho}) \rangle = \langle -\alpha_i, -\check{\rho} \rangle + \langle \alpha_i, \lambda \rangle + \langle \alpha_i, -\check{\rho} \rangle = \langle \alpha_i, \lambda \rangle. 
\]
Now recall by Lemma \ref{lem:NPimageAsi} that $\proj_i(\lambda - 2\check{\rho})$ is a non-integral Newton point if and only if $\langle \alpha_i, \lambda-2\check{\rho} \rangle = 2k+1$ for some $k \in \Z$, which in turn happens if and only if $\lambda$ itself lies in an $\alpha_i$-hyperplane of odd index, since $\langle \alpha_i, 2\check{\rho} \rangle = 2$.  Finally, since $\lambda - 2\check{\rho}$ is dominant, we know that $\langle \alpha_i, \lambda \rangle \geq 2$.  Combining this observation with the fact that $\lambda$ lies in an $\alpha_i$-hyperplane of odd index, in fact we have $\langle \alpha_i, \lambda \rangle \geq 3$.  Therefore $d_i \geq 1$ above, 
as required.
\end{proof}

The main result of this section now immediately follows.

\begin{proof}[Proof of Theorem~\ref{thm:firstTarget}] Combine Corollary~\ref{cor:firstTarget} with Theorem~\ref{thm:ADLVChimneys}(1).  
\end{proof}

%!TEX root = main.tex

\section{Reaching the remaining targets}\label{sec:remainingTargets}

In this section, we modify the gallery $\gamma_{\check{\rho}}$ constructed in Section \ref{sec:firstTarget}, which is designed to reach the maximum Newton point associated to $\lambda-2\check{\rho}$ having standard parabolic subgroup $P_i$, in order to reach all other Newton points $\nu' \in \Conv(\sW(\lambda-2\check{\rho}))$ associated to the same parabolic subgroup.  The main idea is to apply the root operators of \cite{GaussentLittelmann} as in \cite[Sec.~6]{MST1}, in order to prove Theorem \ref{thm:remainingTargets}. The preparatory results which guarantee the availability of the root operators are developed in Section \ref{sec:rootOps}.  These root operators are then applied in Section \ref{sec:lower}, although the positivity of the folds only holds for larger Newton points.  These galleries are then further modified in Section \ref{sec:lowest}, in order to reach the smaller Newton points.

\begin{thm}\label{thm:remainingTargets}
Let $x_0=t^\lambda w_0\in \eW$, and assume that $\x_0$ is in the shrunken dominant Weyl chamber $\Cfs$.  Suppose that $\nu'$ is a non-integral Newton point with associated spherical standard parabolic subgroup $P_i$ of rank 1, and suppose that \[ \nu' \in \Conv(\sW(\lambda - 2\check{\rho})).\] Then for any $b' \in \eW$ such that $\nu_{b'}=\nu'$ and $\kappa_G(b') = \kappa_G(x_0)$, we have
$X_{x_0}(b') \neq \emptyset.$
\end{thm}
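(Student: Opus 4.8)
The plan is to reach all non-integral Newton points $\nu'$ with associated parabolic $P_i$ of rank $1$ inside $\Conv(\sW(\lambda-2\check\rho))$ by starting from the folded gallery $\gamma_{\check\rho}$ of Section~\ref{sec:firstTarget}, which already reaches the \emph{maximal} such Newton point $\nu = \proj_i(\lambda-2\check\rho)$, and applying the root operators of \cite{GaussentLittelmann} adapted to alcove walks as in \cite[Sec.~6]{MST1}. First I would record the key observation that, for a fixed rank~$1$ parabolic $P_i$, the set of non-integral Newton points $\nu'$ that can arise all lie along the line $H_{\alpha_i}$, and by Proposition~\ref{prop:stdReps}(2) each has standard representative $t^{\eta'}s_i$ with $\eta' \in H_{\alpha_i,1}\cap Q^\vee$ and $\proj_i(\eta')=\nu'$. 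Moreover the condition $\nu'\in\Conv(\sW(\lambda-2\check\rho))$ together with $\nu'\in H_{\alpha_i}$ means that $\nu'$ ranges over the segment from $\nu$ down to its image $w_0\nu$ under the longest element — equivalently over a string of lattice points obtained from $\eta$ by successive translations. The parameter controlling which Newton point is reached is $\langle \alpha_j, \cdot\rangle$ for the simple roots $\alpha_j$ with $j \neq i$ in the direction transverse to $H_{\alpha_i}$, and applying a root operator associated to an appropriate root lowers exactly this quantity by one while preserving the type $\vec x_0$ and the final spherical direction $s_{i_1}=w_0 s_i w_0$ of Lemma~\ref{lem:sphericalZ}.

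The argument then splits into the two regimes flagged in Remark~\ref{rem:chimney}. In Section~\ref{sec:rootOps} I would first establish the combinatorial preparation: verify that $\gamma_{\check\rho}$ satisfies the hypotheses needed for the root operators $f_\beta$ (or $e_\beta$) of \cite{GaussentLittelmann} to be defined — namely that it is positively folded with respect to the opposite standard orientation (which we already know from Section~\ref{sec:folding}) and has the appropriate extremal behaviour with respect to the relevant $\beta$-hyperplanes — and confirm that applying them preserves the type $\vec x_0$. Then in Section~\ref{sec:lower} I would apply these operators to move from $\nu$ to the \emph{larger} Newton points $\nu'$, meaning those for which the resulting conjugating element $y'=t^{\mu'}w_0s_i$ still satisfies $\langle \alpha_{i_1},\mu'\rangle\ge 1$; for this range Lemma~\ref{lem:posFolded} applies verbatim, and together with the analogues of Lemma~\ref{lem:firstTransverse}, Corollary~\ref{cor:firstConjugation}, and Corollary~\ref{cor:firstTarget} (with $\nu,\eta,\zeta$ replaced by $\nu',\eta',\zeta'$, and with the new final vertex tracked through the root operator) one deduces that the new gallery is positively folded with respect to the $(P_i,y')$-chimney, whence $X_{x_0}(b')\neq\emptyset$ by Theorem~\ref{thm:ADLVChimneys}(1). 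For the remaining \emph{smaller} Newton points, where $\langle\alpha_{i_1},\mu'\rangle<1$ so that $\gamma_{\check\rho}$'s analogue is \emph{not} positively folded with respect to the naive chimney, Section~\ref{sec:lowest} would further modify those galleries: the key point is to relocate or reflect the folds occurring in $H_{\alpha_{i_1},1}$ so that they land on the correct ($+$) side of the chimney dictated by the conjugating element given by Proposition~\ref{prop:inverseRank1}, using that the transverse subspace for $\nu'$ is translated relative to that for $\nu$ and hence the appropriate sector sits in a different fundamental strip between $\alpha_{i_1}$-hyperplanes.

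Throughout, the bookkeeping is: (i) each target $\nu'$ determines $\eta'\in H_{\alpha_i,1}\cap Q^\vee$ and hence $b_{\nu'}=t^{\eta'}s_i$; (ii) the modified gallery $\gamma'$ has type $\vec x_0$, final spherical direction $s_{i_1}$, and final vertex $\zeta'$ with $w_0\zeta'\in\cT_{\nu'}$ by the analogue of Lemma~\ref{lem:firstTransverse} (using Lemma~\ref{lem:top}-type identities to check $\proj_i(w_0\zeta')=\nu'$); (iii) Proposition~\ref{prop:inverseRank1} then exhibits $y'$ with $b_{\nu'}^{y'}\fa_0$ equal to the final alcove of $\gamma'$; (iv) the positive-foldedness of $\gamma'$ with respect to the $(P_i,y')$-chimney is checked via Lemma~\ref{lem:posFolded} in the larger range and via the explicit fold relocation in the smaller range; (v) Theorem~\ref{thm:ADLVChimneys}(1) finishes. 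The main obstacle I anticipate is precisely step~(iv) in the small-Newton-point regime: as noted in Remark~\ref{rem:chimney} and in the introduction's discussion of aperiodicity, root operators are only guaranteed to preserve positivity of folds in the periodic ($P=B$) setting, so the folds inherited from $\gamma_{\check\rho}$ in $H_{\alpha_{i_1},1}$ can end up on the wrong side of an aperiodic chimney; carefully redesigning those galleries — rather than merely applying a root operator — so that every fold is positive with respect to the shifted chimney, while keeping the type equal to $\vec x_0$ and the endpoint on the correct transverse subspace, is the delicate technical heart of Section~\ref{sec:lowest}.
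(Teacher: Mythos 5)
Your proposal essentially matches the paper's own proof: apply root operators for the simple roots other than $\alpha_i$ to descend from the maximal Newton point $\nu=\proj_i(\lambda-2\check\rho)$, split into two regimes according to whether the resulting conjugating element $y'=t^{\mu'}w_0s_i$ satisfies $\langle\alpha_{i_1},\mu'\rangle\geq 1$, modify the gallery in the second regime, and finish with Theorem~\ref{thm:ADLVChimneys}(1). Two imprecisions to fix when fleshing this out. First, the relevant $\nu'$ do not form a segment from $\nu$ to $w_0\nu$ — note $w_0\nu$ is antidominant — but an $(n-1)$-parameter family in $\mathfrak{a}^+_{P_i}\cap\Conv(\sW(\lambda-2\check\rho))$, which is why one needs $n-1$ independent root operator strings $f^{c_j}_{\alpha'_{i_j}}$ for $\alpha'_{i_j}\in\Delta\setminus\{\alpha_i\}$ and why Proposition~\ref{prop:lower} must actually prove a containment of lattice sets rather than a one-dimensional statement. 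Second, the $f$-operators of~\cite{GaussentLittelmann} preserve positive foldedness with respect to the \emph{standard} orientation, not the opposite standard one, so one cannot apply them directly to $\gamma_{\check\rho}$; the paper first passes to the vertex-to-vertex gallery $w_0\gamma_{\check\rho}^\sharp$, applies the operators there, and conjugates back by $w_0$. The ``delicate technical heart'' you correctly flag in the small-Newton-point regime is handled in the paper not by relocating all $H_{\alpha_{i_1},1}$-folds at once, but by undoing only the \emph{last} such fold via a reflection of the tail (yielding $\gamma^+_{\check\rho}(c_2,\dots,c_n)$) and then introducing one new fold in the higher hyperplane $H_{\alpha_{i_1},-2d'_i+2}$, simultaneously replacing $y'$ by a new conjugating element $y''$ with $\langle\alpha_{i_1},\mu''\rangle=-d'_i+1\geq 1$; Lemma~\ref{lem:crossing2} then shows this new fold is positive. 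With those details supplied, your outline would reproduce the paper's argument.
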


The assumptions on $\x_0$ throughout the remainder of this section are those from the statement of Theorem~\ref{thm:remainingTargets}; in particular, we do not assume that $\proj_i(\lambda - 2\check{\rho})$ is non-integral, as we did in Section~\ref{sec:conjugationFinal}.  As in Section \ref{sec:firstTarget}, although $\lambda \in \omega + R^\vee$ throughout, all explicit gallery constructions are illustrated in the 0-sheet of extended alcoves.

\subsection{Application of root operators}\label{sec:rootOps}

Let $\gamma_{\check{\rho}}:\fa \rightsquigarrow \z$ be the folded gallery of type $\vec{x}_0$ constructed in Section~\ref{sec:folding}.  In this section, we obtain a family of galleries from $\gamma_{\check{\rho}}$ using the root operators of~\cite{GaussentLittelmann}. Figures~\ref{fig:rootOps}--\ref{fig:rootOps3} continue the example from Figures~\ref{fig:gamma}--\ref{fig:firstTarget}, and one new gallery obtained by implementing the construction described in this section is shown in pink in Figure~\ref{fig:rootOps3}.

\begin{figure}[ht]
\centering
\begin{overpic}[width=0.6\textwidth]{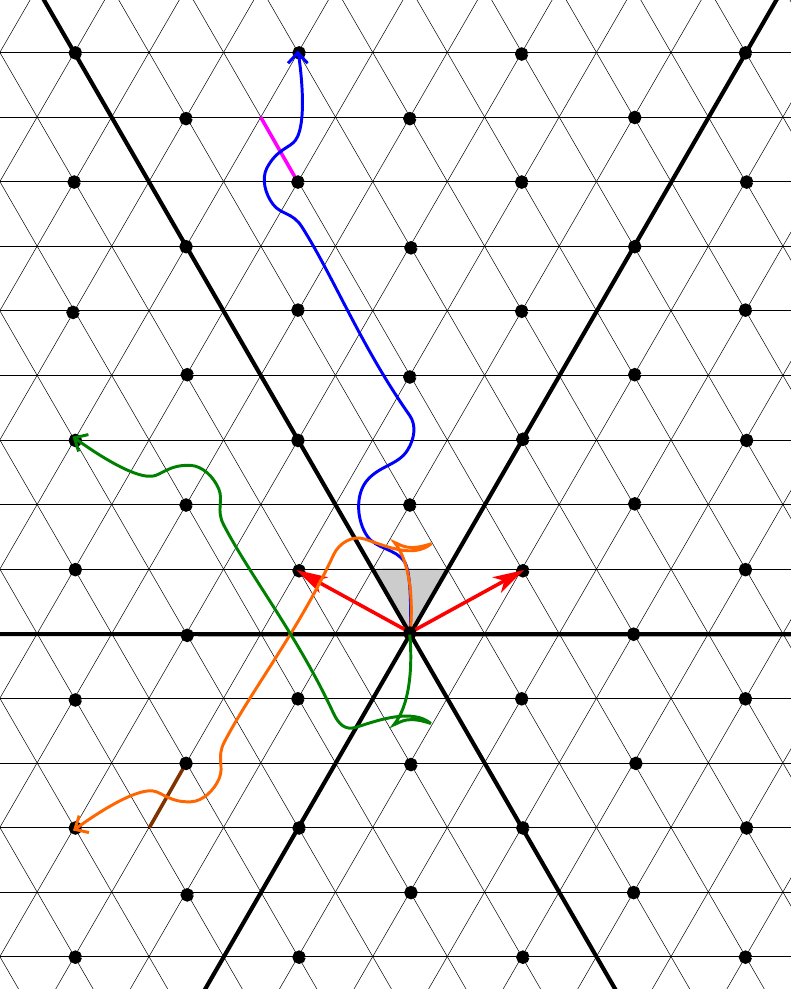}
\put(29,96){\color{blue}$\lambda$}
\put(37,65){\color{blue}$\gamma_\lambda$}
\put(12,84){\color{Fuchsia}$p_i(\gamma, \lambda - \check{\rho})$}
\put(40.5,38.5){\color{black}$\fa$}
\put(40.5,46){\color{black}$\check{\rho}$}
\put(49,37){\color{red}$\alpha_1^\vee$}
\put(31,37){\color{red}$\alpha_2^\vee$}
\put(4.5,14){\color{OrangeRed}$\zeta$}
\put(4.5,57){\color{DarkGreen}$w_0\zeta$}
\put(22.5,30){\color{OrangeRed}$\gamma_{\check{\rho}}^\sharp$}
\put(16,45){\color{DarkGreen}$w_0\gamma_{\check{\rho}}^\sharp$}
\end{overpic}
\caption{An example illustrating the gallery $w_0\gamma_{\check{\rho}}^{\sharp}$ to begin the application of root operators.}
\label{fig:rootOps}
\end{figure}

Let $\gamma_{\check{\rho}}^\sharp$ be the canonical vertex-to-vertex gallery associated to $\gamma_{\check{\rho}}$, with initial vertex the origin and final vertex $\zeta$ defined in \eqref{eq:zeta}.  By the same argument as in the proof of \cite[Prop.~6.7]{MST1}, $\gamma_{\check{\rho}}^\sharp$ has the type of a minimal gallery $\gamma_\lambda$ from the origin to $\lambda$, and $\gamma_\lambda$ is minimal in the sense defined in~\cite{GaussentLittelmann}.  In Figure~\ref{fig:rootOps}, the minimal gallery $\gamma_\lambda$ is blue, and $\gamma_{\check{\rho}}^\sharp$ is orange.

As in the proof of \cite[Prop.~6.7]{MST1}, we now act on $\gamma_{\check{\rho}}^{\sharp}$ on the left by $w_0$ to obtain the gallery $w_0 \gamma_{\check{\rho}}^\sharp$, which is positively folded with respect to the standard orientation, also has type $\gamma_\lambda$, and has final vertex $w_0 \zeta$.  In Figure~\ref{fig:rootOps}, the gallery $w_0 \gamma_{\check{\rho}}^\sharp$ is dark green.  

We now consider the application of root operators to $w_0\gamma_{\check{\rho}}^\sharp$. We first identify where the folds of $w_0\gamma_{\check{\rho}}^\sharp$ occur.  Recall that we denote by $\alpha_{i_2},\dots,\alpha_{i_n}$ the elements of $\Delta \setminus \{ \alpha_{i_1} \}$ in any order, where $\alpha_{i_1} = -w_0 \alpha_i$.  For $1 \leq j \leq n$, we now write $\alpha'_{i_j}$ for the positive simple root $-w_0 \alpha_{i_j}$.  Then $\alpha_i = \alpha'_{i_1}$, and we have $\Delta = \{ \alpha_{i_1}', \alpha'_{i_2},\dots,\alpha'_{i_n} \}$.  In Figure~\ref{fig:rootOps}, we have $i = 1$, $i_1 = 2$, and $i_2 = 1$, and so $\alpha'_{i_1} = \alpha_1$ and $\alpha'_{i_2} = \alpha_2$.

\begin{lemma}\label{lem:indexing1}  All folds in the gallery $w_0 \gamma_{\check{\rho}}^\sharp$ occur in hyperplanes of the form $H_{\alpha'_{i_j},-1}$, where $j \in \{1,\dots,n\}$.  In particular, the last $n-1$ folds of $w_0 \gamma_{\check{\rho}}^\sharp$ occur in the hyperplanes $H_{\alpha'_{i_n},-1}$, \dots, $H_{\alpha'_{i_2},-1}$, in that order.
\end{lemma}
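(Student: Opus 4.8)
The plan is to trace carefully how the PRS-folding sequence on $\gamma$ interacts with the translation $t^{\check{\rho}}$ and the left $w_0$-action, building on the computations already made in Section~\ref{sec:folding}. Recall from Lemma~\ref{lem:sphericalZ} and its proof that all folds of $\gamma_{\check{\rho}}$ occur within the subgallery $\tau' = t^{\check{\rho}}\tau$, in panels of the $w_0$-position at $\check{\rho}$, and that the hyperplanes in which the folds of $\tau$ occur are $H_{\alpha_{k_1}}, H_{\alpha_{k_2}},\dots,H_{\alpha_{k_{\ell-1}}}$ in that order, where $\vec{w}_0 = s_{k_1}\cdots s_{k_\ell}$. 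Applying the translation $t^{\check{\rho}}$ shifts each hyperplane $H_{\alpha_{k_j},0}$ to $H_{\alpha_{k_j},1}$, so the folds of $\gamma_{\check{\rho}}$ occur in $H_{\alpha_{k_1},1},\dots,H_{\alpha_{k_{\ell-1}},1}$ in that order. Passing to $\gamma_{\check{\rho}}^\sharp$ preserves the fold hyperplanes since the associated vertex-to-vertex gallery reflects in the same walls, so the folds of $\gamma_{\check{\rho}}^\sharp$ are still in these $H_{\alpha_{k_j},1}$.

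Next I would apply $w_0$ on the left. Since $w_0$ fixes the origin and sends $\check{\rho}$ to $-\check{\rho}$, it sends $H_{\alpha_{k_j},1}$ to $H_{w_0\alpha_{k_j},-1}$: indeed $w_0(H_{\alpha,1}) = H_{w_0\alpha,1}$ but $w_0\alpha_{k_j}$ is a negative root since $w_0$ sends $\Phi^+$ to $\Phi^-$, and rewriting $H_{w_0\alpha_{k_j},1} = H_{-w_0\alpha_{k_j},-1}$ exhibits the fold hyperplane in terms of the positive root $-w_0\alpha_{k_j}$. Now each $\alpha_{k_j}$ for $j \le \ell-1$ is a spherical simple root, so $-w_0\alpha_{k_j}$ is again a spherical simple root, i.e.\ of the form $\alpha'_{i_m} = -w_0\alpha_{i_m}$ for some $m \in \{1,\dots,n\}$ using the notation just introduced. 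This establishes that every fold of $w_0\gamma_{\check{\rho}}^\sharp$ lies in a hyperplane $H_{\alpha'_{i_j},-1}$, which is the first assertion.

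For the second assertion, I track the \emph{last} $n-1$ folds specifically. By the construction of $\vec{w}_0$ in Section~\ref{sec:minGallery}, the reduced word ends with the Coxeter element $s_{i_n}s_{i_{n-1}}\cdots s_{i_1}$, so $s_{k_{\ell-n+1}} = s_{i_n}$, $s_{k_{\ell-n+2}} = s_{i_{n-1}}$, \dots, $s_{k_\ell} = s_{i_1}$. The folds of $\gamma_{\check{\rho}}$ (and hence of $\gamma_{\check{\rho}}^\sharp$) are indexed by $j = 1,\dots,\ell-1$ and occur in $H_{\alpha_{k_j},1}$; the last $n-1$ of these correspond to $j = \ell-n+1,\dots,\ell-1$, i.e.\ to the simple reflections $s_{i_n}, s_{i_{n-1}},\dots,s_{i_2}$, in that order. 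After applying $w_0$, these folds occur in $H_{-w_0\alpha_{i_n},-1} = H_{\alpha'_{i_n},-1}$, then $H_{\alpha'_{i_{n-1}},-1}$, \dots, then $H_{\alpha'_{i_2},-1}$, in that order, which is precisely the claim. The main obstacle here is purely bookkeeping: one must be careful that the PRS-folding sequence genuinely produces folds in the hyperplanes $H_{\alpha_{k_j}}$ (not merely crossings there) and that the correspondence between the order of folds in $\tau$ and the order of letters in $\vec{w}_0$ is as used in Lemma~\ref{lem:sphericalZ}; once that indexing is pinned down, the translation by $\check{\rho}$ and the $w_0$-action are straightforward affine transformations of the hyperplane arrangement. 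I would therefore structure the proof as a short lemma invoking the proof of Lemma~\ref{lem:sphericalZ} for the fold locations of $\gamma_{\check{\rho}}$, then the two elementary geometric observations above about $t^{\check{\rho}}$ and $w_0$, and finally the explicit identification of the tail using the chosen reduced word.
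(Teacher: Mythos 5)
Your proof is correct and follows essentially the same approach as the paper: both track the fold hyperplanes of $\gamma_{\check{\rho}}$ as $H_{\alpha_{k_1},1},\dots,H_{\alpha_{k_{\ell-1}},1}$ coming from the PRS-folding of $\tau'$, use the chosen reduced word to identify the last $n-1$ of these as $H_{\alpha_{i_n},1},\dots,H_{\alpha_{i_2},1}$, and then apply $w_0$ via $w_0 H_{\alpha_{i_j},1} = H_{-\alpha'_{i_j},1} = H_{\alpha'_{i_j},-1}$. Your version spells out the intermediate steps (the translation by $\check{\rho}$ and the sign flip under $w_0$) a bit more explicitly, but it is the same argument.
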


\begin{proof}  By construction, the folds of $\gamma_{\check{\rho}}$, and thus of $\gamma_{\check{\rho}}^\sharp$, are in the hyperplanes $H_{\alpha_{k_1},1}$, \dots, $H_{\alpha_{k_{\ell - 1}},1}$, where $\vec{w_0} = s_{k_1} \cdots s_{k_\ell}$ is the reduced word for $w_0$ fixed in Section \ref{sec:minGallery}.  In particular, the last $n-1$ folds of $\gamma_{\check{\rho}}$, and thus of $\gamma_{\check{\rho}}^\sharp$, are in $H_{\alpha_{k_{\ell - n + 1}},1}, \dots, H_{\alpha_{k_{\ell - 1}},1}$, in that order, where we recall that $\alpha_{k_{\ell - n + 1}} = \alpha_{i_n}, \dots, \alpha_{k_{\ell - 1}} = \alpha_{i_2}$ by definition.  Thus, all folds of $\gamma_{\check{\rho}}^\sharp$ lie in hyperplanes $H_{\alpha_{i_j},1}$, where $j \in \{1,\dots,n\}$, and the last $n-1$ folds of $\gamma_{\check{\rho}}^\sharp$ are in $H_{\alpha_{i_n},1}, \dots, H_{\alpha_{i_2},1}$, in that order.  Then since $w_0 H_{\alpha_{i_j},1} = H_{w_0\alpha_{i_j},1} = H_{-\alpha'_{i_j},1} = H_{\alpha'_{i_j},-1}$, the folds of $w_0 \gamma_{\check{\rho}}^\sharp$ are as claimed.
\end{proof}

As in \cite[Sec.~4.2]{MST1}, for $\alpha$ any positive simple root, $m(w_0\gamma_{\check{\rho}}^\sharp, \alpha)$ denotes the minimal integer $m$ such that the gallery $w_0\gamma_{\check{\rho}}^\sharp$ contains a panel or a first or final vertex in $H_{\alpha,m}$.  The next result will be used to determine how many times certain root operators can be applied to $w_0\gamma_{\check{\rho}}^\sharp$.

\begin{lemma}\label{lem:indexingM}
For each $2 \leq j \leq n$:
\begin{enumerate}
\item $m(w_0\gamma_{\check{\rho}}^\sharp, \alpha_{i_j}') =-1$; and 
\item $M_j' := \langle \alpha_{i_j}', w_0\zeta \rangle \geq 0$, so in particular, $M_j' \neq m(w_0\gamma_{\check{\rho}}^\sharp, \alpha_{i_j}')$.
\end{enumerate}
\end{lemma}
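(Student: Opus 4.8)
The plan is to prove (2) by a direct computation, and (1) by combining Lemma~\ref{lem:indexing1} (which locates the folds) with a ``separating walls'' estimate on the alcoves of $\gamma_{\check{\rho}}$; the clause ``$M_j' \neq m(w_0\gamma_{\check{\rho}}^\sharp,\alpha_{i_j}')$'' is then immediate from $M_j'\geq 0>-1$.

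First I would establish (2). By Lemma~\ref{lem:translationZ} and $w_0\check{\rho}=-\check{\rho}$ the final vertex is $w_0\zeta = -\check{\rho}+s_i(\lambda-\check{\rho})$, so
\[ M_j' = \langle \alpha_{i_j}', w_0\zeta\rangle = -\langle\alpha_{i_j}',\check{\rho}\rangle + \langle s_i\alpha_{i_j}',\lambda-\check{\rho}\rangle. \]
Since $\alpha_{i_j}'\in\Delta$ we have $\langle\alpha_{i_j}',\check{\rho}\rangle=1$, and since $j\geq 2$ we have $\alpha_{i_j}'\neq\alpha_{i_1}'=\alpha_i$, so $s_i\alpha_{i_j}'=\alpha_{i_j}'+c\alpha_i$ is a positive root with $c=-\langle\alpha_{i_j}',\alpha_i^\vee\rangle\geq 0$. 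Because $\x_0$ lies in $\Cfs$, the coweight $\lambda-2\check{\rho}$ is dominant (as in the proof of Corollary~\ref{cor:firstTarget}), hence $\langle\alpha_k,\lambda\rangle\geq 2$ for every $\alpha_k\in\Delta$; therefore $\langle\alpha_{i_j}',\lambda-\check{\rho}\rangle\geq 1$ and $\langle\alpha_i,\lambda-\check{\rho}\rangle\geq 1$, which gives $\langle s_i\alpha_{i_j}',\lambda-\check{\rho}\rangle\geq 1$ and so $M_j'\geq 0$.

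Next I would do (1). The inequality $m(w_0\gamma_{\check{\rho}}^\sharp,\alpha_{i_j}')\leq -1$ is immediate: by Lemma~\ref{lem:indexing1} the gallery $w_0\gamma_{\check{\rho}}^\sharp$ has a fold in $H_{\alpha_{i_j}',-1}$, hence contains a panel there. For the reverse inequality I would apply $w_0$: using $w_0\alpha_{i_j}'=-\alpha_{i_j}$, it suffices to show that every vertex and panel of $\gamma_{\check{\rho}}^\sharp$ satisfies $\langle\alpha_{i_j},\cdot\rangle\leq 1$, and since the cells of the vertex-to-vertex gallery $\gamma_{\check{\rho}}^\sharp$ lie in closures of alcoves of $\gamma_{\check{\rho}}$, it is enough to check the alcoves of $\gamma_{\check{\rho}}$. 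I would decompose $\gamma_{\check{\rho}}$ as (i) the initial minimal subgallery of $\gamma$ from $\fa$ to the alcove $t^{\check{\rho}}w_0\fa_0$, (ii) the block of $\ell(w_0)-1$ folds, all in panels of $t^{\check{\rho}}w_0\fa_0$, and (iii) a terminal minimal gallery from $t^{\check{\rho}}w_0\fa_0$ to $\z$ — minimal since it is the image under an isometry of a minimal subgallery of $\gamma$. Both $\fa_0$ and $t^{\check{\rho}}w_0\fa_0$ lie in the closed half-space $\{\langle\alpha_k,\cdot\rangle\leq 1\}$ for every $\alpha_k\in\Delta$ (on $\fa_0$ one has $\langle\alpha_k,\cdot\rangle\in[0,1]$, and on $w_0\fa_0$ one has $\langle\alpha_k,\cdot\rangle\leq 0$, hence on $t^{\check{\rho}}w_0\fa_0$ one has $\langle\alpha_k,\cdot\rangle\leq\langle\alpha_k,\check{\rho}\rangle=1$), so no wall $H_{\alpha_k,m}$ with $m\geq 1$ separates them and pieces (i) and (ii) stay in $\{\langle\alpha_k,\cdot\rangle\leq 1\}$. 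For piece (iii): by (2), $\langle\alpha_{i_j},\zeta\rangle=-\langle\alpha_{i_j}',w_0\zeta\rangle=-M_j'\leq 0$, and since $\z=\zeta+s_{i_1}\fa_0$ with $s_{i_1}\alpha_{i_j}$ a positive root (as $j\geq 2$) one gets $\langle\alpha_{i_j},\cdot\rangle\leq 1$ on $\z$; as $t^{\check{\rho}}w_0\fa_0$ also lies in $\{\langle\alpha_{i_j},\cdot\rangle\leq 1\}$, the minimal gallery (iii) never crosses $H_{\alpha_{i_j},m}$ with $m\geq 1$. Hence all of $\gamma_{\check{\rho}}$, and therefore $\gamma_{\check{\rho}}^\sharp$, lies in $\{\langle\alpha_{i_j},\cdot\rangle\leq 1\}$, giving $m(w_0\gamma_{\check{\rho}}^\sharp,\alpha_{i_j}')\geq -1$. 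Thus $m(w_0\gamma_{\check{\rho}}^\sharp,\alpha_{i_j}')=-1$, and since $M_j'\geq 0>-1$ we also get $M_j'\neq m(w_0\gamma_{\check{\rho}}^\sharp,\alpha_{i_j}')$.

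The main obstacle is the lower bound in (1): one must check that no alcove of the folded gallery $\gamma_{\check{\rho}}$ drifts into $\{\langle\alpha_{i_j},\cdot\rangle\geq 2\}$. The key simplification is that $\gamma_{\check{\rho}}$ breaks into an initial minimal part, a block of folds confined to the single alcove $t^{\check{\rho}}w_0\fa_0$, and a terminal minimal part, so that the estimate reduces to noting that the two endpoints of each minimal part lie on the same side of the relevant $\alpha_{i_j}$-walls; everything else is a short computation using the shrunken-chamber hypothesis.
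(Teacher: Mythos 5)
Your proof is correct, and it takes a genuinely different route from the paper's. The paper proves part (1) first, via a case analysis of the subgalleries of $\gamma_{\check{\rho}}$ (the initial minimal part, the fold block at $t^{\check{\rho}}w_0\fa_0$, and the two terminal minimal pieces $\gamma_1, \gamma_2$), establishing that every panel of $\gamma_{\check{\rho}}^\sharp$ in $H_{\alpha_{i_j},1}$ is a fold panel and hence that the gallery never crosses $\alpha_{i_j}$-hyperplanes of index $>1$; part (2) is then extracted from that analysis together with the observation that $\zeta$ lies on no hyperplane through $\check{\rho}$. You reverse the order: you prove (2) directly by expanding $M_j' = \langle \alpha'_{i_j}, -\check{\rho} + s_i(\lambda-\check{\rho})\rangle$ and using $\langle \alpha'_{i_j},\check{\rho}\rangle = 1$, $s_i\alpha'_{i_j} = \alpha'_{i_j} - \langle\alpha'_{i_j},\alpha_i^\vee\rangle\alpha_i$ with nonpositive Cartan entry, and $\langle\alpha_k,\lambda\rangle\geq 2$ from the shrunken hypothesis, giving $M_j'\geq 0$ by a two-line estimate. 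You then feed (2) into (1) through a half-apartment convexity argument: $\fa_0$, $t^{\check{\rho}}w_0\fa_0$, and $\z$ all lie in $\{\langle\alpha_{i_j},\cdot\rangle\leq 1\}$ (the last using $\langle\alpha_{i_j},\zeta\rangle = -M_j'\leq 0$ and positivity of $s_{i_1}\alpha_{i_j}$), and since the fold block is confined to $t^{\check{\rho}}w_0\fa_0$ and the two minimal pieces stay inside the half-apartment determined by their endpoints, the whole gallery does. This avoids the paper's separate treatment of $\gamma_1$ and $\gamma_2$ and its appeal to $\zeta$ avoiding hyperplanes through $\check{\rho}$; the trade-off is that your argument makes (1) depend logically on (2), whereas the paper's (1) is self-contained. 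Both are valid; yours is somewhat more streamlined.
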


\noindent For example, in Figure~\ref{fig:rootOps}, where $\alpha_{i_2'} = \alpha_2$, we have $m(w_0\gamma_{\check{\rho}}^\sharp,\alpha_{i_2}') = -1$ and $\langle \alpha_{i_2}',w_0\zeta \rangle = 6$.

\begin{proof}  By the second statement in Lemma~\ref{lem:indexing1}, for $2 \leq j \leq n$ the gallery $w_0 \gamma_{\check{\rho}}^\sharp$ has at least one fold in $H_{\alpha_{i_j}',-1}$.  Hence $m(w_0\gamma_{\check{\rho}}^\sharp, \alpha_{i_j}') \leq -1$.  Suppose by contradiction that $m(w_0\gamma_{\check{\rho}}^\sharp, \alpha_{i_j}') < -1$.  By the first statement in Lemma~\ref{lem:indexing1}, this implies that $w_0\gamma_{\check{\rho}}^\sharp$ must either cross or have final vertex in an $\alpha_{i_j}'$-hyperplane of index strictly less than $-1$.  Hence $\gamma_{\check{\rho}}^\sharp$ must either cross or have a final vertex in an $\alpha_{i_j}$-hyperplane of index strictly greater than~$1$.  

All crossings of $\gamma_{\check{\rho}}^\sharp$ are crossings of $\gamma_{\check{\rho}}$, so we start by considering $\gamma_{\check{\rho}}$.  The initial minimal subgallery of $\gamma_{\check{\rho}}$ from $\fa$ to the $w_0$-position at $\check{\rho}$ does not cross any simple hyperplanes.  The part of $\gamma_{\check{\rho}}$ after all its folds (all of which occur in panels of the $w_0$-position at $\check{\rho}$) consists of a minimal subgallery $\gamma_1$ starting from the $w_0$-position at $\check{\rho}$, running to the alcove at $\check{\rho}$ which is the image of the identity position (again at $\check{\rho}$) under the reflections in $H_{\alpha_{k_1,1}},\dots,H_{\alpha_{k_{\ell - 1}}, 1}$, followed by a minimal subgallery $\gamma_2$ from this alcove to $\z$.  Thus the two minimal subgalleries $\gamma_1$ and $\gamma_2$ meet at the alcove in the $w$-position at $\check{\rho}$, where
\[
w = s_{k_{\ell - 1}} \cdots s_{k_1}.
\]
From the proof of Lemma~\ref{lem:sphericalZ}, we have $s_{k_{\ell - 1}} \cdots s_{k_1} w_0 = s_{i_1}$, and so $w = s_{i_1}w_0=w_0s_i$.

We now show that the minimal gallery $\gamma_1$, from the $w_0$-position to the $w$-position at $\check{\rho}$, does not cross $H_{\alpha_{i_j},1}$ for any $2 \leq j \leq n$.  After applying $t^{-\check{\rho}}$ and then $w_0$ on the left, this is equivalent to showing that a minimal gallery from the base alcove $\fa$ to the alcove $w_0w\fa$ does not cross the hyperplane $H_{w_0\alpha_{i_j}}$.  But $w_0 w = w_0 s_{i_1} w_0 = s_i$, so a minimal gallery from $\fa$ to $w_0w\fa$ only crosses $H_{\alpha_i} = H_{-\alpha_i}$.  As $w_0 \alpha_{i_j}$ is negative, we obviously cannot have $w_0\alpha_{i_j} = \alpha_i$. If $w_0\alpha_{i_j} = -\alpha_i$, this contradicts $\alpha_{i_j} \in \Delta \setminus \{ \alpha_{i_1}\}$.  Hence, $\gamma_1$ does not cross $H_{\alpha_{i_j},1}$ for any $2 \leq j \leq n$. 

The minimal subgallery $\gamma_2$, from the $w$-position at $\check{\rho}$ to its final alcove $\z$, is the image of a minimal gallery from $t^{\check{\rho}} \fa$ to $\x_0$ under a sequence of reflections in hyperplanes through $\check{\rho}$.  Thus $\gamma_2$ lies in the image of the Weyl chamber $\Cw$ under the translation $t^{\check{\rho}}$.  So the subgallery $\gamma_2$ of $\gamma_{\check{\rho}}$ does not contain any panels in hyperplanes through $\check{\rho}$.  

Combining these observations, every panel of $\gamma_{\check{\rho}}$ (and thus of $\gamma_{\check{\rho}}^\sharp$) contained in $H_{\alpha_{i_j},1}$ is a panel of the alcove $t^{\check{\rho}} w_0\fa$ in which $\gamma_{\check{\rho}}$ has a fold.  Therefore $\gamma_{\check{\rho}}^\sharp$ cannot cross an $\alpha_{i_j}$-hyperplane of index strictly greater than $1$.   Since $\lambda$ does not lie on any hyperplane through $\check{\rho}$ by hypothesis, and $\zeta$ is the image of $\lambda$ under a sequence of reflections in hyperplanes through $\check{\rho}$, we also note that the final vertex $\zeta$ of $\gamma_{\check{\rho}}^\sharp$ does not lie on any hyperplane through $\check{\rho}$. It follows that $\zeta$ is not contained in an $\alpha_{i_j}$-hyperplane of index strictly greater than $1$.  This completes the proof of (1).

For (2), as $\alpha_{i_j}' = -w_0 \alpha_{i_j}$, this is equivalent to showing $\langle \alpha_{i_j}, \zeta \rangle \leq 0$.  Now from the previous paragraph, $\gamma_{\check{\rho}}^\sharp$ does not cross $H_{\alpha_{i_j},1}$, and $\zeta$ is not contained in $H_{\alpha_{i_j},1}$.  Since $\gamma_{\check{\rho}}^\sharp$ has first vertex the origin and final vertex $\zeta$, it follows that $\langle \alpha_{i_j}, \zeta \rangle \leq 0$, as required.
\end{proof}

Before continuing with our results concerning the application of root operators, we record the following lemma for later use in Section \ref{sec:lowest}, since its proof uses some arguments from the proof of Lemma~\ref{lem:indexingM}.  Recall from the construction of the minimal gallery $\gamma:\fa \rightsquigarrow \x_0$ in Section~\ref{sec:minGallery} that $\gamma$ ends with a minimal subgallery from the identity position at $\lambda - \check{\rho}$ to $\x_0$, and that the crossing of $\gamma$ immediately before this subgallery is in the hyperplane through $\lambda - \check{\rho}$ parallel to $H_{\alpha_i}$.  Write $p_i(\gamma,\lambda - \check{\rho})$ for the (unique) panel of $\gamma$ at which this crossing occurs.  In Figure~\ref{fig:rootOps}, the panel $p_i(\gamma,\lambda - \check{\rho})$ is shown in pink, and its image in $\gamma_{\check{\rho}}$ is shown in brown.

\begin{lemma}\label{lem:tail_gammaRho}  The image of the panel $p_i(\gamma,\lambda - \check{\rho})$ of $\gamma$ in the gallery $\gamma_{\check{\rho}}$ lies in the hyperplane $H_{\alpha_{i_1},k}$ where $k = \langle \alpha_{i_1},\zeta\rangle - 1$.  Moreover, $\gamma_{\check{\rho}}$ crosses this image from the antidominant to the dominant side, and has no folds after this crossing.
\end{lemma}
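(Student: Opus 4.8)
The plan is to track the panel $p_i(\gamma, \lambda - \check{\rho})$ through the folding sequence of Section~\ref{sec:folding} and identify its image in $\gamma_{\check{\rho}}$, using the same bookkeeping that appears in the proofs of Lemma~\ref{lem:sphericalZ}, Lemma~\ref{lem:translationZ}, and Lemma~\ref{lem:indexingM}. Recall that $\gamma$ is concatenated from five pieces, and $p_i(\gamma, \lambda - \check{\rho})$ is the panel in step (4): it lies in the hyperplane $H_{\alpha_i}$ translated to pass through $\lambda - \check{\rho}$, i.e.\ in $H_{\alpha_i, \langle \alpha_i, \lambda - \check{\rho}\rangle}$, and $\gamma$ crosses it moving from the $w_0$-position toward the identity position at $\lambda - \check{\rho}$. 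The PRS-folding sequence producing $\gamma_{\check{\rho}}$ only folds crossings inside the subgallery $\tau' = t^{\check{\rho}}\tau$, which occurs in step (2), strictly before this panel; all panels after the folded portion are transported by the composite reflection $w = s_{k_{\ell-1}}\cdots s_{k_1}$, acting about the point $\check{\rho}$. So the image of $p_i(\gamma, \lambda - \check{\rho})$ under the folding is $t^{\check{\rho}} w\, t^{-\check{\rho}}$ applied to the original panel.

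First I would compute the hyperplane. From the proof of Lemma~\ref{lem:sphericalZ}, $w = s_{k_{\ell-1}}\cdots s_{k_1}$ satisfies $w w_0 = s_{i_1} = w_0 s_i w_0$, hence $w = w_0 s_i$. The original panel lies in $H_{\alpha_i, m}$ with $m = \langle \alpha_i, \lambda - \check{\rho}\rangle$; applying the affine map $v \mapsto \check{\rho} + w(v - \check{\rho})$ sends $H_{\alpha_i, m}$ to $H_{w\alpha_i, k'}$ where $k' = m + \langle w\alpha_i, \check{\rho}\rangle - \langle \alpha_i, \check{\rho}\rangle \cdot(\text{sign data})$; more cleanly, I would just evaluate $w\alpha_i = w_0 s_i \alpha_i = w_0(-\alpha_i) = \alpha_{i_1}$, so the image hyperplane is an $\alpha_{i_1}$-hyperplane, and then pin down the index by evaluating $\langle \alpha_{i_1}, \cdot\rangle$ on the image of any point of the panel. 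Taking the point $\lambda - \check{\rho}$ itself (which lies on $H_{\alpha_i,m}$ only if $\langle\alpha_i,\lambda-\check\rho\rangle = m$, which holds), its image is $\check{\rho} + w(\lambda - \check{\rho} - \check{\rho}) = \check{\rho} + w_0 s_i(\lambda - 2\check{\rho})$; alternatively, and more directly matching the target, the image of the identity-position alcove at $\lambda-\check\rho$ under the folding reflections is the final alcove $\z$ at $\zeta = \check{\rho} + w_0 s_i(\lambda - \check{\rho})$, so $p_i(\gamma,\lambda-\check\rho)$ maps to a panel of $\z$, hence to one of the bounding $\alpha_{i_1}$-hyperplanes of $\z$. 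Since $\z$ has spherical direction $s_{i_1}$ and translation vertex $\zeta$, and the panel in question was the one whose crossing the minimal subgallery $\gamma_2$ made last before reaching $\x_0$ (equivalently first as seen from $\z$ going backward), it is the $\alpha_{i_1}$-panel of $\z$ lying in $H_{\alpha_{i_1}, k}$ with $k = \langle \alpha_{i_1}, \zeta\rangle - 1$: one computes $\langle \alpha_{i_1}, \zeta\rangle$ and notes that the $\alpha_{i_1}$-reflection wall of the alcove $t^\zeta s_{i_1}\fa_0$ on the antidominant side of $\zeta$ is exactly $H_{\alpha_{i_1}, \langle\alpha_{i_1},\zeta\rangle - 1}$. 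This gives the first assertion.

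For the direction and the ``no folds after'' claim: in $\gamma$, the crossing at $p_i(\gamma,\lambda-\check\rho)$ goes from the $w_0$-position at $\lambda - \check{\rho}$ toward the identity position at $\lambda - \check{\rho}$, i.e.\ from the $H_{\alpha_i}$-negative side toward the $H_{\alpha_i}$-positive (dominant) side of that wall, since $\langle \alpha_i, \cdot\rangle$ increases along this crossing. The folding map $t^{\check{\rho}} w t^{-\check{\rho}}$ with $w = w_0 s_i$ sends $\alpha_i$ to $\alpha_{i_1}$ (a positive root) and is orientation-preserving in the relevant sense, so the crossing in $\gamma_{\check{\rho}}$ is from the $\alpha_{i_1}$-negative (antidominant) side to the $\alpha_{i_1}$-positive (dominant) side, as claimed. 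Finally, everything in $\gamma_{\check{\rho}}$ after this crossing is the image under the folding reflections of the final minimal subgallery of $\gamma$ from the identity position at $\lambda - \check{\rho}$ to $\x_0$ — a minimal, hence unfolded, gallery — so $\gamma_{\check{\rho}}$ has no folds after this crossing. The main obstacle is purely bookkeeping: being careful that the composite reflection transporting the post-fold tail is $w = w_0 s_i$ about $\check{\rho}$ (not about the origin), and correctly converting the index $\langle\alpha_i,\lambda-\check\rho\rangle$ of the original wall into the index $\langle\alpha_{i_1},\zeta\rangle - 1$ of its image; I would do this by the ``image of a chosen point'' computation above rather than by composing affine reflections symbolically, to avoid sign errors.
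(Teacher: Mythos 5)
Your overall strategy is the right one and matches the paper: track the panel through the affine map $t^{\check\rho}\,w\,t^{-\check\rho}$ with $w = w_0 s_i$, use $w\alpha_i = \alpha_{i_1}$, and handle the direction and ``no folds'' claims separately. The direction and ``no folds after'' arguments are correct; the direction argument is slightly more direct than the paper's (which instead shows the post-fold subgallery lies in $t^{\check\rho}(\mathcal{C}_w)$, with $\mathcal{C}_w$ separated from the antidominant chamber by an $\alpha_{i_1}$-wall), and both work.

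However, there is a genuine error in the index computation, at the point where you write that ``the image of the identity-position alcove at $\lambda-\check\rho$ under the folding reflections is the final alcove $\z$ at $\zeta$,'' and hence that $p_i(\gamma,\lambda-\check\rho)$ maps to a panel of $\z$. This is false: $\z$ is the image of $\x_0$, the alcove in the $w_0$-position at $\lambda$, whereas the identity-position alcove at $\lambda - \check\rho$ maps to $t^{\check\rho + w_0 s_i(\lambda - 2\check\rho)}\,w_0 s_i\,\fa_0$, a different alcove (its translation vertex differs from $\zeta$ by $w_0 s_i \check\rho$, and its spherical direction is $w_0 s_i$, not $s_{i_1}$; step (5) of the construction separates the two). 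So ``it is the $\alpha_{i_1}$-panel of $\z$ on the antidominant side'' is not a valid identification, even though the target index $\langle\alpha_{i_1},\zeta\rangle - 1$ happens to be right. The fix is to stay with your first computation, which you started and then abandoned: the image of the original hyperplane $H_{\alpha_i, j}$ with $j = \langle\alpha_i,\lambda-\check\rho\rangle$ under $t^{\check\rho}\,w_0 s_i\,t^{-\check\rho}$ is $H_{\alpha_{i_1}, j}$ (use that $\langle\alpha_{i_1},\check\rho\rangle = \langle\alpha_i,\check\rho\rangle = 1$), and then compare
\[
\langle\alpha_{i_1},\zeta\rangle = \langle\alpha_{i_1},\check\rho\rangle + \langle\alpha_{i_1}, w_0 s_i(\lambda-\check\rho)\rangle = 1 + \langle\alpha_i,\lambda-\check\rho\rangle = 1 + j,
\]
which is exactly what the paper does.
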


\noindent For example, in Figure \ref{fig:rootOps}, compute $\langle \alpha_{i_1},\zeta\rangle =\langle \alpha_2, \zeta\rangle =3$, and note that the brown panel is indeed contained in $H_{\alpha_2, 2}$.

\begin{proof} By construction of $\gamma$, the panel $p_i(\gamma,\lambda - \check{\rho})$ is contained in its minimal subgallery from  $t^{\check{\rho}}\fa$ to $\x_0$.  Now by construction of $\gamma_{\check{\rho}}$, the image in $\gamma_{\check{\rho}}$ of this minimal subgallery of $\gamma$ is a minimal subgallery which runs from the $w$-position at $\check{\rho}$ to $\z$, where $w = s_{i_1}w_0 = w_0 s_i$ is as in the proof of Lemma~\ref{lem:indexingM}.  Therefore, $\gamma_{\check{\rho}}$ crosses the image of $p_i(\gamma,\lambda - \check{\rho})$ and has no folds after this crossing. 

We now determine the hyperplane containing the image of $p_i(\gamma,\lambda - \check{\rho})$.  Using the $\sW$-invariance of the pairing, this hyperplane will be $wH_{\alpha_i,j} = w_0 s_i H_{\alpha_i,j}$ where $j = \langle \alpha_i,\lambda - \check{\rho} \rangle$.  Since $w_0 s_i \alpha_i = \alpha_{i_1}$, it remains to show that $\langle \alpha_{i_1},\zeta\rangle - 1 = \langle \alpha_i,\lambda - \check{\rho} \rangle$.  We compute that
\begin{eqnarray*}
\langle \alpha_{i_1}, \zeta \rangle & = & \langle \alpha_{i_1}, \check{\rho} + w_0 s_i(\lambda - \check{\rho}) \rangle \\
& = & \langle \alpha_{i_1}, \check{\rho} \rangle + \langle  \alpha_i, \lambda - \check{\rho} \rangle \\
& = & 1 + \langle \alpha_i, \lambda - \check{\rho} \rangle,
\end{eqnarray*}
as required.

Finally, we show that the crossing of $\gamma_{\check{\rho}}$ at the image of $p_i(\gamma,\lambda - \check{\rho})$ is from the antidominant to the dominant side.  Now as observed in the proof of Lemma~\ref{lem:indexingM}, the minimal subgallery of $\gamma_{\check{\rho}}$ starting in the $w$-position at $\check{\rho}$ is contained in the image of the Weyl chamber $\Cw$ under $t^{\check{\rho}}$.  Since $w = s_{i_1}w_0$, the Weyl chamber $\Cw$ is separated from the antidominant Weyl chamber exactly by an $\alpha_{i_1}$-hyperplane.  Therefore any crossing of an $\alpha_{i_1}$-hyperplane by the subgallery of $\gamma_{\check{\rho}}$ starting in the $w$-position at $\check{\rho}$ is from the antidominant to the dominant side.
\end{proof}

We are now prepared to apply the available root operators to the vertex-to-vertex gallery $w_0 \gamma_{\check{\rho}}^\sharp$.  We use all notation as in \cite[Sec.~4.2]{MST1}, and we and refer the reader in particular to Definition 4.14 for a reminder about the precise definition of $f_\alpha$, and to and Lemma 4.17 for several key properties of root operators, following \cite{GaussentLittelmann}. 

We first consider the application of the single root operator $f_{\alpha_{i_2}'}$ to $w_0 \gamma_{\check{\rho}}^\sharp$.  For $M$ a non-negative integer, we write $f_{\alpha_{i_2}'}^M$ for an $M$-fold application of $f_{\alpha_{i_2}'}$.  Recall that $\gamma_{\check{\rho}}$ has $\ell(w_0) - 1$ folds, and hence $w_0 \gamma_{\check{\rho}}^\sharp$ has $\ell(w_0) - 1$ folds as well.  In Figure~\ref{fig:rootOps2}, we have $f_{\alpha_{i_2}'} = f_{\alpha_2}$, $M_2'= \langle \alpha'_{i_2}, w_0\zeta \rangle = \langle \alpha_2,w_0\zeta \rangle = 6$, and $\ell(w_0)-1 =2$.  We depict  $f_{\alpha_2}(w_0 \gamma_{\check{\rho}}^\sharp)$ in light green, $f_{\alpha_2}^2(w_0 \gamma_{\check{\rho}}^\sharp)$ in light blue, and $f_{\alpha_2}^6(w_0 \gamma_{\check{\rho}}^\sharp)$ in purple. 

\begin{figure}[ht]
\centering
\begin{overpic}[width=0.7\textwidth]{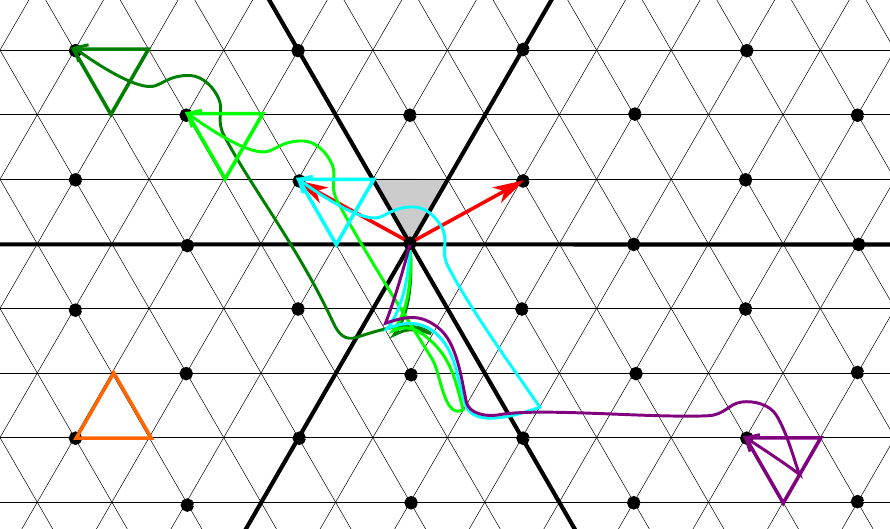}
\put(45,36){\color{black}$\fa$}
\put(44.5,26){\color{black}$\w_0$}
\put(54,33){\color{red}$\alpha_1^\vee$}
\put(36,33){\color{red}$\alpha_2^\vee$}
\put(5,7.5){\color{OrangeRed}$\zeta$}
\put(12,12){\color{OrangeRed}$\z$}
\put(4.5,55.5){\color{DarkGreen}$w_0\zeta$}
\put(18,52){\color{DarkGreen}$w_0\gamma_{\check{\rho}}^\sharp$}
\end{overpic}
\caption{The application of the root operators $f^M_{\alpha_{i_2}'} = f^M_{\alpha_2}$ to $w_0 \gamma_{\check{\rho}}^\sharp$ for $M \in \{1,2,6\}$.}
\label{fig:rootOps2}
\end{figure}

\begin{lemma}\label{lem:singleRootOp}  For $1 \leq k \leq M_2'+1$ the root operator $f_{\alpha_{i_2}'}^{k}$ is defined for the gallery $w_0 \gamma_{\check{\rho}}^\sharp$, while $f_{\alpha_{i_2}'}^{M_2' + 2}$ is not defined for this gallery.  Moreover for $1 \leq k \leq M_2'$, the gallery $f_{\alpha_{i_2}'}^{k}(w_0\gamma_{\check{\rho}}^\sharp)$ has:
\begin{enumerate}
\item first alcove $\w_0$; 
\item final alcove the translate of $w_0\z$ by $-k(\alpha_{i_2}')^\vee$; and
\item the same first $\ell(w_0) - 2$ folds as $w_0 \gamma_{\check{\rho}}^\sharp$.
\end{enumerate}
\end{lemma}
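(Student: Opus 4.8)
The plan is to work entirely with the standard combinatorics of root operators from \cite[Sec.~4.2]{MST1} (following \cite{GaussentLittelmann}), applied to the vertex-to-vertex gallery $w_0\gamma_{\check{\rho}}^\sharp$, whose relevant statistics we already control via Lemma~\ref{lem:indexing1} and Lemma~\ref{lem:indexingM}. Recall the basic mechanism: for a positive simple root $\alpha = \alpha_{i_2}'$, the operator $f_\alpha$ is defined on a gallery $\delta$ precisely when $m(\delta,\alpha) < \langle \alpha, \mathrm{end}(\delta)\rangle$, it decreases the final vertex by $\alpha^\vee$, it decreases $\langle \alpha, \mathrm{end}(\cdot)\rangle$ by $2$ and leaves $m(\cdot,\alpha)$ unchanged until $\langle \alpha,\mathrm{end}(\cdot)\rangle$ drops to $m(\cdot,\alpha)$, and it acts by reflecting a tail of the gallery at the first panel lying in $H_{\alpha, m(\delta,\alpha)}$ after which the gallery stays weakly below that level (possibly inserting one new fold at that panel). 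By Lemma~\ref{lem:indexingM}(1) we have $m(w_0\gamma_{\check{\rho}}^\sharp, \alpha_{i_2}') = -1$, and by Lemma~\ref{lem:indexingM}(2) we have $M_2' = \langle \alpha_{i_2}', w_0\zeta\rangle \geq 0 > -1$; hence $f_{\alpha_{i_2}'}$ is defined, and a straightforward induction using the effect of $f_\alpha$ on these two statistics shows that $f_{\alpha_{i_2}'}^k$ is defined for exactly $1 \leq k \leq M_2' + 1$ — after $M_2'+1$ applications the final vertex has paired value $M_2' - 2(M_2'+1) = -M_2'-2 < -1 = m$, wait, let me recompute: $\langle \alpha, \mathrm{end}\rangle$ starts at $M_2'$, after $k$ steps it is $M_2' - 2k$ as long as this stays $\geq m = -1$; the last $k$ for which $M_2' - 2(k-1) > -1$, i.e.\ $f_\alpha^k$ is applicable, is governed by the usual ``$M' - m$'' bound, giving $k \le M_2' - m = M_2' + 1$, and $f_{\alpha_{i_2}'}^{M_2'+2}$ fails because at that stage $\langle \alpha, \mathrm{end}\rangle = m$. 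This establishes the first sentence.

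For the three enumerated properties, the key observation is that all the action of $f_{\alpha_{i_2}'}$ happens in a controlled tail of the gallery. First I would argue (1): $f_\alpha$ only modifies the gallery from a certain panel onwards, and since $m(w_0\gamma_{\check{\rho}}^\sharp,\alpha_{i_2}') = -1$ while the first alcove $\w_0 = w_0\fa$ has $\langle \alpha_{i_2}', w_0\fa\rangle$-values of its vertices equal to $0$ (the base alcove has all simple coordinates in $\{0,1\}$, and $\alpha_{i_2}'$ is a genuine simple root here), the first panel of $w_0\gamma_{\check{\rho}}^\sharp$ lying in $H_{\alpha_{i_2}',-1}$ occurs strictly after the initial alcove; by Lemma~\ref{lem:indexing1} the only such panels are among the last $n-1$ folds, which lie well after the start. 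Hence every $f_{\alpha_{i_2}'}^k$ leaves the first alcove $\w_0$ untouched. For (2): this is the general fact that $f_\alpha$ translates the final vertex by $-\alpha^\vee$, so $k$ applications translate $w_0\zeta$ by $-k(\alpha_{i_2}')^\vee$; since the final vertex of $w_0\gamma_{\check{\rho}}^\sharp$ is $w_0\zeta = w_0\cdot(\text{translation vertex of }\z)$, this gives the claimed final vertex $w_0\zeta - k(\alpha_{i_2}')^\vee$, which is the translate of $w_0\z$ by $-k(\alpha_{i_2}')^\vee$ (matching spherical direction, since root operators preserve the type and thus the spherical direction of the last alcove). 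For (3): by Lemma~\ref{lem:indexing1} the folds of $w_0\gamma_{\check{\rho}}^\sharp$ are, in order, in $H_{\alpha_{k_1}',-1},\dots$ with the \emph{last} $n-1$ of them in $H_{\alpha_{i_n}',-1},\dots,H_{\alpha_{i_2}',-1}$; in particular the \emph{very last} fold is in $H_{\alpha_{i_2}',-1}$, and — crucially — this is the unique fold of $w_0\gamma_{\check{\rho}}^\sharp$ in an $\alpha_{i_2}'$-hyperplane at the minimal level $-1$ that lies in the tail where reflection occurs, while the first $\ell(w_0)-2$ folds are either in hyperplanes for other roots $\alpha_{i_j}'$ with $j \ne 2$, or in $H_{\alpha_{i_2}',-1}$ but occurring before the panel at which $f_{\alpha_{i_2}'}$ reflects. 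So as long as $k \le M_2'$ (so that $m$ has not yet been reached and the reflection panel stays the same one identified by the last fold), the operator $f_{\alpha_{i_2}'}$ acts only on the tail of the gallery at or after that final fold, leaving the first $\ell(w_0)-2$ folds unchanged.

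The main obstacle I anticipate is pinning down precisely \emph{where} the reflecting panel of $f_{\alpha_{i_2}'}$ sits relative to the first $\ell(w_0)-2$ folds, and in particular verifying that it coincides with (the panel of) the last fold of $w_0\gamma_{\check{\rho}}^\sharp$ and remains there for all $k$ up to $M_2'$ — this is what makes (3) work, and it requires an honest look at the structure of $\gamma_{\check{\rho}}^\sharp$ after its folds, using the decomposition of $\gamma_{\check{\rho}}$ into an initial minimal piece, the folded piece at $\check{\rho}$, and the two minimal subgalleries $\gamma_1,\gamma_2$ as analyzed in the proof of Lemma~\ref{lem:indexingM}. Concretely, one needs that after the last fold (in $H_{\alpha_{i_2}',-1}$ for $w_0\gamma_{\check{\rho}}^\sharp$) the gallery never again rises to level $0$ in the $\alpha_{i_2}'$ direction before ending at $w_0\zeta$, whose paired value $M_2' = \langle \alpha_{i_2}', w_0\zeta\rangle \ge 0$ — combined with $m = -1$ this is exactly the ``there is a well-defined first return panel'' situation from \cite[Sec.~4.2]{MST1}, and by the minimality of $\gamma_1$ and the fact that $\gamma_2$ lives in $t^{\check{\rho}}\Cw$ (no simple-through-$\check{\rho}$ panels), this return panel is forced to be the one at the last fold. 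Once that structural point is secured, all three assertions follow from the bookkeeping in \cite[Lem.~4.17]{MST1}. I would present this as: first the applicability/non-applicability count, then (2) as a direct consequence of the translation property, then (1), and finally (3) with the careful localization argument, citing the relevant statements of \cite{MST1} rather than re-deriving the root operator formalism.
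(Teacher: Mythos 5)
Your description of the mechanism of $f_\alpha$ contains an error that undermines the inductive bookkeeping. You claim that $f_\alpha$ "decreases $\langle\alpha,\mathrm{end}(\cdot)\rangle$ by $2$ and leaves $m(\cdot,\alpha)$ unchanged," but this is false: each application of $f_\alpha$ \emph{decreases $m$ by $1$} as well. (Geometrically: the portion of the gallery between the last panel in $H_{\alpha,m}$ and the next panel in $H_{\alpha,m+1}$ is reflected across $H_{\alpha,m}$, sending it into the region between levels $m$ and $m-1$; the tail is then translated by $-\alpha^\vee$, so the new minimum is $m-1$.) You actually notice the resulting inconsistency yourself --- "wait, let me recompute" --- because with $m$ pinned at $-1$, the condition $M_2'-2(k-1)>-1$ gives roughly $k\le (M_2'+2)/2$, not $k\le M_2'+1$; you then sidestep this by asserting "the usual $M'-m$ bound" without deriving it from your (incorrect) description. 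The paper avoids this by running the induction with $m(f^{k-1}(\cdot),\alpha_{i_2}')=-k$ decreasing and $\langle\alpha_{i_2}',\mathrm{end}\rangle = M_2'-2(k-1)$, and obtaining applicability from the criterion $m\ne M$ of \cite[Lem.~4.17]{MST1}, which gives $-k\ne M_2'-2(k-1)$, i.e.\ $k\ne M_2'+2$, directly.

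A second gap is in your justification of item (2). You write that "root operators preserve the type and thus the spherical direction of the last alcove," but this is a non sequitur: type preservation says nothing about the spherical direction of the final alcove, which can change if the tail containing that alcove is reflected (rather than merely translated) by $f_\alpha$. This is precisely why the statement restricts items (1)--(3) to $k\le M_2'$ and not $k\le M_2'+1$. The paper proves that the final alcove is simply translated by $-(\alpha_{i_2}')^\vee$ only under the inequality $M_2'-2(k-1)>-k+1$, i.e.\ $k\le M_2'$, invoking the argument of \cite[Lem.~6.10]{MST1}; your proof needs an analogous step. Your discussion of item (3) has the right intuition --- that the reflecting panel coincides with the last fold, which by Lemma~\ref{lem:indexing1} lies in $H_{\alpha_{i_2}',-1}$, so the first $\ell(w_0)-2$ folds are untouched --- and that part of the argument is essentially the paper's, though it too should be re-run with the corrected evolution of $m$.
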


\begin{proof}  Let $m_2 = m(w_0\gamma_{\check{\rho}}^\sharp,\alpha_{i_2}')$.  The gallery $w_0\gamma_{\check{\rho}}^\sharp$ has final vertex $w_0 \zeta$, and so by \cite[Lem.~4.17]{MST1} the operator $f_{\alpha_{i_2}'}$ is not defined for this gallery if and only if $m_2 = M_2'$.  Hence Lemma~\ref{lem:indexingM}(2) establishes that $f_{\alpha_{i_2}'}$ can be applied to $w_0\gamma_{\check{\rho}}^\sharp$ at least once.

By Lemma~\ref{lem:indexingM}(1) we have $m_2 = -1$.  The operator $f_{\alpha_{i_2}'}$ thus has the effect of fixing all parts of $w_0\gamma_{\check{\rho}}^\sharp$ which lie before its last panel contained in $H_{\alpha_{i_2}',-1}$, say $p_l$.  Thus, the first alcove of $f_{\alpha_{i_2}'}(w_0\gamma_{\check{\rho}}^\sharp)$ remains $\w_0$.  Also, by Lemma~\ref{lem:indexing1}, the last fold of $w_0 \gamma_{\check{\rho}}^\sharp$ is in $H_{\alpha_{i_2}',-1}$, and so this fold occurs in $p_l$, and the gallery $f_{\alpha_{i_2}'}(w_0\gamma_{\check{\rho}}^\sharp)$ has the same first $\ell(w_0) - 2$ folds  as $w_0 \gamma_{\check{\rho}}^\sharp$.  Now the operator $f_{\alpha_{i_2}'}$ has the effect of reflecting in $H_{\alpha_{i_2}',-1}$ the portion of $w_0\gamma_{\check{\rho}}^\sharp$ lying between $p_l$ and its next panel (or final vertex) contained in $H_{\alpha_{i_2}',0}$, say $p_{l'}$, and translating by $-(\alpha_{i_2}')^\vee$ the remainder of $w_0\gamma_{\check{\rho}}^\sharp$.  The final vertex $w_0\zeta$ of $w_0\gamma_{\check{\rho}}^\sharp$ lies in $H_{\alpha_{i_2}',M_2'}$, and by  Lemma~\ref{lem:indexingM}(2), we have $M_2' \geq 0$.  It follows that the minimal index of an $\alpha_{i_2}'$-hyperplane containing a panel or final vertex of $f_{\alpha_{i_2}'}(w_0\gamma_{\check{\rho}}^\sharp)$ is 
\[
m(f_{\alpha_{i_2}'}(w_0\gamma_{\check{\rho}}^\sharp),\alpha_{i_2}') = m(w_0\gamma_{\check{\rho}}^\sharp, \alpha_{i_2}') -1 = m_2-1 = -2,
\]
and the gallery $f_{\alpha_{i_2}'}(w_0\gamma_{\check{\rho}}^\sharp)$ has final vertex $w_0\zeta - (\alpha_{i_2}')^\vee$.  Note that 
\[
\langle \alpha_{i_2}', w_0\zeta - (\alpha_{i_2}')^\vee \rangle = M_{2}' - 2.
\]

We now inductively assume that the operator $f_{\alpha_{i_2}'}^{k-1}$ can be applied to $w_0\gamma_{\check{\rho}}^\sharp$, where $2 \leq k \leq M_2' + 1$, so that the minimal index of an $\alpha_{i_2}'$-hyperplane containing a panel or final vertex of $f^{k-1}_{\alpha_{i_2}'}(w_0\gamma_{\check{\rho}}^\sharp)$ is 
\begin{equation}\label{eq:mk}
    m(f^{k-1}_{\alpha_{i_2}'}(w_0\gamma_{\check{\rho}}^\sharp),\alpha_{i_2}') = m(w_0\gamma_{\check{\rho}}^\sharp, \alpha_{i_2}') -(k-1) = m_2-(k-1) = -k,
\end{equation}
and the gallery $f^{k-1}_{\alpha_{i_2}'}(w_0\gamma_{\check{\rho}}^\sharp)$ has final vertex $w_0\zeta - (k-1)(\alpha_{i_2}')^\vee$ satisfying  
\[
\langle \alpha_{i_2}', w_0\zeta - (k-1)(\alpha_{i_2}')^\vee \rangle = M_2' - 2(k-1).
\]
Then again by \cite[Lem.~4.17]{MST1}, the root operator $f_{\alpha_{i_2}'}$ can be applied to the gallery $f^{k-1}_{\alpha_{i_2}'}(w_0\gamma_{\check{\rho}}^\sharp)$ provided that $
-k \neq M_2' - 2(k-1)$, or equivalently $k \neq M_2' + 2$.  Therefore, by induction $f^k_{\alpha_{i_2}'}$ can be applied to $w_0\gamma_{\check{\rho}}^\sharp$ for all $1 \leq k \leq M_{2}' + 1$, while $f_{\alpha_{i_2}'}^{M_2' + 2}$ is not defined for this gallery.

Suppose now that $1 \leq k \leq M_2'$.  The root operator $f_{\alpha_{i_2}'}$ fixes the first vertex of any gallery to which it can be applied, and from \eqref{eq:mk} we have 
\[
m(f^{k-1}_{\alpha_{i_2}'}(w_0\gamma_{\check{\rho}}^\sharp),\alpha_{i_2}') = -k < 0.
\]
Hence, $f^{k-1}_{\alpha_{i_2}'}(w_0\gamma_{\check{\rho}}^\sharp)$ contains a panel (or has a final vertex) which lies in $H_{\alpha_{i_2}',-k}$ with $-k < 0$.  Therefore $f_{\alpha_{i_2}'}$ fixes the first alcove of $f^{k-1}_{\alpha_{i_2}'}(w_0\gamma_{\check{\rho}}^\sharp)$, and so by induction the first alcove of $f^k_{\alpha_{i_2}'}(w_0\gamma_{\check{\rho}}^\sharp)$ is $\w_0$, and $f^k_{\alpha_{i_2}'}(w_0\gamma_{\check{\rho}}^\sharp)$ has the same first $\ell(w_0) - 2$ folds as $w_0\gamma_{\check{\rho}}^\sharp$.

For the final alcove, the final vertex of $f^{k-1}_{\alpha_{i_2}'}(w_0\gamma_{\check{\rho}}^\sharp)$ lies in the $\alpha_{i_2}'$-hyperplane of index $M_2' - 2(k-1)$.  By the same argument as in the proof of \cite[Lem.~6.10]{MST1}, provided that $M_{2}' - 2(k-1) > -k + 1$, or equivalently $M_2' > k-1$, then the effect of $f_{\alpha_{i_2}'}$ on the final alcove of $f^{k-1}_{\alpha_{i_2}'}(w_0\gamma_{\check{\rho}}^\sharp)$ will be to simply translate it by $-(\alpha_{i_2}')^\vee$.  Since we are assuming $k \leq M_2'$ in this part of the lemma, the result follows by induction.
\end{proof}

The next result, which now concerns the application of root operators corresponding to multiple simple roots, is proved using similar inductive arguments to Lemma~\ref{lem:singleRootOp}, and so we leave the details as an exercise for the reader.  For $2 \leq j \leq n$, recall from Lemma \ref{lem:indexingM} that $M_j' = \langle \alpha_{i_j}', w_0 \zeta \rangle \geq 0$, and now denote by $\sigma_1 = w_0 \gamma_{\check{\rho}}^\sharp$.

\begin{corollary}\label{cor:allRootOps}  
Let $2 \leq j \leq n$ and suppose for all $2 \leq k \leq j$ that $c_k$ is an integer satisfying $0 \leq c_k \leq M'_{k}$.   Then the operator $f_{\alpha_{i_j}'}^{c_j}$ can be applied to the gallery  
\[ \sigma_{j-1} := f_{\alpha_{i_{j-1}}'}^{c_{j-1}}\left(f_{\alpha_{i_{j-2}}'}^{c_{j-2}}\left(\cdots\left(f_{\alpha_{i_{2}}'}^{c_{2}}\left(w_0\gamma_{\check{\rho}}^\sharp\right)\right)\cdots \right)\right). \] 
Moreover, the gallery $\sigma_j:= f_{\alpha_{i_j}'}^{c_j}(\sigma_{j-1})$ has:
\begin{enumerate}
\item first alcove $\w_0$; 
\item final alcove the translate of $w_0\z$ by $-\sum_{k=2}^j c_k (\alpha_{i_k}')^\vee$; and
\item the same first $\ell(w_0) - j$ folds as $w_0 \gamma_{\check{\rho}}^\sharp$.
\end{enumerate}
\end{corollary}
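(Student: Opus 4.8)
The plan is to prove this by induction on $j$, using the single-root-operator result Lemma~\ref{lem:singleRootOp} as the base case and bootstrapping the inductive step from exactly the same local analysis that appeared in that proof. So first I would set up the induction: for $j=2$ the statement is precisely the content of Lemma~\ref{lem:singleRootOp}, since $\sigma_1 = w_0\gamma_{\check{\rho}}^\sharp$, so $\sigma_2 = f_{\alpha_{i_2}'}^{c_2}(\sigma_1)$ has first alcove $\w_0$, final alcove the translate of $w_0\z$ by $-c_2(\alpha_{i_2}')^\vee$ (using part (2) of that lemma with $k = c_2 \le M_2'$), and the same first $\ell(w_0)-2$ folds as $w_0\gamma_{\check{\rho}}^\sharp$; note $\ell(w_0) - j = \ell(w_0) - 2$ here, and the condition $0 \le c_2 \le M_2'$ is exactly what that lemma requires.

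For the inductive step, I would assume the claim for $j-1$, so that $\sigma_{j-1}$ has first alcove $\w_0$, final vertex $w_0\zeta - \sum_{k=2}^{j-1} c_k(\alpha_{i_k}')^\vee$, and the same first $\ell(w_0)-(j-1)$ folds as $w_0\gamma_{\check{\rho}}^\sharp$. The key observations needed are: (a) the fold of $w_0\gamma_{\check{\rho}}^\sharp$ in $H_{\alpha_{i_j}',-1}$ — which exists and is among the first $\ell(w_0)-(j-1)$ folds by Lemma~\ref{lem:indexing1}, specifically it is the $(\ell(w_0)-j+1)$-th fold counting from the start — has been preserved by $\sigma_{j-1}$, so that $m(\sigma_{j-1},\alpha_{i_j}') = -1$ still holds, provided the earlier root operators $f_{\alpha_{i_k}'}$ for $k < j$ did not move the portion of the gallery lying at or before that fold; and (b) the final vertex of $\sigma_{j-1}$ pairs with $\alpha_{i_j}'$ to give $\langle \alpha_{i_j}', w_0\zeta - \sum_{k=2}^{j-1}c_k(\alpha_{i_k}')^\vee\rangle$, which equals $M_j'$ plus a contribution from the Cartan pairings $-\sum_k c_k \langle \alpha_{i_j}', (\alpha_{i_k}')^\vee\rangle$; one then needs that this quantity is $\ge 0$ so that $f_{\alpha_{i_j}'}$ is defined, and more precisely that $f_{\alpha_{i_j}'}^{c_j}$ can be applied with $c_j \le M_j'$. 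Granting this, the same reflect-then-translate argument as in Lemma~\ref{lem:singleRootOp} shows $\sigma_j$ has first alcove $\w_0$ (since $m(\sigma_{j-1},\alpha_{i_j}') = -1 < 0$, the operator fixes the initial alcove), final vertex translated by an additional $-c_j(\alpha_{i_j}')^\vee$, and the same first $\ell(w_0)-j$ folds as $w_0\gamma_{\check{\rho}}^\sharp$, since the last fold among the first $\ell(w_0)-(j-1)$ — namely the one in $H_{\alpha_{i_j}',-1}$ by Lemma~\ref{lem:indexing1} — is exactly the one that gets consumed by the reflection.

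The main obstacle I anticipate is the bookkeeping in observation (a): verifying that applying $f_{\alpha_{i_2}'}^{c_2}, \dots, f_{\alpha_{i_{j-1}}'}^{c_{j-1}}$ does not disturb the fold of the gallery in $H_{\alpha_{i_j}',-1}$, nor the minimal-index value $m(\cdot,\alpha_{i_j}') = -1$. This requires knowing that each root operator $f_{\alpha_{i_k}'}$ acts only on the tail of the gallery beyond its last panel in $H_{\alpha_{i_k}',m(\cdot,\alpha_{i_k}')}$, and that by the ordering in Lemma~\ref{lem:indexing1} the fold in $H_{\alpha_{i_j}',-1}$ occurs \emph{after} all folds in $H_{\alpha_{i_k}',-1}$ for $k < j$ — wait, actually the ordering in Lemma~\ref{lem:indexing1} is that the last $n-1$ folds occur in $H_{\alpha_{i_n}',-1}, \dots, H_{\alpha_{i_2}',-1}$ in that order, so the $H_{\alpha_{i_j}',-1}$ fold comes \emph{before} the $H_{\alpha_{i_k}',-1}$ folds for $k < j$; one must therefore check that the operator $f_{\alpha_{i_k}'}$ acts on a sufficiently short tail that it leaves the earlier fold in $H_{\alpha_{i_j}',-1}$ untouched, which follows because $m(\sigma_{k-1},\alpha_{i_k}') = -1$ and all relevant panels of interest lie strictly before the tail being reflected. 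Since the paper explicitly flags this as routine and leaves it to the reader, I would carry out the induction cleanly but note that the verification of stability of $m(\cdot,\alpha_{i_j}')$ and of the relevant folds under the earlier operators proceeds by the same argument as in the proof of \cite[Lem.~6.10]{MST1} together with Lemma~\ref{lem:indexing1} and the locality property of root operators from \cite[Lem.~4.17]{MST1}, and the Cartan-pairing nonnegativity $\langle \alpha_{i_j}', w_0\zeta - \sum_{k<j} c_k (\alpha_{i_k}')^\vee\rangle \ge c_j$ is checked directly from Lemma~\ref{lem:indexingM}(2) and the fact that off-diagonal Cartan entries are nonpositive.
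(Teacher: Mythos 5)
Your proposed proof follows the same strategy the paper itself has in mind: an induction on $j$ bootstrapping from Lemma~\ref{lem:singleRootOp}, with the multi-index bookkeeping deferred to the analogous lemmas in \cite{MST1}. The paper's own proof is even terser (a bare citation), so you are actually filling in more structure than the authors do.

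One subtle point in your observation (a) deserves to be stated more carefully. The preservation of the fold in $H_{\alpha_{i_j}',-1}$ by the earlier operators $f_{\alpha_{i_k}'}^{c_k}$ for $k<j$ (which follows from the ordering in Lemma~\ref{lem:indexing1} plus the fact that each $f_{\alpha_{i_k}'}$ fixes everything before its last panel in $H_{\alpha_{i_k}',m(\cdot,\alpha_{i_k}')}$) only gives $m(\sigma_{j-1},\alpha_{i_j}') \le -1$. For the equality $m(\sigma_{j-1},\alpha_{i_j}') = -1$, you also need that the portion of the gallery modified by those earlier operators is never pushed to an $\alpha_{i_j}'$-hyperplane of index $< -1$. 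This does not follow from the fold being untouched; it is a separate claim. It is, however, readily verified: the action of $f_{\alpha_{i_k}'}$ consists of a reflection across some $H_{\alpha_{i_k}',m}$ with the reflected portion lying on the nonnegative side of that hyperplane, followed by a translation by $-(\alpha_{i_k}')^\vee$. Both moves satisfy $\langle \alpha_{i_j}', s_{\alpha_{i_k}',m}(p)\rangle \ge \langle \alpha_{i_j}', p\rangle$ and $\langle \alpha_{i_j}', p - (\alpha_{i_k}')^\vee\rangle \ge \langle \alpha_{i_j}', p\rangle$, using precisely the nonpositivity of the off-diagonal Cartan pairing $\langle \alpha_{i_j}', (\alpha_{i_k}')^\vee\rangle$. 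So the $\alpha_{i_j}'$-level of every point in the gallery is weakly increased by each earlier operator, and since $m(w_0\gamma_{\check{\rho}}^\sharp, \alpha_{i_j}') = -1$ by Lemma~\ref{lem:indexingM}(1), we conclude $m(\sigma_{j-1},\alpha_{i_j}') \ge -1$. Combined with the preserved fold this gives equality, and the rest of your induction then proceeds as you describe. Your closing remark about the Cartan-pairing nonnegativity is the same monotonicity phenomenon at the level of the final vertex; it is worth noting that the two checks are really one and the same observation applied at two different places.
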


\begin{proof}
 Follow the proof of Lemma \ref{lem:singleRootOp}.  More precisely, apply the same arguments as in the proofs for Lemma~6.11 and Corollary~6.12 of~\cite{MST1}, after swapping $e$- and $f$-operators, and the corresponding functions $p$ and $q$ determining the number of times they can be applied. 
\end{proof}

We now fix a collection of integers $(c_2,\dots,c_n)$ such that $0 \leq c_j \leq M'_j$ for each $j$.  Let $\zeta' = \zeta'(c_2,\dots,c_n)$ be given by 
\begin{equation}\label{eq:w0zeta'}
w_0\zeta' = w_0\zeta - \sum_{j=2}^n c_j (\alpha'_{i_j})^\vee
\end{equation}
and define $\sigma = \sigma_n = f_{\alpha_{i_{n}}'}^{c_{n}}\left(f_{\alpha_{i_{n-1}}'}^{c_{n-1}}\left(\cdots\left(f_{\alpha_{i_{2}}'}^{c_{2}}\left(w_0\gamma_{\check{\rho}}^\sharp\right)\right)\cdots\right)\right)$.  By Corollary \ref{cor:allRootOps}, the gallery $\sigma$ has first vertex the origin and final vertex $w_0\zeta'$,  in addition to
\begin{enumerate}
    \item first alcove $\w_0$;
    \item final alcove the translate of $w_0 \z$ given by $w_0\z - \sum_{j=2}^n c_j(\alpha'_{i_j})^\vee;$ and
    \item the same first $\ell(w_0) - n$ folds as $w_0 \gamma_{\check{\rho}}^\sharp$.
\end{enumerate}
In Figure~\ref{fig:rootOps2}, letting $c_2 = 2$ for example, the gallery $\sigma$ is the light blue gallery. Since $w_0\gamma_{\check{\rho}}^\sharp$ is positively folded with respect to the standard orientation, and root operators take positively folded galleries to positively folded galleries (see \cite[Lem.~4.17]{MST1}), then $\sigma$ is positively folded with respect to the standard orientation as well.  

Now define $\gamma_{\check{\rho}}(c_2,\dots,c_n)$ to be the gallery obtained from $\sigma$ by first applying $w_0$ on the left, and then removing the first and last vertex of the resulting gallery running from the origin to $\zeta'$.  Applying $w_0$ to the properties of $\sigma$ listed above,  we see that $\gamma_{\check{\rho}}(c_2,\dots,c_n)$ has
\begin{enumerate}
    \item first alcove $\fa$;
    \item final alcove
\begin{equation}\label{eq:z'}
    \z'= \z'(c_2,\dots,c_n):=\z - w_0\sum_{j=2}^n c_j(\alpha'_{i_j})^\vee = \z + \sum_{j=2}^n c_j \alpha_{i_j}^\vee = t^\zeta s_{i_1} + \sum_{j=2}^n c_j \alpha_{i_j}^\vee = t^{\zeta'}s_{i_1},
\end{equation}
where we define
\begin{equation}\label{eq:zeta'}
    \zeta' = \zeta + \sum_{j=2}^n c_j \alpha_{i_j}^\vee; \quad \text{and}
\end{equation}
\item the same first $\ell(w_0) - n$ folds as $\gamma_{\check{\rho}}$.
\end{enumerate}
In particular, note by comparing Lemmas \ref{lem:sphericalZ} and \ref{lem:translationZ} that the final alcove $\z'$ is simply translated from the final alcove $\z$ of $\gamma_{\check{\rho}}$ by the combination of simple roots prescribed by $(c_2, \dots, c_n)$. In Figure~\ref{fig:rootOps3}, where $c_2 = 2$, $i_1 = 2$,  and $i_2 = 1$, the gallery $\gamma_{\check{\rho}}(2)$, its final alcove $\z' = \z + 2\alpha_1^\vee$, and its final vertex $\zeta' = \zeta + 2\alpha_1^\vee$ are shown in pink.

Moreover, $\gamma_{\check{\rho}}(c_2,\dots,c_n)$ is positively folded with respect to the opposite standard orientation, as was shown for $\gamma_{\check{\rho}}$ in the proof of Lemma \ref{lem:posFolded}.  To determine whether or not $\gamma_{\check{\rho}}(c_2, \dots, c_n)$ remains positively folded with respect to the same $(P_i,y)$-chimney as $\gamma_{\check{\rho}}$, we thus need to identify those folds which occur in $\alpha_{i_1}$-hyperplanes.

\begin{figure}[ht]
\centering
\begin{overpic}[width=0.5\textwidth]{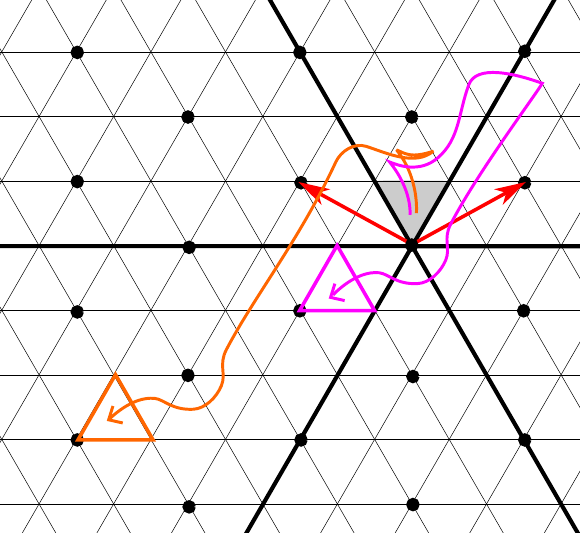}
\put(70,55){\color{black}$\fa$}
\put(83,52){\color{red}$\alpha_1^\vee$}
\put(55,52){\color{red}$\alpha_2^\vee$}
\put(9,13){\color{OrangeRed}$\zeta$}
\put(16,17){\color{OrangeRed}$\z$}
\put(47,35){\color{DeepPink}$\zeta'$}
\put(54,40){\color{DeepPink}$\z'$}
\put(40,42){\color{OrangeRed}$\gamma_{\check{\rho}}$}
\put(78,81){\color{DeepPink}$\gamma_{\check{\rho}}(2)$}
\end{overpic}
\caption{The gallery $\gamma_{\check{\rho}}(2)$ obtained from $\gamma_{\check{\rho}}$ by application of root operators.}
\label{fig:rootOps3}
\end{figure}

\begin{prop}\label{prop:folds}  
The only folds of $\gamma_{\check{\rho}}(c_{2},\dots,c_n)$ in an $\alpha_{i_1}$-hyperplane are its folds in $H_{\alpha_{i_1},1}$ which are folds preserved from the original gallery $\gamma_{\check{\rho}}$.  
\end{prop}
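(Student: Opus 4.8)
The plan is to analyze where folds can occur in $\gamma_{\check{\rho}}(c_2,\dots,c_n)$ by tracking the construction back through the root operators. Recall that this gallery is obtained from $\sigma = \sigma_n$ by applying $w_0$ on the left and deleting the first and last vertex; and $\sigma$ is built from $w_0\gamma_{\check{\rho}}^\sharp$ by successively applying the operators $f_{\alpha'_{i_j}}^{c_j}$ for $j = 2,\dots,n$. So a fold of $\gamma_{\check{\rho}}(c_2,\dots,c_n)$ in an $\alpha_{i_1}$-hyperplane corresponds to a fold of $\sigma$ in a $w_0\alpha_{i_1}$-hyperplane, i.e.\ an $\alpha'_{i_1} = \alpha_i$-hyperplane (using $\alpha'_{i_1} = -w_0\alpha_{i_1} = \alpha_i$). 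Thus the claim is equivalent to: the only folds of $\sigma$ in an $\alpha_i$-hyperplane are those inherited from $w_0\gamma_{\check{\rho}}^\sharp$ in $H_{\alpha_i,-1}$, which is the $w_0$-image of $H_{\alpha_{i_1},1}$.

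First I would recall from Lemma~\ref{lem:indexing1} that the folds of $w_0\gamma_{\check{\rho}}^\sharp$ lie in hyperplanes $H_{\alpha'_{i_j},-1}$ for $j \in \{1,\dots,n\}$, so in particular $w_0\gamma_{\check{\rho}}^\sharp$ does have folds in $H_{\alpha_i,-1} = H_{\alpha'_{i_1},-1}$, and these are precisely the folds whose $w_0$-image lands in $H_{\alpha_{i_1},1}$. The key point is then twofold: (a) applying $f_{\alpha'_{i_j}}$ for $j \geq 2$ cannot create any new fold in an $\alpha_i = \alpha'_{i_1}$-hyperplane, and (b) applying $f_{\alpha'_{i_j}}$ for $j \geq 2$ does not destroy the folds of $w_0\gamma_{\check{\rho}}^\sharp$ in $H_{\alpha_i,-1}$. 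For (b), I would invoke the conclusions of Lemma~\ref{lem:singleRootOp}(3) and Corollary~\ref{cor:allRootOps}(3): since the folds in $H_{\alpha_i,-1}$ are among the \emph{first} $\ell(w_0) - n$ folds of $w_0\gamma_{\check{\rho}}^\sharp$ (the last $n-1$ folds are precisely those in $H_{\alpha'_{i_n},-1},\dots,H_{\alpha'_{i_2},-1}$, i.e.\ the $\alpha'_{i_j}$-folds for $j\geq 2$, by the second statement of Lemma~\ref{lem:indexing1}), each operator $f_{\alpha'_{i_j}}^{c_j}$ preserves them. Actually care is needed here: one must check the folds of $w_0\gamma_{\check{\rho}}^\sharp$ in $H_{\alpha_i,-1}$ really do sit among the first $\ell(w_0)-n$ folds — this follows because by Lemma~\ref{lem:indexing1} the last $n-1$ folds are exactly the ones in $H_{\alpha'_{i_2},-1},\dots,H_{\alpha'_{i_n},-1}$, none of which equals $H_{\alpha'_{i_1},-1}=H_{\alpha_i,-1}$ (the $\alpha'_{i_j}$ for $j=1,\dots,n$ are distinct simple roots).

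For (a), I would argue that each root operator $f_{\alpha'_{i_j}}$ with $j\geq 2$ acts on the relevant gallery by (i) fixing an initial segment, (ii) reflecting a middle segment in the hyperplane $H_{\alpha'_{i_j}, m}$ where $m = m(\cdot,\alpha'_{i_j})$, and (iii) translating the terminal segment by $-(\alpha'_{i_j})^\vee$. Reflection in $H_{\alpha'_{i_j},m}$ sends $\alpha_i = \alpha'_{i_1}$-hyperplanes to $s_{\alpha'_{i_j}}$-images of $\alpha_i$-hyperplanes; since $\alpha_i$ and $\alpha'_{i_j}$ are distinct simple roots, $s_{\alpha'_{i_j}}\alpha_i$ is a positive root different from $\alpha_i$, so no $\alpha_i$-hyperplane is mapped to an $\alpha_i$-hyperplane under this reflection. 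Hence reflection cannot turn a crossing that is \emph{not} at an $\alpha_i$-fold into a fold at an $\alpha_i$-hyperplane, nor can it move an existing $\alpha_i$-fold onto a different $\alpha_i$-hyperplane while keeping it a fold — it would relabel it by $s_{\alpha'_{i_j}}$. And translation by $-(\alpha'_{i_j})^\vee$ sends $H_{\alpha_i,k}$ to $H_{\alpha_i, k - \langle\alpha_i,(\alpha'_{i_j})^\vee\rangle}$, which merely shifts the index but, more importantly, a translation creates no new folds at all; it only moves the already-present folds. The terminal segment of $w_0\gamma_{\check{\rho}}^\sharp$ (after its last fold) is the minimal subgallery identified in Lemma~\ref{lem:tail_gammaRho}, which has no folds, so no translation introduces anything. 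I expect the main obstacle to be bookkeeping: carefully confirming that the folds of $w_0\gamma_{\check{\rho}}^\sharp$ in $H_{\alpha_i,-1}$ lie strictly before the panels in $H_{\alpha'_{i_j},-1}$ that the operators $f_{\alpha'_{i_j}}$ act upon, so that they fall in the ``fixed initial segment'' at each stage and are genuinely preserved with their hyperplane unchanged; this requires using the ordering of folds from Lemma~\ref{lem:indexing1} together with the inductive structure of Corollary~\ref{cor:allRootOps}, exactly as in the cited arguments from \cite[Sec.~6]{MST1}. Finally I would translate back via $w_0$ and deletion of endpoints to conclude the statement for $\gamma_{\check{\rho}}(c_2,\dots,c_n)$ itself, noting that these preserved $\alpha_i$-folds have $w_0$-image in $H_{\alpha_{i_1},1}$, which are precisely the folds $\gamma_{\check{\rho}}$ already has there.
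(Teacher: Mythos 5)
Your overall strategy---lift to the $\sigma$-picture and argue that the root operators neither destroy the $\alpha_i = \alpha'_{i_1}$-folds (since these are among the first $\ell(w_0)-n$ folds and hence preserved by Corollary~\ref{cor:allRootOps}) nor create new ones---is a step-by-step reformulation of the paper's argument, and your handling of the preservation half is essentially fine. However, the argument you give for why no new $\alpha_i$-folds can be created contains a logical error. You deduce from ``$s_{\alpha'_{i_j}}\alpha_i$ is a positive root different from $\alpha_i$, so no $\alpha_i$-hyperplane maps to an $\alpha_i$-hyperplane'' the conclusion that ``reflection cannot turn a non-$\alpha_i$-fold into an $\alpha_i$-fold.'' This is backwards: since $s_{\alpha'_{i_j},m}$ is an involution on the set of hyperplanes, the fact that it moves $\alpha_i$-hyperplanes to $s_{\alpha'_{i_j}}\alpha_i$-hyperplanes means precisely that the $s_{\alpha'_{i_j}}\alpha_i$-hyperplanes (which are \emph{not} $\alpha_i$-hyperplanes) \emph{do} get sent to $\alpha_i$-hyperplanes, so a fold there could become an $\alpha_i$-fold. (Additionally, $s_{\alpha'_{i_j}}\alpha_i \neq \alpha_i$ fails when $\alpha_i\perp\alpha'_{i_j}$, though that case happens to be harmless.)

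What is genuinely missing is the argument that a moved fold never actually sits on an $s_{\alpha'_{i_j}}\alpha_i$-hyperplane at the moment $f_{\alpha'_{i_j}}$ is applied. The paper supplies this content, but globally rather than stepwise: any fold of $\gamma_{\check{\rho}}(c_2,\dots,c_n)$ beyond the first $\ell(w_0)-n$ originated at an $\alpha_{i_j}$-hyperplane with $j\geq 2$, and its possible types after the operators are $s_{i_n}^{\varepsilon_n}\cdots s_{i_{j+1}}^{\varepsilon_{j+1}}(\alpha_{i_j})$ with $\varepsilon_l\in\{0,1\}$; since $s_{i_j}$ never appears, the $\alpha_{i_j}$-coefficient of this expression stays strictly positive, which rules out $\alpha_{i_1}$. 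A stepwise version of the missing point would be: the $\alpha_i$-coefficient of the type of any moved fold remains $0$ at every stage, because each reflection only adds multiples of $\alpha'_{i_l}$ with $l\geq 2$, whereas $s_{\alpha'_{i_j}}\alpha_i$ has $\alpha_i$-coefficient $1$. That is the observation that makes the claim true, and it is not a consequence of what you wrote; without it, the induction does not close.
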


\begin{proof}  As $\gamma_{\check{\rho}}$ has $\ell(w_0) - 1$ folds, and $\gamma_{\check{\rho}}(c_2,\dots,c_n)$ has the same first $\ell(w_0) - n$ folds as $\gamma_{\check{\rho}}$, we only need to consider the last $n - 1$ folds of $\gamma_{\check{\rho}}(c_2,\dots,c_n)$.  Recall from the proof of Lemma \ref{lem:indexing1} that the last $n-1$ folds of $\gamma_{\check{\rho}}$ are in hyperplanes of types $\alpha_{i_n},\dots,\alpha_{i_2}$, and so by the construction of $\gamma_{\check{\rho}}(c_2,\dots,c_n)$, any folds it has other than the first $\ell(w_0) - n$ will be in hyperplanes  of types
\[
s_{i_n}^{\varepsilon_n} \cdots s_{i_{j+1}}^{\varepsilon_{j+1}}(\alpha_{i_j})
\] 
where $j \in \{2,\dots,n\}$ and $\varepsilon_l \in \{ 0,1\}$ for $j+1 \leq l \leq n$.  We will complete the proof by showing that this expression cannot equal $\alpha_{i_1}$. 

First recall that $\{\alpha_{i_2}, \dots, \alpha_{i_n}\} = \Delta \backslash \{\alpha_{i_1}\}$ so that $\alpha_{i_{j}} \neq \alpha_{i_1}$ when $j \in \{2,\dots,n\}$.  Applying any simple reflection $s_{i_l}$ for $j+1 \leq l \leq n$ to the simple root $\alpha_{i_j}$ results in a $\Z$-linear combination of $\alpha_{i_{j}}$ and $\alpha_{i_l}$, where the coefficient of $\alpha_{i_{j}}$ is strictly positive, and the coefficient of $\alpha_{i_l}$ is non-negative.
So applying a sequence of simple reflections other than $s_{i_{j}}$ to $\alpha_{i_{j}}$ results in a linear combination of simple roots in which the coefficient of $\alpha_{i_{j}}$ is strictly positive.  This linear combination thus cannot equal $\alpha_{i_1}$, since the simple roots are a basis.  
\end{proof}

As we did for $\gamma_{\check{\rho}}$ in Lemma \ref{lem:posFolded}, we can now identify a family of elements $y' \in \eW$ such that $\gamma_{\check{\rho}}(c_2, \dots, c_n)$ is positively folded with respect to the $(P_i, y')$-chimney. 

\begin{corollary}\label{cor:posFolded}
Let  $y' = t^{\mu'} w_0 s_i \in \eW$ where $\mu' \in Q^\vee$.  If $\langle \alpha_{i_1}, \mu' \rangle \geq 1$, then 
the gallery $\gamma_{\check{\rho}}(c_{2},\dots,c_n)$ is positively folded with respect to the $(P_i,y')$-chimney.
\end{corollary}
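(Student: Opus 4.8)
The plan is to deduce Corollary~\ref{cor:posFolded} from Proposition~\ref{prop:folds} by essentially repeating the argument used to prove Lemma~\ref{lem:posFolded}, since the only change is which folds occur in $\alpha_{i_1}$-hyperplanes, and Proposition~\ref{prop:folds} shows these have not changed. First I would recall the key facts already established: by construction (see the discussion preceding Proposition~\ref{prop:folds}), the gallery $\gamma_{\check{\rho}}(c_2,\dots,c_n)$ is positively folded with respect to the opposite standard orientation, exactly as $\gamma_{\check{\rho}}$ was. Next, as in the proof of Lemma~\ref{lem:posFolded}, for $y' = t^{\mu'} w_0 s_i$ the spherical direction of $\y'$ is $w_0 s_i$, and since $w_0 s_i \alpha_i = \alpha_{i_1}$, any $(P_i,y')$-sector lies between the hyperplanes $H_{\alpha_{i_1},k}$ and $H_{\alpha_{i_1},k+1}$ where $k = \langle \alpha_{i_1}, \mu' \rangle$. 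The argument of Lemma~\ref{lem:posFolded} shows (using the bijection $\beta \mapsto -w_0 s_i \beta$ from $\Phi^+ \setminus \{\alpha_i\}$ to $\Phi^+ \setminus \{\alpha_{i_1}\}$) that for any such $y'$, the periodic orientation on $\beta$-hyperplanes with respect to the opposite standard orientation agrees with the orientation induced by the $(P_i,y')$-chimney, for all $\beta \in \Phi^+ \setminus \{\alpha_{i_1}\}$.

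The one remaining point is to control the folds in $\alpha_{i_1}$-hyperplanes, and this is precisely the content of Proposition~\ref{prop:folds}: the only such folds of $\gamma_{\check{\rho}}(c_2,\dots,c_n)$ are in $H_{\alpha_{i_1},1}$. Therefore, just as in the final paragraph of the proof of Lemma~\ref{lem:posFolded}, the gallery $\gamma_{\check{\rho}}(c_2,\dots,c_n)$ is positively folded with respect to any $(P_i,y')$-chimney represented by a sector lying between $H_{\alpha_{i_1},k}$ and $H_{\alpha_{i_1},k+1}$ with $k \geq 1$, which is exactly the hypothesis $\langle \alpha_{i_1},\mu'\rangle \geq 1$. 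I do not anticipate a genuine obstacle here — the corollary is a near-verbatim reprise of Lemma~\ref{lem:posFolded} once Proposition~\ref{prop:folds} is in hand — so the proof can simply cite that proof and Proposition~\ref{prop:folds}. The only thing to be careful about is making clear that the positivity of the non-$\alpha_{i_1}$ folds is unaffected, which follows because root operators preserve positive foldedness with respect to the standard orientation and that property passes to the opposite standard orientation after applying $w_0$, exactly as recorded in the discussion preceding the corollary.

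\begin{proof}
By construction, as noted in the discussion preceding Proposition~\ref{prop:folds}, the gallery $\gamma_{\check{\rho}}(c_2,\dots,c_n)$ is positively folded with respect to the opposite standard orientation, just as $\gamma_{\check{\rho}}$ is. Now argue exactly as in the proof of Lemma~\ref{lem:posFolded}: since the spherical direction of $\y'$ is $w_0 s_i$ and $w_0 s_i \alpha_i = \alpha_{i_1}$, any $(P_i,y')$-sector lies between $H_{\alpha_{i_1},k}$ and $H_{\alpha_{i_1},k+1}$ where $k = \langle \alpha_{i_1},\mu'\rangle$, and the map $\beta \mapsto -w_0 s_i\beta$ is a bijection from $\Phi^+ \setminus \{\alpha_i\}$ to $\Phi^+ \setminus \{\alpha_{i_1}\}$. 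Hence for all $\beta \in \Phi^+ \setminus \{\alpha_{i_1}\}$ and all $y' = t^{\mu'}w_0 s_i$, the periodic orientation on $\beta$-hyperplanes with respect to the opposite standard orientation agrees with the orientation induced by the $(P_i,y')$-chimney. By Proposition~\ref{prop:folds}, the only folds of $\gamma_{\check{\rho}}(c_2,\dots,c_n)$ in an $\alpha_{i_1}$-hyperplane are its folds in $H_{\alpha_{i_1},1}$. Therefore $\gamma_{\check{\rho}}(c_2,\dots,c_n)$ is positively folded with respect to any $(P_i,y')$-chimney represented by a sector lying between $H_{\alpha_{i_1},k}$ and $H_{\alpha_{i_1},k+1}$ with $k \geq 1$, which is precisely the condition $\langle \alpha_{i_1},\mu'\rangle \geq 1$.
\end{proof}
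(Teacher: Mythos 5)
Your proof is correct and follows exactly the route the paper takes: the paper's proof of this corollary is the one-line instruction to apply Proposition~\ref{prop:folds} and the argument of Lemma~\ref{lem:posFolded}, and you have carried out that combination faithfully, correctly noting that root operators preserve positivity for the standard orientation (hence for the opposite standard orientation after applying $w_0$) so that the only folds needing special attention are the $\alpha_{i_1}$-hyperplane folds controlled by Proposition~\ref{prop:folds}.
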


\begin{proof}
Apply Proposition~\ref{prop:folds} and arguments as in the proof of Lemma~\ref{lem:posFolded}.
\end{proof}

\subsection{Reaching the lower targets}\label{sec:lower}

We now construct the gallery required to prove Theorem~\ref{thm:remainingTargets} for ``many" of the Newton points $\nu'$ in its statement.  The main result in this section is Proposition~\ref{prop:lower}, which implies that for all $\nu'$ in the statement of Theorem~\ref{thm:remainingTargets}, there is a gallery $\gamma_{\check{\rho}}(c_2, \dots, c_n)$ whose final alcove is a certain $\eW$-conjugate of the standard representative $b_{\nu'}$. In Corollary~\ref{cor:remainingTargets}, we make precise those Newton points $\nu'$ for which this completes the proof of Theorem~\ref{thm:remainingTargets}; there is some subtlety involved in guaranteeing that this gallery $\gamma_{\check{\rho}}(c_2, \dots, c_n)$ is positively folded with respect to the corresponding $(P_i,y')$-chimney, as foreshadowed by Remark \ref{rem:chimney}.  Figure~\ref{fig:lower} illustrates the arguments in this section, and continues the example followed in Figures~\ref{fig:gamma}--\ref{fig:rootOps3}. Recall that in this running example $i = 1$, and it so happens that $\lambda - 2\check{\rho}=\eta$.

\begin{figure}[ht]
\centering
\begin{overpic}[width=0.65\textwidth]{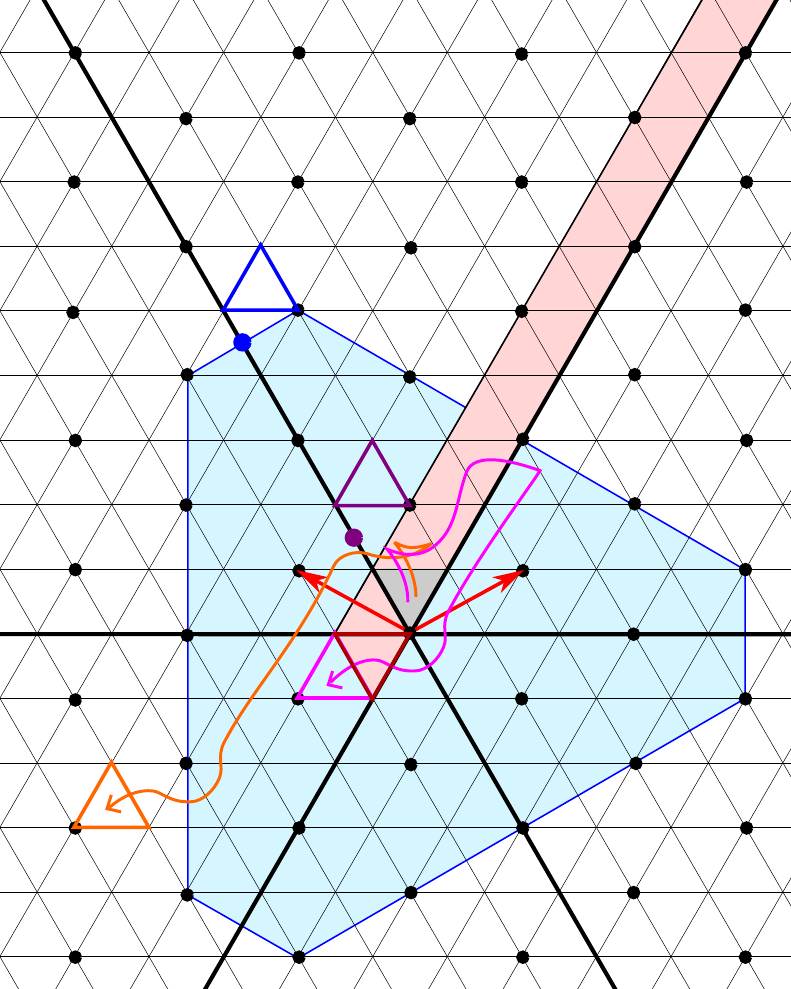}
\put(29.5,96){\color{blue}$\lambda$}
\put(24,66.5){\color{blue}$\nu$}
\put(25.5,71){\color{blue}$b_\nu$}
\put(29.5,71.5){\color{blue}$\eta$}
\put(35.5,46.5){\color{Purple}$\nu'$}
\put(36.5,51){\color{Purple}$b_{\nu'}$}
\put(41,51){\color{Purple}$\eta'$}
\put(40.5,38.5){\color{black}$\fa$}
\put(40.5,46){\color{black}$\check{\rho}$}
\put(49,37){\color{red}$\alpha_1^\vee$}
\put(31,37){\color{red}$\alpha_2^\vee$}
\put(4.5,14){\color{OrangeRed}$\zeta$}
\put(4.5,57){\color{OrangeRed}$w_0\zeta$}
\put(22.5,30){\color{OrangeRed}$\gamma_{\check{\rho}}$}
\put(27,27){\color{DeepPink}$\zeta'$}
\put(33,32){\color{DeepPink}$\z'$}
\put(28,44){\color{DeepPink}$w_0\zeta'$}
\put(46,55){\color{DeepPink}$\gamma_{\check{\rho}}(2)$}
\put(37,33){\color{Maroon}$\y'$}
\put(40.5,32.5){\color{Maroon}$\mu'$}
\end{overpic}
\caption{An example illustrating the proof of Theorem~\ref{thm:remainingTargets} for ``many" Newton points $\nu'$.}
\label{fig:lower}
\end{figure}

We first record a lemma concerning the translation part $\eta'$ of the standard representative for $\nu' \in \Conv(\sW (\lambda-2\check{\rho}))$.  This lemma will be used in the proof of Proposition~\ref{prop:lower}.  In Figure~\ref{fig:lower}, the polytope $\Conv(\sW (\lambda-2\check{\rho}))$ is shaded blue.  The Newton point $\nu$ and its standard representative $b_\nu$ are depicted in blue, and the Newton point $\nu'$ and its standard representative $b_{\nu'}$ in purple.

\begin{lemma}\label{lem:etaConv}
Suppose that $\nu'$ is any non-integral Newton point with associated spherical standard parabolic subgroup $P_i$ such that $\nu' \in \Conv(\sW (\lambda - 2\check{\rho}))$. Denote the standard representative with Kottwitz point $\omega$ by $b_{\nu'} = t^{\eta'}s_i$.  Then $\eta' \in \Conv(\sW (\lambda - 2\check{\rho})) \cap (\omega +R^\vee)$. 
\end{lemma}

\begin{proof}
For ease of notation, we put $\xi = \lambda-2\check{\rho}$ for the rest of this proof. Since both $\nu'$ and $\xi$ are elements of the closed dominant Weyl chamber, \cite[Lemma 12.14]{AtiyahBott} says that since $\nu' \in \Conv(\sW \xi)$, we have $\nu'\leq \xi$ in dominance order.  Now by Corollary~\ref{cor:stdRepDominant}, we also have that $\eta'$ is an element of the closed dominant Weyl chamber, with one exception in type $G_2$.  The result for this exception can easily be verified by inspection, so we now assume $\eta'$ is dominant.  Then by \cite[Lemma 12.14]{AtiyahBott} again, it suffices to prove that $\eta' \leq \xi$ as well.  

Since $\nu' \leq \xi$, by definition of the dominance order on $\mathfrak{a}_{\Q}^+$, we have $\xi-\nu' = \sum r_j\alpha_j^\vee$ for some $r_j \in \Q_{\geq 0}$. Formula \eqref{eq:nuProj} for $\nu'$ from Lemma \ref{lem:NPimageAsi} implies that in fact $r_j \in \Z_{\geq 0}$ for all $j \neq i$, while $r_i \in \frac{1}{2}\Z_{> 0}$ is a positive half-integer. Using the formula for $\eta'$ from Proposition \ref{prop:stdReps} in the non-integral case, we then have
\begin{equation*}
    \xi - \eta' = \xi - \left(\nu'+\frac{1}{2}\alpha_i^\vee\right) = \sum_{j=1}^n r_j \alpha_j^\vee - \frac{1}{2}\alpha_i^\vee.
\end{equation*}
Since $r_i \in \frac{1}{2}\Z_{>0}$, the difference above is also a non-negative (integral) combination of simple coroots, and so $\eta'\leq \xi$ in dominance order, as required.
\end{proof}

The next result should be viewed as a delicate analog of Lemma \ref{lem:top}.  This proposition says that there exists a gallery, constructed as the image of applying root operators to $w_0\gamma_{\check{\rho}}^{\sharp}$ as described in Section~\ref{sec:rootOps}, whose final vertex $\zeta'$ has the desired Newton point $\nu'$. We illustrate the proof of Proposition \ref{prop:lower} in Figure~\ref{fig:polytopes}.

\begin{prop}\label{prop:lower}
Let $x_0=t^\lambda w_0\in \eW$, and assume that $\x_0$ is in the shrunken dominant Weyl chamber $\Cfs$. Suppose that $\nu'$ is a non-integral Newton point with associated spherical standard parabolic subgroup $P_i$ of rank 1 such that $\nu' \in \Conv(\sW (\lambda - 2\check{\rho}))$. Denote by $b_{\nu'}=t^{\eta'}s_i$ the standard representative for $\nu'$ such that $\kappa_G(x_0)=\kappa_G(b_{\nu'})$.
	
	Then there exist integers $(c_2,\dots,c_n)$ with $0 \leq c_j \leq M_j'$ for $2 \leq j \leq n$ such that the gallery $\gamma_{\check{\rho}}(c_2,\dots,c_n):\fa \rightsquigarrow \z'$ satisfies 
	\[
	\proj_i(w_0\zeta') = \nu',
	\]
	 where $z' = t^{\zeta'}s_{i_1}$ with $\zeta' = \zeta'(c_2,\dots,c_n)$ as in \eqref{eq:zeta'}.
\end{prop}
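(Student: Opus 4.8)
The idea is to translate the target equation $\proj_i(w_0\zeta') = \nu'$ into a statement about the integers $c_2,\dots,c_n$, and then solve for them using Lemma~\ref{lem:etaConv}. First I would compute $\proj_i(w_0\zeta')$ in terms of the data we control. By \eqref{eq:w0zeta'}, we have $w_0\zeta' = w_0\zeta - \sum_{j=2}^n c_j(\alpha_{i_j}')^\vee$, so $\proj_i(w_0\zeta') = \proj_i(w_0\zeta) - \sum_{j=2}^n c_j\proj_i((\alpha_{i_j}')^\vee)$. By Lemma~\ref{lem:top}, $\proj_i(w_0\zeta) = \proj_i(\lambda - 2\check{\rho})$, and since $\proj_i = A_{s_i}$ by Lemma~\ref{lem:AwProperties}(4), the term $\proj_i((\alpha_{i_j}')^\vee)$ is the orthogonal projection of $(\alpha_{i_j}')^\vee$ onto $H_{\alpha_i}$. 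Recall $\alpha_{i_1}' = \alpha_i$, so for $j \geq 2$ the coroots $(\alpha_{i_j}')^\vee$ are precisely the simple coroots other than $\alpha_i^\vee$, and their projections span (or nearly span) the hyperplane $H_{\alpha_i}$. This reduces the statement to: can we choose $c_2,\dots,c_n$ in the prescribed ranges so that $\proj_i(\lambda - 2\check{\rho}) - \sum_{j=2}^n c_j \proj_i((\alpha_{i_j}')^\vee) = \nu'$?

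Next, I would rephrase this in terms of the standard representative. By Proposition~\ref{prop:stdReps}(2), $\eta' = \nu' + \tfrac12\alpha_i^\vee$, and similarly the translation vertex $\eta$ of $b_\nu$ (where $\nu = \proj_i(\lambda-2\check{\rho})$, if this happens to be non-integral — but we should not assume that here, so we argue directly with $\lambda - 2\check{\rho}$). The cleaner route: write $\xi := \lambda - 2\check{\rho}$. Since $\nu' \in \Conv(\sW\xi)$ and both are dominant, \cite[Lem.~12.14]{AtiyahBott} gives $\nu' \leq \xi$ in dominance order, so $\xi - \nu' = \sum_{j=1}^n r_j\alpha_j^\vee$ with $r_j \geq 0$; by the parity analysis in the proof of Lemma~\ref{lem:etaConv}, each $r_j \in \Z_{\geq 0}$ for $j \neq i$. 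Applying $\proj_i$ and using $\proj_i(\alpha_i^\vee) = 0$, we get $\proj_i(\xi) - \nu' = \proj_i(\xi) - \proj_i(\nu') = \proj_i(\xi - \nu') = \sum_{j \neq i} r_j \proj_i(\alpha_j^\vee)$ — here I use that $\nu' \in H_{\alpha_i}$ so $\proj_i(\nu') = \nu'$. Now I need to match the indexing: the $\alpha_j$ for $j \neq i$ are exactly $\alpha_{i_2}',\dots,\alpha_{i_n}'$ (up to relabeling, since $\Delta = \{\alpha_{i_1}',\dots,\alpha_{i_n}'\}$ and $\alpha_{i_1}' = \alpha_i$). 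Setting $c_j := r_k$ where $\alpha_{i_j}' = \alpha_k$, we obtain $\proj_i(w_0\zeta') = \proj_i(\xi) - \sum_{j=2}^n c_j \proj_i((\alpha_{i_j}')^\vee) = \proj_i(\xi) - (\proj_i(\xi) - \nu') = \nu'$, as desired.

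The remaining point — and this is where I expect the main obstacle — is verifying the constraint $0 \leq c_j \leq M_j'$. Non-negativity is immediate since each $c_j = r_k \in \Z_{\geq 0}$. The upper bound $c_j \leq M_j' = \langle \alpha_{i_j}', w_0\zeta\rangle$ requires more care. Unwinding, $\langle \alpha_{i_j}', w_0\zeta\rangle = -\langle \alpha_{i_j}', w_0\zeta\rangle$... let me be precise: $M_j' = \langle \alpha_{i_j}', w_0\zeta\rangle$ and by Lemma~\ref{lem:indexingM}(2) this is $\geq 0$; moreover $\alpha_{i_j}' = -w_0\alpha_{i_j}$ so $M_j' = -\langle w_0\alpha_{i_j}, w_0\zeta\rangle = -\langle \alpha_{i_j},\zeta\rangle$. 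Using $\zeta = \check\rho + w_0 s_i(\lambda - \check\rho)$ from \eqref{eq:zeta} and the fact that $\langle \alpha_{i_j},\check\rho\rangle = 1$, one computes $\langle\alpha_{i_j},\zeta\rangle$ explicitly in terms of pairings of $\lambda$ with roots; I would then need the bound $r_k \leq M_j'$, i.e. the coefficient $r_k$ in $\xi - \nu'$ is bounded by this quantity. I expect this to follow by combining the dominance $\nu' \leq \xi$ with the shrunken-dominant hypothesis on $\x_0$ (which forces $\langle \alpha_k,\lambda\rangle$ to be suitably large) and the explicit formula for $\eta'$ from Lemma~\ref{lem:etaConv} together with Lemma~\ref{lem:etaConv}'s conclusion that $\eta' \in \Conv(\sW\xi) \cap (\omega + R^\vee)$ — essentially, the root operators can be applied enough times precisely because $\eta'$ stays inside the relevant polytope. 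If the naive bound is not quite tight, I would instead argue that $r_k \leq \langle \alpha_{i_j}', w_0\eta\rangle$ first (using $\eta' \leq \xi$ and hence $\eta' - \eta$... or rather comparing $\nu'$ to $\nu$ directly), and then note $\langle\alpha_{i_j}',w_0\eta\rangle \leq M_j'$ since $\eta$ and $\zeta$ differ appropriately. The bookkeeping here is the crux; everything else is a direct computation with projections and the explicit formulas already established.
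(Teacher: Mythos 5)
Your plan is sound and, in its arithmetic content, it is exactly the reduction the paper carries out: setting $\xi := \lambda - 2\check{\rho}$ and writing $\xi - \nu' = \sum_k r_k\alpha_k^\vee$, the choice $c_j := r_k$ with $\alpha_{i_j}' = \alpha_k$ gives $\proj_i(w_0\zeta') = \nu'$, and non-negativity of the $c_j$ is immediate from Lemma~\ref{lem:etaConv}. The genuine gap is exactly where you flag it: the upper bound $c_j \leq M_j'$ is never proved. The heuristics in your final paragraph do not constitute an argument, and invoking $\eta$ (the translation vertex of $b_\nu$ for $\nu = \proj_i(\xi)$) is a misstep, since the proposition does not assume $\proj_i(\xi)$ is non-integral, so $\eta$ may not be defined.

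The paper closes the gap by a different route. It introduces an auxiliary gallery $\gamma'$ (obtained from $w_0\gamma_{\check{\rho}}^\sharp$ by one extra fold in $H_{\alpha_i,-1}$, so that its final vertex is $\xi$), observes the identity $w_0\zeta = \xi - (\langle\alpha_i,\xi\rangle + 1)\alpha_i^\vee$, and applies root operators $f_{\alpha_{i_j}'}^{d_j}$ to $\gamma'$. The final step is the Cartan-matrix inequality $M_j := \langle\alpha_{i_j}',\xi\rangle \leq M_j'$, which follows because $\langle\alpha_{i_j}',\alpha_i^\vee\rangle \leq 0$ is an off-diagonal Cartan entry and $\langle\alpha_i,\xi\rangle + 1 > 0$ by the shrunken hypothesis. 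That identity for $w_0\zeta$, together with the $M_j \leq M_j'$ inequality, may be exactly what you need to complete your direct bound. Be aware, however, that the paper's route passes through the assertion that every lattice point of $\Conv(\sW\xi)\cap(\omega+R^\vee)$ has all coefficients $d_j \leq M_j$ when written as $\xi - \sum_j d_j(\alpha_{i_j}')^\vee$, and this assertion appears too strong: in type $\tilde A_2$ with $i=1$, $\xi = 5\fcw_1 + \fcw_2$, $\eta' = \fcw_1$, the $\alpha_2^\vee$-coefficient of $\xi - \eta'$ is $2$, while $M_2 = \langle\alpha_2,\xi\rangle = 1$. Nonetheless $2 \leq 7 = M_2'$ holds, so the Proposition itself is fine; but the bound $c_j \leq M_j'$ still requires a careful argument under either approach, and this remains the one serious open step in your proposal.
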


\begin{proof} 
We are in search of integers $0 \leq c_j \leq M_j'=\langle \alpha_{i_j}', w_0\zeta\rangle$ such that $\proj_i(w_0\zeta') = \nu'$. Recalling that $\alpha_{i_j}' = -w_0\alpha_{i_j}$, Equation \eqref{eq:zeta'} says $w_0\zeta' = w_0\zeta-\sum\limits_{j=2}^nc_j(\alpha_{i_j}')^\vee$.  Further noting that $\proj_i(\alpha_i^\vee) = 0$, we define the following set of lattice points
	\[
	R^\vee(\zeta) := \left\{ \left. w_0\zeta+ c_1 \alpha_i^\vee-\sum\limits_{j=2}^nc_j(\alpha_{i_j}')^\vee\  \right|\  c_1 \in \Z, \ 0 \leq c_j \leq M_j' \mbox{ for } 2 \leq j \leq n\right\}.
	\]
For an arbitrary element of $R^\vee(\zeta)$, by definition
	\[
	\proj_i\left(w_0\zeta + c_1 \alpha_i^\vee - \sum_{j=2}^n c_j (\alpha'_{i_j})^\vee\right)
	= \proj_i(w_0\zeta').
	\]
It thus suffices to show that there is an element of $R^\vee(\zeta)$ whose image under $\proj_i$ is equal to $\nu'$. 	
By Lemma~\ref{lem:etaConv}, we know that $\eta' \in \Conv(\sW (\lambda - 2\check{\rho})) \cap (\omega + R^\vee)$. In addition, $\proj_i(\eta') = \nu'$ by Proposition \ref{prop:stdReps}. We thus instead aim to show that
\begin{equation}\label{eq:contain}
\Conv(\sW  (\lambda - 2\check{\rho})) \cap (\omega + R^\vee)  \subseteq R^\vee(\zeta),
\end{equation}
from which the result then follows.
We will prove the containment \eqref{eq:contain} by an argument involving the construction of the original gallery $\gamma_{\check{\rho}}$.

\begin{figure}[ht]
\centering
\begin{overpic}[width=0.6\textwidth]{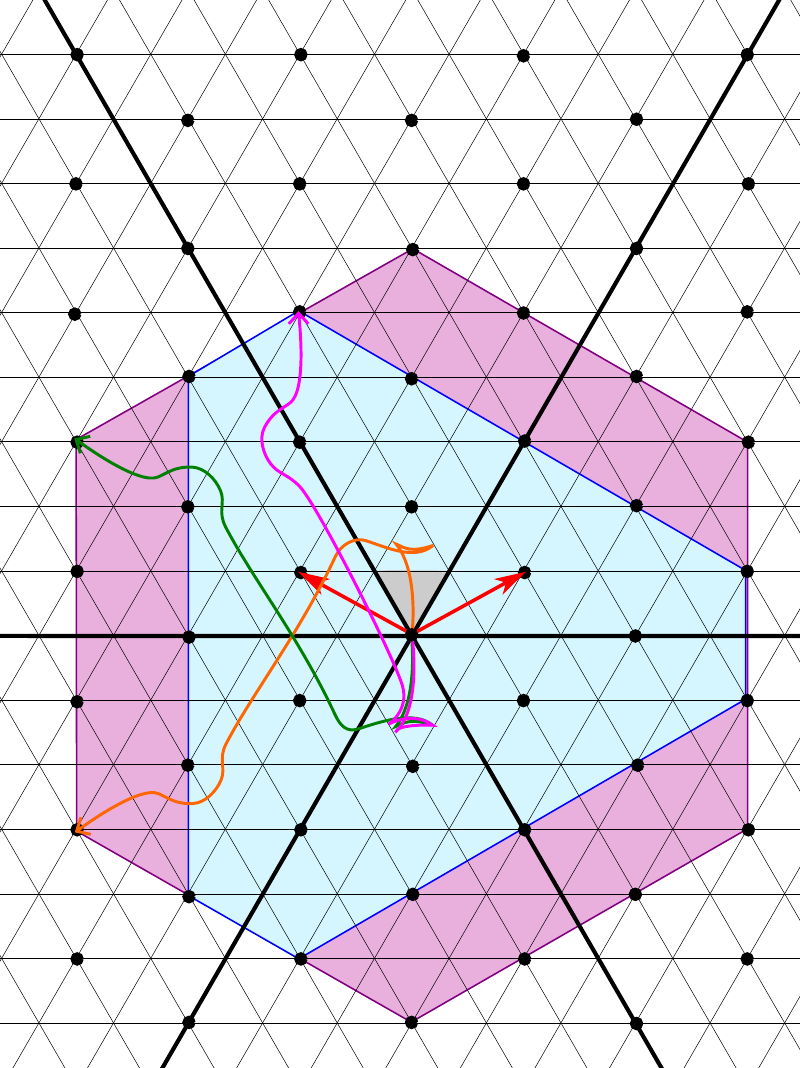}
\put(27,96){\color{blue}$\lambda$}
\put(24,73){\color{blue}$\lambda - 2\check{\rho}$}
\put(38,44){\color{black}$\fa$}
\put(37.5,50){\color{black}$\check{\rho}$}
\put(45,42){\color{red}$\alpha_1^\vee$}
\put(29,42){\color{red}$\alpha_2^\vee$}
\put(4.5,21){\color{OrangeRed}$\zeta$}
\put(3,60){\color{DarkGreen}$w_0\zeta$}
\put(22.5,30){\color{OrangeRed}$\gamma_{\check{\rho}}^\sharp$}
\put(16,45){\color{DarkGreen}$w_0\gamma_{\check{\rho}}^\sharp$}
\put(27,50){\color{Fuchsia}$\gamma'$}
\end{overpic}
\caption{An example illustrating the proof of Proposition~\ref{prop:lower}.}
\label{fig:polytopes}
\end{figure}

Recall that $\gamma_{\check{\rho}}^\sharp$ is the canonical vertex-to-vertex gallery associated to $\gamma_{\check{\rho}}$, with initial vertex the origin and final vertex $\zeta$.  The vertex-to-vertex gallery $w_0\gamma_{\check{\rho}}^\sharp$ thus has final vertex $w_0\zeta$, as depicted in green in Figure \ref{fig:rootOps}. We first claim that in fact
	\begin{equation}\label{eq:w0zetasi}
	    w_0\zeta = s_i(\lambda-2\check{\rho})-\alpha_i^\vee.
	\end{equation}
To see this equality, first recall by \eqref{eq:zeta} that $w_0\zeta = s_i(\lambda-\check{\rho})+w_0\check{\rho} = s_i(\lambda-\check{\rho})-\check{\rho}$. Now rewriting $s_i(\lambda-2\check{\rho})-\alpha_i^\vee = s_i(\lambda-\check{\rho})-s_i(\check{\rho}-\alpha_i^\vee)$, we see that \eqref{eq:w0zetasi} is equivalent to \begin{equation}\label{eq:w0zetasuffices}
    \check{\rho} = s_i(\check{\rho}-\alpha_i^\vee).
\end{equation}
Recall that by definition $\check{\rho}$ is the half-sum of the positive coroots.  Dualizing Corollary 1 in \cite[Ch.~VI $\S$1.6]{Bourbaki4-6}, the simple reflection $s_i$ permutes all the positive coroots apart from $\alpha_i^\vee$ which it sends to $-\alpha_i^\vee$, and so 
\[
s_i(\check{\rho}) = s_i \left(\frac{1}{2}\sum\limits_{\alpha\in \Phi^+} \alpha^\vee \right) = \frac{1}{2}\left(\sum\limits_{\alpha \in \Phi^+\backslash\{\alpha_i\}}\alpha^\vee \right) -\frac{1}{2}\alpha_i^\vee = \check{\rho} - \alpha_i^\vee.
\]
Applying $s_i$ to both sides of the above, \eqref{eq:w0zetasuffices} and thus \eqref{eq:w0zetasi} follows.

We now take the gallery $w_0\gamma_{\check{\rho}}^\sharp$ and introduce the last possible fold in the $(-\check{\rho})$-residue along the hyperplane $H_{\alpha_i, -1}$.  By the same argument as in the proof of Lemma \ref{lem:posFolded}, the resulting gallery $\gamma'$ is positively folded with respect to the standard orientation.  In addition, we compute using \eqref{eq:w0zetasi} that $\gamma'$ has end vertex 
\[
s_{\alpha_i, -1}(w_0 \zeta)= w_0\zeta - \left( \langle \alpha_i ,w_0\zeta \rangle+1\right)\alpha_i^\vee= s_i(\lambda-2\check{\rho})-\langle \alpha_i, s_i(\lambda -2\check{\rho})\rangle\alpha_i^\vee= \lambda-2\check{\rho}.
\]
The gallery $\gamma'$ is shown in pink in Figure \ref{fig:polytopes}.

Recall from the proof of Lemma \ref{lem:indexingM} that the subgallery $\gamma_2$ of $\gamma_{\check{\rho}}^\sharp$ is a minimal gallery from the alcove $t^{\check{\rho}} w\fa$  to $\zeta$ which is entirely contained in the shifted Weyl chamber $t^{\check{\rho}}(\Cu)$, where $u=w_0s_i$.  The image $w_0\gamma_2$ of this subgallery  in $w_0\gamma_{\check{\rho}}^\sharp$ is thus contained in $t^{-\check{\rho}}({\Cu}_{s_i})$, running from the alcove $t^{-\check{\rho}}s_i\fa$ to $w_0\zeta$.  This subgallery $w_0\gamma_2$ is the only portion which differs from $w_0\gamma_{\check{\rho}}^\sharp$ after introducing the fold in $H_{\alpha_i,-1}$ to obtain $\gamma'$.  Since $w_0\gamma_2$ is contained in $t^{-\check{\rho}}({\Cf}_{s_i})$, its image after being reflected across $H_{\alpha_i,-1}$ lies in the shifted dominant Weyl chamber $t^{-\check{\rho}}(\Cf)$, as does the entire gallery $\gamma'$, by the proof of Lemma \ref{lem:indexingM}. (For the general case, simply relabel appropriately according to the action of $\omega$.)

Our plan now is to apply root operators to the gallery $\gamma'$.  Recall that $\Delta = \{ \alpha'_{i_1}, \dots, \alpha'_{i_n}\}$ is our chosen ordering on the basis of simple roots, where $\alpha'_{i_1}=\alpha_i$. Combining the observations from above that $\gamma' \subset t^{-\check{\rho}}(\Cf)$ with Lemma \ref{lem:indexingM}, we have $m(\gamma', \alpha'_{i_j}) =-1$, now for all $1\leq j \leq n$. If we denote by $M_j = \langle \alpha'_{i_j},\lambda-2\check{\rho} \rangle$, then $M_j \geq 1$ for all $1 \leq j \leq n$ by our hypothesis that $\lambda -2\check{\rho}$ is shrunken and dominant, and in particular $m(\gamma', \alpha'_{i_j}) \neq M_j$.  As in Lemma \ref{lem:singleRootOp}, since $\gamma'$ is positively folded with respect to the standard orientation, then for any $1 \leq j \leq n$, the root operator $f^k_{\alpha'_{i_j}}$ is defined on the gallery $\gamma'$ for all $1 \leq k \leq M_j+1$.  By the corresponding analog of Corollary \ref{cor:allRootOps}, if we fix any collection of integers $(d_1, \dots, d_n)$ such that $0 \leq d_j \leq M_j$ for all $1 \leq j \leq n$, we may define the gallery 
\[ 
\sigma' = f_{\alpha'_{i_n}}^{d_n}\left( f_{\alpha'_{i_{n-1}}}^{d_{n-1}}\left( \cdots \left(f_{\alpha'_{i_1}}^{d_1}\left(\gamma'\right)\right) \cdots \right)\right),
\]
which has final vertex
\begin{equation}\label{eq:xisum}
\xi' = \xi'(d_1, \dots, d_n) = \lambda-2\check{\rho} - \sum\limits_{j=1}^n d_j(\alpha'_{i_j})^\vee.
\end{equation}
In particular, since $0 \leq d_j \leq  \langle \alpha'_{i_j}, \lambda-2\check{\rho} \rangle$ for all $1 \leq j \leq n$, then every element of the set $\Conv(\sW(\lambda-2\check{\rho}))\cap (\omega +R^\vee)$ can be expressed as in \eqref{eq:xisum} for some choice of $(d_1, \dots, d_n)$. To complete the proof of \eqref{eq:contain}, it thus suffices to show that $\xi'(d_1, \dots, d_n) \in R^\vee(\zeta)$ for any collection of integers $0\leq d_j \leq M_j$.   The blue polytope in Figure \ref{fig:polytopes} shows $\Conv(\sW(\lambda-2\check{\rho}))$, and the set of all vertices $\xi'(d_1,d_2)$ for $0 \leq d_1 \leq 1$ and $0 \leq d_2 \leq 4$ are indicated by black dots in this blue polytope. A portion of the set $R^\vee(\zeta)$ is shaded purple in Figure \ref{fig:polytopes}; note that indeed the blue polytope corresponding to $\Conv(\sW(\lambda-2\check{\rho}))$ is contained in the purple subset of $R^\vee(\zeta)$.  

Given any $\xi'  = \lambda-2\check{\rho} - \sum\limits_{j=1}^n d_j(\alpha'_{i_j})^\vee$ as in \eqref{eq:xisum}, we first rewrite $\xi'$ as
\[
\xi' = \lambda-2\check{\rho} -(\langle \alpha_i, \lambda-2\check{\rho} \rangle+1)\alpha_i^\vee + (\langle \alpha_i, \lambda-2\check{\rho} \rangle+1)\alpha_i^\vee - \sum\limits_{j=1}^n d_j(\alpha'_{i_j})^\vee.
\]
Next, recall \eqref{eq:w0zetasi}, and compute that 
\begin{equation}\label{eq:w0zetasirewrite}
w_0\zeta = s_i(\lambda-2\check{\rho})-\alpha_i^\vee =\lambda-2\check{\rho} -(\langle \alpha_i, \lambda-2\check{\rho} \rangle+1)\alpha_i^\vee.
\end{equation}
Therefore, we have 
\[ 
\xi' = w_0\zeta + (\langle \alpha_i, \lambda-2\check{\rho} \rangle+1)\alpha_i^\vee - \sum\limits_{j=1}^n d_j(\alpha'_{i_j})^\vee =  w_0\zeta + (M_1-d_1+1)\alpha_i^\vee - \sum\limits_{j=2}^n d_j(\alpha'_{i_j})^\vee.
\]
Since $M_1-d_1+1 \in \Z$, we may define $c_1 = M_1-d_1+1$.  Our final claim is that we may also choose to set $c_j = d_j$ for $2 \leq j \leq n$.  Since all $d_j \geq 0$, we need only show that $d_j \leq M_j'$ in order that $\xi' \in R^\vee(\zeta)$.  Since $d_j \leq M_j$, it suffices to show that $M_j \leq M_j'$ for all $2\leq j \leq n$.  To this end, compute using \eqref{eq:w0zetasirewrite} that
\[ 
M_j' = \langle \alpha_{i_j}',w_0\zeta \rangle = \langle \alpha_{i_j}', \lambda-2\check{\rho} -(\langle \alpha_i, \lambda-2\check{\rho} \rangle+1)\alpha_i^\vee \rangle = M_j - (\langle \alpha_i, \lambda-2\check{\rho}\rangle +1)\langle \alpha'_{i_j}, \alpha_i^\vee \rangle.
\]
Since for $2 \leq j \leq n$, by definition $\alpha'_{i_j} \neq \alpha_i$, the value of $\langle \alpha'_{i_j}, \alpha_i^\vee\rangle$ is an off-diagonal entry of the Cartan matrix, and is thus nonpositive.  In addition, the coefficient $\langle \alpha_i, \lambda-2\check{\rho} \rangle +1 >0$ by our shrunken hypothesis on $\lambda$.  Therefore, the above calculation shows that indeed $M_j \leq M'_j$ for all $2 \leq j \leq n$, as required to verify \eqref{eq:contain} and complete the proof.
\end{proof}

We now prove the corresponding analog of Corollary \ref{cor:firstConjugation}, in which we identify $z'=t^{\zeta'}s_{i_1}$ as a particular $\eW$-conjugate of the standard representative $b_{\nu'}=t^{\eta'}s_i$.

\begin{lemma}\label{lem:conjugation'}  
Let $z' = t^{\zeta'}s_{i_1}$ be as in Proposition \ref{prop:lower}. Then:
 \begin{enumerate}
     \item there exists an integer $d_i' \in \Z$ such that $s_i w_0 \zeta' - \eta' = d_i' \alpha_i^\vee$; and 
     \item if $y' = t^{\mu'} w_0 s_i \in \eW$ with $\mu' = d_i' w_0 s_i \eta' \in Q^\vee$  for $d_i' \in \Z$ as in (1), then $z'=b_{\nu'}^{y'}$.
 \end{enumerate}
\end{lemma}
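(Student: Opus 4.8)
The plan is to prove Lemma~\ref{lem:conjugation'} as a direct analog of Corollary~\ref{cor:firstConjugation}, since $z' = t^{\zeta'}s_{i_1}$ has exactly the same shape as the element $z = t^\zeta s_{i_1}$ treated there, only with $\zeta$ replaced by $\zeta'$ and the standard representative $b_\nu = t^\eta s_i$ replaced by $b_{\nu'} = t^{\eta'}s_i$. The key point is that all the structural facts used in the proof of Corollary~\ref{cor:firstConjugation} carry over: $b_{\nu'}$ is the standard representative of a non-integral Newton point $\nu'$ with associated rank~1 parabolic $P_i$, so Proposition~\ref{prop:stdReps}(2) applies to give $\eta' \in H_{\alpha_i,1}\cap Q^\vee$ with $\proj_i(\eta') = \nu'$, and Proposition~\ref{prop:lower} supplies the crucial equality $\proj_i(w_0\zeta') = \nu'$.

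First I would establish part (1). Put $u = w_0 s_i$; then as in the proof of Corollary~\ref{cor:firstConjugation} we have $s_{i_1} = w_0 s_i w_0 = u s_i u^{-1}$. I would check that $\zeta' \in (u\cT_\nu')\cap(\omega + R^\vee)$: the fact that $\zeta' \in \omega + R^\vee$ follows since $\zeta' = \zeta + \sum_{j=2}^n c_j\alpha_{i_j}^\vee$ by \eqref{eq:zeta'}, and $\zeta \in \omega + R^\vee$ by Lemma~\ref{lem:translationZ} while the $\alpha_{i_j}^\vee \in R^\vee$. For membership in $u\cT_\nu' = w_0 s_i \cT_\nu'$, I would argue exactly as in Lemma~\ref{lem:firstTransverse}: since $A_{s_i} = \proj_i$ fixes $\nu'\in H_{\alpha_i}$ and preserves $\Ker A_{s_i}$, we get $s_i\cT_\nu' = \cT_\nu'$, so it suffices to show $w_0\zeta' \in \cT_\nu'$; but $\proj_i(w_0\zeta') = \nu' = \proj_i(\eta')$ by Proposition~\ref{prop:lower} and Proposition~\ref{prop:stdReps}(2), hence $w_0\zeta' - \eta' \in \Ker A_{s_i}$, and combined with $\eta'\in\cT_\nu'$ from Corollary~\ref{cor:stdRepTransverse} this gives $w_0\zeta' \in \cT_\nu'$. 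Then Proposition~\ref{prop:inverseRank1}(1), applied with $u = w_0 s_i$ and $\zeta = \zeta'$ there, yields an integer $d_i' \in \Z$ with $u^{-1}\zeta' - \eta' = d_i'\alpha_i^\vee$; since $u^{-1} = s_i w_0$, this reads $s_i w_0 \zeta' - \eta' = d_i'\alpha_i^\vee$, which is part (1).

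For part (2), I would invoke Proposition~\ref{prop:inverseRank1}(2) verbatim: with $y' = t^{\mu'}u$ where $\mu' = d_i' u\eta' = d_i' w_0 s_i\eta' \in Q^\vee$, we obtain $b_{\nu'}^{y'} = t^{\zeta'}us_iu^{-1} = t^{\zeta'}s_{i_1} = z'$. Alternatively one can just repeat the short computation from the proof of Corollary~\ref{cor:firstConjugation}: the translation part of $b_{\nu'}^{y'}$ is $\mu' + u\eta' - us_iu^{-1}\mu' = d_i' u(\eta' - s_i\eta') + u\eta'$, and since $\eta'\in H_{\alpha_i,1}$ we have $\eta' - s_i\eta' = \alpha_i^\vee$, so this equals $u(d_i'\alpha_i^\vee + \eta') = u(u^{-1}\zeta') = \zeta'$ by part (1), while the spherical direction is $us_iu^{-1} = s_{i_1}$, giving $b_{\nu'}^{y'} = z'$ as required. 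The only genuine content beyond bookkeeping is the input $\proj_i(w_0\zeta') = \nu'$, which is precisely Proposition~\ref{prop:lower}, so there is no real obstacle here; the main care needed is simply to make sure the hypotheses of Propositions~\ref{prop:stdReps}, \ref{prop:inverseRank1}, and~\ref{prop:lower} are all in force, which they are under the standing assumptions that $\nu'$ is non-integral with rank~1 associated parabolic $P_i$ and $\nu'\in\Conv(\sW(\lambda-2\check{\rho}))$, together with $\kappa_G(x_0) = \kappa_G(b_{\nu'})$.
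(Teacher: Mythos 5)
Your proof is correct and follows essentially the same route as the paper's: establish $\proj_i(w_0\zeta') = \nu' = \proj_i(\eta')$ via Propositions~\ref{prop:lower} and~\ref{prop:stdReps}, deduce $\zeta' \in (w_0 s_i \cT_{\nu'}) \cap (\omega + R^\vee)$ by the argument of Lemma~\ref{lem:firstTransverse}, and then invoke Proposition~\ref{prop:inverseRank1} with $u = w_0 s_i$ exactly as in Corollary~\ref{cor:firstConjugation}. The only cosmetic slip is writing $\cT_\nu'$ where $\cT_{\nu'}$ is meant.
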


\noindent For example, in Figure~\ref{fig:lower}, we have $s_i w_0 \zeta' = \eta'$ and so $d_i' = 0$, $\mu' = 0$, and $y' = w_0 s_i = s_1 s_2$, with $\mu'$ and the alcove $\y'$ shown in maroon. 

\begin{proof}  By Proposition \ref{prop:lower} and Proposition \ref{prop:stdReps}, we have $\proj_i(w_0 \zeta') = \nu' = \proj_i(\eta')$.  Thus, by an identical argument as in the proof of Lemma~\ref{lem:firstTransverse}, we obtain $\zeta' \in (w_0 s_i \cT_{\nu'}) \cap (\omega +R^\vee)$. We now apply Proposition~\ref{prop:inverseRank1} with $u = w_0 s_i$, as in the proof of Corollary~\ref{cor:firstConjugation}. 
\end{proof}

Our next corollary combines the results of this section to prove Theorem~\ref{thm:remainingTargets} for ``many" of the Newton points $\nu' \in \Conv(\sW(\lambda-2\check{\rho}))$ with associated parabolic subgroup $P_i$.

\begin{corollary}\label{cor:remainingTargets}
Let $x_0=t^\lambda w_0\in \eW$, and assume that $\x_0$ is in the shrunken dominant Weyl chamber $\Cfs$. Suppose that 
\[\nu' \in \Conv(\sW  (\lambda - 2\check{\rho})),\]
where $\nu'$ is a non-integral Newton point with associated spherical standard parabolic subgroup $P_i$ of rank 1. Denote by $b_{\nu'}=t^{\eta'}s_i$ the standard representative for $\nu'$ such that $\kappa_G(x_0)=\kappa_G(b_{\nu'})$. 

Let $\mu' = d_i'w_0s_i\eta'$ and $y' = t^{\mu'}w_0s_i \in \eW$ be as in the statement of Lemma \ref{lem:conjugation'}. If the integer $d_i' \geq 1$, then the gallery $\gamma_{\check{\rho}}(c_2,\dots,c_n): \fa \rightsquigarrow \bb_{\nu'}^{y'}$ has type $\vec{x}_0$ and is positively folded with respect to the $(P_i,y')$-chimney.
\end{corollary}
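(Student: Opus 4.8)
The plan is to assemble Corollary~\ref{cor:remainingTargets} from the pieces already established in Sections~\ref{sec:rootOps} and~\ref{sec:lower}, exactly parallel to the way Corollary~\ref{cor:firstTarget} was deduced from Corollary~\ref{cor:firstConjugation} and Lemma~\ref{lem:posFolded}. First I would invoke Proposition~\ref{prop:lower} to extract the integers $(c_2,\dots,c_n)$ with $0 \leq c_j \leq M_j'$ such that the gallery $\gamma_{\check{\rho}}(c_2,\dots,c_n)\colon \fa \rightsquigarrow \z'$ has final vertex $\zeta'$ satisfying $\proj_i(w_0\zeta') = \nu'$, where $\z' = z'\fa_0$ with $z' = t^{\zeta'}s_{i_1}$. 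By the construction recalled in Section~\ref{sec:rootOps}, this gallery has type $\vec{x}_0$ (it is obtained from $w_0\gamma_{\check{\rho}}^\sharp$, which has the same type as $\gamma_{\check{\rho}}$, via root operators, which are type-preserving, and then removing endpoints), so the type claim is immediate. For the endpoint identification, I would apply Lemma~\ref{lem:conjugation'}: its part~(2) gives that, with $\mu' = d_i' w_0 s_i \eta'$ and $y' = t^{\mu'}w_0 s_i$, we have $z' = b_{\nu'}^{y'}$, hence $\bb_{\nu'}^{y'} = b_{\nu'}^{y'}\fa_0 = z'\fa_0 = \z'$ is indeed the final alcove of $\gamma_{\check{\rho}}(c_2,\dots,c_n)$.

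Next I would establish the positive-foldedness assertion. The relevant criterion is Corollary~\ref{cor:posFolded}: if $y' = t^{\mu'}w_0 s_i$ with $\langle \alpha_{i_1}, \mu' \rangle \geq 1$, then $\gamma_{\check{\rho}}(c_2,\dots,c_n)$ is positively folded with respect to the $(P_i,y')$-chimney. So it remains to compute $\langle \alpha_{i_1},\mu'\rangle$ and show it equals $d_i'$, which is $\geq 1$ by hypothesis. This is the same computation as in the proof of Corollary~\ref{cor:firstTarget}: since $\mu' = d_i' w_0 s_i \eta'$ and $\alpha_{i_1} = w_0 s_i \alpha_i$ (recall $s_{i_1} = w_0 s_i w_0$, so $w_0 s_i \alpha_i = w_0(-\alpha_i) = \alpha_{i_1}$), the $\sW$-invariance of the pairing gives
\[
\langle \alpha_{i_1}, \mu' \rangle = \langle w_0 s_i \alpha_i,\, d_i' w_0 s_i \eta' \rangle = d_i' \langle \alpha_i, \eta' \rangle = d_i',
\]
using that $\eta' \in H_{\alpha_i,1}$ by Proposition~\ref{prop:stdReps}(2), so that $\langle \alpha_i,\eta'\rangle = 1$. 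Since $d_i' \geq 1$ by assumption, Corollary~\ref{cor:posFolded} applies with $\mu'$ in place of $\mu$ and yields the conclusion.

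I expect no genuine obstacle here: the corollary is designed precisely to be the concatenation of Proposition~\ref{prop:lower}, Lemma~\ref{lem:conjugation'}, and Corollary~\ref{cor:posFolded}, with the one-line pairing computation sandwiched in between, and it mirrors the proof of Corollary~\ref{cor:firstTarget} almost verbatim. The only point requiring a modicum of care is the bookkeeping that the $(c_2,\dots,c_n)$ furnished by Proposition~\ref{prop:lower} are exactly the parameters entering the gallery $\gamma_{\check{\rho}}(c_2,\dots,c_n)$ appearing in Corollary~\ref{cor:posFolded} — but these are the same objects by construction, so nothing new is needed. One should also note explicitly that the hypothesis $d_i' \geq 1$ is the price paid (as foreshadowed in Remark~\ref{rem:chimney}) for guaranteeing the chimney condition; the Newton points $\nu'$ for which $d_i' \leq 0$ fall outside the scope of this corollary and must be treated separately in Section~\ref{sec:lowest}.

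\begin{proof}
By Proposition~\ref{prop:lower}, there exist integers $(c_2,\dots,c_n)$ with $0 \leq c_j \leq M_j'$ for $2 \leq j \leq n$ such that the gallery $\gamma_{\check{\rho}}(c_2,\dots,c_n)\colon \fa \rightsquigarrow \z'$ satisfies $\proj_i(w_0\zeta') = \nu'$, where $z' = t^{\zeta'}s_{i_1}$ with $\zeta' = \zeta'(c_2,\dots,c_n)$ as in \eqref{eq:zeta'}.  By the construction in Section~\ref{sec:rootOps}, this gallery is obtained from $w_0\gamma_{\check{\rho}}^\sharp$ by applying root operators and then removing the first and last vertex; since $w_0\gamma_{\check{\rho}}^\sharp$ has the same type as $\gamma_{\check{\rho}}$, namely $\vec{x}_0$, and root operators preserve type, the gallery $\gamma_{\check{\rho}}(c_2,\dots,c_n)$ has type $\vec{x}_0$ as well.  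By Lemma~\ref{lem:conjugation'}(2), with $\mu' = d_i' w_0 s_i \eta' \in Q^\vee$ and $y' = t^{\mu'}w_0 s_i \in \eW$, we have $z' = b_{\nu'}^{y'}$, and hence $\bb_{\nu'}^{y'} = b_{\nu'}^{y'}\fa_0 = z'\fa_0 = \z'$ is the final alcove of $\gamma_{\check{\rho}}(c_2,\dots,c_n)$.  Thus $\gamma_{\check{\rho}}(c_2,\dots,c_n)\colon \fa \rightsquigarrow \bb_{\nu'}^{y'}$ has type $\vec{x}_0$.

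It remains to verify that $\gamma_{\check{\rho}}(c_2,\dots,c_n)$ is positively folded with respect to the $(P_i,y')$-chimney.  By Corollary~\ref{cor:posFolded}, since $y' = t^{\mu'}w_0 s_i$, it suffices to show that $\langle \alpha_{i_1}, \mu' \rangle \geq 1$.  Recall that $s_{i_1} = w_0 s_i w_0$, so that $w_0 s_i \alpha_i = w_0(-\alpha_i) = \alpha_{i_1}$.  Using the $\sW$-invariance of the evaluation pairing and that $\eta' \in H_{\alpha_i,1}$ by Proposition~\ref{prop:stdReps}(2), we compute
\[
\langle \alpha_{i_1}, \mu' \rangle = \langle w_0 s_i \alpha_i,\, d_i' w_0 s_i \eta' \rangle = d_i' \langle \alpha_i, \eta' \rangle = d_i'.
\]
By hypothesis, $d_i' \geq 1$, so $\langle \alpha_{i_1}, \mu' \rangle \geq 1$.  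Therefore Corollary~\ref{cor:posFolded} applies, and $\gamma_{\check{\rho}}(c_2,\dots,c_n)$ is positively folded with respect to the $(P_i,y')$-chimney, as required.
\end{proof}
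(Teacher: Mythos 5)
Your proof is correct and follows essentially the same route as the paper's: it combines Proposition~\ref{prop:lower}, Lemma~\ref{lem:conjugation'}, and Corollary~\ref{cor:posFolded}, with the same one-line pairing computation $\langle \alpha_{i_1},\mu'\rangle = d_i'$ closing the argument. Your version is marginally more explicit about why the type is $\vec{x}_0$ and silently corrects a small typo in the paper's display ($\langle \alpha_i, \eta\rangle$ there should read $\langle \alpha_i, \eta'\rangle$), but there is no substantive difference.
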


\begin{proof} 
By construction, the gallery $\gamma_{\check{\rho}}(c_2, \dots, c_n):\fa \rightsquigarrow \z'$ has type $\vec{x}_0$. Lemma \ref{lem:conjugation'} says that $\z' = z'\fa_0$, where $z'=b_{\nu'}^{y'}$, and so $\bb_{\nu'}^{y'} = b_{\nu'}^{y'}\fa_0$ is indeed the final alcove of $\gamma_{\check{\rho}}(c_2, \dots, c_n)$.  In order for $\gamma_{\check{\rho}}(c_2, \dots, c_n)$ to be positively folded with respect to the $(P_i,y')$-chimney, by  Corollary~\ref{cor:posFolded}, it suffices to show that $\langle \alpha_{i_1}, \mu' \rangle \geq 1$.  Since $\eta' \in H_{\alpha_i,1}$ by Proposition \ref{prop:stdReps}, we have 
\begin{equation}\label{eq:dieq}
\langle \alpha_{i_1}, \mu' \rangle  = \langle w_0s_i\alpha_i, d_i'w_0s_i\eta' \rangle = d_i'\langle \alpha_i, \eta \rangle = d_i'.
\end{equation}
Therefore, the hypothesis $d_i'\geq 1$ automatically implies that $\langle \alpha_{i_1}, \mu' \rangle \geq 1$, as required.
\end{proof}

Although Corollary \ref{cor:remainingTargets} does produce ``many'' of the galleries required to prove Theorem \ref{thm:remainingTargets}, not all galleries obtained by the construction in this section satisfy the inequality required to apply Corollary \ref{cor:remainingTargets}. In Figure~\ref{fig:lower}, for example, we have $s_iw_0\zeta' = \eta'$, and so $d_i'=0$ and $\mu'=0$.  A sector representing the $(P_i,y')$-chimney is shaded pink, and it can be seen that the gallery $\gamma_{\check{\rho}}(2):\fa \rightsquigarrow \z'$ is not positively folded with respect to this chimney, since the first fold is not positive.  The modification required to complete the proof of Theorem~\ref{thm:remainingTargets} in the cases where the integer $d_i' \leq 0$ is the subject of the next section.

\subsection{Reaching the remaining targets}\label{sec:lowest}

In this final section, we complete the proof of Theorem~\ref{thm:remainingTargets} by making a small modification to the gallery $\gamma_{\check{\rho}}(c_2, \dots, c_n)$ produced by Proposition \ref{prop:lower}, in the remaining cases where the inequality $d_i'\geq 1$ required to apply Corollary \ref{cor:remainingTargets} is not met.  We continue the example from Figures~\ref{fig:gamma}--\ref{fig:lower} in Figure~\ref{fig:lowest}. 

\begin{figure}[ht]
\centering
\begin{overpic}[width=0.6\textwidth]{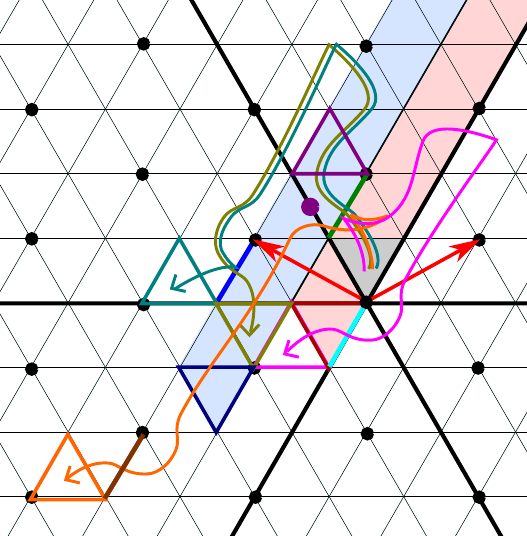}
\put(53,60){\color{Purple}$\nu'$}
\put(63,69){\color{Purple}$b_{\nu'}$}
\put(67.5,71){\color{Purple}$\eta'$}
\put(67.5,48){\color{black}$\fa$}
\put(64,64){\color{DarkGreen}$p$}
\put(65,34){\color{Aqua}$p_i(\gamma_{\check{\rho}}(2))$}
\put(43,46){\color{blue}$p'_i(\gamma_{\check{\rho}}(2))$}
\put(22,9){\color{SaddleBrown}$p_i(\gamma_{\check{\rho}})$}
\put(68,63){\color{black}$\check{\rho}$}
\put(83,48){\color{red}$\alpha_1^\vee$}
\put(54,52){\color{red}$\alpha_2^\vee$}
\put(5,3){\color{OrangeRed}$\zeta$}
\put(12,14){\color{OrangeRed}$\z$}
\put(34,16){\color{OrangeRed}$\gamma_{\check{\rho}}$}
\put(46,27){\color{DeepPink}$\zeta'=\mu''$}
\put(53,37){\color{DeepPink}$\z'$}
\put(86,62){\color{DeepPink}$\gamma_{\check{\rho}}(2)$}
\put(60,40){\color{Maroon}$\y'$}
\put(67,38){\color{Maroon}$\mu'$}
\put(25,38){\color{Teal}$\zeta''$}
\put(32,49){\color{Teal}$\z''$}
\put(39,60){\color{Teal}$\gamma''$}
\put(39,26){\color{DarkBlue}$\y''$}
\put(48,82){\color{Olive}$\gamma_{\check{\rho}}^+(2)$}
\end{overpic}
\caption{An example illustrating the proof of Theorem~\ref{thm:remainingTargets} for the remaining Newton points $\nu'$.}
\label{fig:lowest}
\end{figure}

As in Corollary \ref{cor:remainingTargets}, suppose that $\nu' \in \Conv(\sW(\lambda-2\check{\rho}))$, where $\nu'$ is a non-integral Newton point with standard representative $b_{\nu'}=t^{\eta'}s_i$.  The starting point for our modified construction will be the gallery $\gamma_{\check{\rho}}(c_2, \dots, c_n): \fa \rightsquigarrow \z'$  from Corollary~\ref{cor:remainingTargets}.  In particular, $z'=t^{\zeta'}s_{i_1}=b_{\nu'}^{y'}$, and the integer $d_i' \in \Z$ is determined by $s_iw_0\zeta'-\eta' = d_i'\alpha_i^\vee$ with all notation as in Lemma \ref{lem:conjugation'}. Throughout this subsection, we assume that $d_i' \leq 0$, since otherwise Corollary \ref{cor:remainingTargets} applies. In Figure \ref{fig:lowest}, the gallery $\gamma_{\check{\rho}}(2)$ is shown in pink.  In this example, since $\eta'=s_iw_0\zeta'$ as indicated in purple, then $d_i' = 0$, and so $\mu' = 0$ and $y' = w_0 s_i$ are shown in maroon.

We  will now modify the gallery $\gamma_{\check{\rho}}(c_2,\dots,c_n)$ to obtain a new gallery $\gamma'':\fa \rightsquigarrow \z''$ of the same type, which is positively folded with respect to the $(P_i,y'')$-chimney for an element $y'' \in \eW$ such that $z''=b_{\nu'}^{y''}$.  We first consider the final alcove $\z''$, which will just be a translate of $\z'$ by a particular multiple of $\alpha_{i_1}^\vee$.  The following is the required variation on Lemma \ref{lem:conjugation'} in case $d_i'\leq 0$.

\begin{lemma}\label{lem:z''}  Suppose that $d_i'\leq 0$, and let $z'' = t^{\zeta''} s_{i_1}$ where $\zeta'' = \zeta' + (-2d_i'+1)\alpha_{i_1}^\vee$. If $y'' = t^{\mu''}w_0 s_i \in \eW$ with $\mu'' = (-d'_i+1) w_0 s_i \eta' \in Q^\vee$, then $z'' = b_{\nu'}^{y''}$ and $\langle \alpha_{i_1},\mu''\rangle \geq 1$.
\end{lemma}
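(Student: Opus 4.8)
\textbf{Proof plan for Lemma~\ref{lem:z''}.}

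The plan is to verify the two assertions by direct computation, mirroring the structure of the proofs of Lemma~\ref{lem:conjugation'} and Corollary~\ref{cor:remainingTargets}, and reusing the relation $s_iw_0\zeta' - \eta' = d_i'\alpha_i^\vee$ from Lemma~\ref{lem:conjugation'}(1). First I would compute the translation part of $b_{\nu'}^{y''}$. By the same formula used in the proofs of Lemma~\ref{lem:containedTransverse} and Corollary~\ref{cor:firstConjugation}, with $y'' = t^{\mu''}w_0s_i$ and $\mu'' = (-d_i'+1)w_0s_i\eta'$, the translation part equals
\[
\mu'' + w_0 s_i \eta' - w_0 s_i (w_0 s_i)^{-1}\mu'' = \mu'' + w_0 s_i \eta' - (w_0 s_i) s_i (w_0 s_i)^{-1}\mu''.
\]
Since $\eta' \in H_{\alpha_i,1}$ by Proposition~\ref{prop:stdReps}, we have $\eta' - s_i\eta' = \alpha_i^\vee$, so I would factor $\mu'' - (w_0s_i)s_i(w_0s_i)^{-1}\mu'' = (-d_i'+1)w_0s_i(\eta' - s_i\eta') = (-d_i'+1)w_0s_i\alpha_i^\vee = (-d_i'+1)\alpha_{i_1}^\vee$, using $w_0 s_i \alpha_i = \alpha_{i_1}$. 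Hence the translation part of $b_{\nu'}^{y''}$ is $w_0 s_i \eta' + (-d_i'+1)\alpha_{i_1}^\vee$. It remains to check that this equals $\zeta''$. Applying $w_0 s_i$ to the relation $s_i w_0 \zeta' - \eta' = d_i'\alpha_i^\vee$ gives $w_0 s_i \eta' = \zeta' - d_i' w_0 s_i \alpha_i^\vee = \zeta' - d_i'\alpha_{i_1}^\vee$; substituting, the translation part becomes $\zeta' - d_i'\alpha_{i_1}^\vee + (-d_i'+1)\alpha_{i_1}^\vee = \zeta' + (-2d_i'+1)\alpha_{i_1}^\vee = \zeta''$, as desired. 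The spherical direction of $b_{\nu'}^{y''}$ is $(w_0s_i)s_i(w_0s_i)^{-1} = w_0 s_i w_0 = s_{i_1}$, so indeed $b_{\nu'}^{y''} = t^{\zeta''}s_{i_1} = z''$.

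For the inequality, I would compute $\langle \alpha_{i_1}, \mu''\rangle$ exactly as in Corollary~\ref{cor:remainingTargets}, Equation~\eqref{eq:dieq}: $\langle \alpha_{i_1}, \mu''\rangle = \langle w_0 s_i \alpha_i, (-d_i'+1)w_0 s_i \eta'\rangle = (-d_i'+1)\langle \alpha_i, \eta'\rangle = -d_i' + 1$, again using $\eta' \in H_{\alpha_i,1}$ and $W$-invariance of the pairing. Since we are in the case $d_i' \leq 0$, we get $-d_i' + 1 \geq 1$, which is the claimed bound.

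I do not expect a genuine obstacle here; this is a bookkeeping lemma whose proof is entirely parallel to Lemma~\ref{lem:conjugation'} and Equation~\eqref{eq:dieq}, with the single new ingredient being the choice of the extra shift $(-d_i'+1)$ in $\mu''$ (rather than $d_i'$) to force positivity when $d_i' \leq 0$, and the corresponding shift $(-2d_i'+1)\alpha_{i_1}^\vee$ in the target vertex $\zeta''$. The only point requiring a little care is keeping the conjugation-by-$(w_0 s_i)$ computations consistent with the sign conventions already fixed earlier; I would double-check the identity $w_0 s_i \alpha_i = \alpha_{i_1}$ and the location $\eta' \in H_{\alpha_i,1}$ from Proposition~\ref{prop:stdReps}, both of which have already been used repeatedly in Section~\ref{sec:firstTarget} and in Lemma~\ref{lem:conjugation'}. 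One should also note, for the subsequent argument (not part of this lemma's statement), that $z''$ lies in the $\eW$-conjugacy class of $b_{\nu'}$ by Corollary~\ref{cor:conjRank1}, since $\zeta'' \in (w_0 s_i \cT_{\nu'}) \cap (\omega + R^\vee)$: this follows because $\zeta''$ differs from $\zeta'$ by a multiple of $\alpha_{i_1}^\vee = w_0 s_i \alpha_i^\vee \in w_0 s_i \Ker A_{s_i}$, and $\zeta' \in (w_0 s_i \cT_{\nu'}) \cap (\omega + R^\vee)$ was established in the proof of Lemma~\ref{lem:conjugation'}.
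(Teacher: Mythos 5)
Your proof is correct, and your computations check out: the identities $s_{i_1}w_0 s_i = w_0$, $w_0 s_i\alpha_i^\vee = \alpha_{i_1}^\vee$, and $\eta'-s_i\eta' = \alpha_i^\vee$ (from $\eta'\in H_{\alpha_i,1}$) do yield translation part $w_0 s_i\eta' + (-d_i'+1)\alpha_{i_1}^\vee = \zeta''$ after substituting $w_0 s_i\eta' = \zeta' - d_i'\alpha_{i_1}^\vee$ from Lemma~\ref{lem:conjugation'}(1).

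You take a somewhat more direct route than the paper does. The paper first shows $\zeta''\in (w_0 s_i\cT_{\nu'})\cap(\omega+R^\vee)$ (using that $\zeta''-\zeta'$ is a multiple of $\alpha_{i_1}^\vee = w_0 s_i\alpha_i^\vee$), then invokes Proposition~\ref{prop:inverseRank1}(1) to conclude $s_iw_0\zeta''-\eta' = d_i''\alpha_i^\vee$ for some $d_i''\in\Z$, computes $d_i'' = -d_i'+1$, and finally applies Proposition~\ref{prop:inverseRank1}(2) with $\mu'' = d_i''w_0 s_i\eta'$. You instead unroll Proposition~\ref{prop:inverseRank1} and directly plug into the raw conjugation formula $\xi = \mu'' + u\eta' - uvu^{-1}\mu''$ from Lemma~\ref{lem:containedTransverse}, verifying the translation part and spherical direction by hand. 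Both are valid; the paper's route keeps the argument parallel to Corollary~\ref{cor:firstConjugation} and Lemma~\ref{lem:conjugation'} and makes the role of the transverse subspace explicit, while your route is slightly more self-contained and avoids re-entering the machinery of Section~\ref{sec:conjugacy}. Your closing remark about $\cT_{\nu'}$-membership is not needed once you've directly verified $z'' = b_{\nu'}^{y''}$, since conjugacy is then automatic.
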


\noindent In Figure~\ref{fig:lowest}, in which we recall that $d_i'=0$, we have $\mu'' = w_0s_i\eta' = \zeta'$ so that $y'' = t^{\zeta'}w_0 s_i = t^{\zeta'}s_1 s_2$. We depict $\zeta''=\zeta'+\alpha_{i_1}^\vee$ and $\z'' = \z'+\alpha_{i_1}^\vee$ in teal, and   $\y''$ is shown in dark blue.

\begin{proof}
Since $\alpha_{i_1}^\vee = w_0 s_i \alpha_i^\vee$, the equality $\zeta'' = \zeta' + (-2d_i'+1)\alpha_{i_1}^\vee$ implies that $s_iw_0(\zeta'' - \zeta') \in \Ker A_{s_i}$.  Therefore, $s_iw_0(\zeta''-\zeta')+\nu' \in \cT_{\nu'}$  where we recall that $\cT_{\nu'} = \nu'+\Ker A_{s_i}$. From the proof of Lemma~\ref{lem:conjugation'}, we have $\zeta' \in (w_0 s_i \cT_{\nu'}) \cap (\omega +R^\vee)$, and so it follows that $\zeta'' \in (w_0 s_i \cT_{\nu'}) \cap (\omega +R^\vee)$.  Now by Proposition~\ref{prop:inverseRank1}(1) with $u=w_0s_i$, we know that $s_iw_0\zeta'' - \eta' = d_i'' \alpha_i^\vee$ for some integer $d_i'' \in \Z$.   Using the definition $\zeta'' = \zeta' + (-2d_i'+1)\alpha_{i_1}^\vee$  and the relation $s_iw_0\zeta'-\eta'=d_i'\alpha_i^\vee$ from Lemma \ref{lem:conjugation'}, a direct calculation then shows that in fact $d_i'' = -d_i'+1$. The fact that $z''=b_{\nu'}^{y''}$ then holds by Proposition~\ref{prop:inverseRank1}(2) with our choice of $\mu''= d_i''w_0s_i\eta'=(-d_i'+1)w_0s_i\eta'$.
Finally, compute using $\eta' \in H_{\alpha_i,1}$ by Proposition~\ref{prop:stdReps} that
\[
\langle \alpha_{i_1},\mu''\rangle = \langle w_0 s_i \alpha_i,(-d_i'+1) w_0 s_i \eta' \rangle = (-d'_i+1) \langle \alpha_i, \eta' \rangle = -d'_i+1 \geq 1,
\]
since $d_i'\leq 0$ by hypothesis.
\end{proof}

Fix $z''$ and $y''$ as in the statement of Lemma~\ref{lem:z''}. We shall now construct a gallery $\gamma'':\fa \rightsquigarrow \z''$ of the same type as $\gamma_{\check{\rho}}(c_2,\dots,c_n)$, which is positively folded with respect to the $(P_i,y'')$-chimney.  We will do this by first ``undoing" a fold of $\gamma_{\check{\rho}}(c_2,\dots,c_n)$, and then introducing one additional fold.  In Figure~\ref{fig:lowest}, a sector representing the $(P_i,y'')$-chimney is shaded light blue, and the final modified gallery $\gamma'':\fa \rightsquigarrow \z''$ is shown in  teal.

By Proposition~\ref{prop:folds} and the construction of the original folded gallery $\gamma_{\check{\rho}}$, all folds of $\gamma_{\check{\rho}}(c_2,\dots,c_n)$ which lie in an $\alpha_{i_1}$-hyperplane occur in $H_{\alpha_{i_1},1}$. 
More specifically, these folds are in the panel of the alcove $t^{\check{\rho}} w_0\fa$,  which is contained in $H_{\alpha_{i_1},1}$.  
Let $p$ be the last panel of  $\gamma_{\check{\rho}}(c_2,\dots,c_n)$ in which there is fold in $H_{\alpha_{i_1},1}$, and let $\gamma_{\check{\rho}}^+(c_2,\dots,c_n)$ be the gallery obtained from $\gamma_{\check{\rho}}(c_2,\dots,c_n)$ by applying the reflection $s_{\alpha_{i_1},1}$ to the subgallery of $\gamma_{\check{\rho}}(c_2,\dots,c_n)$ which starts at $p$.  In Figure~\ref{fig:lowest}, the panel $p$ is dark green, and the gallery $\gamma_{\check{\rho}}^+(2)$, in which we have ``undone'' this fold at $p$, is colored olive green.  Further, note that $\gamma_{\check{\rho}}^+(2)$ is positively folded with respect to the $(P_i,y'')$-chimney, for which a sector is shaded light blue.
Lemma \ref{lem:gamma_rho+} formalizes these observations about the gallery $\gamma_{\check{\rho}}^+(c_2, \dots, c_n)$, using arguments similar to the proof of Lemma \ref{lem:posFolded}.

\begin{lemma}\label{lem:gamma_rho+}
The gallery $\gamma_{\check{\rho}}^+(c_2,\dots,c_n): \fa \rightsquigarrow s_{\alpha_{i_1},1}\z'$ is a gallery of the same type as $\gamma_{\check{\rho}}(c_2,\dots,c_n)$, and $\gamma_{\check{\rho}}^+(c_2,\dots,c_n)$ is positively folded with respect to the $(P_i,y'')$-chimney.
\end{lemma}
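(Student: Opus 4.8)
The plan is to prove the two assertions of Lemma~\ref{lem:gamma_rho+} in turn: first that $\gamma_{\check{\rho}}^+(c_2,\dots,c_n)$ has the same type as $\gamma_{\check{\rho}}(c_2,\dots,c_n)$ and runs from $\fa$ to $s_{\alpha_{i_1},1}\z'$, and second that it is positively folded with respect to the $(P_i,y'')$-chimney. The first assertion is immediate from the general theory of galleries: applying a single reflection $s_{\alpha_{i_1},1}$ to the tail of a combinatorial gallery starting at a panel $p$ does not change the type sequence (the type of each panel is unchanged, since $s_{\alpha_{i_1},1}$ is an affine reflection and these act type-preservingly on each sheet), it only changes which alcoves the tail passes through. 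Since $p$ is a panel of the first alcove in the reflected tail, the first alcove $\fa$ of $\gamma_{\check{\rho}}(c_2,\dots,c_n)$ is unchanged, and the final alcove becomes $s_{\alpha_{i_1},1}\z'$ because the final alcove $\z'$ lies in the tail after $p$. I would state this briefly, citing the construction of PRS-folding sequences and the action of $\aW$ on galleries from Section~\ref{sec:galleriesChimneys}.

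For the second assertion, I would argue as in the proof of Lemma~\ref{lem:posFolded} and Corollary~\ref{cor:posFolded}. The key is to track in which hyperplanes the folds of $\gamma_{\check{\rho}}^+(c_2,\dots,c_n)$ occur and on which side of each hyperplane. By Proposition~\ref{prop:folds}, the only folds of $\gamma_{\check{\rho}}(c_2,\dots,c_n)$ in an $\alpha_{i_1}$-hyperplane are folds in $H_{\alpha_{i_1},1}$, all occurring in the panel of $t^{\check{\rho}}w_0\fa$. The operation producing $\gamma_{\check{\rho}}^+$ ``undoes'' the last such fold, at the panel $p$, so $\gamma_{\check{\rho}}^+(c_2,\dots,c_n)$ has exactly one fewer fold in $H_{\alpha_{i_1},1}$; all of its remaining folds in an $\alpha_{i_1}$-hyperplane are again in $H_{\alpha_{i_1},1}$. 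As in the proof of Lemma~\ref{lem:posFolded}, for every $\beta \in \Phi^+ \setminus \{ \alpha_{i_1} \}$ the orientation induced by any $(P_i,y'')$-chimney with $y'' = t^{\mu''}w_0 s_i$ and $\langle \alpha_{i_1},\mu''\rangle \geq 1$ agrees on $\beta$-hyperplanes with the opposite standard (periodic) orientation, and $\gamma_{\check{\rho}}^+(c_2,\dots,c_n)$ still has all its non-$\alpha_{i_1}$ folds positive with respect to that orientation, since reflecting a tail does not destroy positivity of folds lying before $p$ and the folds after $p$ get reflected consistently. For the $\alpha_{i_1}$-hyperplane $H_{\alpha_{i_1},1}$: a $(P_i,y'')$-sector lies between $H_{\alpha_{i_1},k}$ and $H_{\alpha_{i_1},k+1}$ with $k = \langle \alpha_{i_1},\mu''\rangle \geq 1$, so $H_{\alpha_{i_1},1}$ is on the ``far side'' of the chimney and the $+$ side of $H_{\alpha_{i_1},1}$ is the antidominant side; the surviving folds in $H_{\alpha_{i_1},1}$ at the panel of $t^{\check{\rho}}w_0\fa$ are on that $+$ side exactly as in the proof of Lemma~\ref{lem:posFolded}. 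Hence all folds of $\gamma_{\check{\rho}}^+(c_2,\dots,c_n)$ are positive with respect to the $(P_i,y'')$-chimney.

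The main obstacle I expect is bookkeeping: one must be careful that reflecting the tail of $\gamma_{\check{\rho}}(c_2,\dots,c_n)$ at $p$ genuinely turns a fold into a crossing (rather than creating a new fold elsewhere) and that this crossing is a \emph{positive} crossing with respect to the $(P_i,y'')$-chimney — but since we are only asked about positivity of \emph{folds}, not crossings, in the statement of this lemma, this point can be deferred. The real subtlety is confirming that no fold of $\gamma_{\check{\rho}}^+$ other than those in $H_{\alpha_{i_1},1}$ lies in an $\alpha_{i_1}$-hyperplane; this follows because undoing a fold only removes a fold, it cannot introduce one, so Proposition~\ref{prop:folds} continues to apply verbatim to $\gamma_{\check{\rho}}^+$. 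I would therefore organize the proof as: (1) note the type and endpoints are as claimed by the type-preserving action of $s_{\alpha_{i_1},1}$; (2) invoke Proposition~\ref{prop:folds} to locate all $\alpha_{i_1}$-folds of $\gamma_{\check{\rho}}^+$ in $H_{\alpha_{i_1},1}$; (3) repeat the periodicity argument of Lemma~\ref{lem:posFolded} for the non-$\alpha_{i_1}$ directions using $\langle \alpha_{i_1},\mu''\rangle \geq 1$ from Lemma~\ref{lem:z''}; (4) check the surviving $H_{\alpha_{i_1},1}$-folds are on the correct side. This yields that $\gamma_{\check{\rho}}^+(c_2,\dots,c_n)$ is positively folded with respect to the $(P_i,y'')$-chimney, completing the proof.
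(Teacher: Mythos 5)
Your overall structure matches the paper's proof: establish the type and endpoints via type-preservation of the affine reflection, then invoke Corollary~\ref{cor:posFolded} (using $\langle\alpha_{i_1},\mu''\rangle\ge1$ from Lemma~\ref{lem:z''}) for the untouched initial segment, and finally argue that the reflected tail after $p$ stays positively folded via the periodicity of the chimney orientation on non-$\alpha_{i_1}$-hyperplanes.

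However, there is a genuine gap in the step where you dismiss the ``real subtlety.'' You write that the reflected tail has no new $\alpha_{i_1}$-folds ``because undoing a fold only removes a fold, it cannot introduce one, so Proposition~\ref{prop:folds} continues to apply verbatim.'' That reasoning is incorrect: applying $s_{\alpha_{i_1},1}$ to the subgallery after $p$ does not merely delete one fold and leave the rest in place — it moves every panel and alcove in the tail, so a fold that previously sat in an $H_{\beta,k}$ now sits in $s_{\alpha_{i_1},1}(H_{\beta,k}) = H_{s_{\alpha_{i_1}}(\beta),\,k'}$. The folds of $\gamma_{\check{\rho}}^+(c_2,\dots,c_n)$ are therefore \emph{not} a subset of the folds of $\gamma_{\check{\rho}}(c_2,\dots,c_n)$, and Proposition~\ref{prop:folds} does not apply ``verbatim.'' The correct reason the reflected folds remain in non-$\alpha_{i_1}$-hyperplanes — and, simultaneously, remain positive with respect to the opposite standard orientation — is the root-theoretic fact that $s_{\alpha_{i_1}}$ permutes $\Phi^+\setminus\{\alpha_{i_1}\}$: since $p$ is the \emph{last} $\alpha_{i_1}$-fold of $\gamma_{\check{\rho}}(c_2,\dots,c_n)$, every fold of the tail lies in a $\beta$-hyperplane with $\beta\in\Phi^+\setminus\{\alpha_{i_1}\}$, and $s_{\alpha_{i_1}}(\beta)>0$ guarantees both that the image is another non-$\alpha_{i_1}$-hyperplane and that a positive fold maps to a positive fold. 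This single observation, which is the crux of the paper's argument and which you omit, should replace your appeal to ``undoing only removes.'' Your later phrase ``the folds after $p$ get reflected consistently'' gestures at this but does not prove it.
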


\begin{proof}
The reflection $s_{\alpha_{i_1},1}$ is type-preserving, so the galleries $\gamma_{\check{\rho}}^+(c_2,\dots,c_n)$ and $\gamma_{\check{\rho}}(c_2,\dots,c_n)$ have the same type.  By construction, the three galleries $\gamma_{\check{\rho}}$, $\gamma_{\check{\rho}}(c_2,\dots,c_n)$, and $\gamma_{\check{\rho}}^+(c_2,\dots,c_n)$ all begin with a minimal gallery from $\fa$ to $t^{\check{\rho}} w_0\fa$, and so $\gamma_{\check{\rho}}^+(c_2,\dots,c_n)$ also has first alcove $\fa$.  Its final alcove is that obtained by applying the reflection $s_{\alpha_{i_1},1}$ to the final alcove of $\gamma_{\check{\rho}}(c_2,\dots,c_n)$, and so its final alcove is indeed $s_{\alpha_{i_1},1}\z'$.  

It remains to show that $\gamma_{\check{\rho}}^+(c_2, \dots, c_n)$ is positively folded with respect to the $(P_i,y'')$-chimney. By Lemma~\ref{lem:z''}, we have $\langle \alpha_{i_1},\mu''\rangle \geq 1$. By Corollary~\ref{cor:posFolded}, the gallery $\gamma_{\check{\rho}}(c_2,\dots,c_n)$ is thus positively folded with respect to the $(P_i,y'')$-chimney.  We now just need to verify that the portion of $\gamma_{\check{\rho}}^+(c_2,\dots,c_n)$ which differs from  $\gamma_{\check{\rho}}(c_2,\dots,c_n)$ remains positively folded with respect to the $(P_i,y'')$-chimney. Since by definition $p$ is the last panel of $\gamma_{\check{\rho}}(c_2,\dots,c_n)$ which has a fold in an $\alpha_{i_1}$-hyperplane,  it suffices to show that the reflection $s_{\alpha_{i_1},1}$ takes positive folds in $\beta$-hyperplanes, where $\beta \in \Phi^+ \setminus \{ \alpha_{i_1} \}$, to positive folds.  Now all such folds are positive with respect to the opposite standard orientation, and since $\beta \in \Phi^+ \setminus \{ \alpha_{i_1} \}$ we have $s_{\alpha_{i_1}}(\beta) > 0$.  The result follows.
\end{proof}

Although the gallery $\gamma_{\check{\rho}}^+(c_2, \dots, c_n)$ is positively folded with respect to the $(P_i,y'')$-chimney by Lemma \ref{lem:gamma_rho+}, its final alcove $s_{\alpha_{i_1},1}\z'$ is no longer a conjugate of the standard representative $b_{\nu'}$. To obtain the final gallery $\gamma''$, we introduce one final fold, again in an $\alpha_{i_1}$-hyperplane, though of higher index than the original fold at $p$.  However, in order to ensure that this additional fold will be both possible and positive, we require the following lemma concerning $\gamma_{\check{\rho}}^+(c_2,\dots,c_n)$.  In our example, in which $i_1=2$ and $d_i'=0$, this lemma observes that the final crossing of the hyperplane $H_{\alpha_{i_1},2}$ in the olive green gallery $\gamma_{\check{\rho}}^+(2)$ from Figure \ref{fig:lowest} moves toward the antidominant chamber, which puts the new final fold on the positive side of the chimney represented by $\y''$.

\begin{lemma}\label{lem:crossing2}
The portion of the gallery $\gamma_{\check{\rho}}^+(c_2,\dots,c_n)$ after its last fold crosses the hyperplane $H_{\alpha_{i_1},-2d_i'+2}$ from the dominant side to the antidominant side.
\end{lemma}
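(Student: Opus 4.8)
\textbf{Proof plan for Lemma~\ref{lem:crossing2}.} The strategy is to track carefully where the relevant $\alpha_{i_1}$-crossing of $\gamma_{\check{\rho}}^+(c_2,\dots,c_n)$ occurs, by comparing it to the already-understood gallery $\gamma_{\check{\rho}}(c_2,\dots,c_n)$ and using Lemma~\ref{lem:tail_gammaRho} as the base case. Recall from Lemma~\ref{lem:tail_gammaRho} that $\gamma_{\check{\rho}}$ crosses the image of the panel $p_i(\gamma,\lambda-\check{\rho})$ in the hyperplane $H_{\alpha_{i_1},k}$ with $k=\langle\alpha_{i_1},\zeta\rangle-1$, that this crossing goes from the antidominant to the dominant side, and that $\gamma_{\check{\rho}}$ has no folds after it. The first step is to transport this through the root operator construction of Section~\ref{sec:rootOps}: since the root operators $f^{c_j}_{\alpha'_{i_j}}$ applied to $w_0\gamma_{\check{\rho}}^\sharp$ only modify the gallery after its last panel in $H_{\alpha'_{i_j},-1}$, and since by Proposition~\ref{prop:folds} no new folds in $\alpha_{i_1}$-hyperplanes are introduced, the image in $\gamma_{\check{\rho}}(c_2,\dots,c_n)$ of the tail of $\gamma_{\check{\rho}}$ after its last $\alpha_{i_1}$-fold is again fold-free and ends at $\z'$; one must compute that the corresponding crossing now lies in $H_{\alpha_{i_1},k'}$ with $k'=\langle\alpha_{i_1},\zeta'\rangle-1$, again from the antidominant to the dominant side, using $w_0\zeta'=w_0\zeta-\sum_{j\geq 2}c_j(\alpha'_{i_j})^\vee$ together with the fact that $\langle\alpha_{i_1},(\alpha'_{i_j})^\vee\rangle$ only involves the coroots $\alpha'_{i_j}$ with $j\geq 2$, which are simple and distinct from $\alpha_i$.

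Next I would analyze the effect of the reflection $s_{\alpha_{i_1},1}$ used to form $\gamma_{\check{\rho}}^+(c_2,\dots,c_n)$ from $\gamma_{\check{\rho}}(c_2,\dots,c_n)$. This reflection is applied to the subgallery starting at the panel $p$, which is the last $\alpha_{i_1}$-fold panel, hence lies in $H_{\alpha_{i_1},1}$; it therefore fixes $H_{\alpha_{i_1},1}$ and sends $H_{\alpha_{i_1},m}$ to $H_{\alpha_{i_1},2-m}$. The tail of $\gamma_{\check{\rho}}(c_2,\dots,c_n)$ after $p$ contains the fold-free crossing identified above in $H_{\alpha_{i_1},\langle\alpha_{i_1},\zeta'\rangle-1}$; applying $s_{\alpha_{i_1},1}$ sends this to a crossing in $H_{\alpha_{i_1},2-(\langle\alpha_{i_1},\zeta'\rangle-1)}=H_{\alpha_{i_1},3-\langle\alpha_{i_1},\zeta'\rangle}$, and reverses the side of the crossing, so it now goes from the dominant to the antidominant side. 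It remains to identify $3-\langle\alpha_{i_1},\zeta'\rangle$ with $-2d_i'+2$. For this I would use the relation $s_iw_0\zeta'-\eta'=d_i'\alpha_i^\vee$ from Lemma~\ref{lem:conjugation'}: applying $\langle\alpha_i,\cdot\rangle$ and using $\eta'\in H_{\alpha_i,1}$ (Proposition~\ref{prop:stdReps}) gives $\langle\alpha_i,s_iw_0\zeta'\rangle=2d_i'+1$, i.e.\ $\langle-\alpha_i,w_0\zeta'\rangle=2d_i'+1$, i.e.\ $\langle\alpha_{i_1},\zeta'\rangle=\langle w_0s_i\alpha_i,\zeta'\rangle=\langle\alpha_i,s_iw_0\zeta'\rangle\cdot(-1)\cdot(-1)$; tracking the signs through $w_0\alpha_{i}$-conventions carefully, one obtains $\langle\alpha_{i_1},\zeta'\rangle=2d_i'+1$, whence $3-\langle\alpha_{i_1},\zeta'\rangle=2-2d_i'=-2d_i'+2$, as desired. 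I would also note that this crossing is indeed after the last fold of $\gamma_{\check{\rho}}^+(c_2,\dots,c_n)$, since the reflection $s_{\alpha_{i_1},1}$ fixes the part of the gallery before $p$ and the part after $p$ inherits the fold-freeness from the tail of $\gamma_{\check{\rho}}(c_2,\dots,c_n)$.

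The main obstacle I anticipate is purely bookkeeping: keeping the indices of the $\alpha_{i_1}$-hyperplanes and the orientation of crossings straight through the three successive transformations (translation by $\check{\rho}$ and folding to get $\gamma_{\check{\rho}}$, root operators $w_0(\cdot)$ plus $f$-operators to get $\gamma_{\check{\rho}}(c_2,\dots,c_n)$, then the reflection $s_{\alpha_{i_1},1}$), and correctly matching the sign conventions relating $\alpha_i$, $\alpha_{i_1}=-w_0\alpha_i$, and the pairing $\langle\alpha_{i_1},\zeta'\rangle$ with the integer $d_i'$. None of the individual steps is deep; the risk is an off-by-one or a sign error, so I would double-check the computation of $\langle\alpha_{i_1},\zeta'\rangle=2d_i'+1$ against the running $\tilde A_2$ example (where $d_i'=0$ and the crossing should be in $H_{\alpha_{i_1},2}$, consistent with $-2d_i'+2=2$), which is precisely the sanity check already flagged in the text preceding the lemma.
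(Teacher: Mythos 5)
Your proposal follows the paper's proof essentially step by step: you use Lemma~\ref{lem:tail_gammaRho} for the base case, argue that the corresponding crossing in $\gamma_{\check{\rho}}(c_2,\dots,c_n)$ is a translate lying in $H_{\alpha_{i_1},\langle\alpha_{i_1},\zeta'\rangle-1}$ (still antidominant-to-dominant), apply $s_{\alpha_{i_1},1}$ to reverse the orientation and shift the index to $3-\langle\alpha_{i_1},\zeta'\rangle$, and then compute $\langle\alpha_{i_1},\zeta'\rangle=2d_i'+1$ to conclude. One small slip to fix: the portion of $\gamma_{\check{\rho}}(c_2,\dots,c_n)$ after the last $\alpha_{i_1}$-fold panel $p$ is \emph{not} fold-free (it still contains the final $n-1$ folds, all in non-$\alpha_{i_1}$-hyperplanes); what matters, and what the paper explicitly verifies, is that there \emph{are} folds after $p$, which guarantees that the last fold of $\gamma_{\check{\rho}}^+(c_2,\dots,c_n)$ lies in the reflected tail and hence the crossing at the image of $p_i(\gamma_{\check{\rho}}(c_2,\dots,c_n))$ really is after the last fold.
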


\begin{proof} We first claim that the portion of $\gamma_{\check{\rho}}^+(c_2,\dots,c_n)$ after its last fold is obtained by applying the reflection $s_{\alpha_{i_1},1}$ to the portion of $\gamma_{\check{\rho}}(c_2,\dots,c_n)$ after its last fold.  By construction, to verify this claim it suffices to prove that $\gamma_{\check{\rho}}(c_2,\dots,c_n)$ has a fold after panel $p \subset H_{\alpha_{i_1},1}$; that is, there exists a fold after its last fold in an $\alpha_{i_1}$-hyperplane. From the proof of Proposition~\ref{prop:folds}, the last $n-1$ folds of  $\gamma_{\check{\rho}}(c_2,\dots,c_n)$ are all in hyperplanes of types other than $\alpha_{i_1}$, so this first claim holds since $n\geq 2$ by hypothesis.

Recall that $p_i(\gamma,\lambda - \check{\rho})$ denotes the panel of the original gallery $\gamma$ at which it crosses the $\alpha_i$-hyperplane passing through $\lambda - \check{\rho}$.  Write $p_i(\gamma_{\check{\rho}})$ for the image in $\gamma_{\check{\rho}}$ of the panel $p_i(\gamma,\lambda - \check{\rho})$, and recall from Lemma~\ref{lem:tail_gammaRho} that the subgallery of $\gamma_{\check{\rho}}$ after all its folds crosses $p_i(\gamma_{\check{\rho}}) \subset H_{\alpha_{i_1},k}$ from the antidominant to the dominant side, where $k = \langle \alpha_{i_1}, \zeta \rangle - 1$.  In Figure~\ref{fig:lowest}, the panel $p_i(\gamma_{\check{\rho}})$ is shown in brown.

By construction, the image of the panel $p_i(\gamma_{\check{\rho}})$ in $\gamma_{\check{\rho}}(c_2, \dots, c_n)$ is then simply a translation of $p_i(\gamma_{\check{\rho}})$ by $\sum\limits_{j=2}^n c_j\alpha_{i_j}^\vee$.  We denote this panel of $\gamma_{\check{\rho}}(c_2, \dots, c_n)$ by $p_i(\gamma_{\check{\rho}}(c_2, \dots, c_n))$.  Since only a translation was applied to obtain $p_i(\gamma_{\check{\rho}}(c_2, \dots, c_n))$ from $p_i(\gamma_{\check{\rho}})$, then the crossing of $\gamma_{\check{\rho}}(c_2, \dots, c_n)$ at the panel $p_i(\gamma_{\check{\rho}}(c_2, \dots, c_n))$ is also from the antidominant to the dominant side.  Further, since $p_i(\gamma_{\check{\rho}}) \subset H_{\alpha_{i_1},\langle \alpha_{i_1}, \zeta \rangle-1}$ and $\zeta'=\zeta + \sum\limits_{j=2}^n c_j \alpha_{i_j}^\vee$, then this panel $p_i(\gamma_{\check{\rho}}(c_2, \dots, c_n)) \subset H_{\alpha_{i_1},\langle \alpha_{i_1}, \zeta' \rangle-1}$. For example, the panel $p_i(\gamma_{\check{\rho}}(2))$ is highlighted in aqua in Figure \ref{fig:lowest}, in which $\langle \alpha_2, \zeta'\rangle-1=0$.  Indeed, $p_i(\gamma_{\check{\rho}}(2))\subset H_{\alpha_2,0}$, and the pink gallery $\gamma_{\check{\rho}}(2)$ crosses this panel from the antidominant to the dominant side.

Now consider the image  $p'_i(\gamma_{\check{\rho}}(c_2, \dots, c_n))$ of the panel $p_i(\gamma_{\check{\rho}}(c_2, \dots, c_n))$ under the reflection $s_{\alpha_{i_1},1}$.  Since this panel lies in an $\alpha_{i_1}$-hyperplane, then the reflection $s_{\alpha_{i_1},1}$ reverses the orientation of the crossing, so that in the reflected image the crossing will be from the dominant side to the antidominant side.  The reflected panel $p'_i(\gamma_{\check{\rho}}(2))$ is shown in blue in Figure \ref{fig:lowest}, and the corresponding crossing in the olive gallery $\gamma_{\check{\rho}}^+(2)$ is toward the antidominant side.

Finally, we claim that the panel $p'_i(\gamma_{\check{\rho}}(c_2, \dots, c_n))$ in the gallery $\gamma_{\check{\rho}}^+(c_2, \dots, c_n)$ is contained in the hyperplane $H_{\alpha_{i_1},-2d_i'+2}$.  Recall from Lemma \ref{lem:conjugation'} that $s_iw_0\zeta' = \eta'+d_i'\alpha_i^\vee$, and compute that
\[ 
\langle \alpha_{i_1}, \zeta' \rangle = \langle \alpha_i, s_iw_0\zeta' \rangle = \langle \alpha_i, \eta'+d_i'\alpha_i^\vee \rangle = 1+2d_i',
\]
where we have used $\eta'\in H_{\alpha_i,1}$ by Proposition \ref{prop:stdReps}. Therefore, applying $s_{\alpha_{i_1},1}=t^{\alpha_{i_1}^\vee}s_{\alpha_{i_1}}$ to the hyperplane $H_{\alpha_{i_1},\langle \alpha_{i_1}, \zeta' \rangle-1}$ containing $p_i(\gamma_{\check{\rho}}(c_2, \dots, c_n))$ gives us another $\alpha_{i_1}$-hyperplane, now having index
\[ 
-\left(\langle \alpha_{i_1}, \zeta' \rangle-1\right) +2 = -(1+2d_i' -1)+2 = -2d_i'+2,
\]
as claimed.  In our example, $d_i'=0$ and indeed the blue panel $p_i'(\gamma_{\check{\rho}}(2))$ of the gallery $\gamma''$ is contained in $H_{\alpha_2,2}$.
\end{proof}

Finally, let $\gamma''$ be the gallery obtained from $\gamma_{\check{\rho}}^+(c_2,\dots,c_n)$ by carrying out the fold in the hyperplane $H_{\alpha_{i_1},-2d_i'+2}$.  Our final corollary says that introducing this final fold preserves the fact that it is positively folded with respect to the $(P_i,y'')$-chimney.  In addition, the new end alcove $\z''$ is once again a $\eW$-conjugate of the standard representative for $\nu'$. In Figure~\ref{fig:lowest}, this final gallery $\gamma''$ and its end alcove $\z''=\z'+\alpha_{i_1}^\vee$ are shown in teal, and $\gamma''$ is indeed positively folded with respect to the $(P_i,y'')$-chimney, a sector for which is shaded light blue.

\begin{corollary}\label{cor:gamma''}
Let $x_0=t^\lambda w_0\in \eW$, and assume that $\x_0$ is in the shrunken dominant Weyl chamber $\Cfs$.  Suppose that 
\[\nu' \in \Conv(\sW  (\lambda - 2\check{\rho})),\]
where $\nu'$ is a non-integral Newton point with associated spherical standard parabolic subgroup $P_i$ of rank 1. Denote by $b_{\nu'}=t^{\eta'}s_i$ the standard representative for $\nu'$ such that $\kappa_G(x_0)=\kappa_G(b_{\nu'})$. 

Let $d_i'\in \Z$ be as in Lemma \ref{lem:conjugation'}(1) and $y'' \in \eW$ as in Lemma \ref{lem:z''}. If the integer $d_i' \leq 0$, then the gallery $\gamma'': \fa \rightsquigarrow  \bb_{\nu'}^{y''}$ has type $\vec{x}_0$ and is positively folded with respect to the $(P_i,y'')$-chimney.
\end{corollary}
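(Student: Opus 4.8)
The plan is to assemble the three lemmas of this subsection (Lemma~\ref{lem:z''}, Lemma~\ref{lem:gamma_rho+}, and Lemma~\ref{lem:crossing2}) exactly as in the proof of Corollary~\ref{cor:firstTarget}, which is the analogous statement when $d_i' \geq 1$. The three things to verify are: (i) $\gamma''$ has type $\vec{x}_0$; (ii) the final alcove of $\gamma''$ is $\bb_{\nu'}^{y''}$; and (iii) $\gamma''$ is positively folded with respect to the $(P_i,y'')$-chimney.

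For (i), the gallery $\gamma''$ is obtained from $\gamma_{\check{\rho}}(c_2,\dots,c_n)$ by applying two type-preserving reflections (first $s_{\alpha_{i_1},1}$ to a tail, then a fold, which is a reflection, in $H_{\alpha_{i_1},-2d_i'+2}$), and $\gamma_{\check{\rho}}(c_2,\dots,c_n)$ has type $\vec{x}_0$ by construction in Section~\ref{sec:rootOps}; so $\gamma''$ has type $\vec{x}_0$ as well. For (ii), Lemma~\ref{lem:gamma_rho+} tells us $\gamma_{\check{\rho}}^+(c_2,\dots,c_n)$ has final alcove $s_{\alpha_{i_1},1}\z'$. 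The fold at $H_{\alpha_{i_1},-2d_i'+2}$ then reflects the final alcove across that hyperplane; I would track the translation vertex through this reflection and check it lands on $\zeta'' = \zeta' + (-2d_i'+1)\alpha_{i_1}^\vee$, using the computation $\langle \alpha_{i_1},\zeta'\rangle = 1+2d_i'$ from the proof of Lemma~\ref{lem:crossing2} (the hyperplanes $H_{\alpha_{i_1},1}$ and $H_{\alpha_{i_1},-2d_i'+2}$ are symmetric about $H_{\alpha_{i_1},(3-2d_i')/2}$, and the spherical direction $s_{i_1}$ is preserved since we reflect in an $\alpha_{i_1}$-hyperplane). By Lemma~\ref{lem:z''}, $t^{\zeta''}s_{i_1} = b_{\nu'}^{y''}$, so the final alcove is $\bb_{\nu'}^{y''}$ as required.

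For (iii), Lemma~\ref{lem:gamma_rho+} already gives that $\gamma_{\check{\rho}}^+(c_2,\dots,c_n)$ is positively folded with respect to the $(P_i,y'')$-chimney, so only the newly introduced fold needs to be checked. By Lemma~\ref{lem:crossing2}, the tail of $\gamma_{\check{\rho}}^+(c_2,\dots,c_n)$ after its last (old) fold crosses $H_{\alpha_{i_1},-2d_i'+2}$ from the dominant side to the antidominant side. Since $d_i' \leq 0$, this hyperplane has index $-2d_i'+2 \geq 2 > 1 \geq \langle \alpha_{i_1},\mu''\rangle$... wait — rather, the relevant comparison (mirroring the proof of Lemma~\ref{lem:posFolded}) is that $\langle \alpha_{i_1},\mu''\rangle \geq 1$ by Lemma~\ref{lem:z''}, so any $(P_i,y'')$-sector lies between $H_{\alpha_{i_1},k}$ and $H_{\alpha_{i_1},k+1}$ with $k = \langle \alpha_{i_1},\mu''\rangle \geq 1$; on $\alpha_{i_1}$-hyperplanes of index $\geq 1$ the chimney orientation points $+$ on the side facing away from infinity, i.e.\ the side the crossing comes \emph{from}. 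Hence the fold obtained by not completing this antidominant-ward crossing (i.e.\ reflecting the tail back) is a positive fold. So $\gamma''$ is positively folded with respect to the $(P_i,y'')$-chimney, completing the proof; Theorem~\ref{thm:remainingTargets} then follows by combining Corollaries~\ref{cor:remainingTargets} and~\ref{cor:gamma''} with Theorem~\ref{thm:ADLVChimneys}(1).

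The main obstacle I anticipate is the bookkeeping in step (iii): one must be careful that the new fold at $H_{\alpha_{i_1},-2d_i'+2}$ is genuinely positive \emph{with respect to the chimney}, not merely with respect to the opposite standard orientation — and indeed it is \emph{not} positive for the opposite standard orientation, which is precisely why the index $-2d_i'+2$ (rather than, say, a small index) matters: the chimney orientation on $\alpha_{i_1}$-hyperplanes of index $>k\ge 1$ is reversed relative to the periodic orientation inside the ``critical strip'', so folds there that look negative in the standard picture become positive for the chimney. Getting the inequality $-2d_i'+2 \geq \langle\alpha_{i_1},\mu''\rangle + 1$ lined up correctly — which reduces to $-2d_i'+2 \geq (-d_i'+1)+1$, i.e.\ $d_i' \leq 0$, exactly our hypothesis — is the delicate point, and I would state it explicitly. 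One also needs to confirm (from Lemma~\ref{lem:crossing2}'s proof) that the crossing being ``undone'' really is the \emph{last} crossing of that hyperplane by the tail, so that the fold is well-defined and no later portion of the gallery interferes.
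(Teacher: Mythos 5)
Your overall structure matches the paper's proof exactly: verify type, end alcove, and positivity, using Lemmas~\ref{lem:gamma_rho+}, \ref{lem:z''}, and \ref{lem:crossing2} respectively. Items (i) and (ii) are essentially correct (the paper computes the composite $s_{\alpha_{i_1},-2d_i'+2}\cdot s_{\alpha_{i_1},1} = t^{(-2d_i'+1)\alpha_{i_1}^\vee}$ directly, which is the same as your coordinate tracking). The gap is in your stated orientation claim in (iii). You assert that on $\alpha_{i_1}$-hyperplanes of index $\geq 1$ the chimney orientation points $+$ on the side the crossing comes from (the dominant side). That threshold is wrong: since the $(P_i,y'')$-sector lies between $H_{\alpha_{i_1},k}$ and $H_{\alpha_{i_1},k+1}$ for $k=\langle\alpha_{i_1},\mu''\rangle$, the $+$ side of $H_{\alpha_{i_1},m}$ is the dominant side only when $m > k$, while for $1\leq m\leq k$ the chimney itself sits on the dominant side and the $+$ side is antidominant. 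As written, your argument would fail for fold hyperplanes in that intermediate range. The rescue is exactly the inequality you name in your final paragraph, namely $-2d_i'+2 > -d_i'+1 = k$, which is equivalent to $d_i'\leq 0$: this is precisely what guarantees the fold hyperplane lies strictly above the critical strip. So the fix is to replace ``index $\geq 1$'' with ``index $> k$'' and show the fold hyperplane satisfies this via $d_i'\leq 0$, which is what the paper does.

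Your closing remark that the new fold is ``not positive for the opposite standard orientation'' is also incorrect, and reflects a genuine confusion about where the two orientations disagree. By Lemma~\ref{lem:crossing2} the crossing in $\gamma_{\check{\rho}}^+(c_2,\dots,c_n)$ runs from the dominant to the antidominant side, so folding leaves the fold alcove on the dominant side of $H_{\alpha_{i_1},-2d_i'+2}$ --- and the dominant side is the $+$ side for the opposite standard orientation on every $\alpha_{i_1}$-hyperplane, so the new fold is positive for that orientation too. The chimney orientation disagrees with the opposite standard orientation on $\alpha_{i_1}$-hyperplanes of index $\leq k$, not of index $> k$. Thus the role of $d_i'\leq 0$ is not to rescue a ``negative-looking'' fold but to ensure the fold occurs at an index where the chimney and opposite standard orientations coincide, so that positivity in the periodic picture carries over.
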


\begin{proof} Since $d_i'\leq 0$, we may construct the gallery $\gamma''$ by modifying the gallery $\gamma_{\check{\rho}}(c_2, \dots, c_n)$ as explained in this section. The galleries $\gamma''$ and $\gamma_{\check{\rho}}^+(c_2,\dots,c_n)$ differ only by the fold introduced at the hyperplane $H_{\alpha_{i_1},-2d_i'+2}$.  The reflection $s_{\alpha_{i_1},-2d_i'+2}$ preserves type, and so $\gamma''$ has the same type as $\gamma_{\check{\rho}}^+(c_2, \dots, c_n)$, and thus also as $\gamma_{\check{\rho}}(c_2, \dots, c_n)$ by Lemma \ref{lem:gamma_rho+}.  Then by Corollary \ref{cor:remainingTargets}, the type of $\gamma_{\check{\rho}}(c_2, \dots, c_n)$, and thus also of $\gamma''$, is $\vec{x}_0$.

The final alcove of $\gamma''$ equals $s_{\alpha_{i_1},-2d_i'+2} \left( s_{\alpha_{i_1},1}\z'\right)$ by construction. Given any $\alpha \in \Phi^+$, we can express the reflection across the affine hyperplane $H_{\alpha,k}$ as $s_{\alpha,k} = t^{k\alpha^\vee}s_\alpha$.  Therefore,  
\[ s_{\alpha_{i_1},-2d_i'+2} \cdot s_{\alpha_{i_1},1} = \left(t^{(-2d_i'+2)\alpha_{i_1}^\vee}s_{\alpha_{i_1}}\right) t^{\alpha_{i_1}^\vee}s_{\alpha_{i_1}} = t^{(-2d_i'+1)\alpha_{i_1}^\vee}, \]
in which case the final alcove equals $t^{(-2d_i'+1)\alpha_{i_1}^\vee}\z' = \z''$, where indeed $z''=t^{\zeta''}s_{i_1}$ with $\zeta''=\zeta'+(-2d_i'+1)\alpha_{i_1}^\vee$. Lemma \ref{lem:z''} then says that $z''=b_{\nu'}^{y''}$, and so the final alcove of $\gamma''$ is indeed $\bb_{\nu'}^{y''}$.

By Lemma \ref{lem:gamma_rho+}, the gallery $\gamma_{\check{\rho}}^+(c_2,\dots,c_n)$ is positively folded with respect to the $(P_i,y'')$-chimney.  We need only to verify that the portion of $\gamma''$ which differs from  $\gamma_{\check{\rho}}^+(c_2,\dots,c_n)$ remains positively folded with respect to the $(P_i,y'')$-chimney. Since the galleries $\gamma''$ and $\gamma_{\check{\rho}}^+(c_2,\dots,c_n)$ differ only by the new fold introduced by applying $s_{\alpha_{i_1},-2d_i'+2}$, it thus remains only to show that this new fold at $H_{\alpha_{i_1},-2d_i'+2}$ is positive with respect to the $(P_i,y'')$-chimney.

Recall from the proof of Lemma \ref{lem:posFolded} that any $(P_i, y'')$-sector lies between $H_{\alpha_{i_1},k}$ and $H_{\alpha_{i_1},k+1}$ where $k=\langle \alpha_{i_1}, \mu'' \rangle$. By Lemma \ref{lem:crossing2}, this final fold at $H_{\alpha_{i_1},-2d_i'+2}$ occurs on the dominant side of the hyperplane. 
Since $d_i'\leq 0$ by hypothesis, as in the proof of Lemma \ref{lem:z''} we have $-2d_i'+2>-d_i'+1 = \langle \alpha_{i_1},\mu''\rangle \geq 1$ so that this final fold is indeed positive. The gallery $\gamma''$ is thus positively folded with respect to the $(P_i,y'')$-chimney.
\end{proof}

We are now prepared to complete the proof of the main result of this section, after which we can immediately prove our main theorem.

\begin{proof}[Proof of Theorem~\ref{thm:remainingTargets}] Combine Corollary~\ref{cor:remainingTargets} and Corollary \ref{cor:gamma''} with Theorem~\ref{thm:ADLVChimneys}(1). 
\end{proof}

\begin{proof}[Proof of Theorem~\ref{thm:w0ShrunkenDominant}]
 Theorem \ref{thm:translations} proves case (3a), as well as case (3b) when $\nu_b$ is integral.  Combine Theorems \ref{thm:firstTarget} and \ref{thm:remainingTargets} to complete case (3b) when $\nu_b$ is non-integral. The construction for the basic case (3c), appears as Theorem A in \cite{MST1}.
\end{proof}

\renewcommand{\refname}{Bibliography}
\bibliography{bibliography}
\bibliographystyle{alpha}

\end{document}